\DeclareSymbolFont{cmletters}{OML}{cmm}{m}{it}              
\DeclareSymbolFont{cmsymbols}{OMS}{cmsy}{m}{n}
\DeclareSymbolFont{cmlargesymbols}{OMX}{cmex}{m}{n}
\DeclareMathSymbol{\myjmath}{\mathord}{cmletters}{"7C}     \let\jmath\myjmath 
\DeclareMathSymbol{\myamalg}{\mathbin}{cmsymbols}{"71}     \let\amalg\myamalg
\DeclareMathSymbol{\mycoprod}{\mathop}{cmlargesymbols}{"60}\let\coprod\mycoprod
\DeclareMathSymbol{\myalpha}{\mathord}{cmletters}{"0B}     \let\alpha\myalpha 
\DeclareMathSymbol{\mybeta}{\mathord}{cmletters}{"0C}      \let\beta\mybeta
\DeclareMathSymbol{\mygamma}{\mathord}{cmletters}{"0D}     \let\gamma\mygamma
\DeclareMathSymbol{\mydelta}{\mathord}{cmletters}{"0E}     \let\delta\mydelta
\DeclareMathSymbol{\myepsilon}{\mathord}{cmletters}{"0F}   \let\epsilon\myepsilon
\DeclareMathSymbol{\myzeta}{\mathord}{cmletters}{"10}      \let\zeta\myzeta
\DeclareMathSymbol{\myeta}{\mathord}{cmletters}{"11}       \let\eta\myeta
\DeclareMathSymbol{\mytheta}{\mathord}{cmletters}{"12}     \let\theta\mytheta
\DeclareMathSymbol{\myiota}{\mathord}{cmletters}{"13}      \let\iota\myiota
\DeclareMathSymbol{\mykappa}{\mathord}{cmletters}{"14}     \let\kappa\mykappa
\DeclareMathSymbol{\mylambda}{\mathord}{cmletters}{"15}    \let\lambda\mylambda
\DeclareMathSymbol{\mymu}{\mathord}{cmletters}{"16}        \let\mu\mymu
\DeclareMathSymbol{\mynu}{\mathord}{cmletters}{"17}        \let\nu\mynu
\DeclareMathSymbol{\myxi}{\mathord}{cmletters}{"18}        \let\xi\myxi
\DeclareMathSymbol{\mypi}{\mathord}{cmletters}{"19}        \let\pi\mypi
\DeclareMathSymbol{\myrho}{\mathord}{cmletters}{"1A}       \let\rho\myrho
\DeclareMathSymbol{\mysigma}{\mathord}{cmletters}{"1B}     \let\sigma\mysigma
\DeclareMathSymbol{\mytau}{\mathord}{cmletters}{"1C}       \let\tau\mytau
\DeclareMathSymbol{\myupsilon}{\mathord}{cmletters}{"1D}   \let\upsilon\myupsilon
\DeclareMathSymbol{\myphi}{\mathord}{cmletters}{"1E}       \let\phi\myphi
\DeclareMathSymbol{\mychi}{\mathord}{cmletters}{"1F}       \let\chi\mychi
\DeclareMathSymbol{\mypsi}{\mathord}{cmletters}{"20}       \let\psi\mypsi
\DeclareMathSymbol{\myomega}{\mathord}{cmletters}{"21}     \let\omega\myomega
\DeclareMathSymbol{\myvarepsilon}{\mathord}{cmletters}{"22}\let\varepsilon\myvarepsilon
\DeclareMathSymbol{\myvartheta}{\mathord}{cmletters}{"23}  \let\vartheta\myvartheta
\DeclareMathSymbol{\myvarpi}{\mathord}{cmletters}{"24}     \let\varpi\myvarpi
\DeclareMathSymbol{\myvarrho}{\mathord}{cmletters}{"25}    \let\varrho\myvarrho
\DeclareMathSymbol{\myvarsigma}{\mathord}{cmletters}{"26}  \let\varsigma\myvarsigma
\DeclareMathSymbol{\myvarphi}{\mathord}{cmletters}{"27}    \let\varphi\myvarphi
\theoremstyle{plain}
\newtheorem{thm}{Theorem}[section]
\newtheorem{cor}[thm]{Corollary}
\newtheorem{lemma}[thm]{Lemma}
\newtheorem{prop}[thm]{Proposition}
\theoremstyle{definition}
\newtheorem{df}[thm]{Definition}
\newtheorem{rem}[thm]{Remark}
\newtheorem{ex}[thm]{Example}
\DeclareMathOperator{\Mat}{Mat}
\DeclareMathOperator{\GL}{GL}
\DeclareMathOperator{\SL}{SL}
\DeclareMathOperator{\PSL}{PSL}
\DeclareMathOperator{\OO}{O}
\DeclareMathOperator{\SO}{SO}
\DeclareMathOperator{\Sp}{Sp}
\DeclareMathOperator{\Spec}{Spec}
\DeclareMathOperator{\Sch}{Sch}
\DeclareMathOperator{\Hom}{Hom}
\DeclareMathOperator{\Aut}{Aut}
\DeclareMathOperator{\Proj}{Proj}
\DeclareMathOperator{\Rad}{Rad}
\DeclareMathOperator{\Nil}{Nil}
\DeclareMathOperator{\sign}{sign}
\DeclareMathOperator{\tensor}{\otimes}
\DeclareMathOperator{\diag}{{diag}}
\DeclareMathOperator{\mon}{{mon}}
\def\0{{\bf 0}}
\def\A{{\mathbb A}}
\def\B{{\mathbb B}}
\def\C{{\mathbb C}}
\def\F{{\mathbb F}}
\def\G{{\mathbb G}}
\def\N{{\mathbb N}}
\def\P{{\mathbb P}}
\def\Q{{\mathbb Q}}
\def\Z{{\mathbb Z}}
\def\cC{{\mathcal C}}
\def\cD{{\mathcal D}}
\def\cF{{\mathcal F}}
\def\cG{{\mathcal G}}
\def\cH{{\mathcal H}}
\def\cI{{\mathcal I}}
\def\cM{{\mathcal M}}
\def\cO{{\mathcal O}}
\def\cP{{\mathcal P}}
\def\cR{{\mathcal R}}
\def\cT{{\mathcal T}}
\def\cU{{\mathcal U}}
\def\cW{{\mathcal W}}
\def\cX{{\mathcal X}}
\def\cZ{{\mathcal Z}}
\def\fe{{\mathfrak e}}
\def\fg{{\mathfrak g}}
\def\fh{{\mathfrak h}}
\def\fl{{\mathfrak l}}
\def\fp{{\mathfrak p}}
\def\fq{{\mathfrak q}}
\def\fP{{\mathfrak P}}
\def\Fun{{\F_1}}
\def\Funsq{{\F_{1^2}}}
\def\Funn{{\F_{1^n}}}
\def\int{\textup{int}}
\def\pr{\textup{pr}}
\def\id{\textup{id}}
\def\1{\textbf{1}}
\def\barx{{\overline{x}}}
\def\bary{{\overline{y}}}
\def\cprime{$'$}
\def\blanc{-}
\def\bp{{\mathcal{B}lpr}}
\def\canc{{\textup{canc}}}
\def\inv{{\textup{inv}}}
\def\proper{{\textup{prop}}}
\def\kar{{\textup{char\,}}}
\def\rk{{\textup{rk}}}
\def\prk{{\sim}}
\def\SRings{{\mathcal{S\!R}\textit{ings}}}
\def\Sets{\mathcal{S}\!\!\textit{ets}}
\def\Top{\mathcal{T}\!\!\textit{op}}
\def\={\equiv}
\def\n={\equiv\hspace{-10,5pt}/\hspace{3,5pt}}
\def\pos{{\geq 0}}
\def\un{\underline{\bf n}}
\def\bs{{\bar{s}}}
\def\px{\star} 
\def\Fpx{\F^\px}
\def\red{{\textup{red}}}
\def\top{{\textup{top}}}
\def\Sotimes{\otimes^{\!+}}
\def\Stimes{\times^{\!+}}
\def\toptimes{\times^\top}
\def\SA{{\vphantom{\A}}^{+\!}{\A}}   
\def\SP{{\vphantom{\P}}^{+}{\P}}   
\def\SG{{\vphantom{\G}}^{+}{\G}}   
\def\sT{{\scriptscriptstyle\cT\hspace{-3pt}}}
\def\hatexp{{{}_{\textstyle\hat\ }}}
\DeclareMathOperator{\BSch}{Sch_\Fun}
\DeclareMathOperator{\SSch}{Sch_\N^+}
\DeclareMathOperator{\TSch}{Sch_\cT}
\DeclareMathOperator{\BTSch}{Sch_{\Fun,\cT}}
\DeclareMathOperator{\rkBSch}{Sch_\Fun^\rk}
\newcommand{\arincl}[1]{\ar@{ >->}@<-0,0ex>#1} 
\newcommand{\norm}[1]{\left| #1 \right|}
\newcommand{\prerk}[1]{\hat{\overline{ #1 }}}
\newcommand{\closure}[1]{\overline{\{ #1 \}}}
\newcommand{\gen}[1]{\langle #1 \rangle}
\newcommand{\bpquot}[2]{#1\!\sslash\!#2}
\newcommand{\bpgenquot}[2]{#1\!\sslash\!\gen{#2}}
\newcommand{\tinymat}[4]{\bigl( \begin{smallmatrix} #1 & #2 \\ #3 & #4 \end{smallmatrix} \bigr)}
\newcommand{\mat}[4]{ \begin{pmatrix} #1 & #2 \\ #3 & #4 \end{pmatrix} }
\begin{document}

\title{\bf The geometry of blueprints\\[0,8cm] \large\bf Part II: Tits-Weyl models of algebraic groups}
\author{Oliver Lorscheid\footnote{Department of Mathematics, University of Wuppertal, Gau\ss str.\ 20, 42097 Wuppertal, Germany, {\tt lorscheid@math.uni-wuppertal.de}.}}
\date{}

\maketitle

\begin{abstract}
 This paper is dedicated to a problem raised by Jacquet Tits in 1956: the Weyl group of a Chevalley group should find an interpretation as a group over what is nowadays called $\mathbb{F}_1$, \emph{the field with one element}. Based on Part I of The geometry of blueprints, we introduce the class of \emph{Tits morphisms} between blue schemes. The resulting \emph{Tits category} $\textup{Sch}_\mathcal{T}$ comes together with a base extension to (semiring) schemes and the so-called \emph{Weyl extension} to sets.

 We prove for $\cG$ in a wide class of Chevalley groups---which includes the special and general linear groups, symplectic and special orthogonal groups, and all types of adjoint groups---that a linear representation of $\cG$ defines a model $G$ in $\textup{Sch}_\mathcal{T}$ whose Weyl extension is the Weyl group $W$ of $\cG$. We call such models \emph{Tits-Weyl models}. The potential of Tits-Weyl models lies in \textit{(a)} their intrinsic definition that is given by a linear representation; \textit{(b)} the (yet to be formulated) unified approach towards thick and thin geometries; and \textit{(c)} the extension of a Chevalley group to a functor on blueprints, which makes it, in particular, possible to consider Chevalley groups over semirings. This opens applications to idempotent analysis and tropical geometry.
\end{abstract}
\pagebreak[4]

\begin{footnotesize}\tableofcontents\end{footnotesize}


\section*{Introduction}
\label{intro}\addcontentsline{toc}{section}{Introduction}


One of the main themes of $\Fun$-geometry was and is to give meaning to an idea of Jacques Tits that dates back to 1956 (see Section 13 in \cite{Tits56}). Namely, Tits proposed that there should be a theory of algebraic groups over a field of ``caract\'eristique une'', which explains certain analogies between geometries over finite fields and combinatorics. 

There are good expositions of Tits' ideas from a modern viewpoint (for instance, \cite{Soule04}, \cite{CC08} or \cite{L09}). We restrict ourselves to the following example that falls into this line of thought. The number of $\F_q$-rational points $\GL_n(\F_q)$ of the general linear group is counted by a polynomial $N(q)$ in $q$ with integral coefficients. The limit $\lim_{q\to1} N(q)/(q-1)^{n}$ counts the elements of the Weyl group $W=S_n$ of $\GL_n$. The same holds for any standard parabolic subgroup $P$ of $\GL_n$ whose Weyl group $W_P$ is a parabolic subgroup of the Weyl group $W$. While the group $\GL_n(\F_q)$ acts on the coset space $\GL_n/P(\F_q)$, which are the $\F_q$-rational points of a flag variety, the Weyl group $W=S_n$ acts on the quotient $W/W_P$, which is the set of decompositions of $\{1,\dotsc,n\}$ into subsets of cardinalities that correspond to the flag type of $\GL_n/P$.

The analogy of Chevalley groups over finite fields and their Weyl groups entered $\Fun$-geometry as the slogan: $\Fun$-geometry should provide an $\Fun$-model $G$ of every Chevalley group $\cG$ whose group $G(\Fun)=\Hom(\Spec\Fun,G)$ of $\Fun$-rational points equals the Weyl group $W$ of $\cG$. Many authors contributed to this problem: see \cite{Kapranov-Smirnov}, \cite{Manin95}, \cite{Soule04}, \cite{Deitmar05}, \cite{Haran07}, \cite{TV08}, \cite{CC08}, \cite{LL09}, \cite{Borger09}, \cite{L09}, \cite{Deitmar11} (this list is roughly in the order of appearance, without claiming to be complete).

However, there is a drawback to this philosophy. Recall that the Weyl group $W$ of a Chevalley group $\cG$ is defined as the quotient $W=N(\Z)/T(\Z)$ where $T$ is a split maximal torus of $\cG$ and $N$ is its normalizer in $\cG$. Under certain natural assumptions, a group isomorphism $G(\Fun)\stackrel\sim\to W$ yields an embedding $W\hookrightarrow N(\Z)$ of groups that is a section of the quotient map $N(\Z)\to W$. However, such a section does exist in general as the example $\cG=\SL_2$ witnesses (see Problem B in the introduction of \cite{L09} for more detail). 

This problem was circumvented in different ways. While some approaches restrict themselves to treat only a subclass of Chevalley groups over $\Fun$ (in the case of $\GL_n$, for instance, one can embed the Weyl group as the group of permutation matrices), other papers describe Chevalley groups merely as schemes without mentioning a group law. The more rigorous attempts to establish Chevalley groups over $\Fun$ are the following two approaches. In the spirit of Tits' later paper \cite{Tits66}, which describes the extended Weyl group, Connes and Consani tackled the problem by considering schemes over $\Funsq$ (see \cite{CC08}), which stay in connection with the extended Weyl group in the case of Chevalley groups. In the author's earlier paper \cite{L09}, two different classes of morphisms were considered: while rational points are so-called \emph{strong morphisms}, group laws are so-called \emph{weak morphisms}.

In this paper, we choose a different approach: we break with the convention that $G(\Fun)$ should be the Weyl group of $\cG$. Instead, we consider a certain category $\TSch$ of $\Fun$-schemes that comes together with ``base extension'' functors $(\blanc)_\Z:\TSch\to\Sch_\Z$\footnote{Note a slight incoherence with the notation of the main text of this paper where the functor $(\blanc)_\Z$ is denoted by $(\blanc)_\Z^+$. We will omit the superscript ``$+$'' also at other places of the introduction to be closer to the standard notation of algebraic geometry. An explanation for the need of the additional superscript is given in Section \ref{subsection:notation-and-conventions}.} to usual schemes and $\cW:\TSch\to\Sets$ to sets. Roughly speaking, a \emph{Tits-Weyl model} of a Chevalley group $\cG$ is an object $G$ in $\TSch$ together with a morphism $\mu:G\times G\to G$ such that $G_\Z$ together with $\mu_\Z$ is isomorphic to $\cG$ as a group scheme and such that $\cW(G)$ together with $\cW(\mu)$ is isomorphic to the Weyl group of $\cG$. We call the category $\TSch$ the \emph{Tits category} and the functor $\cW$ the Weyl extension.

\subsubsection*{A first heuristic}

Before we proceed with a more detailed description of the Tits category, we explain the fundamental idea of Tits-Weyl models in the case of the Chevalley group $\SL_2$. The standard definition of the scheme $\SL_{2,\Z}$ is as the spectrum of $\Z[\SL_2]=\Z[T_1,T_2,T_3,T_4]/(T_1T_4-T_2T_3-1)$, which is a closed subscheme of $\A^4_\Z=\Spec\Z[T_1,T_2,T_3,T_4]$. The affine space $\A^4_\Z$ has an $\Fun$-model in the language of Deitmar's $\Fun$-geometry (see \cite{Deitmar05}). Namely, $\A^4_\Fun=\Spec\Fun[T_1,T_2,T_3,T_4]$ where 
\[
 \Fun[T_1,T_2,T_3,T_4] \quad = \quad \{T_1^{n_1}T_2^{n_2}T_3^{n_3}T_4^{n_4}\}_{n_1,n_2,n_3,n_4\geq0}
\] 
is the monoid\footnote{For the sake of simplification, we do not require $\Fun[T_1,T_2,T_3,T_4]$ to have a zero. This differs from the conventions that are used in the main text, but this incoherence does not have any consequences for the following considerations.} of all monomials in $T_1$, $T_2$, $T_3$ and $T_4$. Its prime ideals are the subsets 
\[
 (T_i)_{i\in I} \quad = \quad \{T_1^{n_1}T_2^{n_2}T_3^{n_3}T_4^{n_4}\}_{n_i>0\text{ for one }i\in I}
\]
of $\Fun[T_1,T_2,T_3,T_4]$ where $I$ ranges to all subsets of $\{1,2,3,4\}$. Note that this means $(T_i)_{i\in I}=\emptyset$ for $I=\emptyset$. Thus $\A^4_\Fun=\{(T_i)_{i\in I}\}_{I\subset\{1,2,3,4\}}$. 

If one applies the naive intuition that prime ideals are closed under addition and subtraction to the equation
\[
 T_1T_4 \ - \ T_2T_3 \quad = \quad 1,
\]
then the points of $SL_{2,\Fun}$ should be the prime ideals $(T_i)_{i\in I}$ that do not contain both terms $T_1T_4$ and $T_2T_3$. This yields the set $\SL_{2,\Fun}=\{(\emptyset),(T_1),(T_2),(T_3),(T_4),(T_1,T_4),(T_2, T_3)\}$, which can be illustrated as
\begin{center}
  \includegraphics{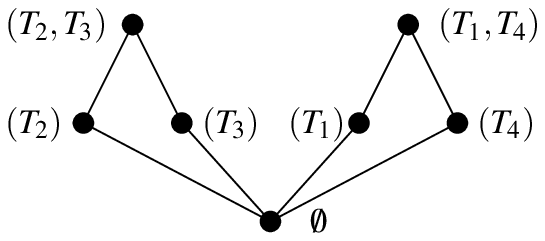}
\end{center}
where the vertical lines express the inclusion relation $(T_i)_{i\in J}\subset (T_i)_{i\in I}$. The crucial observation is that the two maximal ideals $(T_2,T_3)$ and $(T_1,T_4)$ of this set correspond to the subscheme $\bigl\{\tinymat \ast 0 0 \ast \bigr\}$ of diagonal matrices and the subscheme $\bigl\{\tinymat 0 \ast \ast 0 \bigr\}$ of anti-diagonal matrices of $\SL_{2,\Z}$, respectively, which, in turn, correspond to the elements of the Weyl group $W=N(\Z)/T(\Z)$ where $T=\bigl\{\tinymat \ast 0 0 \ast \bigr\}$ is the diagonal torus and $N$ its normalizer.

This example was the starting point for the development of the geometry of blueprints. A formalism that puts the above ideas on a solid base is explained in the preceeding Part I of this paper (see \cite{blueprints1}). Please note that we give brief definitions of blueprints and blue schemes in the introduction of Part I. In the proceeding, we will assume that the reader is familiar with this.

\subsubsection*{The Tits category}

It is the topic of this paper to generalize the above heuristics to other Chevalley groups and to introduce a class of morphisms that allows us to descend group laws to morphisms of the $\Fun$-model of Chevalley groups. Note that the approach of \cite{L09} is of a certain formal similarity: the tori of minimal rank in a torification of $\SL_{2,\Z}$ are the diagonal torus and the anti-diagonal torus. Indeed the ideas of \cite{L09} carry over to our situation.

The \emph{rank space} $X^\rk$ of a blue scheme $X$ is the set of the so-called ``points of minimal rank'' (which would be the points $(T_2,T_3)$ and $(T_1,T_4)$ in the above example) together with certain algebraic data, which makes it a discrete blue scheme. A \emph{Tits morphism} $\varphi:X\to Y$ between two blue schemes $X$ and $Y$ will be a pair $\varphi=(\varphi^\rk,\varphi^+)$ of a morphism $\varphi^\rk:X\to Y$ between the rank spaces and a morphism $\varphi^+:X^+\to Y^+$ between the associated semiring schemes\footnote{Please note that we avoid the notation ``$X_\N$'' from the preceeding Part I of this paper for reasons that are explained in Section \ref{subsection:notation-and-conventions}.} $X^+=X_\N$ and $Y^+=Y_\N$ that satisfy a certain compatibility condition. 

The Tits category $\TSch$ is defined as the category of blue schemes together with Tits morphisms. The Weyl extension $\cW:\TSch\to \Sets$ is the functor that sends a blue scheme $X$ to the underlying set $\cW(X)$ of its rank space $X^\rk$ and a Tits morphism $\varphi:X\to Y$ to the underlying map $\cW(\varphi):\cW(X)\to \cW(Y)$ of the morphism $\varphi^\rk:X^\rk\to Y^\rk$. The base extension $(\blanc)_\Z:\TSch\to\Sch_\Z$ sends a blue scheme $X$ to the scheme $X^+_\Z$ and a Tits morphism $\varphi:X\to Y$ to the morphism $\varphi^+_\Z:X^+_\Z\to Y^+_\Z$. Note that we can replace $\Z$ by a semiring $k$, which yields a base extension $(\blanc)_k:\TSch\to\Sch_k$ for every semiring $k$. We obtain the diagram
\[
 \xymatrix@R=0,5pc@C=6pc{  &  {\Sets} \\ {\TSch} \ar[ur]^{\cW}\ar[dr]^{(\blanc)^+} \\ & {\Sch_\N} \ar[r]^{(\blanc)_k}   & {\Sch_k}}.
\]

\subsubsection*{Results and applications}

The main result of this paper is that a wide class of Chevalley groups has a Tits-Weyl model. This includes the special and the general linear groups, symplectic groups, special orthogonal groups (of both types $B_n$ and $D_n$) and all Chevalley groups of adjoint type. Next to this, we obtain Tits-Weyl models for split tori, parabolic subgroups of Chevalley groups and their Levi-subgroups.

The strength of the theory of Tits-Weyl models can be seen in the following reasons. This puts it, in particular, in contrast to earlier approaches towards $\Fun$-models of algebraic groups, 


\smallskip\begin{center}\textit{Intrinsic definition through explicit formulas}\end{center}
Tits-Weyl models are determined by explicit formulas (as $T_1T_4-T_2T_3=1$ in the case of $\SL_2$), which shows that Tits-Weyl models are geometric objects that are intrinsically associated to representations in terms of generators and relations of the underlying scheme. The examples in Appendix \ref{app: examples} show that they are indeed accessible via explicit calculations. In other words, we can say that every linear representation of a group scheme $\cG$ yields an $\Fun$-model $G$. The group law of $\cG$ descends uniquely (if at all) to a Tits morphism $\mu:G\times G\to G$ that makes $G$ a Tits-Weyl model of $\cG$.

\smallskip\begin{center}\textit{Unified approach towards thick and thin geometries}\end{center}
Tits-Weyl models combine the geometry of algebraic groups (over fields) and the associated geometry of their Weyl groups in a functorial way. This has applications to a unified approach towards thick and thin geometries as alluded by Jacques Tits in \cite{Tits56}. A treatment of this will be the matter of subsequent work.

\smallskip\begin{center}\textit{Functorial extension to blueprints and semirings}\end{center}
A Chevalley group $\cG$ can be seen as a functor $h_\cG$ from rings to groups. A Tits-Weyl model $G$ of $\cG$ can be seen as an extension of $h_\cG$ to a functor $h_G$ from blueprints to monoids whose values $h_G(\Fun)$ and $h_G(\Funsq)$ stay in close connection to the Weyl group and the extended Weyl group (see Theorem \ref{thm: properties of tits-weyl groups}). In particular, $h_G$ is a functor on the subclass of semirings. This opens applications to geometry that is build on semirings; by name, to idempotent analysis as considered by Kolokoltsov and Maslov, et al.\ (see, for instance, \cite{Maslov94}), tropical geometry as considered Itenberg, Mikhalkin, et al.\ (see, for instance, \cite{Gathmann06}, \cite{Mikhalkin06} and, in particular, \cite[Chapter 2]{Mikhalkin10}), idempotent geometry that mimics $\Fun$-geometry (see \cite{CC09}, \cite{Lescot09} and \cite{Takagi10}) and analytic geometry from the perspective of Paugam (see \cite{Paugam09}), which generalizes Berkovich's and Huber's viewpoints on (non-archimedean) analytic geometry (see \cite{Berkovich93}, \cite{Berkovich98} and \cite{Huber96}).

\subsubsection*{Remarks and open problems}

The guiding idea in the formulation of the theory of Tits-Weyl models is to descend algebraic groups ``as much as possible''. This requires us to relinquish many properties that are known from the theory of group schemes, and to substitute these losses by a formalism that has all the desired properties, which are, roughly speaking, that the category and functors of interest are Cartesian and that Chevalley groups have a model such that its Weyl group is given functorially. As a consequence, we yield only monoids instead of group objects and there are no direct generalizations to relative theories---with one exception: there is a good relative theory over $\Funsq$. Tits monoids over $\Funsq$ are actually much easier to treat: the rank space has a simpler definition that does not require inverse closures, the universal semiring scheme is a scheme, Tits-Weyl models over $\Funsq$ are groups in $\TSch$ and many subtleties in the proofs about the existence of $-1$ in certain blueprints vanish. Note that the Tits-Weyl models that are established in this paper, immediately yield Tits-Weyl models over $\Funsq$ by the base extension $\blanc\otimes_\Fun\Funsq$ from $\Fun$ to $\Funsq$.

The strategy of this paper is to establish Tits-Weyl models by a case-by-case study. There are many (less prominent) Chevalley groups that are left out. Only for adjoint Chevalley groups, we construct Tits-Weyl models in a systematic way by considering their root systems. This raises the problem of the classification of Tits-Weyl models of Chevalley groups. In particular, the following questions suggest themselves.
\begin{itemize}
 \item Does every Chevalley group have a Tits-Weyl model? Is there a systematic way to establish such Tits-Weyl models?
 \item As explained before, a linear representation of a Chevalley group defines a unique Tits-Weyl model if at all. When do different linear representations of Chevalley groups lead to isomorphic Tits-Weyl models? Can one classify all Tits-Weyl models in a reasonable way?
 \item Every Tits-Weyl model of a Chevalley group in this text comes from a ``standard'' representation of the Chevalley group. Can one find a ``canonical'' Tits-Weyl model? What properties would such a canonical Tits-Weyl model have among all Tits-Weyl models of the Chevalley group?
\end{itemize}
See\ Appendix \ref{app: tits-weyl models of type a_1} for the explicit description of some Tits-Weyl models of type $A_1$.

\subsubsection*{Content overview}

The paper is organized as follows. In Section \ref{section:background}, we provide the necessary background on blue schemes to define the rank space of a blue scheme and the Tits category. This section contains a series of results that are of interest of its own while other parts are straightforward generalizations of facts that hold in usual scheme theory (as the results on sober spaces, closed immersions, reduced blueprints and fibres of morphisms). We try to keep these parts short and omit some proofs that are in complete analogy with usual scheme theory. Instead, we remark occasionally on differences between the theory for blue schemes and classical results. 

The more innovative parts of Section \ref{section:background} are the following. In Section \ref{subsection:mixed_characteristics}, we investigate the fact that a blue field can admit embeddings into semifields of different characteristics, which leads to the distinction of the \emph{arithmetic characteristic} and the \emph{potential characteristics} of a blue field and of a point $x$ of a blue scheme. Section \ref{subsection:fibres-of-morphisms} shows that the base extension morphism $\alpha_X:X_\N\to X$ is surjective; in case $X$ is cancellative, also the base extension morphism $\beta_X:X_\Z\to X$ is surjective. From the characterization of prime semifields in Section \ref{subsection:mixed_characteristics}, it follows that the points of a blue scheme are dominated by algebraic geometry over algebraically closed fields and idempotent geometry over the semifield $B_1=\bpgenquot{\{0,1\}}{1+1\=1}$. In Section \ref{subsection:products}, we investigate the underlying topological space of the fibre product of two blue schemes. In contrast to usual scheme theory, these fibre products are always a subset of the Cartesian product of the underlying sets. In Section \ref{subsection: relative additive closures}, we define \emph{relative additive closures}, a natural procedure, which will be of importance for the definition of rank spaces in the form of \emph{inverse closures}. As a last piece of preliminarily theory, we introduce \emph{unit fields} and \emph{unit schemes} in Section \ref{subsection: unit field and unit scheme}. Namely, the unit field of a blueprint $B$ is the subblueprint $B^\px=\{0\}\cup B^\times$ of $B$, which is a blue field. 

In Section \ref{section: the tits category}, we introduce the Tits category. In particular, we define \emph{pseudo-Hopf points} and the \emph{rank space} in Section \ref{subsection: rank space} and investigate the subcategory $\rkBSch$ of blue scheme that consists of rank spaces. Such blue schemes are called \emph{blue schemes of pure rank}. In Section \ref{subsection: tits morphisms}, we define \emph{Tits morphisms} and investigate its connections with usual morphisms between blue schemes. In particular, we will see that the notions of usual morphisms and Tits morphisms coincide on the common subcategories of semiring schemes and blue schemes of pure rank. 

In Section \ref{section: tits monoids}, we introduce the notions of a \emph{Tits monoid} and of \emph{Tits-Weyl models}. After recalling basic definitions and facts on groups and monoids in \emph{Cartesian categories} in Section \ref{subsection: reminder on cartesian categories}, we show in Section \ref{subsection: sch_t is cartesian} that the Tits category as well as some other categories and functors between them are Cartesian. In Section \ref{subsection: tits-weyl models}, we are finally prepared to define a Tits monoid as a monoid in $\TSch$ and a Tits-Weyl model of a group scheme $\cG$ as a Tits monoid with certain additional properties as described before. As first applications, we establish constant group schemes and tori as Tits monoids in $\rkBSch$ in Section \ref{subsection: tits-weyl models of pure rank}. Tori and certain semi-direct products of tori by constant group schemes, as they occur as normalizers of maximal tori in Chevalley groups, have Tits-Weyl models in $\rkBSch$.

In Section \ref{section: tits-weyl models of chevalley groups}, we establish Tits-Weyl models for a wide range of Chevalley groups. As a first step, we introduce the Tits-Weyl model $\SL_n$ of the special linear group in Section \ref{subsection: special linear group}. All other Tits-Weyl models of Chevalley groups will be realized by an embedding of the Chevalley group into a special linear group. In order to do so, we will frequently use an argument, which we call the \emph{cube lemma}, to descend morphisms. In Section \ref{subsection: closed subgroups of tits-weyl models}, we prove the core result Theorem \ref{thm: tits-weyl models of subgroups}, which provides a Tits-Weyl model for subgroups of a group scheme with a Tits-Weyl model under a certain hypothesis on the position of a maximal torus and its normalizer in the subgroup. We apply this to describe the Tits-Weyl model of general linear groups, symplectic groups and special orthogonal groups and some of their isogenies like adjoint Chevalley groups of type $A_n$ and orthogonal groups of type $D_n$. In Section \ref{subsection: adjoint chevalley groups}, we describe Tits-Weyl models of Chevalley groups of adjoint type that come from the adjoint representation of the Chevalley group on its Lie algebra. This requires a different strategy from the cases before and is based on formulas for the adjoint action over algebraically closed fields.

In Section \ref{section: tits-weyl models of subgroups}, we draw further conclusions from Theorem \ref{thm: tits-weyl models of subgroups}. If $\cG$ is a Chevalley group with a Tits Weyl model, then certain parabolic subgroups of $\cG$ and their Levi subgroups have Tits-Weyl models. We comment on unipotent radicals, but the problem of Tits-Weyl models of their unipotent radicals stays open.

We conclude the paper with Appendix \ref{app: examples}, which contains examples of non-standard Tits-Weyl models of tori and explicit calculations for three Tits-Weyl models of type $A_1$.

\section{Background on blue schemes} 
\label{section:background}

In this first part of the paper, we establish several general results on blue schemes that we will need to introduce the Tits category and Tits-Weyl models.


\subsection{Notations and conventions}
\label{subsection:notation-and-conventions}

To start with, we will establish certain notations and conventions used throughout the paper. We assume in general that the reader is familiar with the first part \cite{blueprints1} of this work. Occasionally, we will repeat facts if it eases the understanding, or if a presentation in a different shape is useful. For the purposes of this paper, we will, however, slightly alter notations from \cite{blueprints1} as explained in the following.

 \subsubsection*{All blueprints are proper and with a zero}

 The most important convention---which might lead to confusion if not noticed---is that we change a definition of the preceding paper \cite{blueprints1}, in which we introduced blueprints and blue schemes:

\begin{quote}
 \textit{Whenever we refer to a blueprint or a blue scheme in this paper, we understand that it is proper and with $0$.}
\end{quote}

\noindent 
When we make occasional use of the more general definition of a blueprint as in \cite{blueprints1}, then we will refer to it as a \emph{general blueprint}. In \cite{blueprints1}, we denoted the category of proper blueprints with $0$ by $\bp_0$. There is a functor $(\blanc)_0$ from the category $\bp$ of general blueprints to $\bp_0$. 

While for a monoid $A$ and a pre-addition $\cR$ on $A$, we denoted by $B=\bpquot{A}{\cR}$ the general blueprint with underlying monoid $A$, we mean in this paper by $\bpquot{A}{\cR}$ the proper blueprint $B_\proper$ with $0$, whose underlying monoid $A'$ differs in general from $A$. Namely, $A'$ is a quotient of $A\cup\{0\}$. 
 
To acknowledge this behaviour, we will call $\bpquot{A}{\cR}$ a \emph{representation of $B$} if $B=\bpquot{A}{\cR}$. If $A$ is the underlying monoid of $B$, then we call $\bpquot{A}{\cR}$ the \emph{proper representation of $B$ (with $0$)}.

We say that a morphism between blueprint is \emph{surjective} if it is a surjective map between the underlying monoids. In other words, $f:B\to C$ is surjective if for all $b\in C$, there is an $a\in B$ such that $b=f(a)$. If $B=\bpquot A\cR$ and $C=\bpquot\Spec{A'}{\cR'}$ are representations, which do not necessarily have to be proper, and $f:A\to A'$ is a surjective map, then $f:B\to C$ is a surjective morphism of blueprints.

Note that the canonical morphism $B\to B_0$ for a general blueprint $B$ induces a homeomorphism between their spectra. To see this, remember that the proper quotient is formed by identifying $a,b\in B$ if they satisfy $a\=b$. If $a\=b$, then a prime ideal of $B$ contains either both elements or none. Since every ideal contains $0$ if $B$ has a zero, it follows that the spectra of $B$ and $B_\proper$ are homeomorphic. 

Accordingly, we refer to proper blue schemes with $0$ simply by blue schemes, and call blue schemes in the sense of \cite{blueprints1} \emph{general} blue schemes. If $X$ is a general blue scheme, then $X_0\to X$ is a homeomorphism, thus we might make occasional use of general blue schemes if we are only concerned with topological questions. We denote the category of blue schemes (in the sense of this paper) by $\BSch$. 

Note that we do not require that blueprints are global. We will not mention this anymore, but remark here that all explicit examples of blueprints in this text are global. For general arguments that need the fact that morphisms between blue schemes are locally algebraic (Theorem 3.23 of \cite{blueprints1}), we take care to work with the coordinate blueprints $\Gamma X=\Gamma(X,\cO_X)$ and $\Gamma Y=\Gamma(Y,\cO_Y)$, which are by definition global blueprints.

 \subsubsection*{Blue schemes versus semiring schemes}

By a \emph{(semiring) scheme}, we mean a blue scheme whose coordinate blueprints are (semi-) rings. We denote the category of semiring schemes by $\SSch$ and the category of schemes by $\Sch^+_\Z$. Though the categories $\SSch$ and $\Sch^+_\Z$ embed as full subcategories into $\BSch$, and these embeddings have left-adjoints, one has to be careful with certain categorical constructions like fibre products or affine spaces, whose outcome depends on the chosen category. Roughly speaking, we will apply the usual notation from algebraic geometry if we carry out a construction in the larger category $\BSch$, and we will use a superscript ${}^+$ if we refer to the classical construction in the category of schemes. Usually, constructions in $\SSch$ coincide with constructions in $\Sch_\Z^+$, so that we can use the superscript ${}^+$ also for constructions in $\SSch$. 

We explain in the following, which constructions are concerned, and how the superscript ${}^+$ is used.

\begin{center} 
 \textit{Tensor products and fibre products}
\end{center}

\noindent We denote the functor that associates to a blueprint the generated semiring by $(\blanc)^+$. Thus we write $B^+$ for the associated semiring (which is $B_\N$ in the notation of \cite{blueprints1}), and $X^+$ for the semiring scheme associated to a blue scheme $X$. These come with canonical morphisms $B\to B^+$ and $\beta: X^+\to X$.

We have seen in \cite{blueprints1} that the category of blueprints contains tensor products $B\otimes_DC$. To distinguish these from the tensor product of semirings in case $B$, $C$ and $D$ are semirings, we write for the latter construction $B\Sotimes_DC$. 

Since $(\blanc)^+:\bp_0\to\SRings$ is the right-adjoint of the forgetful functor $\SRings\to\bp_0$, we have that $(B\otimes_DC)^+=B^+\Sotimes_{D^+}C^+$. Since we are considering only $\bp_0$, the functor $(\blanc)_\inv$ from \cite{blueprints1}, which adjoins additive inverses to a blueprint $B$ is isomorphic to the functor $(\blanc)\otimes_\Fun\Funsq$ (recall from \cite[Lemma 1.4]{blueprints1} that a blueprint is with inverses if and only if it is with $-1$). This implies that $B^+\Sotimes_{D^+}C^+$ if and only if one of $B$, $C$ or $D$ is with a $-1$. In particular, $B\Sotimes_DC$ is a ring if $B$, $C$ and $D$ are rings; and $(B\otimes_\Fun\Funsq)^+$ is the ring generated by a blueprint $B$. 

The corresponding properties of the tensor product hold for fibre products of blue schemes. We denote by $X\times_ZY$ the fibre product in $\BSch$, while $X\Stimes_ZY$ stays for the fibre product in $\SSch$. Then we have $(X\times_ZY)^+=X^+\Stimes_{Z^+}Y^+$. For a blue scheme $X$ we denote $X_B$ or $X\times_\Fun B$ the base extension $X\times_{\Spec\Fun}\Spec B$. If $B$ is a semiring, then $X^+_B$ stays for $(X_B)^+$. Note that in general $(X^+)_B$ is \emph{not} a semiring scheme. In particular $X^+_\N=X^+$ and $X_\Z^+=(X_\Funsq)^+$, which is the scheme associated to $X$.

\begin{center} 
 \textit{Free algebras and affine space}
\end{center}

\noindent Another construction that needs a specification of the category is the functor of free algebras. We denote the \emph{free object in a set $\{T_i\}_{i\in I}$ over a blueprint $B$} in the category $\bp_0$ by $B[T_i]$. If $B$ is a semiring, we denote the free object in $\SRings$ by $B[T_i]^+$. If $B$ is a ring, then $B[T_i]^+$ is a ring. The spectrum of the free object on $n$ generators is $n$-dimensional affine space: $\A^n_B=\Spec B[T_i]$ if $B$ is a blueprint, and $\SA^n_B=\Spec B[T_i]^+$ is $B$ is a semiring.

Note that localizations coincide for blueprints and semirings, i.e.\ if $S$ is a multiplicative subset of a blueprint $B$ and $\sigma:B\to B^+$ is the canonical map, then $(S^{-1}B)^+=\sigma(S)^{-1}B^+$. We denote the localization of the free blueprint in $T$ over $B$ by $B[T^{\pm1}]$ or $B[T^{\pm1}]^+$, depending whether we formed the free algebra in $\bp_0$ or $\SRings$ The corresponding geometric objects are the \emph{multiplicative group schemes} $\G_{m,B}$ and $\SG_{m,B}$, respectively. The \emph{higher-dimensional tori} $\G^n_{m,B}$ and $\SG^n_{m,B}$ are defined in the obvious way.

There are other schemes that can be defined either category $\BSch$ and $\SSch$. For example, the definition of projective $n$-space (as a scheme) by gluing $n$-dimensional affine spaces along their intersections generalizes to semiring schemes and blue schemes. We define $\P^n_B$ as the \emph{projective $n$-space} obtained by gluing affine planes $\A^n_B$ if $B$ is a blueprint, $\SP^n_B$ as the {projective $n$-space} obtained by gluing $\SA^n_B$ if $B$ is a semiring and $\SP^n_B$ as the {projective $n$-space} obtained by gluing $\SA^n_B$ if $B$ is a ring. A more conceptual viewpoint on this is given in a subsequent paper where we introduce the functor $\Proj$ for graded blueprints and graded semirings.


\subsection{Sober and locally finite spaces}
\label{subsection-sober-and-locally-finite-spaces}

While the underlying topological space of a scheme of finite type over an (algebraically closed) field consists typically of infinitely many points, a scheme of finite type over $\Fun$ has only finitely many points. This allows a more combinatorial view for the latter spaces, which is the objective of this section. 

To begin with, recall that a topological space is \emph{sober} if every irreducible closed subset has a unique generic point.

\begin{prop}
 The underlying topological space of a blue scheme is sober.
\end{prop}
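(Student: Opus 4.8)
The plan is to reduce the statement to the affine case and there verify soberness directly from the combinatorics of prime ideals in a (proper, with zero) blueprint.

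\textbf{Reduction to the affine case.} First I would recall that soberness is a local property in the following sense: a topological space that admits an open cover by sober subspaces is itself sober, provided one also checks that generic points are unique globally. Concretely, let $X$ be a blue scheme and $Z\subseteq X$ an irreducible closed subset. Choose an affine open $U=\Spec B$ meeting $Z$; then $Z\cap U$ is a nonempty open subset of the irreducible space $Z$, hence irreducible and dense in $Z$. If $Z\cap U$ has a generic point $\eta$ in $U$, then $\overline{\{\eta\}}\supseteq \overline{Z\cap U}=Z$ (closure taken in $X$), and since $\eta\in Z$ and $Z$ is closed we get $\overline{\{\eta\}}=Z$, so $\eta$ is a generic point of $Z$. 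For uniqueness: if $\eta,\eta'$ are both generic points of $Z$, pick an affine open $U$ containing $\eta$; since $\eta'\in\overline{\{\eta\}}$, $\eta'$ also lies in $U$ (as $U$ is open and contains the generic point of the irreducible set $Z\ni\eta'$, so $U\cap Z$ is dense in $Z$ and $\eta'$ lies in its closure, forcing $\eta'\in U$ — more simply, $\eta'\in\overline{\{\eta\}}\subseteq U$ because $U$ open containing $\eta$ implies $\overline{\{\eta\}}\subseteq X\setminus(X\setminus U)$ only if $X\setminus U$ is closed, which it is). Then both are generic points of the irreducible closed subset $Z\cap U$ of the affine scheme $U$, and affine uniqueness finishes the argument.

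\textbf{The affine case.} For $B$ a blueprint, the points of $\Spec B$ are its prime ideals, with closed sets $V(\mathfrak a)=\{\mathfrak p : \mathfrak p\supseteq\mathfrak a\}$, exactly as in commutative algebra. The key computation is that $\overline{\{\mathfrak p\}}=V(\mathfrak p)$: indeed $\mathfrak q\in\overline{\{\mathfrak p\}}$ iff every basic open $\Spec B[a^{-1}]$ containing $\mathfrak q$ contains $\mathfrak p$, iff $a\notin\mathfrak q\Rightarrow a\notin\mathfrak p$, iff $\mathfrak p\subseteq\mathfrak q$. Now let $Z=V(\mathfrak a)$ be irreducible and closed; I would replace $\mathfrak a$ by its radical (the intersection of all primes containing it — note $V(\mathfrak a)=V(\sqrt{\mathfrak a})$), and show $\mathfrak a=\sqrt{\mathfrak a}$ is itself prime when $V(\mathfrak a)$ is irreducible. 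The standard argument transfers: if $ab\in\mathfrak a$ but $a\notin\mathfrak a$ and $b\notin\mathfrak a$, then $V(\mathfrak a)\subseteq V(\mathfrak a\cup\{a\})\cup V(\mathfrak a\cup\{b\})$ with both pieces proper closed subsets, contradicting irreducibility; hence $\mathfrak a$ is prime and $Z=V(\mathfrak a)=\overline{\{\mathfrak a\}}$, so $\mathfrak a$ is the generic point. Uniqueness is immediate from $\overline{\{\mathfrak p\}}=\overline{\{\mathfrak q\}}\Leftrightarrow V(\mathfrak p)=V(\mathfrak q)\Leftrightarrow\mathfrak p=\mathfrak q$ (apply the description of closure to recover a prime as the minimal element of the closure of its point).

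\textbf{Anticipated obstacle.} The genuinely routine parts are the affine computations, which parallel scheme theory verbatim once one knows that $\Spec B$ has the expected topology (this is already set up in Part I). The only point requiring a bit of care is whether radicals of ideals in a blueprint behave well — specifically that $\sqrt{\mathfrak a}=\bigcap_{\mathfrak p\supseteq\mathfrak a}\mathfrak p$ and that this is an ideal — but since the paper works throughout with proper blueprints with $0$, ideals and their radicals are well-behaved and this presents no difficulty; one may even cite the analogy with classical scheme theory as the paper signals it will do for such results. I expect the proof to be short, essentially a remark that the classical argument carries over, with the globalization-from-affine step being the only thing worth spelling out.
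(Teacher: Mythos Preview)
Your approach is essentially the paper's: reduce to the affine case and, for an irreducible closed $V\subset\Spec B$, take $\eta=\bigcap_{\fp\in V}\fp$ (your ``radical''), then show $\eta$ is prime via the decomposition $V\subset V_a\cup V_b$ forced by $ab\in\eta$. One small slip in your globalization: the claim $\overline{\{\eta\}}\subseteq U$ is false in general; what you actually want is that since $\eta'$ is \emph{also} generic, $\eta\in\overline{\{\eta'\}}$, and hence every open neighbourhood of $\eta$ (in particular $U$) contains $\eta'$.
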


\begin{proof}
 Since the topology of a blue scheme is defined by open affine covers, a blue scheme is sober if all of its affine open subsets are sober. Thus assume $X=\Spec B$ is an affine blue scheme. A basis of the topology of closed subsets of $X$ is formed by
 \[
  V_a \quad = \quad \{ \ \fp\subset B\text{ prime ideal}\ | \ a\in\fp \ \}
 \]
 where $a$ ranges through all elements of $B$. Given an irreducible closed subset $V$, we define $\eta=\bigcap_{\fp\in V}\fp$, which is an ideal of $B$. 

 We claim that $\eta$ is a prime ideal. Let $ab\in\eta$. Since every $\fp\in V$ contains $\eta$ and therefore $ab$, we have $V\subset V_{ab}=V_a\cup V_b$. Thus
 \[
  V \ = \ V_{ab}\cap V \ = \ (V_a \cup V_b) \cap V \ = \ (V_a\cap V) \cup (V_b\cap V).
 \]
 Since $V$ is irreducible, either $V=V_a\cap V$ or $V=V_b\cap V$, i.e.\ $V\subset V_a$ or $V\subset V_b$ This means that either $a\in\eta$ or $b\in\eta$, which shows that $\eta$ is a prime ideal.

 The closed subset $V$ is the intersection of all $V_a$ with $a\in\fp$ for all $\fp\in V$. Since $\eta$ is defined as intersection of all $\fp\in V$, it is contained in all $V_a$ that contain $V$. Thus $\eta\in V$.

 We show that $\eta$ is the unique generic point of $V$. The closure $\overline{\{\eta\}}$ of $\eta$ consists of all prime ideals that contain $\eta$, and thus $V\subset \overline{\{\eta\}}$. Thus $\eta$ is a generic point of $V$. If $\eta'$ is another generic point of $V$, then $\eta'$ is contained in every prime ideal $\fp\in V$. Thus $\eta'=\eta$, and $\eta$ is unique.
\end{proof}

\begin{df}
 A topological space is \emph{finite} if it has finitely many points. A topological space is \emph{locally finite} if it has an open covering by finite topological spaces.
\end{df}

These notions find application to blue schemes of (locally) finite type as introduced in Section \ref{subsection:closed_subschemes}.

\begin{lemma}
 Let $X$ be a locally finite and sober topological space. Let $x\in X$. Then the set $\{x\}$ is locally closed in $X$.
\end{lemma}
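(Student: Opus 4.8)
The goal is to show that in a locally finite sober space $X$, every singleton $\{x\}$ is locally closed, i.e.\ it can be written as the intersection of an open set with a closed set. Since $X$ is locally finite, we may choose a finite open neighbourhood $U$ of $x$. The plan is to work entirely inside $U$: because $U$ is open in $X$, a subset of $U$ that is locally closed in $U$ is also locally closed in $X$ (the open set can be intersected back with $U$). So it suffices to prove that $\{x\}$ is closed in $U$, since then $\{x\} = U \cap \overline{\{x\}}$ exhibits it as locally closed in $X$.

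First I would note that $U$, being open in the sober space $X$, is itself sober (an irreducible closed subset of $U$ has closure in $X$ an irreducible closed set, whose generic point must lie in $U$ since $U$ is open and meets it; uniqueness transfers as well). Thus it is enough to show: a finite sober space has all its points closed only after passing to... no — more precisely, I want to show $\{x\}$ is closed in $U$. Here is where finiteness does the work. Suppose $\{x\}$ is not closed in $U$; then its closure $\overline{\{x\}}^{\,U}$ contains some point $x_1 \neq x$ with $x \in \overline{\{x_1\}}$... wait, that's the wrong direction. Let me set it up via specialization order: write $y \leadsto z$ if $z \in \overline{\{y\}}$. In a sober (hence $T_0$) space this is a partial order. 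If $\{x\}$ is not closed in $U$, then $\overline{\{x\}}^{\,U} \setminus \{x\}$ is nonempty, so there is $y_1 \neq x$ with $x \leadsto y_1$.

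The main step is then a descent/finiteness argument to derive a contradiction or to locate a closed point, but actually the cleaner route is: consider the closed set $Z = \overline{\{x\}}^{\,U}$, which is finite. I claim $Z$ has a point $z$ that is closed in $U$ — namely, take $z$ maximal in $Z$ for the specialization order $\leadsto$ (possible since $Z$ is finite and nonempty). Then $\overline{\{z\}}^{\,U} \subseteq Z$ and is irreducible with generic point $z$; if it contained a point $w \neq z$ then $z \leadsto w$ contradicts maximality, so $\overline{\{z\}}^{\,U} = \{z\}$ and $z$ is closed in $U$. But this gives a closed point of $U$ in $Z$, not immediately that $x$ is closed. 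To get $x$ itself closed: I instead argue directly that $\{x\}$ is already closed in $U$. Suppose not; then by the above $Z = \overline{\{x\}}^{\,U}$ is finite with more than one point, and $x$ is its generic point (sober!), so $x \leadsto y$ for every $y \in Z$. Take $y \in Z$ maximal; as shown $\{y\}$ is closed in $U$. Now $Z \setminus \{y\}$ is a nonempty (it contains $x$) closed subset of $Z$ that is irreducible — because its closure in $U$ is $Z \setminus \{y\}$ itself (it's closed) and... hmm, $Z \setminus \{y\}$ is closed since $\{y\}$ is open in $Z$? Not quite. Let me instead just say: $Z$ is irreducible with generic point $x$; remove a maximal point $y$; then $Z' = \overline{Z \setminus \{y\}}^{\,U}$. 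Since $x \in Z'$ and $\overline{\{x\}} = Z$, we get $Z' = Z$, so $y \in \overline{Z \setminus \{y\}}$, meaning $y$ is a specialization of some point in $Z \setminus \{y\}$ — contradicting that $y$ is maximal. Hence $\{x\}$ was closed in $U$ all along.

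\textbf{The main obstacle} I expect is getting the finiteness argument phrased cleanly without circularity: one must be careful that ``maximal element exists'' uses only finiteness of $Z = \overline{\{x\}}^{\,U}$ (guaranteed by local finiteness, since $Z \subseteq U$ and $U$ is finite), and that soberness is invoked exactly twice — once to know $\overline{\{x\}}^{\,U}$ is irreducible with $x$ as its (unique) generic point, and once implicitly for the $T_0$ property making $\leadsto$ a partial order. The transfer of soberness from $X$ to the open subset $U$, and the transfer of ``locally closed in $U$'' to ``locally closed in $X$'', are routine but should be stated. Everything else is elementary point-set topology; no properties of blueprints are needed beyond what has already been proved (namely that blue schemes are sober, though here we only use the hypotheses of the lemma).
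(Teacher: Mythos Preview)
Your argument has a genuine error. You set out to prove that $\{x\}$ is closed in \emph{some} finite open neighbourhood $U$ of $x$, but you never choose $U$ with any care; you then attempt to show $\{x\}$ is closed in an \emph{arbitrary} such $U$, which is simply false. Take $X=\Spec\Fun[T]=\{(0),(T)\}$, $x=(0)$ the generic point, and $U=X$: then $\overline{\{x\}}^U=X\neq\{x\}$. Tracing your argument in this example exposes the broken step: you pick $y$ maximal in $Z=\overline{\{x\}}^U$ for the specialization order $\leadsto$, observe that $y\in\overline{Z\setminus\{y\}}$, and conclude that some $z\in Z\setminus\{y\}$ satisfies $z\leadsto y$. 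But that is not a contradiction: maximality of $y$ rules out $y\leadsto w$ with $w\neq y$, not $z\leadsto y$ with $z\neq y$. Having points specialize \emph{to} a maximal (i.e.\ closed) point is perfectly fine.

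The paper's proof avoids this by constructing a \emph{specific} open set. After reducing to $X$ finite, it sets $V=\bigcup_{x\notin\overline{\{y\}}}\overline{\{y\}}$, a finite union of closed sets missing $x$, and $U=X\setminus V$. By construction $U$ is exactly the set of generizations of $x$, so any $y\in U\cap\overline{\{x\}}$ satisfies both $y\leadsto x$ and $x\leadsto y$; soberness (or even just $T_0$) then forces $y=x$, giving $\{x\}=U\cap\overline{\{x\}}$. If you want to salvage your route, you must similarly shrink $U$ to the minimal open neighbourhood of $x$ in the finite space before claiming $\{x\}$ is closed there---and at that point you are essentially doing the paper's argument.
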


\begin{proof}
 Since this is a local question, we may assume that $X$ is finite. Define $V=\bigcup_{x\notin\closure{y}}\closure{y}$, which is a finite union of closed subsets, which does not contain $x$. Thus $U=X-V$ is an open neighbourhood of $x$. If $x\in\closure{y}$, i.e.\ $y\in U$, and $y\in\closure{x}$, then $x=y$ since $X$ is sober. Therefore $U\cap\closure{x}=\{x\}$, which verifies that $\{x\}$ is locally closed.
\end{proof}

In the following we consider a topological space $X$ as a poset by the rule $x\leq y$ if and only if $y\in\closure x$ for $x,y\in X$.

\begin{lemma}\label{lemma-open-and-closed-subsets-of-locally-finite-space}
 Let $X$ be a locally finite topological space, and $U$ a subset of $X$. Then $U$ is open (closed) if and only if for all $x\leq y$, $y\in U$ implies $x\in U$ ($x\in U$ implies $y\in U$).
\end{lemma}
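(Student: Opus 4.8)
The plan is to prove the two equivalences together, first settling the ``closed'' assertion and then obtaining the ``open'' one by passing to complements. With the poset convention $x\leq y\Leftrightarrow y\in\closure{x}$, note that the condition ``$x\in U$ implies $y\in U$ for all $x\leq y$'' says exactly that $U$ is stable under specialization, while ``$y\in U$ implies $x\in U$ for all $x\leq y$'' says that $U$ is stable under generalization; moreover $\closure{x}=\{\,y\in X\mid x\leq y\,\}$ directly from the definition of the ordering.

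First I would dispose of the two ``only if'' directions, which require no finiteness. If $U$ is closed and $x\in U$ with $x\leq y$, then $\closure{x}\subseteq\overline{U}=U$, whence $y\in\closure{x}\subseteq U$; so $U$ is stable under specialization. If $U$ is open and $y\in U$ with $x\leq y$, and if we had $x\notin U$, then $x$ would lie in the closed set $X\setminus U$, forcing $\closure{x}\subseteq X\setminus U$ and contradicting $y\in\closure{x}\cap U$; hence $x\in U$ and $U$ is stable under generalization.

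For the converse in the closed case, assume $U$ is stable under specialization. Closedness is a local property, so I fix an open covering $\{V_i\}$ of $X$ by finite spaces (using local finiteness) and show each $U\cap V_i$ is closed in $V_i$. Since $\closure{x}^{V_i}=\closure{x}^{X}\cap V_i$ for $x\in V_i$, the subspace ordering on $V_i$ is the restriction of that of $X$, so $U\cap V_i$ is again stable under specialization within $V_i$. Now $V_i$ is finite, hence so is $U\cap V_i$, and therefore $\overline{U\cap V_i}=\bigcup_{x\in U\cap V_i}\closure{x}$ is a finite union of closed sets; as each $\closure{x}$ is contained in $U\cap V_i$ by specialization-stability, we get $\overline{U\cap V_i}\subseteq U\cap V_i$, i.e.\ $U\cap V_i$ is closed in $V_i$. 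Thus $U$ is closed in $X$.

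Finally, the converse in the open case follows formally: $U$ is stable under generalization if and only if $X\setminus U$ is stable under specialization, which by the case just proved holds if and only if $X\setminus U$ is closed, i.e.\ if and only if $U$ is open. The only point requiring a little care is the locality reduction in the closed case---verifying that specialization-stability of $U$ descends to $U\cap V_i$ inside $V_i$---but this is immediate once one observes that the closure of a point in an open subspace is the trace of its closure in $X$; everything else is an unwinding of definitions. (Note that, in contrast to the classical statement for spectral spaces, only local finiteness is used here, not soberness.)
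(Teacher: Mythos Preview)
Your proof is correct and follows essentially the same approach as the paper: reduce to the finite case via the open cover, then use that for a finite subset $U$ the closure $\overline{U}=\bigcup_{x\in U}\closure{x}$ is contained in $U$ by specialization-stability. You are slightly more explicit than the paper about why the reduction to finite pieces is legitimate (checking that specialization-stability restricts to each $V_i$ via $\closure{x}^{V_i}=\closure{x}^X\cap V_i$), which is a welcome clarification; the paper simply asserts the question is local and takes $X$ finite.
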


\begin{proof}
 We prove only the statement about closed subsets. The statement about open subsets is complementary and can be easily deduced by formal negation of the following.

 Since this is a local question, we may assume that $X$ is finite. If $U$ is closed, then $x\leq y$ and $x\in U$ implies $y\in\closure{x}\subset U$. 

 Conversely, if $x\leq y$ and $x\in U$ implies $y\in U$ for all $x,y\in X$, then we have that for all $x\in U$ its closure $\closure{x}=\{y\in X|x\leq y\}$ is a subset of $U$. Since $U$ is finite, $\closure{U}=\bigcup_{x\in U}\closure{x}$ is the closure of $U$, and it is contained in $U$. Thus $U$ is closed.
\end{proof}

\begin{prop} \label{prop: continuous=order-preserving for locally finite spaces}
 Let $X$ and $Y$ be topological spaces. A continuous map $f:X\to Y$ is order-preserving. If $X$ is locally finite, then an order-preserving map $f:X\to Y$ is continuous.
\end{prop}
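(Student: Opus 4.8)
The statement splits into two independent implications, and I would handle them in order.

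For the first implication, the plan is to unwind the definition of the specialization order directly. Suppose $f:X\to Y$ is continuous and $x\leq y$, i.e.\ $y\in\closure{x}$. Since $f$ is continuous, $f(\closure{x})\subseteq\overline{f(\{x\})}=\closure{f(x)}$, hence $f(y)\in\closure{f(x)}$, which is exactly $f(x)\leq f(y)$. This is a one-line argument using only the elementary fact that continuous maps send the closure of a set into the closure of its image; no finiteness hypothesis is needed here.

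For the second implication, assume $X$ is locally finite and $f:X\to Y$ is order-preserving; I would show $f$ is continuous by checking that preimages of closed sets are closed. Let $C\subseteq Y$ be closed and set $U=f^{-1}(C)$. By Lemma \ref{lemma-open-and-closed-subsets-of-locally-finite-space}, it suffices to verify that for all $x\leq y$ in $X$, $x\in U$ implies $y\in U$. So take $x\leq y$ with $f(x)\in C$; since $f$ is order-preserving, $f(x)\leq f(y)$, i.e.\ $f(y)\in\closure{f(x)}$. As $C$ is closed and contains $f(x)$, it contains $\closure{f(x)}$, whence $f(y)\in C$, i.e.\ $y\in U$. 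This shows $U$ is closed, so $f$ is continuous.

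\textbf{Expected main obstacle.} There is essentially no obstacle: the argument is routine once Lemma \ref{lemma-open-and-closed-subsets-of-locally-finite-space} (the combinatorial characterization of closed subsets of a locally finite space) is available, which reduces continuity to a purely order-theoretic condition. The only point worth a moment's care is that $Y$ need not be locally finite in the second implication—so one must test closedness in $X$ (where local finiteness holds) rather than trying to argue on the side of $Y$; working with preimages of closed subsets of $Y$ handles this cleanly.
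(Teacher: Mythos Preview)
Your proposal is correct and follows essentially the same approach as the paper: both implications are proved exactly as you describe, with the second reducing continuity to the order-theoretic criterion of Lemma~\ref{lemma-open-and-closed-subsets-of-locally-finite-space} applied to preimages of closed subsets of $Y$. The only cosmetic difference is that for the first implication the paper phrases the argument via the closed preimage $f^{-1}(\closure{f(x)})$ rather than via $f(\closure{x})\subset\closure{f(x)}$, but these are equivalent one-liners.
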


\begin{proof}
 Let $f:X\to Y$ be continuous and $x\leq y$ in $X$. The set $f^{-1}(\closure{f(x)})$ is closed and contains $x$. Thus $y\in f^{-1}(\closure{f(x)})$, which means that $f(x)\leq f(y)$. This shows that $f$ is order-preserving.

 Let $X$ be locally finite and $f:X\to Y$ order-preserving. Let $V$ be a closed subset of $Y$. We have to show that $f^{-1}(V)$ is a closed subset of $X$. We apply the characterization of closed subsets from Lemma \ref{lemma-open-and-closed-subsets-of-locally-finite-space}: let $x\in f^{-1}(V)$ and $x\leq y$. Since $f$ is order-preserving, $f(x)\leq f(y)$. This means that $f(y)\in\closure{f(x)}\subset V$ and thus $y\in f^{-1}(V)$.
\end{proof}

\begin{ex}
 The previous lemma and proposition show that the underlying topological space of a locally finite blue scheme is completely determined by its associated poset. We will illustrate locally finite schemes $X$ by diagrams whose points are points $x\in X$ and with lines from a lower point $x$ to a higher point point $y$ if $x<y$ and their is no intermediate $z$, i.e.\ $x<z<y$. For example, the underlying topological space of $\A^1_{\Fun}=\Spec\Fun[T]$ consists of the prime ideals $(0)$ and $(T)$, the latter one being a specialization of the former one. Similarly, $\A^2_\Fun=\Spec\Fun[S,T]$ has four points $(0)$, $(S)$, $(T)$ and $(S,T)$. The projective line $\P^1_{\Fun}=\A^1_{\Fun}\coprod_{\G_{m,\Fun}}\A^1_{\Fun}$ has two closed points $[0:1]$ and $[1:0]$ and one generic point $[1:1]$. Similarly, the points of $\P^2_\Fun$ correspond to all combinations $[x_0:x_1:x_2]$ with $x_i=0$ or $1$ with exception of $x_0=x_1=x_2=0$. These blue schemes can be illustrated as in Figure \ref{figure: a1,a2,p1,p2}.
\begin{figure}[h]
 \begin{center}
  \includegraphics{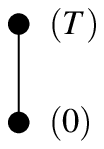} \hspace{0,5cm} \includegraphics{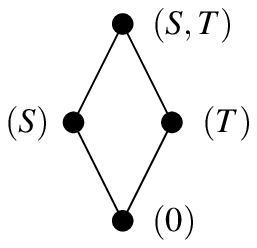}   \hspace{0,5cm} \includegraphics{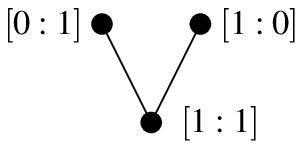} \hspace{0,5cm} \includegraphics{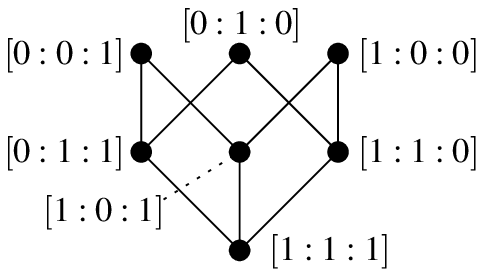}
  \caption{The blue scheme $\A^1_\Fun$, $\A^2_\Fun$, $\P^1_\Fun$ and $\P^2_\Fun$ (from left to right)}
  \label{figure: a1,a2,p1,p2}
 \end{center} 
\end{figure}
\end{ex}


\subsection{Closed immersions}
\label{subsection:closed_subschemes}

An important tool to describe all points of a blue scheme are closed immersions into known blue schemes. We generalize the notion of closed immersions as introduced in \cite{CHMWW11} to blue schemes.

\begin{df}
 A morphism $\varphi:X\to Y$ of blue schemes is a \emph{closed immersion} if $\varphi$ is a homeomorphism onto its image and for every affine open subset $U$ of $Y$, the inverse image $V=\varphi^{-1}(U)$ is affine in $X$ and $\varphi^\#(U):\Gamma(\cO_Y,U)\to\Gamma(\cO_X,V)$ is surjective. A \emph{closed subscheme of $Y$} is a blue scheme $X$ together with a closed immersion $X\to Y$.
\end{df}

\begin{rem}
 In contrast to usual scheme theory, it is in general not true that the image of a closed immersion $\varphi:X\to Y$ is a closed subset of $Y$. Consider, for instance, the diagonal embedding $\Delta:\A^1_\Fun\to\A^1_\Fun\times_\Fun\A^1_\Fun=\A^2_\Fun$, which corresponds to the blueprint morphism $\Fun[T_1,T_2]\to\Fun[T]$ that maps both $T_1$ and $T_2$ to $T$. Then the inverse image of the $0$-ideal is the $0$-ideal, and the inverse image of the ideal $(T)$ is $(T_1,T_2)$. But the set $\{(0),(T_1,T_2)\}$ is not closed in $\A^2_\Fun$ as illustrated in Figure \ref{figure: diagonal of a1}. 
\begin{figure}[h]
 \begin{center}
  \includegraphics{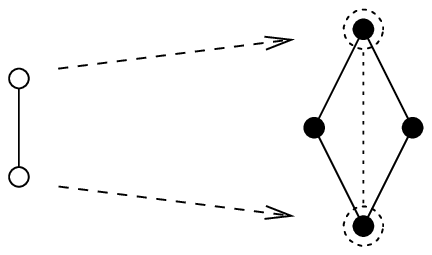}
  \caption{The diagonal embedding $\Delta:\A^1_\Fun\to\A^2_\Fun$}
  \label{figure: diagonal of a1}
 \end{center} 
\end{figure}
\end{rem}

\begin{lemma}\label{lemma: surjections are closed immersions}
 Let $f:B\to C$ be a surjective morphism of blueprints. Then $f^\ast:\Spec C\to \Spec B$ is a closed immersion.
\end{lemma}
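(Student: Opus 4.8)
The plan is to show the two defining conditions of a closed immersion directly from surjectivity of $f$, using the fact that the topology and structure sheaf of an affine blue scheme are governed by the coordinate blueprint.

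First I would analyze the map $f^\ast:\Spec C\to\Spec B$ on points. If $f:B\to C$ is surjective, then for a prime ideal $\fq\subset C$, the preimage $f^{-1}(\fq)$ is a prime ideal of $B$, and since $f$ is surjective one recovers $\fq$ as the image (under $f$) of $f^{-1}(\fq)$; more precisely, $\fq\mapsto f^{-1}(\fq)$ is injective because $f$ is surjective, and its image consists of exactly those primes $\fp\subset B$ containing $\ker$-type data, i.e.\ those $\fp$ with $f^{-1}(f(\fp))=\fp$ — equivalently those $\fp$ that are ``saturated'' for $f$. The basic closed sets $V_a$ for $a\in B$ pull back/push forward correctly: one checks $f^\ast(V_b^C) = V_{?}^B \cap \im f^\ast$ and that $(f^\ast)^{-1}(V_a^B) = V_{f(a)}^C$, which shows $f^\ast$ is continuous and closed onto its image, hence a homeomorphism onto its image. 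Here I would lean on the description of the topology via the sets $V_a$ used already in the proof that blue schemes are sober, and on basic manipulations of prime ideals under a surjective monoid-and-preaddition morphism.

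Second I would verify the sheaf condition. An affine open subset of $\Spec B$ is covered by principal opens $U_a = \Spec B[a^{-1}]$ (localizations), so by the local nature of the conditions and the fact (recalled in Section \ref{subsection:notation-and-conventions}) that localization commutes with the relevant constructions, it suffices to check that for each $a\in B$ the induced map on sections $\Gamma(\cO_{\Spec B}, U_a) = B[a^{-1}] \to \Gamma(\cO_{\Spec C}, (f^\ast)^{-1}(U_a))$ is surjective, and that $(f^\ast)^{-1}(U_a)$ is again affine. Since $(f^\ast)^{-1}(U_a) = \Spec C[f(a)^{-1}]$ is a principal open of $\Spec C$, it is affine; and the map $B[a^{-1}]\to C[f(a)^{-1}]$ is the localization of the surjection $f:B\to C$, hence surjective because localization of blueprints is exact enough to preserve surjectivity of the underlying monoid map (a surjective map of monoids stays surjective after inverting an element, and the pre-addition only shrinks further). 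Thus every affine open of $\Spec B$ pulls back to an affine open of $\Spec C$ with surjective restriction map, which is the definition of a closed immersion.

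The main obstacle I expect is not any single deep step but rather being careful about the distinction, emphasized in this paper, between a blueprint and its proper representation with $0$: a surjective morphism is by definition surjective on underlying monoids, and one must confirm that this notion is stable under localization and that the coordinate blueprint $\Gamma(\cO_X, U_a)$ is genuinely $B[a^{-1}]$ (as a proper blueprint with $0$) rather than some naive quotient — the paper's conventions in Section \ref{subsection:notation-and-conventions} are exactly what make this go through. A secondary subtlety is that, as the remark preceding the lemma warns, the image of $f^\ast$ need not be closed in $\Spec B$; so I would only claim that $f^\ast$ is a homeomorphism onto its image (with the subspace topology) and not attempt to show the image is closed. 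Everything else is a routine transcription of the classical argument for affine schemes, using that the assignments $a\mapsto V_a$ and $a\mapsto U_a$ behave as in commutative algebra.
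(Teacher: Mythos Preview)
Your proposal is correct and follows essentially the same route as the paper: injectivity of $f^\ast$ from surjectivity of $f$ via $\fq=f(f^{-1}(\fq))$, homeomorphism onto the image via basic opens, and surjectivity of the localized section maps. The one substantive difference is in the sheaf-condition step: you check only principal opens $U_a=\Spec B[a^{-1}]$ and invoke a locality principle to pass to arbitrary affine opens, whereas the paper asserts directly that every affine open of $\Spec B$ is of the form $\Spec(S^{-1}B)$ for some multiplicative set $S$ and then verifies that $S^{-1}B\to f(S)^{-1}C$ is surjective. The paper's formulation is cleaner relative to the stated definition (which demands the condition for \emph{every} affine open, and ``preimage is affine'' is not obviously local on the base without further argument); your reduction is the standard move but would benefit from a sentence explaining why checking on principal opens suffices here.
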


\begin{proof}
 Put $X=\Spec B$, $Y=\Spec C$ and $\varphi=f^\ast: Y\to X$. We first show that $\varphi$ is injective. Since $f:B\to C$ is surjective, $f(f^{-1}(\fp))=\fp$ for all $\fp\subset C$. If $\fp$ and $\fp'$ are prime ideals of $C$ with $\varphi(\fp)=\varphi(\fp')$, then $\fp=f(f^{-1}(\fp))=f(f^{-1}(\fp'))=\fp'$. Thus $\varphi$ is injective.

 For to show that $\varphi$ is a homeomorphism onto its image, we have to verify that every open subset $V$ of $Y$ is the inverse image $\varphi^{-1}(U)$ of some open subset $U$ of $X$. It suffices to verify this for basic opens. Let $V_a=\{\fp\in Y|a\notin\fp\}$ for some $a\in C$. Then there is a $b\in B$ such that $f(b)=a$ and thus $V_a=\varphi^{-1}(U_b)$ for $U_b=\{\fq\in X|b\notin\fq\}$. Hence $\varphi$ is a homeomorphism onto its image.

 Affine opens of $X=\Spec B$ are of the form $U\simeq\Spec(S^{-1}B)$ for some multiplicative subset $S$ of $B$. The inverse image $V=\varphi^{-1}(U)$ is then of the form $V\simeq\Spec(f(S)^{-1}C)$, and thus affine. Since $f:B\to C$ is surjective, also the induced map $S^{-1}f:S^{-1}B\to f(S)^{-1} C$ is surjective. Thus $\varphi$ is a closed immersion.
\end{proof}

If $A$ is a monoid (with $0$), then we consider $A$ as the blueprint $B=\bpgenquot{A}{\emptyset}$. Since $\bpgenquot{A}{\emptyset}\to\bpquot A\cR$ is surjective for any pre-addition $\cR$ on $A$, we have the following immediate consequence of the previous lemma.

\begin{cor} \label{cor: Spec B is a subset of Spec A}
 If $B=\bpquot A\cR$ is a representation of the blueprint $B$, then $\Spec B \subset \Spec A$. \qed
\end{cor}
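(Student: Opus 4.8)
The plan is to obtain Corollary \ref{cor: Spec B is a subset of Spec A} as an immediate consequence of Lemma \ref{lemma: surjections are closed immersions}, exactly as the prose preceding the statement already suggests. First I would record the setup: given a representation $B=\bpquot A\cR$, the monoid $A$ (with $0$) is regarded as the blueprint $\bpgenquot{A}{\emptyset}$, and there is a canonical morphism $\pi:\bpgenquot{A}{\emptyset}\to\bpquot{A}{\cR}=B$ coming from enlarging the pre-addition from $\emptyset$ to $\cR$. The key point is that $\pi$ is surjective as a map of underlying monoids — indeed, passing to the proper quotient with $0$ only identifies elements of $A\cup\{0\}$ and never introduces new ones, so on underlying monoids $\pi$ is the (surjective) quotient map $A\cup\{0\}\twoheadrightarrow A'$, where $A'$ is the underlying monoid of $B$.

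Having established surjectivity of $\pi$, I would simply apply Lemma \ref{lemma: surjections are closed immersions}: the induced map $\pi^\ast:\Spec B\to\Spec\bigl(\bpgenquot{A}{\emptyset}\bigr)=\Spec A$ is a closed immersion, hence in particular injective and a homeomorphism onto its image. This realizes $\Spec B$ as a subset of $\Spec A$, which is precisely the assertion. Concretely, the injection sends a prime ideal $\fp\subset B$ to its preimage $\pi^{-1}(\fp)\subset A$; surjectivity of $\pi$ guarantees $\pi(\pi^{-1}(\fp))=\fp$, so no two distinct primes of $B$ collapse.

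There is essentially no obstacle here: the only thing to be careful about is the bookkeeping around the conventions of this paper, namely that $\bpquot{A}{\cR}$ now always denotes the \emph{proper} blueprint with $0$ (so its underlying monoid $A'$ may differ from $A$), and that $\bpgenquot{A}{\emptyset}$ likewise is taken to be proper with $0$. One must check that the comparison morphism $\bpgenquot{A}{\emptyset}\to B$ is still surjective on underlying monoids after these properizations — but this is exactly the content of the remark in Section \ref{subsection:notation-and-conventions} that forming the proper quotient only quotients the monoid, together with the observation that the pre-addition $\cR$ does not shrink the monoid further than the proper-quotient construction already does. Since this bookkeeping is routine and the substantive work is entirely contained in Lemma \ref{lemma: surjections are closed immersions}, the statement follows at once, whence the \qed with no further argument.
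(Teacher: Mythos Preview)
Your proposal is correct and follows exactly the paper's approach: the sentence immediately preceding the corollary already observes that $\bpgenquot{A}{\emptyset}\to\bpquot{A}{\cR}$ is surjective, and the corollary is then recorded as an immediate consequence of Lemma~\ref{lemma: surjections are closed immersions}. Your discussion of the properization bookkeeping is accurate but more than the paper bothers to spell out.
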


 Let $f:B\to C$ be a morphism of blueprints. We say that $C$ is \emph{finitely generated over $B$ (as a blueprint)} or that $f$ is \emph{of finite type} if $f$ factorizes through a surjective morphism $B[T_1,\dotsc,T_n]\to C$ for some $n\in\N$. If $C$ is finitely generated over a blue field, then $C$ has finitely many prime ideals and thus $\Spec C$ is finite. 

 Let $\varphi:X\to S$ be a morphism of blue schemes. We say that $X$ is \emph{locally of finite type over $S$ (as a blue scheme)} if for every affine open subset $U$ of $X$ that is mapped to an affine open subset $V$ of $S$ the morphism $\varphi^\#(V):\Gamma(\cO_S,V)\to\Gamma(\cO_X,U)$ between sections is of finite type. We say that $X$ is \emph{of finite type over $S$ (as a blue scheme)} if $X$ is locally finitely generated and compact. If $X$ is (locally) of finite type over a blue field $\kappa$, i.e.\ $X\to\Spec\kappa$ is (locally) of finite type, then $X$ is (locally) finite.

\begin{ex} \label{ex: subschemes of affine space over f1}
 We can apply Corollary \ref{cor: Spec B is a subset of Spec A} to describe the topological space of affine blue schemes of finite type over $\Fun$. It is easily seen that the prime ideals of the free blueprint $\Fun[T_1,\dotsc. T_n]$ are of the form $\fp_I=(T_i)_{i\in I}$ where $I$ is an arbitrary subset of $\un=\{1,\dotsc, n\}$. Every blueprint $B$ that is finitely generated over $\Fun$ has a representation $B=\bpquot{\Fun[T_1,\dotsc,T_n]}{\cR}$, then every prime ideal of $B$ is also of the form $\fp_I$ (where it may happen that $T_i\=T_j$ if the representation of $B$ is not proper).

 More precisely, $\fp_I$ is a prime ideal of $B=\bpquot{\Fun[T_1,\dotsc,T_n]}{\cR}$ if and only if for all additive relations $\sum a_i\=\sum b_j$ in $B$, either all terms $a_i$ and $b_j$ are contained in $\fp_I$ or at least two of them are not contained in $\fp_I$.

 Since $\A^n_\Fun$ is finite, $\Spec B$ is so, too, and the topology of $\Spec B$ is completely determined by the inclusion relation of prime ideals of $B$.
\end{ex}


\subsection{Reduced blueprints and closed subschemes}
\label{subsection: reduced blueprints}

In this section, we extend the notions of reduced rings and closed subschemes to the context of blueprints and blue schemes. Since all proofs have straight forward generalizations, we forgo to spell them out and restrict ourselves to state the facts that are needed in this paper.

\begin{df}
 Let $B$ be a blueprint and $I\subset B$ an ideal. The \emph{radical $\Rad(I)$ of $I$} is the intersection $\bigcap\fp$ of all prime ideals $\fp$ of $B$ that contain $I$. The \emph{nilradical $\Nil(B)$ of $B$} is the radical $\Rad(0)$ of the $0$-ideal of $B$. 
\end{df}

\begin{rem}
 If $B$ is a ring, then $\Rad(I)$ equals the set 
 \[
 \sqrt I \quad = \quad \{ \ a\in B \ | \ a^n\in I\text{ for some } n>0 \ \}.
 \]
 The inclusion $\sqrt I\subset\Rad(I)$ holds for all blueprints and $\Rad(I)\subset\sqrt I$ holds true if $B$ is with $-1$. The latter inclusion is, however, not true in general as the following example shows.

 Let $B=\bpgenquot{\Fun[S,T,U]}{S\=T+U}$ and $I=(S^2, T^2)=\{S^2b,T^2b|b\in B\}$. Then $\Rad(I)=(S,T,U)$ while $\sqrt I=\{Sb,Tb|b\in B\}$ does not contain $U$.
\end{rem}

If, however, $I$ is the $0$-ideal, then the equality $\sqrt 0=\Rad(0)$ holds true for all blueprints.

\begin{lemma}
 Let $B$ be a blueprint. Then the following conditions are equivalent.
 \begin{enumerate}
  \item $\Nil(B)=0$;
  \item $\sqrt 0=0$;
  \item $0$ is a prime ideal of $B$.
 \end{enumerate}
 If $B$ satisfies these conditions, then $B$ is said to be \emph{reduced}. \qed
\end{lemma}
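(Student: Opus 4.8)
The plan is to run through the implications in the order $(iii)\Rightarrow(i)\Leftrightarrow(ii)\Rightarrow(iii)$, with essentially all of the content sitting in the last step. The first two implications are bookkeeping and I would dispose of them quickly, reserving real work only for $(ii)\Rightarrow(iii)$.

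For $(i)\Leftrightarrow(ii)$ there is nothing beyond unwinding definitions: by definition $\Nil(B)=\Rad(0)$, and the equality $\Rad(0)=\sqrt0$ holds for an arbitrary blueprint, as recorded in the paragraph preceding the lemma. Hence $\Nil(B)$ and $\sqrt0$ are literally the same subset of $B$, so one equals $0$ exactly when the other does. For $(iii)\Rightarrow(i)$: by definition $\Nil(B)=\Rad(0)$ is the intersection of all prime ideals of $B$ (each of which contains $0$, since all our blueprints have $0$), so if $0$ is itself a prime ideal it occurs in this intersection and $\Nil(B)\subseteq 0$; the reverse inclusion is automatic, whence $\Nil(B)=0$.

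It remains to prove $(ii)\Rightarrow(iii)$. Assuming $\sqrt0=0$, one must verify that $0$ is a prime ideal. That $0$ is an ideal is immediate because $0$ is absorbing, so the point is primality, which I would establish by contraposition: if $0$ is not prime then — using the blueprint notion of prime ideal from \cite{blueprints1}, which couples the multiplicative condition on the complement with compatibility with the pre-addition $\cR$ — there is a witness to this failure, and from it one must produce a nonzero element $a$ with $a^n=0$, contradicting $\sqrt0=0$. Manufacturing such a nilpotent is the step I expect to be the main obstacle: it is the blueprint counterpart of the classical statement that a reduced ring whose spectrum is irreducible is integral, and it cannot be read off from the multiplicative monoid alone, so the argument has to track how a relation $ab\=0$ with $a,b\neq0$ propagates through the additive relations of $B$, with properness of $B$ invoked to dispose of the degenerate relations. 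Once $0$ is known to be prime, the cycle of implications closes.
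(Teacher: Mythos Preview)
The paper gives no proof here: the lemma is stated without argument, under the section's blanket remark that such facts are straightforward generalizations from ring theory. Your handling of $(i)\Leftrightarrow(ii)$ and of $(iii)\Rightarrow(i)$ is correct and is exactly the bookkeeping you describe.

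The genuine gap is $(ii)\Rightarrow(iii)$, and it is not merely that you leave it unfinished: the implication is false in general, so no completion along the lines you sketch can succeed. Take the monoid $A=\{0,1,a,b\}$ with $a^{2}=a$, $b^{2}=b$, $ab=ba=0$ and trivial pre-addition; this is a proper blueprint $B$ with zero. There are no nonzero nilpotents, so $\sqrt{0}=0$, and the prime ideals are $\{0,a\}$, $\{0,b\}$, $\{0,a,b\}$, with intersection $\{0\}$, so $\Nil(B)=0$ as well; yet $\{0\}$ is not prime since $ab=0$ with $a,b\neq 0$. The phenomenon is already visible for rings --- $\Z\times\Z$ is reduced but not a domain --- and your own analogy flags it: the classical fact you invoke is ``reduced \emph{and with irreducible spectrum} implies integral'', which carries an extra hypothesis absent here. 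A bare relation $ab=0$ with $a,b\neq 0$ simply does not manufacture a nilpotent. Condition~(iii) is strictly stronger than (i)--(ii); the stated three-way equivalence appears to be an oversight, and only $(i)\Leftrightarrow(ii)$ is actually used downstream (e.g.\ in forming $B^\red=B/\Nil(B)$).
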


We define $B^\red=B/\Nil(B)$ as the quotient of $B$ by its nilradical, which is a reduced blueprint. Every morphism from $B$ into a reduced blueprint factors uniquely through the quotient map $B\to B^\red$.

\begin{lemma}
 The universal morphism $f:B\to B^\red$ induces a homeomorphism $f^\ast:\Spec B^\red\to \Spec B$ between the underlying topological spaces of the spectra of $B$ and $B^\red$. \qed
\end{lemma}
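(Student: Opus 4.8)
The plan is to read off the statement from Lemma \ref{lemma: surjections are closed immersions} together with the very definition of the nilradical. First I would note that the quotient map $f:B\to B^{\red}=B/\Nil(B)$ is a \emph{surjective} morphism of blueprints: on the level of underlying monoids it is the canonical surjection onto the (proper) quotient of $(B\setminus\Nil(B))\cup\{0\}$. Hence Lemma \ref{lemma: surjections are closed immersions} applies and shows that the induced continuous map $f^\ast:\Spec B^{\red}\to\Spec B$ is a closed immersion; in particular it is injective and a homeomorphism onto its image. So everything reduces to proving that $f^\ast$ is surjective.

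For surjectivity I would argue as follows. Let $\fp$ be a prime ideal of $B$. By definition $\Nil(B)=\Rad(0)$ is the intersection of all prime ideals of $B$, so $\Nil(B)\subseteq\fp$. Since $f$ is surjective with $f^{-1}(0)=\Nil(B)\subseteq\fp$, we get $f^{-1}(f(\fp))=\fp$. A routine check—lift representatives of elements of $B^{\red}$ along $f$ and use that $\fp$ is prime—shows that $f(\fp)$ is a prime ideal of $B^{\red}$, and then $f^\ast(f(\fp))=f^{-1}(f(\fp))=\fp$. Thus $\fp$ lies in the image of $f^\ast$. Since $f^\ast$ is a homeomorphism onto its image and that image is all of $\Spec B$, it is a homeomorphism.

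I do not expect a genuine obstacle here; the content is essentially formal once Lemma \ref{lemma: surjections are closed immersions} is in hand. The only thing requiring a little care is the bookkeeping around the ``proper with $0$'' conventions: one must check that passing to the proper quotient does not spoil surjectivity of $B\to B^{\red}$, and that the correspondence $\fp\leftrightarrow f(\fp)$ between primes of $B$ containing $\Nil(B)$ and primes of $B^{\red}$ behaves exactly as in commutative algebra. Both follow directly from the construction of quotients of proper blueprints in \cite{blueprints1}, so the proof remains short.
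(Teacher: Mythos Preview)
The paper does not actually prove this lemma: the statement ends with a \qed\ and the surrounding text explains that ``all proofs have straight forward generalizations'' from ordinary scheme theory, so the author omits them. Your argument is correct and is exactly the expected one: apply Lemma~\ref{lemma: surjections are closed immersions} to the surjection $B\to B/\Nil(B)$ to get a homeomorphism onto the image, and then use that $\Nil(B)$ lies in every prime (by its very definition as $\Rad(0)$) together with the usual prime-ideal correspondence for quotients to conclude surjectivity.

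One small remark: your parenthetical description of the underlying monoid of $B^{\red}$ as ``$(B\setminus\Nil(B))\cup\{0\}$'' is not quite accurate---the underlying monoid of $B/\Nil(B)$ is the quotient of the monoid of $B$ by the congruence $\sim_\N^{\Nil(B)}$ from \cite{blueprints1}, which can identify elements outside $\Nil(B)$ with one another. This does not affect your proof, since all you need is that the quotient map is surjective on underlying monoids, which is immediate from the construction.
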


\begin{prop}
 Let $X$ be a blue scheme with structure sheaf $\cO_X$. Then the following conditions are equivalent.
 \begin{enumerate}
  \item $\cO_X(U)$ is reduced for every open subset $U$ of $X$.
  \item $\cO_X(U_i)$ is reduced for all $i\in I$ where $\{U_i\}_{i\in I}$ is an affine open cover of $X$.
 \end{enumerate}
 If $X$ satisfies these conditions, then $X$ is said to be \emph{reduced}. \qed
\end{prop}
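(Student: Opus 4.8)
The implication (i)$\,\Rightarrow\,$(ii) is immediate, since affine open subsets are in particular open; the content is the converse. The plan is the usual localization-and-gluing argument of scheme theory: first refine the given affine cover to a cover by basic opens whose coordinate blueprints are reduced, and then conclude by the identity axiom for the structure sheaf. The only blueprint-specific input is the (routine) fact that a localization of a reduced blueprint is reduced.

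Assume (ii) and let $U$ be an arbitrary open subset of $X$. Writing $U_i=\Spec B_i$, we have $U=\bigcup_i(U\cap U_i)$, and each $U\cap U_i$ is a union of basic open subsets $\{\,\fp : a\notin\fp\,\}$ of $U_i$ with $a\in B_i$, these forming a basis of the topology of the affine blue scheme $U_i$. On such a basic open the structure sheaf takes the value $S_a^{-1}B_i$, the localization of $B_i$ at $S_a=\{1,a,a^2,\dots\}$. A localization of a reduced blueprint is reduced: if $(b/s)^n=0$ in $S_a^{-1}B_i$ then $t\,b^n=0$ in $B_i$ for some $t\in S_a$, hence $(tb)^n=0$, hence $tb=0$ because $B_i$ is reduced, and therefore $b/s=0$. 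Since $B_i=\cO_X(U_i)$ is reduced by hypothesis, every such $S_a^{-1}B_i$ is reduced. Thus $U$ admits an open covering $\{W_\lambda\}$ with $\cO_X(W_\lambda)$ reduced for all $\lambda$.

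Now let $f\in\cO_X(U)$ satisfy $f^n=0$ for some $n>0$; by the characterisation of reducedness recalled above (reduced $\Leftrightarrow\sqrt 0=0$, with $\sqrt 0=\{\,b: b^m=0\text{ for some }m>0\,\}$ when the ideal is $(0)$) it suffices to show $f=0$. Restriction is a morphism of blueprints, so $(f|_{W_\lambda})^n=(f^n)|_{W_\lambda}=0$ in $\cO_X(W_\lambda)$; since $\cO_X(W_\lambda)$ is reduced, $f|_{W_\lambda}=0$ for every $\lambda$. By the identity axiom of the sheaf $\cO_X$, the map $\cO_X(U)\to\prod_\lambda\cO_X(W_\lambda)$ is injective, hence $f=0$. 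This proves (i). There is no genuine obstacle here: the argument is formally the same as in classical scheme theory, the sole point needing a line of verification being the stability of reducedness under localization, which was carried out above.
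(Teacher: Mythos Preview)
Your proof is correct. The paper does not actually give a proof of this proposition: the \qed\ is placed directly after the statement, and the section is prefaced by the remark that ``all proofs have straight forward generalizations, we forgo to spell them out.'' Your argument is precisely the standard scheme-theoretic one the paper is alluding to---refine to basic opens, use that localization preserves reducedness, and apply the separation axiom of the structure sheaf---so there is nothing to compare.
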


\begin{cor}
 A blueprint $B$ is reduced if and only its spectrum $\Spec B$ is reduced.\qed
\end{cor}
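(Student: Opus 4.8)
The plan is to read the corollary off from the preceding proposition, using nothing more than the identification of $B$ with the blueprint $\Gamma(\Spec B,\cO_{\Spec B})$ of global sections of the structure sheaf of $\Spec B$, which is recalled in Part~I. Write $X=\Spec B$, so that $\cO_X(X)=B$.

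Suppose first that $\Spec B$ is reduced. Applying condition~(i) of the preceding proposition to the open subset $U=X$ shows directly that $\cO_X(X)=B$ is a reduced blueprint. Conversely, suppose that $B$ is reduced. Since $X=\Spec B$ is affine, the one-member family $\{X\}$ is an affine open cover of $X$, and the only section blueprint occurring in it, namely $\cO_X(X)=B$, is reduced by hypothesis; condition~(ii) of the preceding proposition then yields that $X$ is reduced.

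Both implications are thus a single application of the preceding proposition, and I do not expect any genuine obstacle. The one point worth flagging is the use of the trivial affine cover $\{X\}$ in the second implication: this sidesteps having to verify separately that localizations of a reduced blueprint---or, more generally, sections of $\cO_X$ over arbitrary open subsets---are again reduced, which would be the alternative way of establishing condition~(i) directly.
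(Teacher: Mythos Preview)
Your proof is correct and is exactly the argument the paper has in mind: the corollary is stated with a \qed\ immediately after the proposition, so the intended proof is precisely the two one-line applications of conditions (i) and (ii) that you wrote out. There is nothing to add.
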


Let $X$ be a blue scheme. We define the reduced blue scheme $X^\red$ as the underlying topological space of $X$ together with the structure sheaf $\cO_X^\red$ that is defined by $\cO_X^\red(U)=\cO_X(U)^\red$. It comes together with a closed immersion $X^\red\to X$, which is a homeomorphism between the underlying topological spaces.

More generally, there is for every closed subset $V$ of $X$ a reduced closed subscheme $Y$ of $X$ such that the inclusion $Y\to X$ has set theoretic image $V$ and such that every morphism $Z\to X$ from a reduced scheme $Z$ to $X$ with image in $V$ factors uniquely through $Y\hookrightarrow X$. We call $Y$ the \emph{(reduced) subscheme of $X$ with support $V$}.

\begin{rem}
 Note that $Y$ is not the smallest subscheme of $X$ with support $V$ since in general, there quotients of blueprints that are the quotient by an ideal. For example consider the blue field $\{0\}\cup\mu_n$ where $\mu_n$ is a group of order $n$ together with the surjective blueprint morphism $\{0\}\cup\mu_n\to \{0\}\cup\mu_m $ where $m$ is a divisor of $n$. Then $\Spec(\{0\}\cup\mu_m)\to\Spec(\{0\}\cup\mu_n)$ is a closed immersion of reduced schemes with the same topological space, which consists of one point.
\end{rem}


\subsection{Mixed characteristics}
\label{subsection:mixed_characteristics}

A major tool for our studies of the topological space of a blue scheme $X$ are morphisms $\Spec k\to X$ from the spectrum of a semifield $k$ into $X$, whose image is a point $x$ of $X$. In particular, the characteristics that $k$ can assume are an important invariant of $x$. Note that unlike fields (in the usual sense), a blue field might admit morphisms into fields of different characteristics. For instance, the blue field $\Fun=\{0,1\}$ embeds into every field. In this section, we investigate the behaviour of blue fields and their characteristics.

\begin{df}
 Let $B$ be a blueprint. The \emph{(arithmetic) characteristic $\kar B$ of $B$} is the characteristic of the ring $B^+_\Z$.
\end{df}

We apply the convention that the characteristic of the zero ring $\{0\}$ is $1$. Thus a ring is of characteristic $1$ if and only it is the zero ring. As the examples below show, there are, however, non-trivial blueprints of characteristic $1$.

With this definition, the arithmetic characteristic of a blueprint $B=\bpquot{A}{\cR}$ is finite (i.e.\ not equal to $0$) if and only if there is an additive relation of the form
\[
 \sum a_i + \underbrace{1+\dotsb+1}_{n\text{-times}} \quad \= \quad \sum a_i
\]
in $\cR$ with $n>0$, and $\kar B$ is equal to the smallest such $n$. 

A \emph{prime semifield} is a semifield that does not contain any proper sub-semifield. Prime semifields are close to prime fields, which are $\Q$ or $\F_p$ where $p$ is a prime. Indeed, $\F_p$ are prime semifields since they do not contain any smaller semifield. The rational numbers $\Q$ contain the smaller prime semifield $\Q_\pos$ of non-negative rational numbers. There is only one more prime semifield, which is the idempotent semifield $\B_1=\bpgenquot{\{0,1\}}{1+1\=1}$ (cf.\ \cite{Lescot09} and \cite[p.\ 13]{CC09}). Note that semifields, and, more generally, semirings $B$ that contain $\B_1$ are idempotent, i.e.\ $a+a\=a$ for all $a\in B$. 

Every semifield $k$ contains a unique prime semifield, which is generated by $1$ as a semifield. If $k$ contains $\F_p$, then $\kar k=p$ and $k$ is a field since it is with $-1$. If $k$ contains $\B_1$, then $\kar k=1$. If $k$ contains $\Q_\pos$, then $k^+_\Z$ is either a field of characteristic $0$ or the zero ring $\{0\}$. Thus the characteristic of $k$ is either $0$ or $1$. In the former case, $k\to k^+_\Z$ is a morphism into a field of characteristic $0$. To see that the latter case occurs, consider the example $k=\bpgenquot{\Q_\pos(T)}{T+1\=T}$ where $\Q_\pos(T)$ are all rational functions $P(T)/Q(T)$ where $P(T)$ and $Q(T)$ are polynomials with non-negative rational coefficients. Indeed, $k^+_\Z=\{0\}$ since $1\=(T+1)-T\=0$; it is not hard to see that $k$ contains $\Q_\pos$ as constant polynomials.



\begin{df}
 Let $B$ be a blueprint. An integer $p$ is called a \emph{potential characteristic of $B$} if there is a semifield $k$ of characteristic $p$ and a morphism $B\to k$. We say that $B$ is \emph{of mixed characteristics} if $B$ has more than one potential characteristic, and that $B$ is \emph{of indefinite characteristic} if all primes $p$, $0$ and $1$ are potential characteristics of $B$. A blueprint $B$ is almost of indefinite characteristic, if all but finitely many primes $p$ are potential characteristics of $B$.
\end{df}

We investigate the potential characteristics of semifields.

\begin{lemma}\label{lemma-semifields-have-1-or-2-geometric-char}
 Let $k$ be a semifield. Then there is a morphism $k\to\B_1$ if and only if $k$ is without an additive inverse $-1$ of $1$. Consequently, $\kar k$ is the only potential characteristic of $k$, unless $k$ is of arithmetic characteristic $0$, but without $-1$. In this case, $k$ has potential characteristics $0$ and $1$. 
\end{lemma}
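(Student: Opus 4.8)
The plan is to prove the equivalence first and then deduce the statement about potential characteristics from it together with the classification of prime semifields recalled above. For the forward implication, observe that any morphism of blueprints sends $1$ to $1$, while $\B_1$ has no additive inverse of $1$ (in $\B_1$ neither $1+0$ nor $1+1$ equals $0$); hence a relation $1+(-1)\=0$ in $k$ cannot be carried over, so the existence of a morphism $k\to\B_1$ already forces $k$ to be without $-1$.

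For the converse, assume $k$ is without $-1$ and define $f\colon k\to\B_1$ by $f(0)=0$ and $f(a)=1$ for every $a\neq 0$. Since a semifield has no zero divisors, $f$ is multiplicative, and clearly $f(0)=0$, $f(1)=1$. Additivity reduces to the case $a,b\neq0$, where it asks that $f(a+b)=1$, i.e.\ $a+b\neq0$; but $a+b=0$ with $a$ a unit would give $a^{-1}b=-1$ in $k$, against the hypothesis. As $k$ and $\B_1$ are semirings, a map that is additive, multiplicative and unital respects every pre-additive relation, hence $f$ is a morphism of blueprints. I expect this to be the only delicate point: checking that the naive ``zero or nonzero'' map really is a blueprint morphism rather than merely a multiplicative one, which is precisely where ``without $-1$'' is used.

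For the ``consequently'' part I would split on whether $k$ has $-1$. If $k$ is with $-1$, then $k$ is a field and $k^+_\Z=k$, so $\kar k=p$ is the characteristic of $k$; any morphism $f\colon k\to k'$ into a semifield yields $1_{k'}+f(-1)=0$, so $k'$ is also a field, and then $f$ is an injection of fields, whence $\kar k'=\kar k=p$; thus $\kar k$ is the unique potential characteristic. If $k$ is without $-1$, then by the classification $k$ contains $\B_1$ or $\Q_\pos$ (it cannot contain $\F_p$, which carries $-1$), so $\kar k\in\{0,1\}$, and $1$ is a potential characteristic by the first part. If moreover $\kar k=1$, then $k^+_\Z=\{0\}$, so any morphism $k\to k'$ induces $\{0\}=k^+_\Z\to k'^+_\Z$, forcing $k'^+_\Z=\{0\}$ and $\kar k'=1$; hence $1$ is the only potential characteristic. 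Finally, if $k$ is without $-1$ and $\kar k=0$, then $k$ contains $\Q_\pos$ and $k^+_\Z$ is a field of characteristic $0$, so the canonical morphism $k\to k^+_\Z$ shows $0$ is a potential characteristic; and if some prime $p$ were a potential characteristic via $k\to k'$, the induced ring map $k^+_\Z\to k'^+_\Z$ would embed a characteristic-$0$ field into the nonzero ring $k'^+_\Z$, contradicting $\kar k'=p$. So in this case the potential characteristics are exactly $0$ and $1$, and $\kar k$ alone in all others, as claimed. Beyond the point flagged above, everything here is routine bookkeeping of which prime semifield forces which arithmetic characteristic.
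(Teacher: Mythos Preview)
Your proof is correct and follows essentially the same route as the paper. The paper defines the same map $f\colon k\to\B_1$ and verifies it on arbitrary pre-addition relations $\sum a_i\=\sum b_j$ (arguing that neither side can be the zero sum unless both are), whereas you check binary additivity, which suffices since $k$ and $\B_1$ are semirings; these are the same argument in different packaging. For the ``consequently'' part the paper is terser: it dispatches the characteristic-$1$ case by noting that no morphism exists from an idempotent semiring into a cancellative semifield, and in the characteristic-$0$-without-$-1$ case it only records that $0$ and $1$ are potential without spelling out why no prime $p$ can occur. Your use of the functoriality of $(-)^+_\Z$ to exclude the unwanted characteristics is a clean way to make these exclusions explicit and is arguably more complete than the paper's sketch.
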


\begin{proof}
 Let $k$ be a semifield. Then the map $f:k\to\B_1$ that sends $0$ to $0$ and every other element to $1$ is multiplicative. If $k$ is with $-1$, then $1+(-1)\=0$ in $k$, but $1+1\n= 0$ in $\B_1$; thus $f$ is not a morphism in this case. If $k$ is without $-1$, then for every relation $\sum a_i\=\sum b_j$ in $k$ neither sum is empty. Since $\sum 1\=\sum 1$ holds true in $\B_1$ if neither sum is empty, $f$ is a morphism of semifields. This proves the first statement of the lemma.

 Trivially, the arithmetic characteristic of a semifield $k$ is a potential characteristic of $k$. If $k$ contains $-1$, then $k$ is a field and has a unique characteristic. Since there is no morphism from an idempotent semiring into a cancellative semifield, semifields of characteristic $1$ have only potential characteristic $1$. The only case left out, is the case that $k$ is of arithmetic characteristic $0$, but is without $-1$. Then there is a morphism $k\to \B_1$, and thus $k$ has potential characteristic $0$ and $1$. 
\end{proof}

Let $B$ be a blueprint of arithmetic characteristic $n>1$. Since every morphism $B\to k$ into a semifield $k$ factorizes through $B^+$, which is with $-1=n-1$, the semifield $k$ is a field and every potential characteristic $p$ of $B$ is a divisor of $n$. This generalizes trivially to the cases $n=0$ and $n=1$. The reverse implication is not true since $1$ divides all other characteristics. Even if we exclude $p=1$ as potential characteristic, the reverse implication does also not hold for blueprints of arithmetic characteristic $0$, as the example $B=\Q$ and, more general, every proper localization of $\Z$, witnesses. However, it is true for blueprints of finite arithmetic characteristic.

\begin{lemma}\label{lemma-divisors-of-char-are-geometric-char}
 Let $B$ be a blueprint of characteristic $n\geq1$. If $p$ is a prime divisor of $n$, then $p$ is a potential characteristic of $B$. 
\end{lemma}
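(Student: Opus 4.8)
The plan is to reduce the statement to elementary commutative algebra by passing through the ring $R := B^+_\Z$. First observe that if $n=1$ there is nothing to prove, since $1$ has no prime divisors; so we may assume $n\geq 2$, and in particular that $n$ is finite and $R\neq\{0\}$. By definition $\kar B$ is the characteristic of $R$, so $R$ is a nonzero commutative ring of characteristic $n$, and there is a canonical morphism of blueprints $B\to R$ (namely the composite $B\to B\otimes_\Fun\Funsq\to B^+_\Z$). It therefore suffices to construct a ring homomorphism $R\to K$ onto a field $K$ of characteristic $p$: then $K$ is a semifield with $\kar K=p$ (as $K$ is already a ring, $K^+_\Z=K$), and the composite $B\to R\to K$ exhibits $p$ as a potential characteristic of $B$.

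To construct $K$, I would first check that $pR\neq R$. Write $n=pm$ with $m\geq 1$ an integer. If $pR=R$, then $px=1$ for some $x\in R$, whence in $R$ we get $m=m\cdot 1=(mp)x=nx=0$; but then $\mathrm{char}(R)=n$ divides $m$, contradicting $n=pm>m$ (here $p\geq 2$ and $m\geq 1$). Hence $pR$ is a proper ideal and $R/pR$ is a nonzero $\F_p$-algebra. Choosing a maximal ideal $\fm$ of $R/pR$ (which exists by Zorn's lemma), the quotient $K:=(R/pR)/\fm$ is a field; since $p$ maps to $0$ in $K$ and $K\neq\{0\}$, the characteristic of $K$ is exactly $p$. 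Composing, $B\to R\to R/pR\to K$ is the required morphism.

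The content of the argument is essentially the single observation that a ring of characteristic $n=pm$ cannot have $p$ invertible; everything else — the existence of the canonical morphism $B\to B^+_\Z$, the existence of maximal ideals, and the computation of the characteristic of the residue field — is routine. The one point where some care is warranted is the reduction step: one must take the target semifield to be a genuine field of characteristic $p$ rather than try to land directly in $\F_p$, since $R/pR$ need not equal $\F_p$ and need not even be reduced; passing to a residue field $K$ sidesteps this. I do not expect any genuine obstacle beyond bookkeeping, and the same scheme of proof — map to $B^+_\Z$, then to a residue field — is the natural tool for analysing potential characteristics of blueprints of finite characteristic in general.
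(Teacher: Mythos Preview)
Your proof is correct and follows essentially the same route as the paper: pass to the ring $R=B^+_\Z$, observe that $p$ generates a proper ideal there (you give a slightly more explicit argument for this than the paper does), and then map $R/pR$ to a residue field of characteristic $p$ to obtain the composite $B\to B^+_\Z\to B^+_\Z/(p)\to k$.
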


\begin{proof}
 If $p$ divides $n$, then $n>1$ and $p=1+\dotsb +1$ generates a proper ideal in $B^+_\Z$. Thus $B^+_\Z/(p)$ is a ring of characteristic $p$ and, in particular, not the zero ring. Therefore, there is a morphism $B^+_\Z/(p)\to k$ into a field $k$ of characteristic $p$. The composition $B\to B^+_\Z\to B^+_\Z/(p)\to k$ verifies that $p$ is a potential characteristic of $B$.
%
\end{proof}

If $G$ is an abelian semigroup, then we denote by $B[G]$ the \emph{(blue) semigroup algebra of $G$ over $B$}, which is the blueprint $\bpquot A\cR$ with $A=B\times G$ and $\cR=\gen{\sum(a_i,1)\=\sum(b_j,1)|\sum a_i\=\sum b_j\text{ in }B}$. Since there are morphisms $B\to B[G]$, which maps $b$ to $(b,1)$, and $B[G]\to B$, which maps $(b,g)$ to $b$, the potential characteristics of $B$ and $B[G]$ are the same. Thus every blue field of the form $\Fun[G]$ is of indefinite characteristic. More generally, we have the following. Recall from \cite{blueprints1} that $\Funn$ (for $n\geq1$) is the blue field $\bpquot {(0\cup\mu_n)}\cR$ where $\mu_n$ is a cyclic group with $n$ elements and $\cR$ is generated by the relations $\sum_{\zeta\in H}\zeta\=0$ where $H$ varies through all non-trivial subgroups of $\mu_n$.

\begin{lemma}\label{lemma-funn-is-of-indefinite-characteristic}
 Let $G$ be an abelian semigroup and $n\geq1$. Then $\Funn[G]$ has all potential characteristics but $1$ unless $n=1$, in which case $\Fun[G]$ is of indefinite characteristic.
\end{lemma}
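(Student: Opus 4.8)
The plan is to reduce the claim to a computation of the potential characteristics of $\Funn$ itself. As noted just before the lemma, the morphisms $B\to B[G]$, $b\mapsto(b,1)$, and $B[G]\to B$, $(b,g)\mapsto b$, show that $\Funn$ and $\Funn[G]$ have exactly the same potential characteristics, so it is enough to treat $\Funn$. For $n=1$ one has $\Funn=\Fun$, which is of indefinite characteristic by the same remark, so from now on I would assume $n\geq2$; then $\mu_n$ is a non-trivial subgroup of $\mu_n$, so $\sum_{\zeta\in\mu_n}\zeta\=0$ is among the defining relations of $\Funn$.

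The first step is to exclude the characteristic $1$. Rewriting that relation as $1+\bigl(\sum_{\zeta\in\mu_n,\,\zeta\neq1}\zeta\bigr)\=0$ --- a non-empty sum since $n\geq2$ --- exhibits the semiring $\Funn^+$ as a blueprint with $-1$, hence as a ring. Any morphism from $\Funn$ into a semifield $k$ factors through the canonical morphism $\Funn\to\Funn^+$, so $k$ contains $-1$ and is therefore a field; since the only ring of characteristic $1$ is the zero ring, which is not a semifield, this forces $\kar k\neq1$. Hence $1$ is not a potential characteristic of $\Funn$.

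It remains to realize every prime $p$ and also $0$ as potential characteristics, which I would do by mapping $\mu_n$ to roots of unity. Over $\C$, and over $\overline{\F_p}$ when $p\nmid n$, send $\mu_n$ isomorphically onto the group of $n$-th roots of unity; every relation $\sum_{\zeta\in H}\zeta\=0$ is then respected because a non-trivial subgroup $H\leq\mu_n$ is cyclic of some order $d>1$ and the $d$-th roots of unity in a field sum to $0$ when $d>1$. When $p\mid n$, write $n=p^am$ with $p\nmid m$ and use $\mu_n\cong\mu_{p^a}\times\mu_m$ to map $\mu_n$ onto the $m$-th roots of unity in $\overline{\F_p}$, collapsing the $\mu_{p^a}$-factor to $1$; for a non-trivial subgroup $H\cong\mu_{p^b}\times\mu_{e'}$ with $p\nmid e'$ the image of $\sum_{\zeta\in H}\zeta$ factors as $p^b\cdot(\text{sum of the }e'\text{-th roots of unity in }\overline{\F_p})$, which vanishes because the first factor is $0$ when $b\geq1$ and the second is $0$ when $e'>1$, and $|H|=p^be'>1$ forces at least one of these. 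In every case it suffices to check the generating relations, by the universal property of $\bpquot{A}{\cR}$. Combined with the previous step, this identifies the potential characteristics of $\Funn$, and hence of $\Funn[G]$, as precisely all primes together with $0$.

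The only point that requires real care is this last verification in the case $p\mid n$: confirming that the chosen map $\mu_n\to\overline{\F_p}^\times$ is compatible with all of the relations $\sum_{\zeta\in H}\zeta\=0$ simultaneously. The bookkeeping with the factorization $|H|=p^be'$, and the two competing reasons the relevant product can vanish, is elementary but is where one must be attentive; the rest of the argument is formal.
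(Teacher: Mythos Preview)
Your proposal is correct and follows essentially the same approach as the paper: reduce to $\Funn$, realize characteristics $0$ and all primes by mapping $\mu_n$ to roots of unity in $\C$ resp.\ $\overline{\F_p}$, and exclude characteristic $1$ for $n\geq2$ by observing that $\Funn^+$ contains $-1$. Your treatment of the case $p\mid n$ via the explicit splitting $\mu_n\cong\mu_{p^a}\times\mu_m$ is in fact cleaner than the paper's description of the same map (the paper phrases it in terms of a ``kernel'' in a slightly imprecise way), but the underlying construction and verification are identical.
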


\begin{proof}
 Since there is a morphism $\Funn[G]\to\Funn$ that maps all elements of $G$ to $1$, it suffices to show that $\Funn$ is of indefinite characteristic.  Let $\zeta_n$ be a primitive root of unity. Then $\Funn$ embeds into $\Q[\zeta_n]$ and thus $0$ is a potential characteristic of $\Funn$. Let $p$ be a prime that does not divide $n$. Then $\Funn$ embeds into the algebraic closure $\overline{\F_p}$of $\F_p$, and $p$ is a potential characteristic of $\Funn$. 

 The last case is that $p$ is a prime that divides $n$. Then we can define a unique multiplicative map $f:\Funn\to\overline{\F_p}$ whose kernel consists of those $\zeta\in\Funn$ whose multiplicative order is divisible by $p$. We have to verify that this map induces a map between the pre-additions. It is enough to verify this on generators of the pre-addition of $\Funn$. Let $H$ be a non-trivial subgroup of $\mu_n$ whose order is not divisible by $p$. Then $H$ is mapped injectively onto the non-trivial subgroup $f(H)$ of $\overline{\F_p}^\times$, and we have $\sum_{\zeta\in H}f(\zeta)=\sum_{\zeta'\in f(H)}\zeta'=0$ in $\overline{\F_p}$. If $H$ is a subgroup of $\mu_n$ whose order is divisible by $p$, then the kernel of the restriction $f:H\to \overline{\F_p}^\times$ is of some order $p^k$ with $k\geq1$. Thus
 \[
  \sum_{\zeta\in H} f(\zeta) \quad = \quad \sum_{\zeta'\in f(H)} (\underbrace{\zeta'+\dotsb+\zeta'}_{p^k\text{-times}}) \quad = \quad 0.
 \]
 This shows that $f:\Funn\to\overline{\F_p}$ is a morphism of blueprints and that $p$ is a potential characteristic of $\Funn$. If $n\neq 1$, then $\F_{1^n}^+=\Z[\zeta_n]$ is with $-1$ where $\zeta_n$ is a primitive $n$-th root of unity. Thus there is no blueprint morphism $\Funn\to k$ into a semi-field of characteristic $1$ unless $n=1$.
\end{proof}

To conclude this section, we transfer the terminology from algebra to geometry.

\begin{df}
 Let $X$ be a blue scheme, $x$ a point of $X$ and $\kappa(x)$ be the residue field of $x$. The \emph{(arithmetic) characteristic $\kar(x)$ of $x$} is the arithmetic characteristic of $\kappa(x)$. We say that $p$ is a \emph{potential characteristic of $x$} if $p$ is a potential characteristic of $\kappa(x)$, and we say that $x$ is of \emph{mixed} or of \emph{indefinite characteristics} if $\kappa(x)$ is so.
\end{df}

By a \emph{monoidal scheme}, we mean a $\cM_0$-scheme in the sense of \cite{blueprints1}. A monoidal scheme is characterized by its coordinate blueprints, which are blueprints with trivial pre-addition. 

\begin{cor}\label{cor-points-in-monoidal-schemes-are-of-indefinite-characteristic}
 Let $X$ be a monoidal scheme. Then every point of $X$ is of indefinite characteristic.
\end{cor}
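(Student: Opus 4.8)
The plan is to reduce the assertion to a statement about the residue field of a single point and then invoke Lemma~\ref{lemma-funn-is-of-indefinite-characteristic} with $n=1$.

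First I would pass to an affine open neighbourhood $\Spec B$ of $x$, where $B$ is one of the coordinate blueprints of $X$ and is therefore a blueprint with trivial pre-addition; write $B=\bpgenquot{A}{\emptyset}$ for a monoid $A$ with zero. Let $\fp$ be the prime ideal of $B$ corresponding to $x$. The formation of the localization $B_\fp$ and of the residue field $\kappa(x)=B_\fp/\fp B_\fp$ does not introduce any additive relations: a localization of a blueprint with trivial pre-addition again has trivial pre-addition, and so does a quotient of it. Hence $\kappa(x)$ is a blue field with trivial pre-addition.

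Next I would observe that any blue field $\kappa$ with trivial pre-addition is of the form $\Fun[G]$ with $G=\kappa^\times$ its (abelian) group of units. Indeed, as a monoid $\kappa=\{0\}\cup\kappa^\times$, and ``trivial pre-addition'' means exactly that $\kappa=\bpgenquot{(\{0\}\cup\kappa^\times)}{\emptyset}=\Fun[\kappa^\times]$, where $G=\kappa^\times$ is in particular an abelian semigroup as required by the lemma. Applying Lemma~\ref{lemma-funn-is-of-indefinite-characteristic} in the case $n=1$ to $G=\kappa(x)^\times$ then shows that $\kappa(x)=\Fun[G]$ is of indefinite characteristic, i.e.\ every prime $p$, as well as $0$ and $1$, is a potential characteristic of $\kappa(x)$. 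By the definition of the potential characteristics of a point, $x$ is of indefinite characteristic; since $x\in X$ was arbitrary, this proves the claim.

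The only step that requires any care is the identification of $\kappa(x)$ with an $\Fun$-algebra $\Fun[G]$ — that is, verifying that neither localizing a blueprint with trivial pre-addition nor passing to its residue field can accidentally create nontrivial additive relations. This is immediate from the explicit descriptions of localizations and of quotient blueprints, so no genuine obstacle arises; the substance of the corollary is entirely contained in Lemma~\ref{lemma-funn-is-of-indefinite-characteristic}.
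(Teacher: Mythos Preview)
Your proof is correct and follows essentially the same approach as the paper: the paper's proof is the single sentence ``This follows immediately from Lemma~\ref{lemma-funn-is-of-indefinite-characteristic} since the residue field of a point in a monoidal scheme is of the form $\Fun[G]$ for some abelian group $G$,'' and you have simply unpacked why the residue field has this form. Your explicit check that localizations and ideal quotients of blueprints with trivial pre-addition again have trivial pre-addition is the right justification for that step.
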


\begin{proof}
 This follow immediately from Lemma \ref{lemma-funn-is-of-indefinite-characteristic} since the residue field of a point in a monoidal scheme is of the form $\Fun[G]$ for some abelian group $G$.
\end{proof}

\begin{ex}
 We give two examples to demonstrate certain effects of potential characteristics under specialization. Let $B_1=\bpgenquot{\Fun[T]}{T+T\=0}$. Then $B_1$ has two prime ideals $x_0=(0)$ and $x_T=(T)$. The residue field $\kappa(x_0)=\bpgenquot{\Fun[T^{\pm1}]}{T+T\=0}\simeq\F_2[T^{\pm1}]$ has only potential characteristic $2$ since $1+1\=T^{-1}(T+T)\=0$, while the residue field $\kappa(x_T)=\Fun$ is of indefinite characteristic.

 The blueprint $B_2=\bpgenquot{\Fun[T]}{1+1=T}$ has also two prime ideals $x_0=(0)$ and $x_T=(T)$. The residue field $\kappa(x_0)=\bpgenquot{\Fun[T^{\pm1}]}{1+1\=T}$ has all potential characteristics except for $2$ since $1+1\=T$ is invertible, while the residue field $\kappa(x_T)=\bpgenquot{\Fun}{1+1\=0}= \F_2$ has only characteristic $2$. We illustrate the spectra of $B_1$ and $B_2$ together with their residue fields in Figure \ref{figure: char of points}.
\begin{figure}[h]
 \begin{center}
  \includegraphics{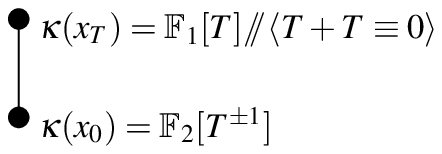} \hspace{1cm} \includegraphics{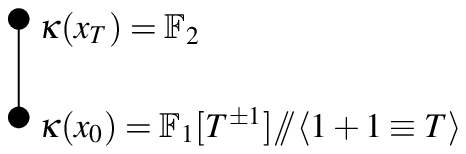}
  \caption{The spectra of $B_1$ and $B_2$ together with their respective residue fields}
  \label{figure: char of points}
 \end{center} 
\end{figure}
\end{ex}


\subsection{Fibres and image of morphisms from (semiring) schemes}
\label{subsection:fibres-of-morphisms}

The fibre of a morphism $\varphi:Y\to X$ of schemes over a point $x\in X$ is defined as the fibre product $\{x\}\Stimes_X Y$. The canonical morphism $\{x\}\Stimes_X Y\to Y$ is an embedding of topological spaces. In this section, we extend this result to blue schemes. Recall from \cite[Prop.\ 3.27]{blueprints1} that the category of blue schemes contains fibre products.


Let $\varphi:Y\to X$ be a morphism of blue schemes and $x\in X$. The \emph{fibre of $\varphi$ over $x$} is the blue scheme $\varphi^{-1}(x)=\{x\}\Stimes_X Y$ and the \emph{topological fibre of $\varphi$ over $x$} is the subspace $\varphi^{-1}(x)^\top=\{y\in Y|\varphi(y)=x\}$ of $Y$. The following lemma justifies the notation since $\varphi^{-1}(x)^\top$ is indeed canonically homeomorphic to the underlying topological space of $\varphi^{-1}(x)$.

\begin{lemma}\label{lemma-algebraic-and-topological-fibre-of-a-morphism}
 Let $\varphi:Y\to X$ be a morphism of blue schemes and $x\in X$. Then the canonical morphism $\varphi^{-1}(x)\to Y$ is a homeomorphism onto $\varphi^{-1}(x)^\top$.
\end{lemma}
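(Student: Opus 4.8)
The plan is to reduce the statement to the affine case and then to a direct computation with prime ideals and localizations, mirroring the classical proof but taking care that in the blueprint setting fibre products sit inside Cartesian products of underlying sets (by the discussion in Section \ref{subsection:products}). First I would observe that the assertion is local on $X$ and on $Y$: choosing an affine open $U = \Spec B$ of $X$ containing $x$ and an affine open $V = \Spec C$ of $\varphi^{-1}(U)$, the formation of $\varphi^{-1}(x)$ commutes with restriction to opens, so it suffices to treat $\varphi$ the morphism $\Spec C \to \Spec B$ induced by a blueprint homomorphism $f\colon B \to C$, with $x = \fp$ a prime ideal of $B$.

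Next I would identify the residue blue field $\kappa(x)$ as $\mathrm{Frac}(B/\fp)$, the localization of $B/\fp$ at the multiplicative set of nonzero elements, so that $\Spec\kappa(x) \to \Spec B$ realizes the one-point space $\{x\}$. The fibre is then $\Spec\bigl(\kappa(x) \otimes_B C\bigr)$. The key algebraic step is to compute this tensor product: writing $S = f(B\setminus\fp)$, one has $\kappa(x)\otimes_B C \cong S^{-1}\bigl(C/\fp C\bigr)$ up to the proper-quotient-with-zero normalization, using that localization commutes with quotients and with tensor products for blueprints (the relevant facts are recorded in \cite{blueprints1}; localizations for blueprints behave as in the ring case, cf.\ Section \ref{subsection:notation-and-conventions}). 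Therefore the underlying topological space of $\varphi^{-1}(x)$ is $\Spec\bigl(S^{-1}(C/\fp C)\bigr)$, whose prime ideals correspond bijectively, via the canonical maps $C \to C/\fp C \to S^{-1}(C/\fp C)$, to those primes $\fq$ of $C$ with $f^{-1}(\fq) \supseteq \fp$ and $\fq \cap S = \emptyset$, i.e.\ with $f^{-1}(\fq) = \fp$; this is exactly $\varphi^{-1}(x)^\top$ as a set. It remains to check that this bijection is a homeomorphism and that it is compatible with the canonical morphism $\varphi^{-1}(x) \to Y$, which follows because both $C \to C/\fp C$ and a localization induce homeomorphisms onto their images of the spectra (the quotient map by Lemma \ref{lemma: surjections are closed immersions}, the localization being an open immersion onto the set of primes missing $S$), and the composite factors the structure morphism of the fibre.

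Patching the affine pieces together, I would finally note that if $x$ lies in two affine opens $U_1, U_2$ of $X$, the two descriptions of $\varphi^{-1}(x)$ over $\varphi^{-1}(U_1)$ and $\varphi^{-1}(U_2)$ agree on the overlap by the universal property of the fibre product, so the locally defined homeomorphisms glue to a homeomorphism $\varphi^{-1}(x) \xrightarrow{\sim} \varphi^{-1}(x)^\top \subseteq Y$.

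I expect the main obstacle to be the bookkeeping around the ``proper and with zero'' convention: the naive tensor product $\kappa(x)\otimes_B C$ must be passed through the functor $(\blanc)_0$, and one must be sure that this normalization does not change the underlying topological space. This is precisely guaranteed by the remark in Section \ref{subsection:notation-and-conventions} that $B \to B_0$ induces a homeomorphism of spectra, so the difficulty is really only one of careful phrasing rather than a genuine mathematical gap; the rest is the standard prime-ideal calculation transported verbatim from scheme theory.
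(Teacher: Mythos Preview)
Your proposal is correct and follows essentially the same route as the paper: reduce to the affine case and compute $\kappa(x)\otimes_B C$ as a localization of a quotient (equivalently, quotient of a localization) of $C$, then conclude using that both operations induce topological embeddings on spectra. The only cosmetic difference is that the paper separates out surjectivity onto $\varphi^{-1}(x)^\top$ via a residue-field argument (factoring $\Spec\kappa(y)\to Y$ through the fibre by the universal property) before doing the tensor computation to get the embedding, whereas you extract both bijectivity and the homeomorphism directly from the prime-ideal description of $S^{-1}(C/\fp C)$; either way works.
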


\begin{proof}
 Since the diagram
 \[
  \xymatrix{\varphi^{-1}(x)\ar[rr]\ar[d] && Y\ar[d]^\varphi\\ \{x\}\ar[rr] && X}
 \]
 commutes, the image of $\varphi^{-1}(x)\to Y$ is contained in $\varphi^{-1}(x)^\top$. Given a point $y\in \varphi^{-1}(x)^\top$, consider the canonical morphism $\Spec\kappa(y)\to Y$ with image $y$, and the induced morphism $\Spec\kappa(y)\to\Spec\kappa(x)$ of residue fields, which has image $\{x\}\subset X$. The universal property of the tensor product implies that both morphisms factorize through a morphism $\Spec\kappa(y)\to\varphi^{-1}(x)$, which shows that the canonical map $\varphi^{-1}(x)\to\varphi^{-1}(x)^\top$ is surjective.

 We have to show that $\varphi^{-1}(x)\to\varphi^{-1}(x)^\top$ is open. Since this is a local question, we may assume that $X=\Spec B$ and $Y=\Spec C$ are affine blue schemes with coordinate blueprints $B$ and $C$. Then $\varphi^{-1}(x)=\Spec(\kappa(x)\otimes_BC)$ and $\kappa(x)=S^{-1}B/\fp(S^{-1}B)$ where $S=B-\fp$ and $\fp=x\in\Spec B$. Let $f=\Gamma(\varphi,X): B\to C$. Then
 \begin{multline*}
  \kappa(x) \ \otimes_B \ C \quad = \quad \bigl(  S^{-1}B \ / \ \fp(S^{-1}B) \bigr) \ \otimes_B \ C \quad \simeq \quad \bigl(  S^{-1}B \ / \ \fp(S^{-1}B)  \bigr) \ \otimes_{S^{-1}B} \ S^{-1}B \ \otimes_B \ C \\
      \simeq \quad \bigl(  S^{-1}B \ / \ \fp(S^{-1}B)  \bigr) \ \otimes_{S^{-1}B}  \ f(S)^{-1} C \quad \simeq \quad f(S)^{-1}C \ / \ f(\fp)(f(S)^{-1}C),
 \end{multline*}
 which is the quotient of a localization of $C$. Note that the last two isomorphisms follow easily from the universal property of the tensor product combined with the universal property of localizations and quotients, completely analogous to the case of rings. This proves that $\varphi^{-1}(x)\to Y$ is a topological embedding. 
\end{proof}

\begin{prop}\label{prop-fibres-of-alpha}
 Let $X$ be a blue scheme and $x\in X$. Let $\alpha_X:X^+\to X$ and $\beta_X:X^+_\Z\to X$ be the canonical morphisms. Then the canonical morphisms $\Spec\kappa(x)^+\to\alpha(x)^{-1}$ and $\Spec\kappa(x)^+_\Z\to\beta(x)^{-1}$ are isomorphisms. 
\end{prop}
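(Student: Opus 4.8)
\noindent\emph{Proof plan.} The plan is to reduce to an affine computation and then to identify the coordinate blueprints on both sides. Being an isomorphism is local on $X$, and $\alpha_X^{-1}(U)=U^+$, $\beta_X^{-1}(U)=U^+_\Z$ for every affine open $U\subseteq X$, while $\kappa(x)$ and the fibres over $x$ depend only on such a $U$ containing $x$; so I would first assume $X=\Spec B$ is affine. Writing $\fp=x$ and $S=B\setminus\fp$, one has $\kappa(x)=S^{-1}B/\fp(S^{-1}B)$, and $\alpha(x)^{-1}=\Spec(\kappa(x)\otimes_B B^+)$, $\beta(x)^{-1}=\Spec(\kappa(x)\otimes_B B^+_\Z)$ are affine as well.

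The main input is the computation carried out inside the proof of Lemma \ref{lemma-algebraic-and-topological-fibre-of-a-morphism}: for \emph{any} morphism $f\colon B\to C$ of blueprints there is a canonical isomorphism $\kappa(x)\otimes_B C\simeq f(S)^{-1}C/f(\fp)\bigl(f(S)^{-1}C\bigr)$, so that the fibre is the spectrum of a quotient of a localization of $C$. With $C=B^+$ and $f=\sigma\colon B\to B^+$ the canonical map, this gives $\kappa(x)\otimes_B B^+\simeq\sigma(S)^{-1}B^+/\sigma(\fp)\bigl(\sigma(S)^{-1}B^+\bigr)$, and similarly for $C=B^+_\Z$. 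On the other side I would compute $\kappa(x)^+=\bigl(S^{-1}B/\fp(S^{-1}B)\bigr)^+$ using that $(\blanc)^+$ commutes with localizations --- the identity $(S^{-1}B)^+=\sigma(S)^{-1}B^+$ recalled in Section \ref{subsection:notation-and-conventions} --- and with the quotient by an ideal $I$: by the defining adjunction of $(\blanc)^+$, a morphism to a semiring $R$ out of $(B/I)^+$, resp.\ out of $B^+/J$ with $J$ the ideal of $B^+$ generated by $\sigma(I)$, is in either case the same as a blueprint morphism $B\to R$ that annihilates $I$, so the two semirings agree. Hence $\kappa(x)^+\simeq\sigma(S)^{-1}B^+/\sigma(\fp)\bigl(\sigma(S)^{-1}B^+\bigr)$, which is precisely the coordinate blueprint of $\alpha(x)^{-1}$ computed above.

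It then remains to check that this identification is the one induced by the canonical comparison morphism $\Spec\kappa(x)^+\to\alpha(x)^{-1}$ --- the morphism corresponding, via the universal property of the tensor product, to the pair $\kappa(x)\to\kappa(x)^+$ and $B^+\to\kappa(x)^+$ --- which amounts to tracing the universal properties of tensor products, localizations and quotients through the above chain of isomorphisms; I expect this (not deep, but careful) bookkeeping to be the main obstacle. The second assertion follows by the identical argument with $(\blanc)^+_\Z=(\blanc\otimes_\Fun\Funsq)^+$ in place of $(\blanc)^+$: this functor is again adjoint to the inclusion $\Rings\hookrightarrow\bp_0$ and commutes with localizations and with quotients by ideals, so $\kappa(x)^+_\Z\simeq\sigma'(S)^{-1}B^+_\Z/\sigma'(\fp)\bigl(\sigma'(S)^{-1}B^+_\Z\bigr)$ is the coordinate blueprint of $\beta(x)^{-1}$, where $\sigma'\colon B\to B^+_\Z$ is canonical. (Alternatively, since $B^+_\Z=B^+\Sotimes_\N\Z$ and $\kappa(x)^+_\Z=\kappa(x)^+\Sotimes_\N\Z$, the second assertion can be deduced from the first by the base change along $\Spec\Z\to\Spec\N$, writing $\beta_X$ as $X^+_\Z\to X^+\xrightarrow{\alpha_X}X$.) Everything other than the canonicity check is a formal consequence of Lemma \ref{lemma-algebraic-and-topological-fibre-of-a-morphism} and of these compatibility properties.
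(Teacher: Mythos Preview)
Your proposal is correct and follows essentially the same route as the paper: reduce to the affine case, invoke the computation from Lemma~\ref{lemma-algebraic-and-topological-fibre-of-a-morphism} to express the fibre as a quotient of a localization of $B^+$, and then use that $(\blanc)^+$ commutes with localizations and with quotients by ideals (the paper cites \cite[Lemma~2.18]{blueprints1} for the $\Z$-version and asserts the semiring version is analogous, where you supply the adjunction argument). The paper treats only $\alpha_X$ and declares $\beta_X$ ``completely analogous''; your alternative deduction of the $\beta_X$ case via base change along $\Spec\Z\to\Spec\N$ is a nice bonus, and your remark about the canonicity check is a point the paper leaves implicit.
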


\begin{proof}
 We prove the proposition only for $\alpha_X$. The proof for $\beta_X$ is completely analogous. Since the statement is local around $x$, we may assume that $X$ is affine with coordinate blueprint $B$, and $x=\fp$ is a prime ideal of $B$. Then $X^+=\Spec B^+$, and we have to show that the canonical map $\kappa(x)\otimes_B B^+\to \kappa(x)^+$ is an isomorphism.  Note that the canonical map $B\to B^+$ is injective, so we may consider $B$ as a subset of $B^+$. Let $S=B-\fp$. The same calculation as in the proof of Lemma \ref{lemma-algebraic-and-topological-fibre-of-a-morphism} shows that 
 \[
  \kappa(x)\otimes_B B^+ \quad \simeq \quad S^{-1}B^+ \ / \ \fp(S^{-1}B^+) \quad \simeq \quad S^{-1}B^+ \ / \ (\fp (S^{-1}B))^+.
 \]
 Recall from \cite[Lemma 2.18]{blueprints1} that $B^+_\Z/I^+_\Z\simeq (B/I)^+_\Z$ where $I$ is an ideal of $B$ and $I^+_\Z$ is the ideal of $B^+_\Z$ that is generated by the image of $I$ in $B^+_\Z$. In the same way it is proven for a blueprint that $B^+/I^+\simeq (B/I)^+$ where $I^+$ is the ideal of $B^+$ that is generated by the image of $I$ in $B^+$. We apply this to derive
 \[
  S^{-1}B^+ \ / \ (\fp (S^{-1}B))^+ \quad \simeq \quad \Big( \ S^{-1}B \ / \ \fp(S^{-1}B) \ \Bigr)^+ \quad = \quad \kappa(x)^+,
 \]
 which finishes the proof of $\Spec\kappa(x)^+\simeq \alpha(x)^{-1}$.
\end{proof}

The potential characteristics of the points of a blue scheme are closely related to the fibres of the canonical morphism from its semiring scheme as the following lemma shows.

\begin{lemma}\label{lemma-geom-char-and-the-fibres-of-alpha}
 The canonical morphism $\alpha_X:X^+\to X$ is surjective. The potential characteristics of a point $x\in X$ correspond to the potential characteristics of the points $y$ in the fibre of $\alpha_X$ over $x$.
\end{lemma}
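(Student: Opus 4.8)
The plan is to reduce both assertions to Proposition \ref{prop-fibres-of-alpha}, which identifies the fibre $\alpha_X^{-1}(x)$ with $\Spec\kappa(x)^+$, together with the fact that $(\blanc)^+$ is left adjoint to the forgetful functor $\SRings\to\bp_0$, so that every morphism from a blueprint into a semiring factors uniquely through its associated semiring.

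First I would prove surjectivity. Fix $x\in X$; by Proposition \ref{prop-fibres-of-alpha} and Lemma \ref{lemma-algebraic-and-topological-fibre-of-a-morphism} the fibre $\alpha_X^{-1}(x)$ has the same underlying space as $\Spec\kappa(x)^+$, so it is enough to see that $\kappa(x)^+$ is a nonzero semiring. Since $\kappa(x)$ is a blue field we have $1\neq 0$ in $\kappa(x)$, and the canonical map $\kappa(x)\to\kappa(x)^+$ is injective (as noted in the proof of Proposition \ref{prop-fibres-of-alpha}), so $1\neq 0$ in $\kappa(x)^+$. A nonzero semiring has a maximal proper ideal by Zorn's lemma, and such an ideal $\fm$ is prime: if $ab\in\fm$ but $a,b\notin\fm$, then $\fm+(a)$ and $\fm+(b)$ both contain $1$, say $1=m+ra=m'+sb$, and multiplying these two expressions puts $1\in\fm$, a contradiction. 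Hence $\Spec\kappa(x)^+\neq\emptyset$, and $\alpha_X$ is surjective.

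For the correspondence of potential characteristics, recall that the potential characteristics of a point are by definition those of its residue field, and argue the two inclusions. If $y\in\alpha_X^{-1}(x)$ has potential characteristic $p$, witnessed by a morphism $\kappa(y)\to k$ into a semifield $k$ with $\kar k=p$, then composing with the homomorphism $\kappa(x)\to\kappa(y)$ on residue fields induced by $\alpha_X$ shows that $p$ is a potential characteristic of $x$. Conversely, suppose $p$ is a potential characteristic of $x$, witnessed by $\varphi:\kappa(x)\to k$ with $k$ a semifield of characteristic $p$; working locally we may assume $X=\Spec B$ and $x=\fp$. By the adjunction $\varphi$ factors as $\kappa(x)\to\kappa(x)^+\to k$, and through the isomorphism $\kappa(x)^+\simeq\kappa(x)\otimes_BB^+$ of Proposition \ref{prop-fibres-of-alpha} we obtain a morphism $g:B^+\to k$; its composition with the canonical map $B\to B^+$ agrees with the composite $B\to\kappa(x)\to k$ of the canonical map and $\varphi$, whose kernel is $\fp$ since $\kappa(x)$ is a blue field and $k\neq\{0\}$. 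Setting $y=g^{-1}(0)\in\Spec B^+$ we therefore get $\alpha_X(y)=\fp=x$, so $y$ lies in the fibre; and since $k$ is a semifield, every element of $B^+\setminus y$ maps to a unit of $k$, so $g$ descends to a morphism $\kappa(y)\to k$, exhibiting $p$ as a potential characteristic of $y$.

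I expect the delicate point to be the converse inclusion: one must track that the prime ideal $y$ produced from the factored morphism genuinely lies in the fibre over $x$ rather than merely somewhere in $X^+$, and that $g$ really descends to the residue field $\kappa(y)$. Both come out cleanly once one unwinds the explicit local description $\kappa(x)^+\simeq S^{-1}B^+/(\fp(S^{-1}B))^+$ with $S=B\setminus\fp$ from the proof of Proposition \ref{prop-fibres-of-alpha} and uses that a morphism from a blue field into a nonzero blueprint has trivial kernel. The surjectivity statement and the forward inclusion are essentially formal consequences of Proposition \ref{prop-fibres-of-alpha}.
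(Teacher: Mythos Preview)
Your proof is correct and follows essentially the same approach as the paper. The paper's argument is slightly more streamlined in the converse direction: rather than working locally and tracking the prime $y=g^{-1}(0)$ by hand, it observes that the morphism $\Spec k\to X$ with image $x$ factors through $X^+$ by the universal property of $(\blanc)^+$ at the level of schemes, and takes $y$ to be the image of this factored morphism; the residue-field map $\kappa(y)\to k$ then falls out immediately. Your explicit local computation unwinds exactly this, so the two arguments are equivalent.
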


\begin{proof}
 The morphism $\alpha_X:X^+\to X$ is surjective for the following reason. The canonical morphism $B\to B^+$ is injective. In particular, $\kappa(x)\to\kappa(x)^+$ is injective for every point $x$ of $X$. This means that $\kappa(x)^+$ is non-trivial, and thus $\alpha^{-1}(x)\simeq\kappa(x)^+$ non-empty. This shows the first claim of the lemma.

 If $x=\alpha(y)$ for some $y$ in the fibre of $\alpha_X$ over $x$, then there is a morphism $\kappa(x)\to\kappa(y)$ between the residue fields, and the potential characteristics of the semifield $\kappa(y)$ are potential characteristics of the blue field $\kappa(x)$. On the other hand, if $\kappa(x)\to k$ is a map into a semifield $k$ of characteristic $p$, then this defines a morphism $\Spec k\to X$ with image $x$, which factors through $X^+$. Thus the map $\kappa(x)\to k$ factors through $\kappa(x)\to \kappa(y)$ for some $y$ in the fibre of $\alpha_X$ over $x$. Thus the latter claim of the lemma.
\end{proof}

\begin{rem}
 By the previous lemma, every point $x$ of a blue scheme $X$ lies in the image of some $\alpha_{X,k}:X^+\Stimes_\N k\to X$ where $k$ is a semifield, which can be chosen to be an algebraically closed field if it is not an idempotent semifield. This shows that the geometry of a blue scheme is dominated by algebraic geometry over algebraically closed fields and idempotent geometry, by which I mean geometry that is associated to idempotent semirings. There are various (different) viewpoints on this: idempotent analysis as considered by Kolokoltsov and Maslov, et al.\ (see, for instance, \cite{Maslov94}), tropical geometry as considered Itenberg, Mikhalkin, et al.\ (see, for instance, \cite{Gathmann06}, \cite{Mikhalkin06} and, in particular, \cite[Chapter 2]{Mikhalkin10}) and idempotent geometry that attempts to mimic $\Fun$-geometry (see \cite{CC09}, \cite{Lescot09} and \cite{Takagi10}). These theories might find a common background in the theory of blue schemes.
\end{rem}

\begin{lemma} \label{lemma: quotients of cancellative blueprints are cancellative}
 Let $B$ be a cancellative blueprint and $I\subset B$ be an ideal of $B$. Then the quotient $B/I$ is cancellative.
\end{lemma}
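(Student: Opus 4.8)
The plan is to reduce the claim to an injectivity statement about associated rings and semirings, exploiting the two comparison isomorphisms that are already at hand: the isomorphism $B^+_\Z/I^+_\Z\simeq(B/I)^+_\Z$ of \cite[Lemma 2.18]{blueprints1} and its semiring analogue $B^+/I^+\simeq(B/I)^+$, both recalled in the proof of Proposition \ref{prop-fibres-of-alpha}, where $I^+\subset B^+$ and $I^+_\Z\subset B^+_\Z$ denote the ideals generated by the image of $I$. The key input is the characterisation that a (proper) blueprint $C$ is cancellative if and only if the canonical map $C^+\to C^+_\Z$ is injective, i.e.\ the additive monoid of the semiring $C^+$ embeds into its group completion $C^+_\Z$. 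Thus everything comes down to showing that $B^+/I^+\to B^+_\Z/I^+_\Z$ is injective, knowing that $B^+\to B^+_\Z$ is.

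The first step is to unwind $I^+_\Z$ explicitly. Since $B^+_\Z$ is the universal ring attached to the semiring $B^+$, every one of its elements is a difference $b-b'$ with $b,b'\in B^+$; and since $I^+$ is closed under addition and under multiplication by $B^+$, the ideal $I^+_\Z$ of $B^+_\Z$ it generates is exactly $\{u-v \mid u,v\in I^+\}$. Now suppose $s,s'\in B^+$ have the same image in $B^+_\Z/I^+_\Z$. Then $s-s'=u-v$ for some $u,v\in I^+$, so $s+v=s'+u$ in $B^+_\Z$; as both sides lie in $B^+$ and $B^+\hookrightarrow B^+_\Z$ is injective, this equality already holds in $B^+$, whence $s$ and $s'$ have the same image in $B^+/I^+$. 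Transporting this injectivity through the two comparison isomorphisms shows that $(B/I)^+\to(B/I)^+_\Z$ is injective, which by the characterisation above says precisely that $B/I$ is cancellative.

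An alternative, more hands-on route avoids the semiring formalism: one lifts a cancellation relation $\sum\bar a_i+\sum\bar e_m\=\sum\bar b_j+\sum\bar e_m$ in $B/I$ to a relation $\sum a_i+\sum e_m+\sum c_k\=\sum b_j+\sum e_m+\sum d_l$ in $B$ with $c_k,d_l\in I$ (using the explicit description of the pre-addition of a quotient by an ideal from \cite{blueprints1}), cancels $\sum e_m$ in $B$ since $B$ is cancellative, and reduces modulo $I$ again; here one must first discard the terms that become $0$ in $B/I$ and remember to pass to the proper quotient. The only genuine obstacle in either approach is pinning down the right notion of ``cancellative'' (the equivalence with $C^+\hookrightarrow C^+_\Z$, not merely $C\hookrightarrow C^+_\Z$ on underlying monoids); granted that, what remains is a short diagram chase together with routine bookkeeping of the decorations $(\blanc)^+$ and $(\blanc)^+_\Z$.
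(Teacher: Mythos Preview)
Your proposal is correct. Your primary approach---reducing to the injectivity of $B^+/I^+ \to B^+_\Z/I^+_\Z$ via the comparison isomorphisms and the characterisation of cancellativity as $C^+\hookrightarrow C^+_\Z$---is a genuinely different route from the paper's. The paper works entirely inside the blueprint $B$: it first proves the explicit claim that $\bar a\equiv\bar b$ in $B/I$ if and only if $a+\sum c_k\equiv b+\sum d_l$ in $B$ for some $c_k,d_l\in I$ (by unwinding the chain definition of the quotient pre-addition from \cite{blueprints1} and using cancellation in $B$ to collapse the chain to a single relation), and then verifies the cancellation property for $B/I$ directly by adding the relation $c_0+\sum c_k\equiv d_0+\sum d_l$ witnessing $\bar c_0\equiv\bar d_0$ and cancelling $c_0+d_0$ in $B$. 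Your ``alternative, hands-on'' sketch is essentially this argument. Your main approach has the advantage of being shorter and of exposing the statement as a fact about cancellative commutative monoids (quotients of cancellative semirings by ideals remain additively cancellative); the paper's version is self-contained and does not need to appeal to the comparison isomorphisms or the equivalence of the two formulations of cancellativity, which you correctly flag as the only point requiring care.
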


\begin{proof}
 We first establish the following claim: two elements $a,b\in B$ define the same class $\overline a\=\overline b$ in $B/I$ if and only if there are elements $c_k,d_l\in I$ such that $a+\sum c_k\=b+\sum d_l$ in $B$. Per definition, $\overline a\=\overline b$ if and only if there is a sequence of the form
 \[
  a \ \= \ \sum c_{1,k} \ \sim^I_\N \ \sum d_{1,k} \ \= \ \sum c_{2,k} \ \sim^I_\N \quad \dotsb \quad \sim^I_\N \ \sum d_{n,k} \ \= \ b 
 \]
 where $\sum c_k\sim^I_\N\sum d_k$ if for all $k$ either $c_k=d_k$ or $c_k,d_k\in I$ (cf.\ \cite[Def.\ 2.11]{blueprints1}). If we add up all additive relations in this sequence, we obtain 
 \[
  a+\sum c_{i,k} \ \= \ b+\sum d_{i,k}.
 \]
 Since $B$ is cancellative, we can cancel all terms $c_{i,k}\=d_{i,k}$ that appear on both sides, and stay over with a relation of the form
 \[
  a+\sum\tilde c_k\ \= \ b+\sum \tilde d_k
 \]
 with $\tilde c_k,\tilde d_k\in I$. This shows one direction of the claim. To prove the reverse direction, consider a relation of the form $a+\sum c_k\ \= \ b+\sum d_l$ with $c_k, d_l\in I$. Then we have
 \[
  a \ \= \ a +\sum 0 \ \sim_\N^I \ a+\sum c_k \ \= \ b+\sum d_l \ \sim^I_\N \ b+\sum 0 \ \= \ b,
 \]
 which shows that $\overline a\=\overline b$ in $B/I$.
 
 With this fact at hand, we can prove that $B/I$ is cancellative. Consider a relation of the form
 \[
  \sum a_i + c_0 \ \= \ \sum b_j +d_0
 \]
 in $B$ where $\overline c_0\=\overline d_0$ in $B/I$. We have to show that $\sum\overline a_i\=\sum\overline b_j$ in $B/I$. By the above fact, $\overline c_0\=\overline d_0$ if and only if there are $c_k,d_l\in I$ such that $c_0+\sum c_k\=d_0+\sum d_l$. Adding this equation to the above equation, with left and right hand side reversed, yields
 \[
  \sum a_i + c_0 + d_0+\sum d_l \ \= \ \sum b_j +d_0 +c_0+\sum c_k.
 \]
 Since $B$ is cancellative, we can cancel the term $c_0+d_0$ on both sides and obtain the sequence
 \[
  \sum\overline a_i \ \= \sum\overline a_i + \sum\overline d_l \ \= \ \sum\overline b_j + \sum\overline c_k \ \= \ \sum\overline b_j
 \]
 in $B/I$, which proves that $B/I$ is cancellative.

\end{proof}

\begin{lemma}\label{lemma-beta-surjective-for-cancellative-scheme}
 If $X$ is cancellative, then the canonical morphism $\beta_X:X^+_\Z\to X$ is surjective.
\end{lemma}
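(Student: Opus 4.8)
The plan is to reduce the surjectivity of $\beta_X$ to a statement about residue fields and then invoke Proposition~\ref{prop-fibres-of-alpha}. By that proposition, for every $x\in X$ the fibre $\beta_X^{-1}(x)$ is canonically isomorphic to $\Spec\kappa(x)^+_\Z$. Hence $\beta_X$ is surjective if and only if $\kappa(x)^+_\Z$ is a nonzero ring---equivalently $\kar\kappa(x)\neq 1$---for every point $x$ of $X$. So the whole statement comes down to showing that the residue fields of a cancellative blue scheme have arithmetic characteristic different from $1$. This parallels the proof of surjectivity of $\alpha_X$ in Lemma~\ref{lemma-geom-char-and-the-fibres-of-alpha}: there the relevant input was the unconditional injectivity of $B\to B^+$, whereas here we will use the injectivity of the finer map $B\to B^+_\Z$, which is exactly what the cancellativity hypothesis provides.

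Concretely, I would fix $x\in X$ and, since the question is local, assume $X=\Spec B$ with $B$ cancellative, so that $\kappa(x)$ is the quotient of the localization $S^{-1}B$ (with $S=B\setminus x$) by the ideal generated by $x$. Localizations of cancellative blueprints are again cancellative---a routine verification entirely analogous to the semiring case---and quotients of cancellative blueprints are cancellative by Lemma~\ref{lemma: quotients of cancellative blueprints are cancellative}. Therefore $\kappa(x)$ is a cancellative blue field, so (recalling that a cancellative blueprint embeds into the ring it generates) the canonical morphism $\kappa(x)\to\kappa(x)^+_\Z$ is injective. Since $\kappa(x)$ is a blue field we have $0\neq 1$ in $\kappa(x)$, hence $0\neq 1$ in $\kappa(x)^+_\Z$, so $\kappa(x)^+_\Z\neq\{0\}$ and $\kar\kappa(x)\neq 1$. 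By Proposition~\ref{prop-fibres-of-alpha} the fibre $\beta_X^{-1}(x)\cong\Spec\kappa(x)^+_\Z$ is then non-empty; as $x$ was arbitrary, $\beta_X$ is surjective.

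The only step that requires genuine care---and the only place where the hypothesis enters---is the inheritance of cancellativity by the residue fields, i.e.\ the stability of cancellativity under localization, since Lemma~\ref{lemma: quotients of cancellative blueprints are cancellative} already handles the quotient. That the hypothesis is indispensable is visible already from $\Spec\B_1$: the idempotent semifield $\B_1$ is not cancellative and has arithmetic characteristic $1$, so $(\Spec\B_1)^+_\Z=\emptyset$ and $\beta$ fails to be surjective in that case.
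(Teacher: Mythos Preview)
Your proof is correct and follows essentially the same route as the paper's: reduce to the affine case, show that $\kappa(x)$ is cancellative by combining stability of cancellativity under localization with Lemma~\ref{lemma: quotients of cancellative blueprints are cancellative}, and conclude that $\kappa(x)^+_\Z\neq\{0\}$. The only cosmetic difference is that you invoke Proposition~\ref{prop-fibres-of-alpha} to identify the fibre $\beta_X^{-1}(x)$ with $\Spec\kappa(x)^+_\Z$, whereas the paper simply notes that the morphism $\Spec\kappa(x)^+_\Z\to\Spec\kappa(x)\hookrightarrow X$ factors through $X^+_\Z$ by the universal property; both arguments yield a point in the fibre over $x$.
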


\begin{proof}
 This is a local question, so we may assume that $X=\Spec B$ for a cancellative blueprint $B$. Since localizing preserves cancellative blueprints (see \cite[section 1.13]{blueprints1}), $B_\fp$ is also cancellative for every prime ideal $\fp$ of $B$. The residue field at $x=\fp$ is $\kappa(x)=B_\fp/\fp B_\fp$, which is cancellative by the Lemma \ref{lemma: quotients of cancellative blueprints are cancellative}. This it is a subblueprint of the (non-zero) ring $\kappa(x)_\Z^+$. Thus the canonical morphism $\Spec \kappa(x)_\Z^+\to\Spec\kappa(x)\hookrightarrow X$ has image $\{x\}$ and factors through $X_\Z^+$ by the universal property of the scheme $X_\Z^+$.
\end{proof}

\begin{rem}
 Every sesquiad (see \cite{Deitmar11}) can be seen as a cancellative blueprint. A prime ideal of a sesquiads is the intersection of a prime ideal of the prime ideal of its universal ring with the sesquiad. The previous lemma shows that the sesquiad prime ideals coincide with its blueprint prime ideals.
\end{rem}

While the points of potential characteristic $p\neq 1$ are governed by usual scheme theory, the points of potential characteristic $1$ in a fibre $\alpha^{-1}(x)$ are of a particularly simple shape.

\begin{lemma}\label{lemma: points of potential characteristic 1 in fibres of alpha}
 Let $x\in X$ be a point with potential characteristic $1$. Then $\alpha^{-1}_X(x)$ is irreducible with generic point $\eta$, which is the only point of $\alpha^{-1}(x)$ with potential characteristic $1$. If $X$ is cancellative, then $\eta$ has also potential characteristic $0$.
\end{lemma}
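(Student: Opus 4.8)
The plan is to reduce to the affine local picture and work with the explicit description of the fibre $\alpha^{-1}_X(x)$ given by Proposition \ref{prop-fibres-of-alpha}, namely $\alpha^{-1}_X(x)\simeq\Spec\kappa(x)^+$. So the task becomes: if $\kappa$ is a blue field with potential characteristic $1$, then $\Spec\kappa^+$ is irreducible, its generic point is the only point of potential characteristic $1$, and if $\kappa$ is cancellative the generic point also has potential characteristic $0$.

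First I would analyze the semiring $\kappa^+$. Since $\kappa$ has potential characteristic $1$, there is a morphism $\kappa\to k$ into a semifield of characteristic $1$; by Lemma \ref{lemma-semifields-have-1-or-2-geometric-char} such a $k$ embeds no copy of $-1$, so $\kappa$ is without $-1$ (if $\kappa$ had $-1$ then every quotient and every target semifield would too, contradicting $\kar k=1$). Hence $\kappa^+$ is a semiring without $-1$ generated by the blue field $\kappa$, and by Lemma \ref{lemma-semifields-have-1-or-2-geometric-char} (applied to the prime semifield inside any residue field) the only way a point of $\Spec\kappa^+$ can have potential characteristic $1$ is for its residue semifield to be without $-1$. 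The key structural point I want is: the nilradical-type element, or rather the "additive part," behaves well. Concretely, since $\kappa^\times$ is a group and every nonzero element of $\kappa$ is a unit, $\kappa^+$ is generated as a semiring by units; I would argue that $\Spec\kappa^+$ has a unique closed point given by the ideal $\fm$ of all elements that are "additively built up but not units," and more importantly a unique generic point $\eta$ corresponding to the minimal prime. Irreducibility then amounts to showing $\kappa^+$ has a unique minimal prime ideal, equivalently that $\Nil(\kappa^+)$ is prime, equivalently (Lemma in Section \ref{subsection: reduced blueprints}) that $(\kappa^+)^\red$ is reduced with $0$ prime. I expect this to follow because $\kappa^+/\Nil$ is a semiring in which the image of $\kappa$ is still a blue field (all nonzero elements units), and a semiring generated by a blue field with no zero divisors in the multiplicative monoid has $0$ prime once nilpotents are removed — the multiplicative monoid of $\kappa^+$ is $\kappa^\times$ together with sums, and a product lands in a minimal prime essentially because of the group structure on units.

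Next, for the characterization of $\eta$: I would show every point $y\neq\eta$ of $\alpha^{-1}(x)$ has potential characteristic $\neq 1$. The generic point $\eta$ of $\Spec\kappa^+$ has residue semifield the semifield of fractions of $(\kappa^+)^\red$; I claim this is without $-1$ (it receives a map from $\kappa$, which is without $-1$, and localization/quotient by the minimal prime cannot introduce $-1$ unless it was already forced, which it is not since $\kar\kappa=0$ — here is where I'd use that $x$ has potential characteristic $1$ but, being a blue field mapping to a char-$1$ semifield, $\kappa$ is not forced to have $-1$). For any other prime $\fp\supsetneq\eta$, I want $\kappa(\fp)$ to contain $-1$. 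This is the crux: passing to a strictly larger prime kills enough elements that the resulting residue semifield becomes a genuine ring. I would try to see this directly from the structure of $\kappa^+$: the elements not in $\eta$ but in $\fp$ are additive combinations, and setting such a combination to be a non-unit (hence the corresponding localization-quotient) forces cancellation relations that produce $1+1+\dots = 0$-type identities; alternatively, I'd note every non-generic point specializes toward the closed point whose residue field is $\F_p$ or $\kappa$ itself, and invoke Lemma \ref{lemma-divisors-of-char-are-geometric-char} / the classification of prime semifields. This step — ruling out potential characteristic $1$ at non-generic points — is the main obstacle, since it requires understanding exactly which quotients of $\kappa^+$ remain without $-1$.

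Finally, for the cancellative case: if $X$ is cancellative, then by Lemma \ref{lemma: quotients of cancellative blueprints are cancellative} (and the fact that localization preserves cancellativity, as recalled in the proof of Lemma \ref{lemma-beta-surjective-for-cancellative-scheme}) the residue field $\kappa=\kappa(x)$ is a cancellative blue field. Then $\kappa\hookrightarrow\kappa^+\hookrightarrow\kappa^+_\Z$, and $\kappa^+_\Z$ is a nonzero ring (nonzero because $\kappa$ embeds in it). Its characteristic must be $0$: if it were a prime $p$, then $p\mid\kar\kappa$ by the remark preceding Lemma \ref{lemma-divisors-of-char-are-geometric-char}, but $\kar\kappa=0$ since $\kappa$ has potential characteristic $1$ and is without $-1$ of finite additive order (a finite arithmetic characteristic $n>1$ would force $-1=n-1$ into $\kappa^+$, contradicting potential characteristic $1$ by Lemma \ref{lemma-semifields-have-1-or-2-geometric-char}). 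So $\kappa^+_\Z$ is a ring of characteristic $0$, giving a morphism from $\kappa^+$ — hence from the generic residue semifield of $\Spec\kappa^+$, by the universal property and the fact that the kernel of $\kappa^+\to\kappa^+_\Z$ is contained in every prime, in particular in $\eta$ — into a field of characteristic $0$. Therefore $\eta$ has potential characteristic $0$ as well. I would assemble these pieces into the statement, taking care that the identification $\alpha^{-1}_X(x)\simeq\Spec\kappa(x)^+$ from Proposition \ref{prop-fibres-of-alpha} is what transports "irreducible with generic point $\eta$" back to $X^+$.
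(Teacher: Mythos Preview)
Your reduction to $\Spec\kappa^+$ via Proposition \ref{prop-fibres-of-alpha} is correct, and your observation that $\kappa$ is without $-1$ is the right starting point. But the structural analysis you propose for irreducibility (via nilradicals and reducedness) and for the uniqueness of the characteristic-$1$ point is both more complicated than necessary and, as you yourself flag, incomplete at the crucial step.

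The paper handles irreducibility and uniqueness in one stroke. Since $\kappa$ is a blue field, every nonzero element of $\kappa^+$ is a sum $\sum a_i$ with $a_i\in\kappa^\times$. Now take \emph{any} morphism $f:\kappa^+\to k$ into a semifield $k$ of characteristic $1$ (such an $f$ exists by the universal property of $\kappa\to\kappa^+$ applied to the given $\kappa\to k$). If $f(\sum a_i)=0$ for a nontrivial sum, write it as $a+\sum a_j'$; then $f(a)$ is a unit in $k$ with additive inverse $f(\sum a_j')$, so $k$ contains $-1$, contradicting $\kar k=1$. Hence $\ker f=\{0\}$. Since the kernel of a morphism into a semifield is prime, $0$ is a prime ideal of $\kappa^+$ --- this is irreducibility, with generic point $\eta=0$. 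And if $\fp$ is any prime with potential characteristic $1$, composing $\kappa^+\to\kappa(\fp)\to k'$ gives a morphism to a characteristic-$1$ semifield whose kernel contains $\fp$; the same argument forces $\fp=0$. Both claims thus follow from this single kernel computation, with no need to analyze the nilradical or the poset of primes.

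For the cancellative claim, your argument via $\kappa^+_\Z$ having characteristic $0$ can be made to work, but the paper's route is shorter: by Lemma \ref{lemma-beta-surjective-for-cancellative-scheme} the map $X^+_\Z\to X^+$ is surjective (since $X$ cancellative implies $X^+$ cancellative), so $\eta$ lies under some point of $X^+_\Z$ and hence has a potential characteristic $p\neq 1$; since $\kappa(\eta)$ is a semifield without $-1$, Lemma \ref{lemma-semifields-have-1-or-2-geometric-char} forces $p=0$.
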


\begin{proof}
 Let $\kappa$ be the residue field of $x$. Since $x$ has potential characteristic $1$, there is a morphism into a semifield $k$ of characteristic $1$. By the universal property of $\kappa\to\kappa^+$, this morphism factors through a morphism $f:\kappa^+\to k$. Every element of $\kappa^+$ is of the form $\sum a_i$ where $a_i$ are units of $\kappa$. 

 Consider the case that $f(\sum a_i)=0$. Unless the sum is trivial, it is of the form $a+\sum a'_j$ for some unit $a$ of $\kappa$. Then $b'=f(\sum a'_j)$ is an additive inverse of $b=f(a)$ in $k$. Since units are mapped to units, $b$ is a unit of $k$ and therefore $1=b^{-1}b$ has the additive inverse $-1=b^{-1}b'$. But this is not possible in a semifield of characteristic $1$. Therefore we conclude that $\sum a_i$ has to be the trivial sum and that the kernel of $f:\kappa^+\to k$ is $0$. This shows that $0$ is a prime ideal, that $\alpha^{-1}(x)$ is irreducible with generic point $\eta=0$ and that $\eta$ is the only point of $\alpha^{-1}(x)$ with potential characteristic $1$.

 By Lemma \ref{lemma-beta-surjective-for-cancellative-scheme}, the canonical morphism $\beta:X^+_\Z\to X^+$ is surjective if $X^+$ is cancellative, which is the case if $X$ is cancellative. Therefore every point $x$ of $X^+$ with potential characteristic $1$ has at least one other potential characteristic, which must be $0$ since $\kappa(x)$ is a semifield without $-1$ (cf.\ Lemma \ref{lemma-semifields-have-1-or-2-geometric-char}). This proves the last claim of the lemma.
\end{proof}


\subsection{The topology of fibre products}
\label{subsection:products}

In this section, we investigate the topological space of the product of two blue schemes. The canonical projections of the fibre product of blue schemes are continuous, and thus induce a universal continuous map into the product of the underlying topological spaces. In contrast to the product of two varieties over an algebraically closed field, which surjects onto the product of the underlying topological spaces, the product of two blue schemes injects into the product of the underlying topological spaces.


\begin{prop}\label{prop: tau is an embedding}
 Let $X_1\to X_0$ and $X_2\to X_0$ be morphisms of blue schemes. Then the canonical map
 \[
  \tau: \quad X_1 \ \times_{X_0} \ X_2 \quad \longrightarrow \quad X_1 \ \toptimes_{X_0} \ X_2
 \]
 is an embedding of topological spaces.
\end{prop}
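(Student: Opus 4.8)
The plan is to reduce immediately to the affine case and then to a statement about prime ideals of blueprints. Since the fibre product of blue schemes is constructed by gluing affine pieces, and both projections $X_1 \times_{X_0} X_2 \to X_i$ and the coordinatewise maps $X_1 \toptimes_{X_0} X_2 \to X_i$ are continuous, it suffices to show $\tau$ is injective and a homeomorphism onto its image locally; so we may assume $X_i = \Spec B_i$ for $i = 0,1,2$ with ring maps $B_0 \to B_1$ and $B_0 \to B_2$, so that $X_1 \times_{X_0} X_2 = \Spec(B_1 \otimes_{B_0} B_2)$.

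First I would establish injectivity of $\tau$. A point of $\Spec(B_1 \otimes_{B_0} B_2)$ is a prime ideal $\fq$, and $\tau(\fq) = (\fp_1, \fp_2)$ where $\fp_i$ is the preimage of $\fq$ under $B_i \to B_1 \otimes_{B_0} B_2$. I claim $\fq$ is determined by the pair $(\fp_1,\fp_2)$. The key point is that every element of $B_1 \otimes_{B_0} B_2$ is represented by a sum $\sum (a_k \otimes b_k)$ with $a_k \in B_1$, $b_k \in B_2$ — in fact, since we work with proper blueprints with $0$, the underlying monoid of $B_1 \otimes_{B_0} B_2$ is a quotient of (a monoid built from) the products $a \otimes b$. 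For a monomial $a \otimes b$ one has $a \otimes b \in \fq$ if and only if $a \in \fp_1$ or $b \in \fp_2$ (this is the analogue of the corresponding fact for monoid algebras / the fact that the images of $B_1$ and $B_2$ generate the target). Then primality of $\fq$ forces: a general element $\sum (a_k \otimes b_k)$ lies in $\fq$ iff either every term $a_k \otimes b_k$ lies in $\fq$, or at least two terms do not — but membership of each term is governed by $(\fp_1, \fp_2)$, and the ``at least two terms'' case is then also governed by $(\fp_1, \fp_2)$ together with the pre-addition of $B_1 \otimes_{B_0} B_2$, which is determined by those of $B_1, B_2, B_0$. Hence $\fq$ is recovered from $(\fp_1,\fp_2)$, proving $\tau$ injective. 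The subtle bookkeeping here — making the ``prime ideal membership of a sum'' criterion precise, in the style of Example \ref{ex: subschemes of affine space over f1} — is what I expect to be the main obstacle; one has to be careful that the pre-addition relations do not create additional collapse beyond what the two factor primes prescribe, and that properness (quotienting by $a \equiv b$) is compatible with taking preimages.

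Next I would show $\tau$ is a topological embedding, i.e.\ that every open subset of $X_1 \times_{X_0} X_2$ is the preimage of an open subset of $X_1 \toptimes_{X_0} X_2$. It suffices to treat basic opens $V_c = \{\fq \mid c \notin \fq\}$ for $c \in B_1 \otimes_{B_0} B_2$. Writing $c = \sum(a_k \otimes b_k)$, the condition $c \notin \fq$ is, by the membership criterion above, a condition depending only on which $a_k$ lie in $\fp_1$ and which $b_k$ lie in $\fp_2$ — more precisely $c \notin \fq$ iff it is \emph{not} the case that (all terms are in $\fq$) and \emph{not} the case that ($\geq 2$ terms are out), which unwinds to a finite Boolean combination of the conditions $a_k \in \fp_1$, $b_k \in \fp_2$. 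Each such atomic condition $a_k \notin \fp_1$ cuts out a basic open in $X_1$ pulled back to the topological product, and likewise for $b_k$; a finite Boolean combination of (pullbacks of) opens in $X_1$ and $X_2$ is open in $X_1 \toptimes_{X_0} X_2$. (One must double-check that the relevant Boolean combination is genuinely open and not merely locally closed — the ``at least two terms are out'' clause is a union over pairs, hence open, and the whole expression is a union of such, so this is fine.) Therefore $V_c = \tau^{-1}(W)$ for this open $W$, completing the proof that $\tau$ is an embedding.

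Finally I would note the compatibility with gluing: the affine charts of $X_1 \times_{X_0} X_2$ are $\Spec(\Gamma U_1 \otimes_{\Gamma U_0} \Gamma U_2)$ for affine opens $U_i \subseteq X_i$ with compatible images, and $\tau$ restricted to such a chart lands in $U_1 \toptimes_{U_0} U_2 \subseteq X_1 \toptimes_{X_0} X_2$, which is open; since the affine case gives an embedding on each chart and these are compatible on overlaps, $\tau$ is an embedding globally. This is the routine ``local-to-global'' step and should not present difficulties beyond the standard scheme-theoretic pattern, which the paper has already signalled it will invoke without full detail.
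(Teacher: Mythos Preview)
Your reduction to the affine case and the local-to-global gluing are fine, but the core of your argument rests on a misunderstanding of what the elements of $B_1\otimes_{B_0}B_2$ look like. In the category of (proper) blueprints, the tensor product has underlying monoid a quotient of $A_1\times A_2$; in particular \emph{every} element is a pure tensor $a_1\otimes a_2$. There are no elements of the form $\sum_k a_k\otimes b_k$ --- sums appear only as formal expressions in the pre-addition, never as elements of the blueprint. You actually say this in your parenthetical, but then abandon it and build the rest of the argument around analyzing membership of sums in prime ideals. That analysis is both unnecessary and, as written, not correct: your criterion ``$c\notin\fq$ iff not (all terms in) and not ($\geq 2$ terms out)'' has no meaning for blueprint prime ideals, and even for rings it would be false (a sum can lie in a prime ideal without any summand doing so).

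Once you use that every element is a pure tensor, both halves become one-liners, exactly as in the paper. For injectivity: $a_1\otimes a_2=(a_1\otimes 1)\cdot(1\otimes a_2)$, so by primality $a_1\otimes a_2\in\fp$ iff $a_1\in\fp_1$ or $a_2\in\fp_2$, whence $\fp=\{a_1\otimes a_2\mid a_1\in\fp_1\text{ or }a_2\in\fp_2\}$ is determined by $(\fp_1,\fp_2)$. For the topology: every basic open of $\Spec(B_1\otimes_{B_0}B_2)$ is $U_{a_1\otimes a_2}$, and this equals $\tau^{-1}(U_{a_1}\toptimes_{X_0}U_{a_2})$ directly --- no Boolean combinations needed. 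So your outline is on the right track, but the key structural fact about blueprint tensor products collapses the ``subtle bookkeeping'' you anticipated into nothing.
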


\begin{proof}
 We have to show that $\tau$ is a homeomorphism onto its image. Since the claim of the proposition is a local question, we may assume that $X_i=\Spec B_i$ are affine with $B_i=\bpquot{A_i}{\cR_i}$. Then there are morphisms $j_1:B_0\to B_1$ and $j_2:B_0\to B_2$, and we have $X_1 \times_{X_0} X_2 =\Spec B_1\otimes_{B_0}B_2$ with 
 \[
  B_1 \ \otimes_{B_0} \ B_2 \quad = \quad \bpgenquot{A_1\times A_2}{\ \cR_1\times\{1\},\ \{1\}\times\cR_2,\ (a_0a_1,a_2)\=(a_1,a_0a_2) \ | \ a_i\in B_i \ }. 
 \]
 Note that this is not a proper representation of $B_1\otimes_{B_0}B_2$. Since we are only concerned with topological properties of $\Spec B_1 \otimes_{B_0} B_2$, this is legitimate (cf.\ Section \ref{subsection:notation-and-conventions}).

 We begin to show injectivity of $\tau$. Let $\fp$ be a prime ideal of $B$. Then $\tau(\fp)=(\fp_0,\fp_1,\fp_2)$ where $\fp_i=\iota_i^{-1}(\fp)$ is a prime ideal of $B_i$ and $\iota_i:B_i\to B_1\otimes_{B_0}B_2$ is the canonical map that sends $a$ to $j_1(a)\tensor 1=1\tensor j_2(a)$ if $i=0$,  that sends $a$ to $a\tensor 1$ if $i=1$ and that sends $a$ to $1\tensor a$ if $i=2$. Since for $a_1\tensor a_2=(a_1\tensor 1)\cdot (1\tensor a_2)\in\fp$ either $a_1\tensor 1\in\fp\cap\iota_1(B_1)$ or $1\tensor a_2\in\fp\cap\iota_2(B_2)$, the prime ideal $\fp$ equals the set $\{a_1\tensor a_2| a_i\in\fp_i\}$. Thus $\fp$ is uniquely determined by $\tau(\fp)$.

 We show that $\tau$ is a homeomorphism onto its image. Given a basic open $U=U_{a_1}\toptimes_{X_0}U_{a_2}$ of $X_1 \toptimes_{X_0} X_2$ where $a_i\in B_i$ and $U_{a_i}=\{\fp_i\in X_i|a_i\notin\fp_i\}$ is the according basic open of $X_i$ for $i=1,2$. Then 
 \begin{multline*}
  \tau^{-1}(U) \  = \ \{ \ \fp \ \in \ X_1 \ \times_{X_0} \ X_2 \ | \ a_1\tensor 1\notin\fp\text{ and }1\tensor a_2\notin\fp \ \}  \\
    = \ \{ \ \fp \ \in \ X_1 \ \times_{X_0} \ X_2 \ | \ a_1\tensor a_2\notin\fp \ \} \ = \ U_{a_1\tensor a_2},
 \end{multline*}
 which is a basic open of $X_1 \times_{X_0} X_2$. Thus $\tau$ is continuous. Since every basic open of $X_1 \times_{X_0} X_2$ is of the form $U_{a_1\tensor a_2}$ for some $a_1\in B_1$ and $a_2\in B_2$, $\tau$ is is indeed a homeomorphism onto its image.
\end{proof}

Therefore, we can regard $X_1 \times_{X_0} X_2$ as a subspace of $X_1 \toptimes_{X_0} X_2$, and we denote a point $x$ of $X_1 \times_{X_0} X_2$ by the coordinates $(x_1,x_2)$ of $\tau(x)$ where $x_1\in X_1$ and $x_2\in X_2$. 

In the rest of this section, we investigate the image of $\tau$ in the case $X_0=\Spec\Fun$.

\begin{lemma}\label{lemma: potential characteristics of b_1 tensor b_2}
 Let $B_1$ and $B_2$ be blueprints. Then $p$ is a potential characteristic of $B_1\otimes_\Fun B_2$ if and only if $p$ is a potential characteristic of both $B_1$ and $B_2$. Consequently, $B_1\otimes_\Fun B_2=\{0\}$ if and only if $B_1$ and $B_2$ have no potential characteristic in common.
\end{lemma}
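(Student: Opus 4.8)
The plan is to reduce the statement to the already-established theory of potential characteristics for semifields (Lemmas \ref{lemma-semifields-have-1-or-2-geometric-char} and \ref{lemma-divisors-of-char-are-geometric-char}) together with the universal property of the tensor product. The two directions of the biconditional have opposite flavours: one is a formal functoriality argument, the other requires constructing a morphism out of $B_1 \otimes_\Fun B_2$ into a semifield.

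First I would prove the easy direction: if $p$ is a potential characteristic of $B_1 \otimes_\Fun B_2$, then it is a potential characteristic of each $B_i$. Indeed, there are canonical morphisms $\iota_i : B_i \to B_1 \otimes_\Fun B_2$ (sending $a$ to $a \otimes 1$ and $1 \otimes a$ respectively, as in the proof of Proposition \ref{prop: tau is an embedding}), so any morphism $B_1 \otimes_\Fun B_2 \to k$ into a semifield $k$ of characteristic $p$ pulls back along $\iota_i$ to a morphism $B_i \to k$; hence $p$ is a potential characteristic of $B_i$. This is immediate and needs no further work.

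The substantive direction is the converse. Suppose $p$ is a potential characteristic of both $B_1$ and $B_2$, so there are morphisms $f_i : B_i \to k_i$ into semifields $k_i$ of characteristic $p$. The key point is that the universal property of $B_1 \otimes_\Fun B_2$ as a pushout over $\Fun$ gives a morphism $B_1 \otimes_\Fun B_2 \to k_1 \otimes_\Fun k_2$ (the structure maps $\Fun \to k_i$ automatically agree, since there is a unique morphism out of $\Fun$). So it suffices to show that $k_1 \otimes_\Fun k_2$ admits a morphism to a semifield of characteristic $p$, equivalently that it is nonzero and has $p$ among its potential characteristics. Here I would split into the cases $p = 1$ and $p$ a prime (the case $p = 0$ being handled similarly to the prime case but over a field of characteristic $0$). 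If $p = 1$, then by Lemma \ref{lemma-semifields-have-1-or-2-geometric-char} each $k_i$ is without $-1$, and one checks directly that $k_1 \otimes_\Fun k_2$ is then also without $-1$ and nonzero — e.g.\ it maps onto $\B_1 \otimes_\Fun \B_1 = \B_1$ via $f_1, f_2$ composed with the maps $k_i \to \B_1$ — so $1$ is a potential characteristic. If $p$ is a prime, then each $k_i$ is a field of characteristic $p$, and one forms the tensor product $k_1 \Sotimes_{\F_p} k_2$ of $\F_p$-algebras (using that $(\blanc)^+$ turns $\otimes_\Fun$ into $\Sotimes_{\F_1^+} = \Sotimes_{\F_p}$ after base change, via the identity $(B \otimes_D C)^+ = B^+ \Sotimes_{D^+} C^+$ recalled in Section \ref{subsection:notation-and-conventions}); this $\F_p$-algebra is nonzero since $\F_p$ is a field, hence has a quotient that is a field of characteristic $p$, giving the desired morphism.

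The main obstacle I anticipate is the case $p = 1$: over idempotent semifields one cannot invoke the classical fact that a tensor product of nonzero algebras over a field is nonzero, and one must instead argue directly that $k_1 \otimes_\Fun k_2$ does not collapse to $\{0\}$ and does not acquire a $-1$. The cleanest route is the one indicated above — exhibit an explicit surjection onto $\B_1$ by composing with the canonical maps $k_i \to \B_1$ from Lemma \ref{lemma-semifields-have-1-or-2-geometric-char} — which forces $k_1 \otimes_\Fun k_2 \neq \{0\}$ and exhibits $1$ as a potential characteristic. The final sentence of the lemma is then a restatement: $B_1 \otimes_\Fun B_2 = \{0\}$ precisely when it has \emph{no} potential characteristic (the zero ring maps to no semifield), which by the biconditional just proved happens exactly when $B_1$ and $B_2$ share no potential characteristic; here one uses that every nonzero blueprint has at least one potential characteristic, which follows from the surjectivity of $\alpha_X$ (Lemma \ref{lemma-geom-char-and-the-fibres-of-alpha}) applied to a point of $\Spec(B_1 \otimes_\Fun B_2)$ together with the characterization of prime semifields in Section \ref{subsection:mixed_characteristics}.
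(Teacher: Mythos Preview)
Your proposal is correct and follows essentially the same route as the paper's proof. The only differences are cosmetic: the paper treats the case $p\neq 1$ uniformly by choosing \emph{fields} $k_i$ of characteristic $p$ (for $p=0$ one passes from the given semifield to $(k_i)^+_\Z$) and mapping $B_1\otimes_\Fun B_2$ into a compositum of $k_1$ and $k_2$, whereas you split into $p$ prime and $p=0$ and go through $k_1\Sotimes_{\F_p} k_2$ (resp.\ over $\Q$) followed by a residue field---these are the same construction phrased differently. For $p=1$ and for the final clause your argument coincides with the paper's essentially verbatim.
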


\begin{proof}
 Since there are canonical maps $B_i\to B_1\otimes_\Fun B_2$ for $i=1,2$, every potential characteristic of $B_1\otimes_\Fun B_2$ is a potential characteristic of both $B_1$ and $B_2$. 

 Conversely, let $p$ be a common potential characteristic of $B_1$ and $B_2$. In case $p\neq 1$, there are morphisms $B_i\to k_i$ into fields $k_1$ and $k_2$ of characteristic $p$. The compositum $k$ of $k_1$ and $k_2$ is a field of characteristic $p$ that contains $k_1$ and $k_2$ as subfields. This yields morphisms $f_i: B_i\to k$ and thus a morphism $f:B_1\otimes_\Fun B_2\to k$ (note that there is a unique map $\Fun\to k$, which factorizes through $f_1$ and $f_2$). Thus $p$ is a potential characteristic of $B_1\otimes_\Fun B_2$.

 If $p=1$, then $B_1$ and $B_2$ are both without $-1$, and there are morphisms $f_i:B_i\to \B_1$ by Lemma \ref{lemma-semifields-have-1-or-2-geometric-char}. Therefore there is a morphism $B_1\otimes_\Fun B_2\to\B_1$, and $1$ is a potential characteristic of $B_1\otimes_\Fun B_2$.
%

 This shows in particular that $B_1\otimes_\Fun B_2\neq \{0\}$ if $B_1$ and $B_2$ have a potential characteristic in common. If there is no morphism $B_1\otimes_\Fun B_2\to k$ into a semifield $k$, then there is no morphism $B_1\otimes_\Fun B_2\to \kappa$ into any blue field $\kappa$. This means that $\Spec B_1\otimes_\Fun B_2$ is the empty scheme and $B_1\otimes_\Fun B_2$ is $\{0\}$.
\end{proof}

\begin{ex}
 While $B_1$ and $B_2$ possess all potential characteristics of $B_1\otimes_{\kappa_0} B_1$ for an arbitrary blue field $\kappa_0$, the contrary is not true in general. 

 For instance, consider the tensor product $\Funsq\otimes_{\Fun[T^{\pm1}]}\Funsq$ with respect to the two morphisms $f_1:\Fun[T^{\pm1}]\to\Funsq$ with $f_1(T)=1$ and $f_2:\Fun[T^{\pm1}]\to\Funsq$ with $f_2(T)=-1$. We have that $\Funsq\otimes_\Fun\Funsq=(\Funsq)_\inv=\Funsq$, which is represented by $\{0\tensor0,1\tensor1,1\tensor(-1)\}$. The tensor product $\Funsq\otimes_{\Fun[T^{\pm1}]}\Funsq$ is a quotient of $\Funsq$, and we have 
 \[
  1\tensor(-1) \ = \ 1\tensor(1\cdot f_2(T)) \ = \ (1\cdot f_1(T))\tensor1 \ = \ 1\tensor1.
 \]
 Thus $1\tensor1$ is its own additive inverse and $\Funsq\otimes_{\Fun[T^{\pm1}]}\Funsq=\F_2$, the field with two elements. While $\Funsq$ and $\Fun[T^{\pm1}]$ have both indefinite characteristic, $\F_2$ has characteristic $2$.
\end{ex}

\begin{thm} \label{thm-image-of-tau-in-x_1-times-x_2}
 Let $X_1$ and $X_2$ be blue schemes. Then the embedding $\tau:X_1\times_\Fun X_2\to X_1 \toptimes X_2$ is a homeomorphism onto the subspace
 \[
  \{ \ (x_1,x_2)\in X_1 \toptimes X_2 \ | \ x_1\text{ and }x_2\text{ have a common potential characteristic}\ \}.
 \]
\end{thm}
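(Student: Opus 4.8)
The plan is to assemble the statement from three results already in place. By Proposition \ref{prop: tau is an embedding} we know $\tau$ is a homeomorphism onto its image, so the whole task reduces to identifying, as a subset of $X_1\toptimes X_2$, which pairs $(x_1,x_2)$ lie in the image of $\tau$. Since $\tau(z)=(\pr_1(z),\pr_2(z))$ for the two projections $\pr_i\colon X_1\times_\Fun X_2\to X_i$, a pair $(x_1,x_2)$ lies in the image exactly when the set $\{z\in X_1\times_\Fun X_2\mid \pr_1(z)=x_1,\ \pr_2(z)=x_2\}$ is non-empty.

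Next I would compute this ``diagonal'' fibre as the underlying space of an explicit blue scheme. By Lemma \ref{lemma-algebraic-and-topological-fibre-of-a-morphism} the topological fibre $\pr_1^{-1}(x_1)^\top$ is homeomorphic to the blue scheme $\Spec\kappa(x_1)\times_{X_1}(X_1\times_\Fun X_2)$, which by associativity of fibre products in $\BSch$ equals $\Spec\kappa(x_1)\times_\Fun X_2$. Applying the same lemma once more, to the projection $\Spec\kappa(x_1)\times_\Fun X_2\to X_2$ and the point $x_2$, the locus of points lying over $(x_1,x_2)$ is homeomorphic to
\[
 \bigl(\Spec\kappa(x_1)\times_\Fun X_2\bigr)\times_{X_2}\Spec\kappa(x_2)\ \simeq\ \Spec\kappa(x_1)\times_\Fun\Spec\kappa(x_2)\ =\ \Spec\bigl(\kappa(x_1)\otimes_\Fun\kappa(x_2)\bigr).
\]
Hence $(x_1,x_2)$ lies in the image of $\tau$ if and only if $\Spec(\kappa(x_1)\otimes_\Fun\kappa(x_2))\neq\emptyset$, i.e.\ (the spectrum of a blueprint being empty precisely when the blueprint is $\{0\}$) if and only if $\kappa(x_1)\otimes_\Fun\kappa(x_2)\neq\{0\}$.

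Finally I would invoke Lemma \ref{lemma: potential characteristics of b_1 tensor b_2}, by which $\kappa(x_1)\otimes_\Fun\kappa(x_2)\neq\{0\}$ exactly when the blue fields $\kappa(x_1)$ and $\kappa(x_2)$ have a common potential characteristic; by the definition of the potential characteristics of a point this is precisely the condition that $x_1$ and $x_2$ have a common potential characteristic. Chaining the equivalences identifies the image of $\tau$ with the asserted subspace, and since $\tau$ is already a homeomorphism onto that image the proof concludes.

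The hard part is essentially only the bookkeeping in the middle step — making sure that the diagonal fibre over $(x_1,x_2)$ is correctly modelled by $\Spec(\kappa(x_1)\otimes_\Fun\kappa(x_2))$ — which is handled by applying Lemma \ref{lemma-algebraic-and-topological-fibre-of-a-morphism} twice together with associativity of fibre products over $\Fun$; no genuinely new difficulty arises beyond the cited results.
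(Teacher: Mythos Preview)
Your proof is correct and follows essentially the same strategy as the paper: reduce to identifying the image via Proposition~\ref{prop: tau is an embedding}, and then invoke Lemma~\ref{lemma: potential characteristics of b_1 tensor b_2} to translate non-triviality of $\kappa(x_1)\otimes_\Fun\kappa(x_2)$ into the common-potential-characteristic condition. The only cosmetic difference is that you compute the fibre over $(x_1,x_2)$ by applying Lemma~\ref{lemma-algebraic-and-topological-fibre-of-a-morphism} twice, whereas the paper observes directly that the (unique, by injectivity of $\tau$) point over $(x_1,x_2)$ has residue field $\kappa(x_1)\otimes_\Fun\kappa(x_2)$ and, for the converse, exhibits the point as the image of a morphism $\Spec k\to X_1\times_\Fun X_2$ for a suitable semifield $k$.
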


\begin{proof}
 Note that since the underlying topological space of $X_0=\Spec\Fun$ is the one-point space, we have $X_1\toptimes_{X_0}X_2=X_1\toptimes X_2$. Since $\tau$ is an embedding (cf.\ \ref{prop: tau is an embedding}), we have only to show that the image of $\tau$ is as described in the theorem.

 Let $x=(x_1,x_2)\in X_1\toptimes X_2$. Write $\kappa_i$ for $\kappa(x_i)$ and $\kappa$ for $\kappa_1\otimes_\Fun\kappa_2$. If $x\in X_1\times_\Fun X_2$, then $\kappa=\kappa(x)$. The canonical morphism $\kappa\to\kappa^+$ witnesses that $p=\kar\kappa^+$ is a potential characteristic of $\kappa$. By Lemma \ref{lemma: potential characteristics of b_1 tensor b_2}, the potential characteristics of $\kappa$ (or, equivalently, $x$) correspond to the potential characteristic of $\kappa_1$ and $\kappa_2$ (or, equivalently, $x_1$ and $x_2$). Therefore, $x_1$ and $x_2$ have a potential characteristic in common.

 If, conversely, $x_1$ and $x_2$ have a common potential characteristic $p$, then $p$ is also a potential characteristic of $\kappa$ by Lemma \ref{lemma: potential characteristics of b_1 tensor b_2}. This means that there exists a morphism $\kappa\to k$ into a semifield $k$. The morphism $\Spec k\to \Spec\kappa$ has image $x=(x_1,x_2)$, and thus $(x_1,x_2)\in X_1\times_\Fun X_2$. 
\end{proof}

For later reference, we state the following fact, which follows from the local definition of the fibre product. We use the shorthand notation $\Gamma X$ for the global sections $\Gamma(X,\cO_X)$ of $X$.

\begin{lemma}\label{lemma: global sections of fibre products}
 Let $X\to Z$ and $Y\to Z$ be two morphisms of blue schemes. Then 
 \[
  \Gamma (X\times_ZY) \quad \simeq \quad \Gamma X \ \otimes_{\Gamma Z} \ \Gamma Y.
 \]
 \vspace{-40pt}\\ \ \qed
\end{lemma}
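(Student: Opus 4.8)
The plan is to reduce the statement to the affine case, where it is essentially the definition of the fibre product, and then to pass to the general case by gluing. First note that the structure morphisms $f\colon X\to Z$ and $g\colon Y\to Z$ induce blueprint morphisms $\Gamma Z\to\Gamma X$ and $\Gamma Z\to\Gamma Y$, so that $\Gamma X\otimes_{\Gamma Z}\Gamma Y$ is defined, and that the two projections of $X\times_Z Y$ together with the universal property of this tensor product (established in Part I) produce a canonical blueprint morphism $\phi\colon\Gamma X\otimes_{\Gamma Z}\Gamma Y\to\Gamma(X\times_Z Y)$; the task is to show that $\phi$ is an isomorphism. If $X=\Spec B$, $Y=\Spec C$ and $Z=\Spec D$ are affine, then by the construction of fibre products in $\BSch$ (cf.\ \cite[Prop.\ 3.27]{blueprints1}) one has $X\times_Z Y=\Spec(B\otimes_D C)$, and hence $\Gamma(X\times_Z Y)=B\otimes_D C=\Gamma X\otimes_{\Gamma Z}\Gamma Y$; in this case $\phi$ is an isomorphism by inspection.

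For general $X$, $Y$ and $Z$, I would fix an affine open cover $\{W_k\}$ of $Z$ and then affine open covers $\{U_{k,i}\}$ of the preimages $f^{-1}(W_k)$ and $\{V_{k,j}\}$ of the preimages $g^{-1}(W_k)$. By the local construction of the fibre product, the affine blue schemes $U_{k,i}\times_{W_k}V_{k,j}=\Spec\bigl(\Gamma U_{k,i}\otimes_{\Gamma W_k}\Gamma V_{k,j}\bigr)$ form an affine open cover of $X\times_Z Y$, and, by functoriality of the tensor product, the restriction morphisms of $\cO_{X\times_Z Y}$ along inclusions of these opens (and of affine opens covering their pairwise intersections) are obtained from the restriction morphisms in the chosen covers of $X$ and of $Y$ by tensoring over the corresponding restrictions of $\Gamma Z$. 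Since $\cO_X$, $\cO_Y$, $\cO_Z$ and $\cO_{X\times_Z Y}$ are sheaves, taking global sections identifies $\Gamma X$, $\Gamma Y$, $\Gamma Z$ and $\Gamma(X\times_Z Y)$ with the respective limits over these covers, and under these identifications $\phi$ is exactly the natural comparison morphism from the tensor product over $\Gamma Z$ of the limits computing $\Gamma X$ and $\Gamma Y$ to the limit computing $\Gamma(X\times_Z Y)$.

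The step I expect to require real care is this final comparison — i.e.\ checking that forming $\blanc\otimes_{\Gamma Z}\Gamma Y$ is compatible with the sheaf-theoretic limits that compute global sections over the chosen covers; the rest is bookkeeping. Concretely I would verify that $\phi$ is bijective on underlying monoids and compatible with pre-additions by unwinding a section of $\cO_{X\times_Z Y}$ as a compatible family of sections on the cover $\{U_{k,i}\times_{W_k}V_{k,j}\}$, matching it with an element of $\Gamma X\otimes_{\Gamma Z}\Gamma Y$ term by term, and invoking the affine case on each piece and on each pairwise intersection. Finally, I note that the adjunction $\Hom_{\BSch}(W,\Spec B)\cong\Hom_{\bp_0}(B,\Gamma W)$ from Part I does not by itself shortcut the argument: it controls morphisms whose target is affine, whereas $X\times_Z Y$ enters here via its universal property as a source of morphisms, so the descent along an affine cover seems unavoidable.
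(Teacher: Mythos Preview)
The paper does not actually prove this lemma: it is stated with a \qed\ and prefaced only by ``follows from the local definition of the fibre product.'' So there is nothing to compare against beyond the affine identity $\Spec(B\otimes_D C)=\Spec B\times_{\Spec D}\Spec C$, which you reproduce correctly. Your attempt to pass from the affine case to the general case is the natural thing to try, and you rightly flag the commutation of $\blanc\otimes_{\Gamma Z}\Gamma Y$ with the sheaf equalizer as the only nontrivial step.

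That step, however, is not ``bookkeeping,'' and your sketch (``matching term by term'') does not close it. In the category of rings the analogous commutation fails without flatness hypotheses; abstractly, $\Gamma$ is the left adjoint in the pair $(\Gamma,\Spec)$, so there is no formal reason for it to preserve fibre products. What makes the blueprint case different---and what you never invoke---is that every element of $B\otimes_D C$ is a \emph{pure tensor} $b\otimes c$, since the underlying monoid of a blueprint tensor product is a quotient of $A_B\times A_C$ rather than of a free module on it. This means a global section of $X\times_Z Y$ restricts on each chart $U_i\times_{W_k}V_j$ to a single product $a_{ij}\otimes b_{ij}$, and the overlap conditions then force (away from the zero locus) $a_{ij}$ to depend only on $i$ and $b_{ij}$ only on $j$, producing elements of $\Gamma X$ and $\Gamma Y$. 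Without this observation your gluing argument has no mechanism to separate the two factors, and the proof as written is incomplete.
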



\subsection{Relative additive closures}
\label{subsection: relative additive closures}

Let $f:B\to C$ be a morphism. The \emph{additive closure of $B$ in $C$ w.r.t.\ to $f$} is the subblueprint
\[
 f^+(B) \quad = \quad \{ \ c\in C \ | \ c\=\sum f(a_i)\text{ for }a_i\in B\}
\]
of $C$. Note that this is indeed a subblueprint of $C$ since for $c,d\in f^+(B)$, i.e.\ $c\=\sum f(a_i)$ and $d\=\sum f(b_j)$, the product $cd=\sum f(a_ib_j)$ is an element of $f^+(B)$. 

If $B$ is a subblueprint of $C$ and $\iota:B\hookrightarrow C$ the inclusion, then we call $\iota^+(B)$ briefly the \emph{additive closure of $B$ in $C$}. The subblueprint $B$ is \emph{additively closed in $C$} if $B=\iota^+(B)$.

We list some immediate properties of relative additive closures. Let $f:B\to C$ be a blueprint morphism. Then $f^+(B)$ is additively closed in $C$. More precisely, $f^+(B)$ is the smallest additively closed subblueprint $B'$ of $C$ such that the morphism $f:B\to C$ factors through $B'\hookrightarrow C$. If $C$ is a semiring, then $f^+(B)$ is isomorphic to the universal semiring $f(B)^+$ associated with $f(B)$ (considered as a subblueprint of $C$).

\begin{lemma}\label{lemma: relative additive closure of a morphism}
 For any commutative diagram
 \[
  \xymatrix@R=0,6pc@C=3pc{ & B\ar[rr]^g\ar[dd]_f &&  \tilde B \ar[dd]^{\tilde f}\\ \\ & C\ar[rr]^{\tilde g} && \tilde C,}
 \]
 there exists a unique blueprint morphism $f^+(g):f^+(B)\to f^+(\tilde B)$ such that the diagram
 \[
  \xymatrix@R=1pc@C=3pc{ & B\ar[rr]^g\ar[dd]|\hole_(0.65)f\ar[dl] &&  \tilde B \ar[dd]^{\tilde f}\ar[dl] \\  f^+(B) \ar[rr]^(0,65){f^+(g)}\arincl[dr] && \tilde f^+(\tilde B) \arincl[dr]\\ & C\ar[rr]^{\tilde g}  && \tilde C,}
 \]
 commutes.
\end{lemma}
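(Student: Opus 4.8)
The plan is to build $f^+(g)$ by restriction: we already have the composite map $B \xrightarrow{g} \tilde B \xrightarrow{\tilde f} \tilde C$, which equals $\tilde g \circ f$ by commutativity of the outer square. First I would observe that for any element $c \in f^+(B)$, by definition $c \= \sum f(a_i)$ for some $a_i \in B$; applying $\tilde g$ gives $\tilde g(c) \= \sum \tilde g(f(a_i)) \= \sum \tilde f(g(a_i))$, which exhibits $\tilde g(c)$ as an element of $\tilde f^+(\tilde B)$ since each $g(a_i) \in \tilde B$. Thus $\tilde g$ maps $f^+(B)$ into $\tilde f^+(\tilde B)$, and we define $f^+(g)$ to be this restriction. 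It is automatically a blueprint morphism because it is the restriction of the blueprint morphism $\tilde g$ to a subblueprint, with image landing in a subblueprint; the commutativity of the lower trapezoid (the square involving $f^+(B)$, $\tilde f^+(\tilde B)$, $C$, $\tilde C$) is then immediate from $f^+(g)$ being a restriction of $\tilde g$, and the commutativity of the two triangles on top follows from the fact that $B \to f^+(B) \hookrightarrow C$ is just $f$ (and likewise for $\tilde B$), together with $\tilde f^+(\tilde B) \hookrightarrow \tilde C$ composed with $f^+(g)$ agreeing with $\tilde g \circ (B \to f^+(B))$ on the nose.

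For uniqueness, I would use that $B \to f^+(B)$ is surjective onto a generating set in the relevant sense: every element of $f^+(B)$ is an additive combination $\sum f(a_i)$ of images of elements of $B$, so any blueprint morphism $f^+(B) \to \tilde f^+(\tilde B)$ making the top-left triangle commute must send $f(a) \mapsto \tilde f(g(a))$ for $a \in B$, and since a blueprint morphism is determined by its values on a generating set (here the elements $f(a)$ multiplicatively generate, and additivity is forced), the map is pinned down. More directly: if $\psi, \psi'$ both fit the diagram, then $\iota \circ \psi = \tilde g|_{f^+(B)} = \iota \circ \psi'$ where $\iota : \tilde f^+(\tilde B) \hookrightarrow \tilde C$ is the inclusion (this is the commutativity of the lower trapezoid), and since $\iota$ is injective, $\psi = \psi'$.

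The only mild subtlety is checking that the restriction of $\tilde g$ to $f^+(B)$ is well-defined as a map \emph{of blueprints} — i.e. that it respects the pre-additions — but this is inherited directly from $\tilde g$ being a blueprint morphism, since the pre-addition on $f^+(B)$ (resp. $\tilde f^+(\tilde B)$) is the one induced from $C$ (resp. $\tilde C$) as a subblueprint. I expect the main (very minor) obstacle to be bookkeeping the four-way commutativity of the prism diagram cleanly, rather than any real mathematical content; the construction itself is forced once one unwinds the definition of the relative additive closure given just above the lemma.
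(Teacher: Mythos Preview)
Your proposal is correct and follows essentially the same approach as the paper: define $f^+(g)$ as the restriction of $\tilde g$ to $f^+(B)$, verify via $\tilde g(c)\=\sum\tilde g(f(a_i))\=\sum\tilde f(g(a_i))$ that the image lands in $\tilde f^+(\tilde B)$, and derive uniqueness from the injectivity of $\tilde f^+(\tilde B)\hookrightarrow\tilde C$. The paper's proof is slightly terser but the content is identical.
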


\begin{proof}
 The uniqueness of $f^+(g)$ follows from the injectivity of $\tilde f^+(\tilde B)\hookrightarrow \tilde C$. For $c\in f^+(B)$ define $f^+(g)(c)$ as $\tilde g(c)$, which is a priori an element of $\tilde C$. Since $c\=\sum f(a_i)$ for certain $a_i\in B$, we have that $\tilde g(c)\=\sum \tilde g(f(a_i)) \=\sum \tilde f(g(a_i))$, thus $\tilde g(C)$ is indeed an element of $\tilde f^+(\tilde B)$. This shows that $f^+(g):f^+(B)\to f^+(\tilde B)$ is a blueprint morphism with the desired property.
\end{proof}

Let $B$ be a blueprint and $\iota:B\to B_\inv$ the base extension from $\Fun$ to $\Funsq$ where we write $B_\inv=B\otimes_\Fun\Funsq$. The \emph{inverse closure of $B$} is the subblueprint $\hat B=\iota^+(B)$ of $B_\inv$. A blueprint $B$ is \emph{inverse closed} if $B\simeq \hat B$. 

Note that since $(B_\inv)_\inv= B_\inv$, the inverse closure $\hat B$ of $B$ is inverse closed. The previous lemma extends the association $B\mapsto \hat B$ naturally to a functor $(\blanc)^\hatexp:\bp\to \bp$ whose essential image are the inverse closed blueprints. Further note that the inverse closure $\hat B$ of $B$ equals the intersection of $B_\inv$ with $B^+_\canc$ inside $B^+_\Z$. In particular note that $\hat B$ is cancellative.

\begin{lemma} \label{lemma: tensor product of inverse closures}
 Let $B_1$ and $B_2$ be blueprints. Then $(B_1\otimes_\Fun B_2)^\hatexp \simeq\hat B_1\otimes_\Fun\hat B_2$.
\end{lemma}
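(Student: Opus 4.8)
The plan is to exhibit the canonical comparison morphism
$\Phi\colon\hat B_1\otimes_\Fun\hat B_2\longrightarrow(B_1\otimes_\Fun B_2)^\hatexp$
and to show that it is an isomorphism.

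First I would set up the ambient object. Since $(\blanc)_\inv=(\blanc)\otimes_\Fun\Funsq$ and $\Funsq\otimes_\Fun\Funsq\simeq\Funsq$, associativity and commutativity of $\otimes_\Fun$ yield a canonical isomorphism $(B_1\otimes_\Fun B_2)_\inv\simeq(B_1)_\inv\otimes_\Fun(B_2)_\inv$; under it, $(B_1\otimes_\Fun B_2)^\hatexp=\iota^+(B_1\otimes_\Fun B_2)$ is the subblueprint of $(B_1)_\inv\otimes_\Fun(B_2)_\inv$ consisting of those elements $\equiv\sum a_k\otimes b_k$ with $a_k\in B_1$, $b_k\in B_2$. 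The canonical maps $B_i\to B_1\otimes_\Fun B_2$ induce, through the functoriality of $(\blanc)^\hatexp$ that extends Lemma \ref{lemma: relative additive closure of a morphism}, maps $\hat B_i\to(B_1\otimes_\Fun B_2)^\hatexp$ — concretely the restrictions of the base-change maps $(B_i)_\inv\to(B_1)_\inv\otimes_\Fun(B_2)_\inv$ to $\iota^+(B_i)$, which do land in $\iota^+(B_1\otimes_\Fun B_2)$ since every element of $\iota^+(B_i)$ is a sum of images of elements of $B_i$. Because $\Fun$ is initial in $\bp_0$, so that $\otimes_\Fun$ is the coproduct, these two morphisms assemble into $\Phi$.

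It remains to prove $\Phi$ is an isomorphism. One useful reformulation: $(B_1\otimes_\Fun B_2)^\hatexp$ is inverse closed (it lies in the essential image of $(\blanc)^\hatexp$), so $\Phi$ is an isomorphism as soon as $\hat B_1\otimes_\Fun\hat B_2$ is inverse closed too, because then both sides corepresent the same functor on inverse-closed blueprints, namely $C\mapsto\Hom(B_1,C)\times\Hom(B_2,C)$ — using that $\otimes_\Fun$ is a coproduct and that a morphism from a blueprint into an inverse-closed (hence cancellative) blueprint factors uniquely through its inverse closure, the uniqueness coming from uniqueness of additive inverses in cancellative blueprints. Thus the real task is: \emph{if $B_1$ and $B_2$ are inverse closed, then so is $B_1\otimes_\Fun B_2$}; concretely this is the surjectivity of $\Phi$, i.e.\ that a monoid element $u\otimes v$ of $(B_1)_\inv\otimes_\Fun(B_2)_\inv$ (with $u=\pm a$, $v=\pm b$, $a\in B_1$, $b\in B_2$) lying in $\iota^+(B_1\otimes_\Fun B_2)$ admits a representation $u\otimes v=x\otimes y$ with $x\in\hat B_1$, $y\in\hat B_2$. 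For this I would use the description $\hat B=B_\inv\cap B^+_\canc$ inside $B^+_\Z$ together with the facts that $(\blanc)^+$ and $(\blanc)^+_\Z$ carry $\otimes_\Fun$ to $\Sotimes_\N$ and $\Sotimes_\Z$ (so $(B_1\otimes_\Fun B_2)^+_\Z\simeq(B_1)^+_\Z\Sotimes_\Z(B_2)^+_\Z$), reducing the membership question to an intersection computation inside an honest ring. The cases in which one of $B_1$, $B_2$ carries a $-1$ — whence $(B_i)_\inv=B_i$ and signs move freely across $\otimes_\Fun$ — are immediate and serve as a warm-up.

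The main obstacle is precisely this surjectivity, i.e.\ the stability of inverse-closedness under $\otimes_\Fun$. The functors $(\blanc)^+$ and $(\blanc)_\Z$ commute with base change and hence with $\otimes_\Fun$, but passing to the additive, resp.\ inverse, closure is not a base-change operation, so one cannot simply tensor the identities $\hat B_i=\iota^+(B_i)$; instead one must descend the relation ``$c\equiv\sum a_k\otimes b_k$'' witnessing $c\in\iota^+(B_1\otimes_\Fun B_2)$ into relations supported on each factor separately. The intersection description of $\hat B$ inside $B^+_\Z$ is the device that makes this descent tractable, by transporting the problem into the category of (semi)rings, where tensor products are better behaved.
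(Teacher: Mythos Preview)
Your categorical setup is correct and the reformulation via corepresented functors is clean: once you know that $\hat B_1\otimes_\Fun\hat B_2$ is inverse closed, both sides corepresent the same functor on inverse-closed blueprints and hence agree. You have also correctly isolated the crux: showing that $\otimes_\Fun$ preserves inverse-closedness. But you do not actually carry out this step; you only propose a method, and the method you propose points in the wrong direction.

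The suggestion to transport the problem into $(B_1)^+_\Z\Sotimes_\Z(B_2)^+_\Z$ ``where tensor products are better behaved'' is backwards for this purpose. In a \emph{ring} tensor product, a relation like $a\otimes b+\sum_k c_k\otimes d_k=0$ has no reason to descend to a relation supported on a single factor, and the intersection identity $\hat B=B_\inv\cap B^+_\canc$ does not obviously commute with $\Sotimes_\Z$. By contrast, the \emph{blueprint} tensor product $B_1\otimes_\Fun B_2$ has an extremely restrictive pre-addition: it is generated by $\cR_1\times\{1\}$ and $\{1\}\times\cR_2$. This is the fact the paper exploits, and it is what you are missing.

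Concretely, the paper first reduces to cancellative $B_1,B_2$ (so both sides embed in $B_{1,\inv}\otimes_\Fun B_{2,\inv}$ and the question becomes one of comparing subblueprints), then argues elementwise. Since every element of the monoid underlying $B_{1,\inv}\otimes_\Fun B_{2,\inv}$ is a simple tensor, it suffices to check when $-(a\otimes b)$ lies on each side. If $-(a\otimes b)\in(B_1\otimes_\Fun B_2)^\hatexp$, there is a relation $a\otimes b+\sum c_k\otimes d_k\equiv 0$; because the pre-addition of the blueprint tensor product is generated factor-wise, such a relation must come from a relation $a+\sum\tilde c_k\equiv 0$ in $B_1$ or $b+\sum\tilde d_k\equiv 0$ in $B_2$, whence $-a\in\hat B_1$ or $-b\in\hat B_2$, and then $-(a\otimes b)=(-a)\otimes b$ or $a\otimes(-b)$ lies in $\hat B_1\otimes_\Fun\hat B_2$. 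The reverse inclusion is immediate. This direct argument is short precisely because blueprint tensor products are \emph{simpler} than ring tensor products, not harder; staying in $\bp_0$ rather than passing to $\Sotimes_\Z$ is the move that makes the descent work.
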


\begin{proof}
 Since $(B_1\otimes_\Fun B_2)_\canc=B_{1,\canc}\otimes_\Fun B_{2,\canc}$ and $\hat B=(B_\canc)^\hatexp $, we can assume that $B_1$ and $B_2$ are cancellative. Therefore, we can consider $(B_1\otimes_\Fun B_2)^\hatexp $ and $\hat B_1\otimes_\Fun\hat B_2$ as subblueprints of $(B_1\otimes_\Fun B_2)_\inv=B_{1,\inv}\otimes_\Fun B_{2,\inv}$ that both contain $B_1\otimes_\Fun B_2$ as a subblueprint.
 
 Let $a\otimes b$ be an element of $B_1\otimes_\Fun B_2$. we have to show that its additive inverse $-(a\otimes b)$ is contained in $(B_1\otimes_\Fun B_2)^\hatexp $ if and only if it is contained in $\hat B_1\otimes_\Fun\hat B_2$.

 Assume that $-(a\otimes b)$ is contained in $(B_1\otimes_\Fun B_2)^\hatexp $. Then there is an additive relation of the form $a\otimes b+\sum c_k\otimes d_k\=0$ in $B_1\otimes_\Fun B_2$. By the definition of the tensor product $B_1\otimes_\Fun B_2$, this must come from an additive relation of the form $a+\sum \tilde c_k\=0$ in $B_1$ or an additive relation of the form $b+\sum \tilde d_k\=0$ in $B_2$. Thus $-a\in \hat B_1$ or $-b\in\hat B_2$. In either case, $(-a)\otimes b=-(a\otimes b)=a\otimes (-b)$ is an element of $\hat B_1\otimes_\Fun\hat B_2$.

 Assume that $-(a\otimes b)$ is contained in $\hat B_1\otimes_\Fun\hat B_2$. By symmetry of the argument, we may assume that $-a\in\hat B_1$, i.e.\ we have an additive relation $a+\sum c_k\=0$ in $B_1$. Thus $a\otimes b+\sum c_k\otimes b$ in $B_1\otimes B_2$, which shows that $-(a\otimes b)$ is an element of $(B_1\otimes_\Fun B_2)^\hatexp $.
\end{proof}

If $Z=\Spec B$ is an affine blue scheme, then we define $\hat Z=\Spec\hat B$. It comes together with a morphism $\gamma_Z:\hat Z\to Z$ induced by the blueprint morphism $B\to \hat B$. 

\begin{rem}
 The inverse closure $(\blanc)^\hatexp $ of a blueprint does not behave well with localizations. It seems that there is no (meaningful) extension of $(\blanc)^\hatexp $ from affine blue schemes to all blue schemes. To illustrate the incompatibility with localizations, consider the subblueprint $B=\Fun[T]$ of $C=\bpgenquot{\Fun[T,S]}{ST+S\=0}$, which is additively closed in $C$. Let $\fq$ be the ideal of $C$ that is generated by $T$. Then $C_\fq=\bpgenquot{\Fun[T,S^{\pm1}]}{ST+S\=0}\simeq\Funsq[S^{\pm1}]$. The additive closure of $B$ in $C_\fq$ (w.r.t.\ the canonical morphism $f:B\hookrightarrow C\to C_\fq$) is $f^+(B)\simeq\Funsq$ while the localization $B_\fp$ at the prime ideal $\fp=\fq\cap B$ of $B$ is equal to $B=\Fun[T]$ itself.
\end{rem}


\subsection{The unit field and the unit scheme}
\label{subsection: unit field and unit scheme}

The units of a ring form naturally a group. In certain cases like polynomial rings over fields or discrete valuation rings of positive characteristics, the unit group together with $0$ forms a field; but, for a general ring, this is not true. However, the unit group together with $0$ and the restriction of the (pre-)addition of the ring has always the structure of a blue field, which leads to the following definition.

Let $B=\bpquot A\cR$ be a blueprint. The \emph{unit field of $B$} is the blue field $B^{\px}=\bpquot{A^\times\cup\{0\}}{\cR^\px}$ where $\cR^\px=\cR\vert_{A^\times\cup\{0\}}$ is the restriction of $\cR$ to the submonoid $A^\times\cup\{0\}$ of $A$. It comes together with a canonical inclusion $u:B^\px\to B$ of blueprints.

Let $X$ be a blue scheme and $B=\Gamma X$ its global sections. By \cite[Lemma 3.25]{blueprints1}, there exists a canonical morphism $X\to\Spec B$ that factors every morphism from $X$ to an affine blue scheme in a unique way. The \emph{unit scheme of $X$} is the blue scheme $X^\px=\Spec B^\px$ together with the morphism
\[
 \upsilon: \ X \ \longrightarrow \ \Spec B \ \stackrel{u^\ast}{\longrightarrow} \ \Spec B^\px \ = \ X^\px.
\]
The blue field $\Fpx(X)=B^\px$ is called the \emph{unit field of $X$}. The unit scheme $X^\px$ consists of one point $\eta$, which is corresponds to the unique prime ideal $\{0\}$ of the unit field $\Fpx(X)$. 

For a point $x$ of $X$, we write $\Fpx(x)$ for the unit field of the reduced closed subscheme $\barx$ of $X$ whose support is the closure of $x$. We call $\Fpx(x)$ the \emph{unit field at $x$}. There is a canonical morphism $\psi:\Fpx(x)\to\Gamma\barx\to\cO_{X,x}\to\kappa(x)$ into the residue field of $x$, which is, in general, neither injective nor surjective. If, however, $X$ is a reduced scheme that consists of only one point $x$, then $\psi:\Fpx(x)\to\kappa(x)$ is an isomorphism. This means, in particular, that 
\[
 \Fpx(X)=\Fpx(X^\px)=\Fpx(\eta)=\kappa(\eta)
\]
where $\eta$ is the unique point of $X^\px$.

Note that since a morphism $f:B\to C$ of blueprints sends $0$ to $0$ and units to units, it induces a morphism $f^\px:B^\px\to C^\px$ between the unit fields. Thus taking the unit field is an idempotent endofunctor of the category of blueprints whose essential image is the full subcategory of blue fields. Similarly, taking the unit scheme of a blue scheme is an idempotent endofunctor of the category of blue schemes. Note further that the category of unit schemes is dual to the category of blue fields since unit schemes are affine. 

A blueprint $B$ is \emph{generated by its units} if $u^+(B^\px)=B$ for $u:B^\px\to B$. This is equivalent to saying that $u^+(B^\px)=B$ induces an isomorphism $u^+:(B^\px)^+\to B^+$ of semirings. A blue scheme $X$ is \emph{generated by its units} if $\upsilon:X\to X^\px$ induces an isomorphism $\upsilon^+:X^+\to (X^\px)^+$ of semiring schemes.


\section{The Tits category}
\label{section: the tits category}

In this section, we will introduce Tits morphisms between blue schemes, which will be the technical core of the theory of Tits-Weyl models of algebraic groups. As a first task, we introduce the rank space of a blue scheme. With this, we are prepared to define Tits morphisms and to investigate their relationship to morphisms (in the usual sense), which we also call \emph{locally algebraic morphisms}.


\subsection{The rank space}
\label{subsection: rank space}

Let $X$ be a blue scheme and $x$ a point of $X$. In the following, we will understand by $\barx$ the closure of $x$ in $X$ together with its structure as a reduced closed subscheme (see Section \ref{subsection: reduced blueprints}). 

\begin{df}
 A point $x$ of $X$ is \emph{pseudo-Hopf} if $x$ is almost of indefinite characteristic, $\barx$ is affine, $\barx_\inv$ is generated by its units and $\barx_\Z^+$ is a flat scheme.
\end{df}

\begin{rem}
 If $x$ is pseudo-Hopf and $F=\Fpx(x)$ is the unit field of $x$, then $\Gamma \barx_\Z^+$ is a quotient of the Hopf algebra $\Z[F^\times]$, namely, by the ideal
 \[
  I \quad = \quad \{ \ \sum a_i-\sum b_j \ | \ \sum a_i\=\sum b_j\text{ in }F \ \}.
 \]
\end{rem}

Recall from Section \ref{subsection: relative additive closures} that for an affine blue scheme $Z=\Spec B$, we have $\hat Z=\Spec\hat B$ together with $\gamma_Z:\hat Z\to Z$. If $x\in X$ is a point such that $\barx$ is affine, then this yields the morphism
\[
 \begin{array}{cccccc}
   \rho_x: & \prerk x & \stackrel{\gamma_\barx}{\longrightarrow} & \barx & \stackrel{\iota_\barx}\longrightarrow & X.
 \end{array}
\]

\begin{df}
 Let $X$ be a blue scheme and $x$ a point of $X$. The \emph{rank $\rk\, x$ of $x$} is the dimension of the scheme $\barx_\Q^+$ over $\Q$.

 Let $X$ be connected. Then the \emph{rank of $X$} is 
 \[
  \rk\,X \quad = \quad \inf\ \{ \ \rk\,x \ | \ x\text{ is pseudo-Hopf}\ \} 
 \]
 if $X$ has a pseudo-Hopf point, and $\rk\,X=0$ otherwise. Let $\cZ(X)$ be the set of all pseudo-Hopf points of $X$ whose rank equals $\rk\,X$. The \emph{pre-rank space of $X$} is 
 \[
  X^\prk \quad = \quad \coprod_{x\in\cZ(X)}\ \prerk x
 \]
 and the \emph{rank space of $X$} is 
 \[
  X^\rk \quad = \quad \coprod_{x\in\cZ(X)}\ \prerk x^\px.
 \]
 If $X=\coprod X_i$ is the disjoint union of connected schemes $X_i$, then 
 \[
  X^\prk \quad = \quad \coprod X_i^\prk \hspace{1cm} \text{and} \hspace{1cm} X^\rk \quad = \quad \coprod X_i^\rk
 \]
 are the pre-rank space and the rank space of $X$.
\end{df}

We describe some immediate consequences of these definitions. The canonical morphisms $\rho_x:\prerk x\to\barx\to X$ define a morphism $\rho_X:X^\prk\to X$ and the canonical morphisms $\upsilon_x:\prerk x\to\prerk x^\px$ into the unit scheme define a morphism $\upsilon_X:X^\prk\to X^\rk$. Thus we obtain for every blue scheme $X$ the diagram
\[
 X^\rk \quad\stackrel{\upsilon_X}{\longleftarrow}\quad X^\prk \quad\stackrel{\rho_X}{\longrightarrow}\quad X.
\]

By the definition of pseudo-Hopf points, $\upsilon_X:X^\prk\to X^\rk$ induces an isomorphism $\upsilon_{X,\Z}^+:X^{\prk,+}_\Z\to X^{\rk,+}_\Z$ of schemes where we use the shorthand notations $X^{\prk,+}_\Z=(X^\prk)^+_\Z$ and $X^{\rk,+}=(X^\rk)^+_\Z$. Thus we obtain a commutative diagram
\[
 \xymatrix@C=4pc{X^{\rk,+}_\Z \ar[d]^{\beta_{X^\rk}}     &   X^{\prk,+}_\Z \ar@{=}[l]\ar[r]^{\rho^+_{X,\Z}}\ar[d]^{\beta_{X^\prk}}   &  X^{+}_\Z \ar[d]^{\beta_{X}} \\
           X^\rk                                         &   X^\prk \ar[l]_{\upsilon_X}\ar[r]^{\rho_X}                                    &  X\ .\hspace{-5pt} } 
\]
In the following, we identify $X^{\rk,+}_\Z$ with $X^{\prk,+}_\Z$ via $\upsilon_{X,\Z}^+$, which allows us to consider $\rho_{X,\Z}^+$ as a morphism from $X^{\rk,+}_\Z$ to $X^+_\Z$. If $\upsilon_X:X^\prk\to X^\rk$ is an isomorphism, then we say that \emph{the rank space $X^\rk$ lifts to $X$} and we may define $\tilde\rho_X:X^\rk\to X$ as $\rho_X\circ\upsilon_X^{-1}$. If additionally $\rho_X$ is a closed immersion, then we say that \emph{the rank space $X^\rk$ embeds into $X$}.

We turn to an investigation of the rank spaces. For this, we introduce the notion of blue schemes of pure rank.

\begin{df}
 A blue scheme $X$ is \emph{of pure rank} if it is discrete and reduced, if all points are pseudo-Hopf and if $\prerk x\to\barx$ is an isomorphism for all $x\in X$. We denote the full subcategory of $\BSch$ whose objects are blue schemes of pure rank by $\rkBSch$.
\end{df}

If $X$ is of pure rank, then every $x\in X$ has all all potential characteristics with the possible exception of $1$ since $\{x\}^+_\Z=\barx^+_\Z$ is a flat non-empty scheme. A scheme of pure rank is cancellative since for every connected component $\{x\}$, the blueprint $\Gamma\prerk x\simeq\Gamma \barx=\Gamma\{x\}$ is cancellative.

\begin{prop}\label{prop: characterisation of rank spaces}\quad 
 \begin{enumerate}
  \item The rank space of a blue scheme is of pure rank. 
  \item If $X$ is a scheme of pure rank, then $X^\rk$ lifts to $X$ and $\tilde\rho_X:X^\rk\to X$ is an isomorphism.
 \end{enumerate}
\end{prop}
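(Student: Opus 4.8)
The plan is to prove the two parts of Proposition \ref{prop: characterisation of rank spaces} in turn, exploiting that both the pre-rank space and the rank space are disjoint unions indexed by the minimal-rank pseudo-Hopf points $\cZ(X)$, so that everything reduces to understanding a single summand $\prerk x$ or $\prerk x^\px$ attached to a pseudo-Hopf point $x$.

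\medskip\noindent\textbf{Part (i).} First I would show that $X^\rk=\coprod_{x\in\cZ(X)}\prerk x^\px$ is of pure rank. Discreteness is immediate: each $\prerk x^\px$ is the spectrum of a blue field, hence a one-point space, and a disjoint union of one-point spaces is discrete. Reducedness follows because a blue field has only the prime ideal $\{0\}$, so $\Nil=0$. The remaining conditions are about a typical point $\eta$ of a summand $\prerk x^\px=\Spec F$ where $F=\Fpx(x)=\Fpx(\prerk x)$ is the unit field of the (cancellative, inverse-closed) blueprint $\Gamma\prerk x$. Here $\overline{\{\eta\}}$ in $X^\rk$ is the whole summand $\Spec F$, which is affine; its inverse closure $\widehat{\Spec F}$ is again $\Spec$ of a blue field, automatically generated by its units; and $(\Spec F)^+_\Z=\Spec\Z[F^\times]$, which is the spectrum of a group algebra over $\Z$, hence flat (indeed free) over $\Z$, so the flatness condition holds. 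Almost-indefinite characteristic of $\eta$ follows because $F=\Fpx(x)$ sits between the units of $\barx$ and, by the pseudo-Hopf hypothesis on $x$ together with Lemma \ref{lemma-funn-is-of-indefinite-characteristic}-type arguments (the residue/unit field of a pseudo-Hopf point has almost all potential characteristics, inherited down to $F$ since $F\hookrightarrow\barx_\inv$ and $\Z[F^\times]$ is flat); the cleanest route is to observe $\Z[F^\times]$ flat and nonzero forces all but finitely many primes to be potential characteristics, and then note $1$ may or may not occur but that is permitted. Finally one must check $\prerk\eta\to\overline{\{\eta\}}$ is an isomorphism in $X^\rk$: but $\overline{\{\eta\}}=\Spec F$ with $F$ a blue field which is inverse closed (being a blue field generated by its units, or directly: $F$ embeds in $F\otimes_\Fun\Funsq$ and its additive closure there is all of $F$ since $F$ is already a field-like object), hence $\prerk\eta=\widehat{\Spec F}=\Spec F=\overline{\{\eta\}}$. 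Collecting these, every point of $X^\rk$ is pseudo-Hopf and the isomorphism condition holds, so $X^\rk\in\rkBSch$.

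\medskip\noindent\textbf{Part (ii).} Now suppose $X$ itself is of pure rank; I must show $\upsilon_X:X^\prk\to X^\rk$ is an isomorphism (so $X^\rk$ lifts to $X$) and that $\tilde\rho_X\colon X^\rk\to X$ is an isomorphism. Since $X$ is discrete, it is the disjoint union of its one-point connected components $\{x\}$, each pseudo-Hopf, and one computes $\rk X$ componentwise; because $X$ is discrete every connected component is a single point, so \emph{every} point of $X$ is a minimal-rank pseudo-Hopf point of its own component, i.e. $\cZ(X)=X$ (using that for a connected $X$ with a unique point the infimum is attained there). Hence $X^\prk=\coprod_{x\in X}\prerk x$ and, by the defining property of pure rank, $\gamma_{\barx}\colon\prerk x\to\barx=\{x\}$ is an isomorphism for each $x$; assembling these gives that $\rho_X\colon X^\prk\to X$ is an isomorphism. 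It remains to see $\upsilon_X$ is an isomorphism. For this it suffices that each $\upsilon_x\colon\prerk x\to\prerk x^\px$ is an isomorphism, equivalently that $\Gamma\prerk x$ equals its own unit field. Here is where I would use that $X$ of pure rank is cancellative and each $\Gamma\barx$ is the coordinate blueprint of a flat group-scheme-like object: one shows $\Gamma\{x\}$ is a blue field generated by its units. Concretely, $\barx_\inv$ is generated by its units and $\barx\to\barx_\inv$ induces, after the further hypothesis $\prerk x\cong\barx$ (so $\barx$ is already inverse closed), that $\Gamma\barx=\widehat{\Gamma\barx}$ is cancellative and inverse closed; combined with the fact that $\Gamma\barx_\Z^+=\Z[\Gamma\barx^\times/\!\sim]$ arises from the units (flatness plus the structure of the pre-rank point) one concludes the non-unit elements of $\Gamma\barx$ must already be $0$, i.e. $\Gamma\barx$ is a blue field, hence $\upsilon_x$ is an isomorphism. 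Then $\upsilon_X$ is an isomorphism, $X^\rk$ lifts to $X$, and $\tilde\rho_X=\rho_X\circ\upsilon_X^{-1}$ is an isomorphism.

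\medskip\noindent\textbf{Main obstacle.} The routine-looking but genuinely delicate point is the last step of Part (ii): showing that for a point of pure rank the coordinate blueprint $\Gamma\{x\}$ has \emph{no} nonzero non-unit elements, i.e. that ``discrete, reduced, pseudo-Hopf, and $\prerk x\cong\barx$'' forces a blue field. A pseudo-Hopf point only guarantees that $\barx_\inv$ is generated by its units and $\barx^+_\Z$ is flat; one has to leverage the inverse-closedness coming from $\prerk x\cong\barx$ together with cancellativity (Lemma \ref{lemma: quotients of cancellative blueprints are cancellative}) and the description of $\Gamma\barx^+_\Z$ as a quotient of the Hopf algebra $\Z[F^\times]$ (the Remark after the definition of pseudo-Hopf) to rule out non-unit elements. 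I would expect to argue that any nonzero non-unit $a\in\Gamma\barx$ would survive in $\widehat{\Gamma\barx}\subset\Gamma\barx^+_\Z$ as a non-invertible element, contradicting that $\Gamma\barx^+_\Z$ is (a flat quotient of) a group algebra and that $\prerk x\cong\barx$ identifies $\Gamma\barx$ with its own inverse closure; pinning down exactly why the non-unit part collapses — rather than merely that it embeds into the inverse closure — is the crux, and is likely handled by a short lemma about inverse-closed cancellative blueprints whose $\Z$-realization is a group algebra. Everything else (discreteness, reducedness, flatness of group algebras, the componentwise bookkeeping) is formal.
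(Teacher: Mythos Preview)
Your outline is broadly right, but there are two real problems.

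\medskip
\textbf{Part (i), flatness.} The assertion $(\Spec F)^+_\Z=\Spec\Z[F^\times]$ is false: the unit field $F=\hat B^\px$ carries a nontrivial pre-addition inherited from $\hat B$, so $F^+_\Z$ is only a \emph{quotient} of the group algebra $\Z[F^\times]$. For instance, if $F=\Funsq[T^{\pm1}]$ then $F^\times\simeq\Z/2\Z\times\Z$, so $\Z[F^\times]\simeq\Z[\epsilon,T^{\pm1}]/(\epsilon^2-1)$, whereas $F^+_\Z=\Z[T^{\pm1}]$. The paper does not argue flatness intrinsically from $F$; it uses that for $y\in\cZ(X)$ with $\{\eta\}=\prerk y^\px$ one has $\overline{\{\eta\}}^+_\Z\simeq\prerk y^+_\Z$ (this is exactly the isomorphism $\upsilon^+_{X,\Z}$ built into the definition of pseudo-Hopf), and the latter equals $\bar y^+_\Z$, which is flat by hypothesis on $y$. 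Almost-indefinite characteristic is then read off from this flat nonzero ring. Your group-algebra shortcut happens to give a flat ring, but it is the wrong ring.

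\medskip
\textbf{Part (ii), your ``main obstacle'' is illusory.} You are trying to squeeze ``$\Gamma\{x\}$ is a blue field'' out of cancellativity, inverse-closedness, and the flat Hopf-algebra structure of $\Gamma\{x\}^+_\Z$. That route is unnecessary and, without using discreteness again, does not obviously work: for example $B=\bpgenquot{\Fun[S,T^{\pm1}]}{S\=T+1}$ is cancellative, inverse closed, has $B_\inv$ generated by its units and $B^+_\Z=\Z[T^{\pm1}]$ flat, yet $S$ is a nonzero non-unit. What rescues the situation in the proposition is exactly what you are not exploiting: since $X$ is discrete and reduced, $\{x\}$ is an open affine one-point reduced scheme, so $\cO_X(\{x\})$ has $\{0\}$ as its unique prime ideal, hence is a blue field, and therefore $\Fpx(\barx)\simeq\cO_X(\{x\})\simeq\kappa(x)$. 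Combined with the defining isomorphism $\prerk x\simeq\barx$, this gives $\prerk x^\px\simeq\prerk x\simeq\barx$ at once, with no appeal to flatness or Hopf structure. That is precisely how the paper dispatches (ii) in two lines.

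\medskip
A minor further point: your verification in (i) that $\prerk\eta\to\overline{\{\eta\}}$ is an isomorphism (``a blue field is inverse closed'') is correct in spirit but needs the short case analysis the paper gives: if $\hat B^\px$ contains $-1$ it equals its own $(\blanc)_\inv$; if not, one checks that any relation $\sum a_i\=0$ with $a_i$ units forces all $a_i=0$ (else dividing by one $a_i$ produces $-1$), so no new elements appear in the additive closure inside $(\hat B^\px)_\inv$.
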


\begin{proof}
 We show \eqref{part1}. Let $X$ be a blue scheme. Since its rank space is the disjoint union of spectra of blue fields, $X^\rk$ is discrete and reduced. Before we show that $x$ is pseudo-Hopf, we show that $\prerk x\to\barx$ is an isomorphism for all $x\in X^\rk$. By definition of the rank space, there is a pseudo-Hopf point $y\in X$ such that $\{x\}=\prerk y^\px$. If we denote $\Gamma\overline y$ by $B$, then $\barx=\Spec\hat B^\px$ and we have to show that the natural morphism $\hat B^\px\to (\hat B^\px)^\hatexp $ is an isomorphism. In the case that $\hat B^\px$ is with $-1$, the unit field $\hat B^\px$ is with inverses and equals its additive closure in $(\hat B^\px)_\inv$. In case that $\hat B^\px$ is without $-1$, $\hat B^\px$ does not contain the additive inverse of any element $b$. Thus $\hat B^\px$ equals the image of $\hat B^\px$ in $(\hat B^\px)_\inv$, which is the same as $\hat B^\px_\canc$, and if $\sum a_i\=0$ in $B$, then $a_i\=0$ for all $i$. This means that $(\hat B^\px)^\hatexp \simeq (B^\px_\canc)^\hatexp \simeq B^\px_\canc\simeq\hat B^\px$.

 We show that every point $x$ of $X^\rk$ is pseudo-Hopf. Clearly, $\barx=\{x\}$ is affine for every $x\in X^\rk$. Let $y$ be a pseudo-Hopf point of $X$ such that $\{x\}=\prerk y^\px$. Then $\barx^+_\Z\simeq\prerk{y}^+_\Z$ is a non-empty flat scheme and $x$ is almost of indefinite characteristic. The blueprint $\Gamma\prerk x_\inv=\Gamma\barx_\inv$ is generated by its units since $\Gamma\barx_\inv\subset(\Gamma\barx^+_\inv)^\px$. Thus $x$ is pseudo-Hopf, which finishes the proof of \eqref{part1}.

 We show \eqref{part2}. If $X$ is of pure rank, then every point $x$ is pseudo-Hopf of minimal rank in its component, i.e.\ $\cZ(X)=X$. Since $X$ is discrete and reduced, $\Fpx(\barx)\simeq\cO_X(\{x\})\simeq\kappa(x)$ is a blue field for all $x\in X$. Since $\prerk x\simeq\barx$, we have isomorphisms
 \[
  \prerk x^\px \quad \stackrel\sim\longleftarrow \quad \prerk x \quad \stackrel\sim\longrightarrow \quad \barx \quad \stackrel\sim\longrightarrow \quad \Spec\cO_X(\{x\})
 \]
 and, consequently, $X^\rk\simeq X^\prk\simeq X$. This completes the proof of the proposition.
\end{proof}

We give a series of examples of blue schemes and their rank spaces.

\begin{ex}[Tori]
 The key example of rank spaces are tori over $\Fun$. Let $X=\G_{m,\Fun}^r$ be the spectrum of $B=\Fun[T_1^{\pm1},\dotsc,T_r^{\pm1}]$. Then $X$ consists of one point $\eta$, namely, the $0$-ideal of $B$, and $\eta$ is of indefinite characteristic, $\overline\eta=\Spec B$ is affine, $B_\inv$ is generated by its units since $B^\px_\inv=B_\inv$, and $B_\Z^+=\Z[T_1^{\pm1},\dotsc,T_r^{\pm1}]$ is a free $\Z$-module. Therefore $\eta$ is pseudo-Hopf and we have isomorphisms $X^\rk\simeq X^\prk\simeq X$. 

 Note that the rank of $X$ is $r$, which equals the rank of the group scheme $\SG_{m,\Z}^r$. This is a first instance for the meaning of the rank of a blue scheme. We will see later that, more generally, the rank of a ``Tits-Weyl model'' of a reductive group scheme equals the reductive rank of the group scheme (see Theorem \ref{thm: properties of tits-weyl groups}).
\end{ex}

\begin{ex}[Monoidal schemes]\label{ex: rank space of a monoidal scheme}
 If $X$ is a monoidal scheme, then every point $x$ of $X$ is of indefinite characteristic and $\Gamma \barx^+_\Z$ is a free $\Z$-module. The scheme $\barx_\inv$ is generated by its units if and only if $\barx=\{x\}$, i.e.\ if and only if $x$ is a closed point of $X$. In this case, $\barx=\{x\}$ is an affine blue scheme. Thus the pseudo-Hopf points of $X$ are its closed points. Therefore, the rank space of a monoidal scheme $X$ lifts to $X$. If $X$ is locally of finite type, then $X^\rk$ embeds into $X$.

 The closed points that belong to the rank space, i.e.\ that are of minimal rank, are easily determined since the rank of a point $x$ of a monoidal scheme $X$ equals the free rank of the unit group $\cO_{X,x}^\times$ of the stalk $\cO_{X,x}$ at $x$. For example, the projective space $\P_\Fun^n$ has $n+1$ closed points, which are all of rank $0$. Thus $(\P^n_\Fun)^\rk$ consists of $n+1$ points, which are all isomorphic to $\Spec\Fun$.
\end{ex}

\begin{ex}[Semiring schemes]
 If $X$ is a semiring scheme, then none of its points is almost of indefinite characteristic. Thus both the pre-rank space and the rank space of $X$ are empty.
\end{ex}

\begin{ex}
 The following are four examples that demonstrate certain effects that can occur for blue schemes and their rank spaces. The first example shows that pseudo-Hopf points are in general not closed, a fact that we have to consider in case of $\Fun$-models of adjoint groups. Let $B=\bpgenquot{\Fun[T]}{T\=1+1}$ and $X=\Spec B$. Then $X$ has two points $\eta=(0)$ and $x=(T)$. The closed subscheme $\barx$ is isomorphic to $\F_2$, which is of characteristic $2$ and not a free $\Z$-module. The closed subscheme $\overline\eta$ is $B$ itself and thus affine. The point $\eta$ has all potential characteristics except for $2$. The unit field of $B_\inv$ is $B_\inv^\px=\Fun$, thus $B^+_\Z\simeq\Z\simeq(B^\px)^+_\Z$, which shows $\eta$ is pseudo-Hopf. Thus $X$ is of rank $0$ and $\cZ(X)=\{\eta\}$ is not closed in $X$. The morphism $\rho_X:X^\prk\to X$ is an isomorphism, but the rank space $X^\rk$ does not lift to $X$.

 The second example extends the first example in a way such that the morphism $\rho_X$ is no longer injective. Let $B=\bpgenquot{\Fun[S,T]}{S+T\=1+1}$ and $X=\Spec B$. Then $X$ has four points $\eta=(0)$, $x=(S)$, $y=(T)$ and $z=(S,T)$. For similar reasons as in the first example, the pseudo-Hopf points of $X$ are $x$ and $y$, which are both of rank $0$. Thus $\cZ(X)=\{x,y\}$ and $X^\prk=\prerk x\cup\prerk y$. Both, the closed point of $\prerk x$ and the closed point of $\prerk y$ are mapped to $z$. Thus $\rho_X$ is not injective.

 The third example presents a blue scheme in which one pseudo-Hopf point lies in the closure of another pseudo-Hopf point. Let $B=\bpgenquot{\Funsq[S,T]}{T^2\=1,S\=T+1}$ and $X=\Spec B$. Then $X$ has two points $\eta=(0)$ and $x=(S)$. We have $\barx=\Spec\Funsq$, which means that $x$ is pseudo-Hopf of rank $0$. The point $\eta$ has all potential characteristics except for $1$, $\overline\eta=\Spec B$ is affine, $B^\px_\inv=\bpgenquot{\Funsq[T]}{T^2\=1}=\Funsq[\mu_2]$ is generated by its units (where $\mu_2$ is the cyclic group with two elements) and its extension to $\Z$ is the flat ring $B^+_\Z=\Z[\mu_2]$. Thus $\eta$ is also pseudo-Hopf of rank $0$. This means that $\cZ(X)=\{\eta,x\}$ and $X^\prk=\overline\eta\cup\barx\simeq X\cup\Spec\Funsq$ does not map injectively to $X$. The rank space of $X$ is $X^\rk\simeq \Spec\Funsq[\mu_2]\cup\Spec\Funsq$.

 The forth example shows that in general $\upsilon_X^+:X^{\prk,+}\to X^{\rk,+}$ is not an isomorphism. Let $B=\bpgenquot{\Fun[S,T^{\pm1}]}{T\=S+1+1}$. Then $X=\Spec B$ has two points $x=(S)$ and $\eta=(0)$. The scheme $\barx$ is the spectrum of $\bpgenquot{\Fun[T^{\pm1}]}{T\=1+1}$, whose base extension to $\Z$ is the localization $\Z_{(2)}$, which is not a flat ring. The point $\eta$ is easily seen to be pseudo-Hopf. Thus $X^\prk=\prerk\eta=\Spec B$ and $X^\rk=\prerk\eta^\px=\Spec\Fun[T^{\pm1}]$. The embedding $\N[T^{\pm1}]\to \bpgenquot{\N[S,T^{\pm1}]}{T\=S+1+1}$ is not surjective, thus $\upsilon_X^+:X^{\prk,+}\to X^{\rk,+}$ is not an isomorphism.
\end{ex}


\subsection{Tits morphisms}
\label{subsection: tits morphisms}

\begin{df}
 Let $X$ and $Y$ be blue schemes. A \emph{Tits morphism $\varphi:X\to Y$} is a pair $(\varphi^\rk,\varphi^+)$ where $\varphi^\rk:X^\rk\to Y^\rk$ is a morphism between the rank spaces of $X$ and $Y$ and $\varphi^+:X^+\to Y^+$ is a morphism between the universal semiring schemes of $X$ and $Y$ such that the diagram
 \[
\xymatrix@C=6pc{X^{\rk,+}_\Z  \ar[r]^{\varphi^{\rk,+}_\Z} \ar[d]_{\rho_{X,\Z}^+}      & Y^{\rk,+}_\Z  \ar[d]^{\rho_{Y,\Z}^+} \\ 
           X^+_\Z  \ar[r]^{\varphi^+_\Z}                                      & Y^+_\Z  }
 \]
 commutes.

 If $\varphi:X\to Y$ and $\psi:Y\to Z$ are two Tits morphisms, then the composition $\psi\circ\varphi:X\to Z$ is defined as the pair $(\psi^\rk\circ\varphi^\rk,\psi^+\circ\varphi^+)$. The \emph{Tits category} is the category $\TSch$ whose objects are blue schemes and whose morphisms are Tits morphisms.
\end{df}

To make a clear distinction between Tits morphisms between blue schemes and morphisms in the usual sense, we will often refer to the latter kind of morphism as \emph{locally algebraic morphisms} (cf.\ \cite[Thm.\ 3.23]{blueprints1} for the fact that locally algebraic morphisms are locally algebraic). 

\begin{rem}
 For a wide class of blue schemes $X$, the base extension $\upsilon^+_X: X^{\prk,+}\to X^{\rk,+}$ of $\upsilon_X: X^\prk\to X^\rk$ is already an isomorphism and we can consider $\rho_X^+$ as a morphism from $X^{\rk,+}$ to $X^+$. If this is the case for $X$ and $Y$, then a pair $(\varphi^\rk,\varphi^+)$ as above is a Tits morphism if and only if the diagram
 \[
\xymatrix@C=6pc{X^{\rk,+}  \ar[r]^{\varphi^{\rk,+}} \ar[d]_{\rho_{X}^+}      & Y^{\rk,+}  \ar[d]^{\rho_{Y}^+} \\ 
           X^+  \ar[r]^{\varphi^+}                                      & Y^+  }
 \]
 commutes. In fact, all of the blue schemes that we will encounter in the rest of the paper, will be of this sort.
\end{rem}

The Tits category comes together with two important functors: the base extension $(\blanc)^+:\TSch\to\Sch^+$ to semiring schemes, which sends a blue scheme $X$ to $X^+$ and a Tits morphism $\varphi:X\to Y$ to $\varphi^+:X^+\to Y^+$; and the extension $(\blanc)^\rk:\TSch\to\rkBSch$ to blue schemes of pure rank, which sends a blue scheme $X$ to its rank space $X^\rk$ and a Tits morphism $\varphi:X\to Y$ to $\varphi^\rk:X^\rk\to Y^\rk$. 

The former functor allows us to define the base extensions $(-)_k^+:\TSch\to\Sch^+_k$ for every semiring $k$ or, more generally, the base extension $\blanc\Sotimes S:\TSch\to\Sch^+_S$ for every semiring scheme $S$. 

The latter functor allows us to define the \emph{Weyl extension $\cW:\TSch\to\Sets$} from the Tits category to the category of sets that associates to each blue scheme $X$ the underlying set of its rank space $X^\rk$ and to each Tits morphism $\varphi:X\to Y$ the underlying map of the morphism $\varphi:X^\rk\to Y^\rk$ between the rank spaces.

Both locally algebraic morphisms and Tits morphisms between two blue schemes $X$ and $Y$ have an base extension to semiring scheme morphisms between $X^+$ and $Y^+$. The classes of semiring scheme morphisms between $X^+$ and $Y^+$ that are the respective base extensions of locally algebraic morphisms and of Tits morphisms between $X$ and $Y$ are, in general, different. For example, the natural embedding $\iota_\N:\G_{m,\N}\to\A^1_\N$ descends to a locally algebraic morphism $\iota_\Fun:\G_{m,\Fun}\to\A^1_\Fun$, but there is no Tits morphism $\tilde\iota:\G_{m,\Fun}\to\A^1_\Fun$ with $\tilde\iota^+=\iota_\N$. As we will see in the following, Tits morphisms are more flexible in other aspects, which will allow us to descend the group laws of many group schemes to ``$\Fun$-models'' of the group scheme, which is not the case for locally algebraic morphisms. 

In the following, we will investigate the case that a locally algebraic morphism $\varphi:X\to Y$ of blue schemes defines a Tits morphism. Namely, if $\varphi$ maps $\cZ(X)$ to $\cZ(Y)$, then we can define a morphism $\varphi^\prk:X^\prk\to Y^\prk$ by $\varphi^\prk\vert_{\prerk x}=(\varphi\vert_{\barx})^\hatexp $ for $x\in\cZ(X)$. This defines a morphism $\varphi^\prk$ between the pre-rank spaces of $X$ and $Y$ since $y=\varphi(x)\in\cZ(Y)$ and thus the morphism $\varphi$ restricts to a morphism $\varphi\vert_{\barx}:\barx\to \overline y$ to which we can apply the functor $(\blanc)^\hatexp $. This definition of $\varphi^\prk$ behaves well with composition, i.e.\ if $\varphi$ is as above and $\psi:Y\to Z$ is a locally algebraic morphism that maps $\cZ(Y)$ to $\cZ(Z)$, then $(\psi\circ\varphi)^\prk=\psi^\prk\circ\varphi^\prk$.


 Let $\varphi:X\to Y$ be a locally algebraic morphism of blue schemes that maps $\cZ(X)$ to $\cZ(Y)$ and $\varphi^\prk:X^\prk\to Y^\prk$ the corresponding morphism between the pre-rank spaces of $X$ and $Y$. Then applying the functor $(\blanc)^\rk$ to the connected components of $X^\prk$ yields a morphism $\varphi^\rk:X^\rk\to Y^\rk$ between the rank spaces of $X$ and $Y$.

 We say that a locally algebraic morphism $\varphi:X\to Y$ that maps $\cZ(X)$ to $\cZ(Y)$ is \emph{Tits} or that $\varphi$ is a \emph{locally algebraic Tits morphism}. We denote the category of blue schemes together with locally algebraic Tits morphisms by $\BTSch$. The following proposition justifies the terminology.

\begin{prop}\label{prop: morphisms that are tits}
 Let $\varphi:X\to Y$ be a locally algebraic morphism of blue schemes that maps $\cZ(X)$ to $\cZ(Y)$. Then the pair $(\varphi^\rk,\varphi^+)$ is a Tits morphism from $X$ to $Y$. 
\end{prop}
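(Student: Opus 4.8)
The plan is to unravel the definition of a Tits morphism and reduce its single commutativity requirement to a componentwise diagram chase. Recall that $\varphi^+\colon X^+\to Y^+$ is just the base extension of $\varphi$ to semiring schemes, hence automatically a morphism of semiring schemes, and that, as explained just before the proposition, $\varphi^\rk\colon X^\rk\to Y^\rk$ is obtained componentwise from the morphism $\varphi^\prk\colon X^\prk\to Y^\prk$ of pre-rank spaces — where $\varphi^\prk\vert_{\prerk x}=(\varphi\vert_{\barx})^\hatexp$ for $x\in\cZ(X)$ — by applying the unit-field functor on coordinate blueprints. Both constructions make sense precisely because $\varphi(\cZ(X))\subseteq\cZ(Y)$: for such an $x$ the composite $\barx\hookrightarrow X\xrightarrow{\varphi}Y$ has topological image in $\overline{\{\varphi(x)\}}$ and $\barx$ is reduced, so by the universal property of reduced closed subschemes (Section \ref{subsection: reduced blueprints}) it factors uniquely through a locally algebraic morphism $\varphi\vert_{\barx}\colon\barx\to\overline{\varphi(x)}$, to which the functors $(\blanc)^\hatexp$ and $(\blanc)^\px$ may then be applied. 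So all that must be shown is that the square in the definition of a Tits morphism commutes.

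First I would strip away the identification $X^{\rk,+}_\Z\simeq X^{\prk,+}_\Z$ built into that square. Since $u\colon B^\px\hookrightarrow B$ is natural in the blueprint $B$, the morphisms $\upsilon$ form a natural transformation; applying naturality to the component morphisms of $\varphi^\prk$, taking the coproduct over $\cZ(X)$, and base-extending via $(\blanc)^+_\Z$ yields a commutative square with horizontal arrows $\varphi^{\prk,+}_\Z$, $\varphi^{\rk,+}_\Z$ and vertical arrows $\upsilon^+_{X,\Z}$, $\upsilon^+_{Y,\Z}$. As the latter are isomorphisms (by the definition of pseudo-Hopf points) and $\rho^+_{X,\Z}$ stands, by convention, for $\rho^+_{X^\prk,\Z}\circ(\upsilon^+_{X,\Z})^{-1}$, the Tits square for $(\varphi^\rk,\varphi^+)$ becomes equivalent, after cancelling the isomorphism $(\upsilon^+_{X,\Z})^{-1}$, to the commutativity of
\[
\xymatrix@C=5pc{X^{\prk,+}_\Z\ar[r]^{\varphi^{\prk,+}_\Z}\ar[d]_{\rho^+_{X^\prk,\Z}} & Y^{\prk,+}_\Z\ar[d]^{\rho^+_{Y^\prk,\Z}}\\ X^+_\Z\ar[r]^{\varphi^+_\Z} & Y^+_\Z.}
\]

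This diagram decomposes as a disjoint union over $x\in\cZ(X)$, where $y=\varphi(x)\in\cZ(Y)$, and on the component $\prerk x$ the map $\rho^+_{X^\prk,\Z}$ is $(\iota_{\barx})^+_\Z\circ(\gamma_{\barx})^+_\Z$. Hence it suffices to prove that the diagram of blue schemes
\[
\xymatrix@C=5pc@R=1pc{\prerk x\ar[r]^{(\varphi\vert_{\barx})^\hatexp}\ar[d]_{\gamma_{\barx}} & \prerk y\ar[d]^{\gamma_{\overline y}}\\ \barx\ar[r]^{\varphi\vert_{\barx}}\ar[d]_{\iota_{\barx}} & \overline y\ar[d]^{\iota_{\overline y}}\\ X\ar[r]^{\varphi} & Y}
\]
commutes, and then apply the functor $(\blanc)^+_\Z$. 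The lower square commutes by the very definition of $\varphi\vert_{\barx}$; the upper square commutes because the inclusions $B\hookrightarrow\hat B$ are natural in $B$, which is exactly what Lemma \ref{lemma: relative additive closure of a morphism} provides when applied to the square formed by the blueprint morphism underlying $\varphi\vert_{\barx}$ and its base extension along $\Fun\to\Funsq$. Unwinding the identifications $X^{\rk,+}_\Z\simeq X^{\prk,+}_\Z$ and $Y^{\rk,+}_\Z\simeq Y^{\prk,+}_\Z$ once more then yields the commutativity of the original Tits square.

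I do not expect a genuine obstacle, since the statement is purely formal. The one place that needs care is the bookkeeping around the identification $X^{\rk,+}_\Z\simeq X^{\prk,+}_\Z$ together with the check that the morphism $\varphi^\rk$ coming from the unit-field construction is compatible, through the natural isomorphisms $\upsilon^+_{X,\Z}$ and $\upsilon^+_{Y,\Z}$, with $\varphi^\prk$. Once the two relevant natural transformations — the inclusions $B^\px\hookrightarrow B$ of unit fields and the inclusions $B\hookrightarrow\hat B$ — are recorded, everything reduces to chasing commutative squares and applying functors that preserve them.
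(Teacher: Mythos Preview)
Your proof is correct and follows essentially the same approach as the paper: both establish commutativity at the pre-rank level and then base-extend via $(\blanc)^+_\Z$, using the identification $X^{\rk,+}_\Z\simeq X^{\prk,+}_\Z$. The paper's proof is considerably terser---it simply asserts that the square with $\varphi^\prk$ and $\rho_X$, $\rho_Y$ commutes and then base-extends---whereas you have carefully unpacked the bookkeeping around the $\upsilon$-isomorphisms and verified the pre-rank square componentwise via Lemma~\ref{lemma: relative additive closure of a morphism}, which is exactly the content the paper leaves implicit.
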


\begin{proof}
 The base extension of the commutative diagram
 \[
  \xymatrix@C=6pc{X^{\prk}  \ar[r]^{\varphi^{\prk}} \ar[d]_{\rho_{X}}      & Y^{\prk}  \ar[d]^{\rho_{Y}} \\ 
           X  \ar[r]^{\varphi}                                      & Y  }
 \]
 to semiring schemes yields the commutative diagram
 \[
\xymatrix@C=6pc{X^{\rk,+}_\Z  \ar[r]^{\varphi^{\rk,+}_\Z} \ar[d]_{\rho_{X,\Z}^+}      & Y^{\rk,+}_\Z  \ar[d]^{\rho_{Y,\Z}^+} \\ 
           X^+_\Z  \ar[r]^{\varphi^+_\Z}                                      & Y^+_\Z,  }
 \]
 which proves the lemma.
\end{proof}

Let $X$ and $Y$ be two blue schemes. We define the \emph{set $Y^\sT(X)$ of $X$-rational Tits points of $Y$} as the set $\Hom_\cT(X,Y)$ of Tits morphisms from $X$ to $Y$. We denote the set of locally algebraic morphisms $X\to Y$ of blue schemes by $\Hom(X,Y)$. 

Since the rank space of a semiring scheme is empty, we have the following immediate consequence of the previous proposition.

\begin{cor}\label{cor: tits morphisms between semiring schemes}
  Let $X$ be a semiring scheme and $Y$ a blue scheme. Let $\alpha_Y:Y^+\to Y$ the base extension morphism. Then the map
        \[ 
           \begin{array}{ccc}
              \Hom_\cT(X,Y) & \longrightarrow & \Hom(X,Y) \\
              (\varphi^\rk,\varphi^+) & \longmapsto & \alpha_Y\circ\varphi^+    
           \end{array} 
        \]
        is a bijection. This means in particular that $\SSch$ embeds as a full subcategory into $\TSch$. \qed
\end{cor}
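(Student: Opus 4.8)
The plan is to exhibit an explicit inverse to the given map. First, observe that if $X$ is a semiring scheme, then by the example "Semiring schemes" in Section~\ref{subsection: rank space}, its rank space $X^\rk$ is empty; hence for any blue scheme $Y$, there is exactly one morphism $X^\rk\to Y^\rk$, namely the empty morphism. Consequently a Tits morphism $(\varphi^\rk,\varphi^+)\colon X\to Y$ is completely determined by $\varphi^+\colon X^+\to Y^+$, and the compatibility square in the definition of a Tits morphism is automatic, since $X^{\rk,+}_\Z$ is the empty scheme. Thus $\Hom_\cT(X,Y)$ is in bijection with the set of those semiring-scheme morphisms $X^+\to Y^+$ that can occur as $\varphi^+$ — but since $X$ is a semiring scheme, the canonical morphism $\alpha_X\colon X^+\to X$ is an isomorphism (a semiring scheme equals its own universal semiring scheme), so \emph{every} semiring-scheme morphism $X^+=X\to Y^+$ is of this form. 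Hence $\Hom_\cT(X,Y)\xrightarrow{\ \sim\ }\Hom_{\SSch}(X,Y^+)$ via $(\varphi^\rk,\varphi^+)\mapsto\varphi^+$.

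Next I would identify $\Hom_{\SSch}(X,Y^+)$ with $\Hom(X,Y)$. The functor $(\blanc)^+\colon\BSch\to\SSch$ is left adjoint to the inclusion $\SSch\hookrightarrow\BSch$ (this is the scheme-level statement of the fact, used throughout Section~\ref{subsection:notation-and-conventions}, that $(\blanc)^+$ on blueprints is left adjoint to the forgetful functor $\SRings\to\bp_0$; it is obtained by gluing over affine opens). The unit of this adjunction at $Y$ is precisely $\alpha_Y\colon Y^+\to Y$ — or rather, the adjunction gives, for a semiring scheme $X$ and a blue scheme $Y$, a natural bijection
\[
 \Hom_{\BSch}(X,Y)\ \xrightarrow{\ \sim\ }\ \Hom_{\SSch}(X^+,Y^+)
\]
sending $f$ to $f^+$; composing with the isomorphism $X^+\cong X$ and chasing the unit $\alpha_Y$, the inverse bijection $\Hom_{\SSch}(X,Y^+)\to\Hom_{\BSch}(X,Y)$ is $g\mapsto\alpha_Y\circ g$. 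Since $X$ is a semiring scheme, $\Hom_{\BSch}(X,Y)$ is the same as what the paper writes $\Hom(X,Y)$ (locally algebraic morphisms are simply morphisms of blue schemes). Composing the two bijections gives exactly the claimed map $(\varphi^\rk,\varphi^+)\mapsto\alpha_Y\circ\varphi^+$, proving it is a bijection.

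Finally, for the "in particular" clause: taking $Y$ to be a semiring scheme as well, $\alpha_Y$ is an isomorphism, so $\Hom_\cT(X,Y)\cong\Hom(X,Y)=\Hom_{\SSch}(X,Y)$, and one checks this bijection is compatible with composition of Tits morphisms (composition on the $\varphi^+$-component is just composition in $\SSch$, and the $\varphi^\rk$-components are all the empty morphism), so the assignment $Y\mapsto Y$ on objects together with $(\varphi^\rk,\varphi^+)\mapsto\alpha_Y\circ\varphi^+$ on morphisms is a fully faithful functor $\SSch\to\TSch$; it is clearly injective on objects, hence realizes $\SSch$ as a full subcategory of $\TSch$. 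The only mild subtlety — the step I expect to need the most care — is making the adjunction $(\blanc)^+\dashv(\SSch\hookrightarrow\BSch)$ and the identification of its unit with $\alpha_Y$ precise at the level of (non-affine) blue schemes rather than blueprints; but this is a routine gluing argument from the affine case already recorded in Section~\ref{subsection:notation-and-conventions}, so I would state it and refer back rather than belabor it.
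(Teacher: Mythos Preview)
Your proof is correct and follows essentially the same line as the paper, which states the result as an immediate consequence of the fact that the rank space of a semiring scheme is empty. One terminological slip worth cleaning up: on the scheme side the functor $(\blanc)^+\colon\BSch\to\SSch$ is \emph{right} adjoint to the inclusion $\SSch\hookrightarrow\BSch$ (the handedness flips under $\Spec$ from the algebra side, where $(\blanc)^+$ is left adjoint to the forgetful functor), and $\alpha_Y$ is the counit rather than the unit of this adjunction; you effectively correct yourself when you write out the bijection explicitly as $g\mapsto\alpha_Y\circ g$, so the argument itself is fine.
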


Since the rank space of a blue scheme $X$ of pure rank is isomorphic to $X$ itself, a Tits morphism $\varphi:X\to Y$ between two blue schemes $X$ and $Y$ of pure rank is determined by the morphism $\varphi^\rk:X\to Y$. Therefore, also $\rkBSch$ is a full subcategory of $\TSch$. 

\begin{ex}[$\Fun$-rational Tits points of monoidal schemes]
 Given a monoidal scheme $X$, then for all its points $x$, the schemes $\barx=\prerk x$ and $\prerk x^\px$ are also monoidal. Since $\prerk x$ is generated by its units if and only if $\prerk x=\prerk x^\px$, the rank space $X^\rk$ lifts to $X$ (if $X$ is locally of finite type, $X^\rk$ embeds into $X$). Thus a Tits morphism $\varphi:Y\to X$ from a scheme $Y$ of pure rank is already determined by $\varphi^\rk:Y^\rk\to X^\rk$. This holds, in particular, for $Y=\ast_\Fun$. Since a blue field that is a monoid admits precisely one morphism to $\Fun$, the $\Fun$-rational Tits points of $X$ correspond to the points of the rank space $X^\rk$. These correspond, in turn, to the set $\cZ(X)$ of pseudo-Hopf points of minimal rank in $X$, which is the image of $\tilde\rho_X:X^\rk\to X$.

 Note that in case of a connected monoidal scheme, the set of $\Fun$-rational Tits points coincides with the sets of $\Fun$-rational points as defined \cite{L09}.
\end{ex}

The following proposition characterizes those semiring scheme morphisms that are base extensions of Tits morphisms. Note that if $X$ is a blue scheme and $x\in\cZ(X)$ is a pseudo-Hopf point of minimal rank, then $\Gamma\barx^+_\Z=\Gamma\prerk x^+_\Z=\Fpx(\prerk x)^+_\Z$. In particular, the cancellative blue field $\Fpx(\prerk x)$ is a subblueprint of $\Gamma\barx^+_\Z$.


\begin{prop}\label{prop: semiring scheme morphisms that descend to tits morphisms}
 Let $X$ and $Y$ be two blue schemes and $\varphi^+:X^+\to Y^+$ a morphism. Then there exists a morphism $\varphi^\rk:X^\rk\to Y^\rk$ between the rank spaces of $X$ and $Y$ such that $(\varphi^\rk,\varphi^+)$ is a Tits morphism from $X$ to $Y$ if and only if there is a map $\varphi_0:\cZ(X)\to \cZ(Y)$ such that for all $x\in \cZ(X)$ and $y=\varphi_0(x)$,
 \begin{enumerate}
  \item $\varphi^+\Bigl(\,\rho^+_{X}\bigr(\,\barx^+\,\bigr)\,\Bigr) \ \subset \ \rho^+_{Y}\bigl(\,\overline y^+\,\bigl)$ \quad and
  \item the blueprint morphism $f_x=\Gamma(\varphi^+\vert_{\barx^+})_\Z^+:\Gamma\bary^+_\Z\to \Gamma\barx^+_\Z$ maps $\Fpx(\prerk y)\subset \Gamma\bary^+_\Z$ to $\Fpx(\prerk x)\subset \Gamma\barx^+_\Z$.
 \end{enumerate}
 If $\rho_{Y}^+:Y^{\rk,+}\to Y^+$ is injective, then $\varphi^\rk$ is uniquely determined by $\varphi^+$.
\end{prop}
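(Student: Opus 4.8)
The plan is to analyze both implications one connected component of $X^\rk$ at a time. Recall that $X^\rk=\coprod_{x\in\cZ(X)}\prerk x^\px$, that each $\prerk x^\px=\Spec\Fpx(\prerk x)$ is the spectrum of a cancellative blue field, and that under the standing identifications $X^{\rk,+}_\Z=X^{\prk,+}_\Z$ and $\prerk x^+_\Z\cong\barx^+_\Z$ the morphism $\rho^+_{X,\Z}$ restricted to the component over $x\in\cZ(X)$ is nothing but the closed immersion $\barx^+_\Z\hookrightarrow X^+_\Z$ induced by the reduced closed subscheme $\overline x\hookrightarrow X$, while $\Gamma\barx^+_\Z\cong\Fpx(\prerk x)^+_\Z$ is generated as a ring by the subblueprint $\Fpx(\prerk x)$ (this last point being exactly the content of $\upsilon^+_{X,\Z}$ being an isomorphism over $x$, part of $x$ being pseudo-Hopf). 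Consequently the square defining a Tits morphism splits up into one square of $\Z$-schemes for each $x\in\cZ(X)$, and everything can be checked there.

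First I would do the direction $(\Rightarrow)$. Given a Tits morphism $(\varphi^\rk,\varphi^+)$, the component $\prerk x^\px$ of $X^\rk$ is a one-point scheme, so $\varphi^\rk$ maps it into a single connected component $\prerk y^\px$ of $Y^\rk$; setting $\varphi_0(x)=y$ defines a map $\cZ(X)\to\cZ(Y)$. Restricting the Tits square to the component over $x$ and using the identifications above, it reads $\varphi^+_\Z\circ(\barx^+_\Z\hookrightarrow X^+_\Z)=(\bary^+_\Z\hookrightarrow Y^+_\Z)\circ g$, where $g$ is the base change to $\Z$ of the blue-field morphism $\Fpx(\prerk y)\to\Fpx(\prerk x)$ that underlies $\varphi^\rk$ on this component. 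Passing to underlying topological spaces gives the inclusion $\varphi^+(\rho^+_X(\barx^+))\subset\rho^+_Y(\bary^+)$ of (i) --- the scheme-theoretic prerequisite for $\varphi^+\vert_{\barx^+}$, hence $f_x$, to be defined --- and comparing the square on global $\Z$-sections shows that $f_x$ coincides with $g$, whose restriction to $\Fpx(\prerk y)$ is by construction the given blue-field morphism with image in $\Fpx(\prerk x)$; this is (ii).

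For $(\Leftarrow)$, suppose $\varphi_0$, (i), (ii) are given. By (i), for each $x\in\cZ(X)$ the morphism $\varphi^+$ restricts to $\varphi^+\vert_{\barx^+}\colon\barx^+\to\bary^+$ with $y=\varphi_0(x)$, so $f_x=\Gamma(\varphi^+\vert_{\barx^+})_\Z^+$ is defined; by (ii), $f_x$ carries $\Fpx(\prerk y)$ into $\Fpx(\prerk x)$, hence restricts to a blue-field morphism $\Fpx(\prerk y)\to\Fpx(\prerk x)$, i.e. a morphism $\prerk x^\px\to\prerk y^\px$ of blue schemes of pure rank. Taking the coproduct of these over $x\in\cZ(X)$ yields a morphism $\varphi^\rk\colon X^\rk\to Y^\rk$ (this is automatic since $X^\rk$ is the disjoint union of the $\prerk x^\px$). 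It then remains to verify the Tits square; restricted to the component over $x$ its only content is the equality of ring morphisms $\Gamma\bary^+_\Z\to\Gamma\barx^+_\Z$ between $f_x$ and the $\Z$-extension of its restriction $\Fpx(\prerk y)\to\Fpx(\prerk x)$, and this holds because $\Gamma\bary^+_\Z\cong\Fpx(\prerk y)^+_\Z$ is generated as a ring by $\Fpx(\prerk y)$, together with the defining property of $f_x$, which makes the lower part of the square commute by construction.

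Finally, if $\rho^+_Y$ (hence $\rho^+_{Y,\Z}$) is injective, the Tits square $\rho^+_{Y,\Z}\circ\varphi^{\rk,+}_\Z=\varphi^+_\Z\circ\rho^+_{X,\Z}$ determines $\varphi^{\rk,+}_\Z$ uniquely, and since $X^\rk$ is of pure rank --- discrete, with each $\Fpx(\prerk x)$ a cancellative blue field embedding into its nonzero flat $\Z$-algebra $\Fpx(\prerk x)^+_\Z$ --- the morphism $\varphi^\rk$ is recovered from $\varphi^{\rk,+}_\Z$, giving uniqueness. I expect the main obstacle to be the careful handling of the identifications $X^{\rk,+}_\Z\cong X^{\prk,+}_\Z$, $\prerk x^+_\Z\cong\barx^+_\Z$ and $\Gamma\barx^+_\Z\cong\Fpx(\prerk x)^+_\Z$: these hold over $\Z$ but fail over $\N$ in general (as the last example of Section~\ref{subsection: rank space} shows), so condition (i) must be invoked at exactly the right place both to make $f_x$ meaningful and to pass between the $\N$- and $\Z$-levels; once the correct level is fixed for each assertion, both implications become formal diagram chases.
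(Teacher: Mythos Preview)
Your proposal is correct and follows essentially the same approach as the paper: both use the bijection between $\cZ(X)$ and the components of $X^\rk$, read off $\varphi_0$ from $\varphi^\rk$ (and conversely build $\varphi^\rk$ componentwise from the restrictions $f_x\vert_{\Fpx(\prerk y)}$), and deduce uniqueness from injectivity of $\rho^+_Y$. You are considerably more explicit than the paper (which dispatches both directions with ``evidently'' and ``clearly''), particularly in justifying why the Tits square commutes in the $(\Leftarrow)$ direction via the fact that $\Gamma\bary^+_\Z$ is generated as a ring by $\Fpx(\prerk y)$; your uniqueness argument goes through $\varphi^{\rk,+}_\Z$ and cancellativity of the unit fields rather than through $\varphi_0$ directly as the paper does, but this is a cosmetic difference.
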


\begin{proof}
 For every point $x\in\cZ(X)$, the scheme $\prerk x^\px$ consists of one point, which denote by $\tilde x$. The association $x\to \tilde x$ is a bijection between $\cZ(X)$ and the points of $X^\rk$. Similarly, we denote by $\tilde y$ the point of $Y^\rk$ that corresponds to $y\in\cZ(Y)$.

 Given a Tits morphism $(\varphi^\rk,\varphi^+)$ from $X$ to $Y$, define $\varphi_0(x)=y$ if $\varphi^\rk(\tilde x)=\tilde y$. Evidently, this map satisfies \eqref{part1} and \eqref{part2}.

 Given a morphism $\varphi^+:X^+\to Y^+$ and a map $\varphi_0:\cZ(X)\to\cZ(Y)$ that satisfies \eqref{part1} and \eqref{part2}, we define $\varphi^\rk(\tilde x)=\tilde y$ (as a map) if $\varphi_0(x)=y$. The morphism $(\varphi^\rk)^\#$ between the structure sheaves is determined by the blueprint morphisms $\Gamma(\varphi^\rk\vert_{\{\tilde x\}})=f_x\vert_{\Fpx(\prerk y)}:\Fpx(\prerk y)\to\Fpx(\prerk x)$. The pair $(\varphi^\rk,\varphi^+)$ is clearly a Tits morphism from $X$ to $Y$. 

 Assume that $\rho_Y^+:Y^{\rk,+}\to Y^+$ is injective. Then the map $\varphi_0:\cZ(X)\to \cZ(Y)$ is uniquely determined by the condition that there must be a $y\in \cZ(Y)$ for every $x\in\cZ(X)$ such that $\varphi^+$ restricts to a morphism $\varphi^+\vert_{\barx^+}:\barx^+\to\bary^+$. This determines $\varphi^\rk:\tilde x\mapsto\tilde y$ as a map. If $\varphi^\rk:X^\rk\to Y^\rk$ can be extended to a morphism, then property \eqref{part2} of the proposition applied the scheme morphism $(\varphi^+\vert_{\barx^+})^+_\Z :\barx^+_\Z\to\bary^+_\Z$ shows that the morphism $\varphi^\rk$ is uniquely determined by $\varphi^+$. This shows the additional statement of the proposition.
\end{proof}


\section{Tits monoids}
\label{section: tits monoids}

In this section, we introduce the notion of a Tits monoid as a monoid in the Tits category. We start with a reminder on groups and monoids in Cartesian categories. Then we show that the Tits category $\TSch$ as well as some other categories and functors between them are Cartesian. This allows us to introduce the objects that will be in the focus of our attention for the rest of the paper: Tits-Weyl models of smooth affine group schemes $\cG$ of finite type. Roughly speaking, a Tits-Weyl model of $\cG$ is a Tits monoid $G$ such that $G_\Z^+$ is isomorphic to $\cG$ as a group scheme and such that $\cW(G)$ is isomorphic to the Weyl group of $\cG$.


\subsection{Reminder on Cartesian categories}
\label{subsection: reminder on cartesian categories}

A \emph{Cartesian category} is a category $\cC$ that contains finite products and a terminal object $\ast_\cC$. A \emph{Cartesian functor} is a (covariant) functor between Cartesian categories that commutes with finite products and sends terminal objects to terminal objects. The importance of Cartesian categories is that they admit to define group objects, and the importance of Cartesian functors is that they send group objects to groups objects. In the following, we will expose some facts on (semi-)group objects. All this is general knowledge and we stay away from proving facts. For more details, see, for instance, \cite[Section 1]{L09}.

 \subsubsection*{Semigroups}

Let $\cC$ be a Cartesian category. A \emph{semigroup in $\cC$} is a pair $(G,\mu)$ where $G$ is an object in $\cC$ and $\mu:G\times G\to G$ is a morphism such that the diagram
\[
 \xymatrix{G\times G\times G\ar[rr]^{\mu\times\id}\ar[d]_{\id\times\mu}&& G\times G\ar[d]^\mu\\ G\times G\ar[rr]^\mu&& G }
\]
commutes. We often suppress $\mu$ from the notation and say that $G$ is a semigroup object in $\cC$. We call $\mu$ the \emph{semigroup law of $G$}.

An \emph{(both-sided) identity} for a semigroup $G$ is a morphism $\epsilon:\ast_\cC\to G$ such that the diagrams
\[
 \xymatrix@C=4pc{G\times \ast_\cC\ar[r]^{(\id,\epsilon)}\ar[dr]_{\pr_1}& G\times G\ar[d]^\mu\\ & G }\qquad\text{and}\qquad\xymatrix@C=4pc{\ast_\cC\times G\ar[r]^{(\epsilon,\id)}\ar[dr]_{\pr_2}& G\times G\ar[d]^\mu\\ & G }
\]
commute. An identity for $G$ is unique. If $G$ is with an identity, we say that $G$ is a \emph{monoid in $\cC$} and that $\mu$ is its \emph{monoid law}.

A \emph{group in $\cC$} is a monoid $(G,\mu)$ with identity $\epsilon:\ast_\cC\to G$ that has an \emph{inversion}, i.e.\ a morphism $\iota:G\to G$ such that the diagrams
\[
 \xymatrix{G\ar[r]^{\Delta}\ar[d]& G\times G\ar[r]^{(\id,\iota)}& G\times G\ar[d]^\mu\\ \ast_\cC\ar[rr]^\epsilon&& G } \qquad\text{and}\qquad \xymatrix{G\ar[r]^{\Delta}\ar[d]& G\times G\ar[r]^{(\iota,\id)}& G\times G\ar[d]^\mu\\ \ast_\cC\ar[rr]^\epsilon&& G }
\]
commute. An inversion is unique. If $G$ is a group, we call $\mu$ its \emph{group law}.

A pair $(G,\mu)$ is a semigroup (monoid / group) in $\cC$ if and only if $\Hom_\cC(X,G)$ together with the composition induced by $\mu$ is a semigroup (monoid / group) in $\Sets$ for all objects $X$ in $\cC$. 

Let $\cF:\cC\to\cD$ be a Cartesian functor and $(G,\mu)$ a semigroup in $\cC$. Then $(\cF(G),\cF(\mu))$ is a semigroup in $\cD$, and $\cF$ maps an identity to an identity and an inversion to an inversion. For every object $X$ in $\cC$, the map 
\[
 \Hom_\cC(X,G) \quad \longrightarrow\quad \Hom_\cD(\cF(X),\cF(G))
\]
is a semigroup homomorphism, which maps an identity to an identity and inverses to inverses if they exist.

A \emph{homomorphism of semigroups} $(G_1,\mu_1)$ and $(G_2,\mu_2)$ in $\cC$ is a morphism $\varphi:G_1\to G_2$ such that the diagram
\[
 \xymatrix{G_1\times G_1\ar[rr]^{\mu_1}\ar[d]_{(\varphi,\varphi)} && G_1\ar[d]^\varphi \\  G_2\times G_2\ar[rr]^{\mu_2} && G_2 }
\]
commutes. If $G_1$ is with an identity $\epsilon_1$ and $G_2$ is with an identity $\epsilon_2$, then a semigroup homomorphism $\varphi:G_1\to G_2$ is called \emph{unital} (or \emph{monoid homomorphism}) if the diagram
\[
 \xymatrix@C=5pc@R=0.6pc{  &  G_1\ar[dd]^\varphi \\ \ast_\cC\ar[ur]^{\epsilon_1}\ar[dr]_{\epsilon_2}\\ & G_2}
\]
commutes. If $G_2$ is a group, then every semigroup homomorphism $\varphi:G_1\to G_2$ is unital. A Cartesian functor $\cF:\cC\to\cD$ sends (unital) semigroup homomorphisms to (unital) semigroup homomorphisms.

\subsubsection*{Monoid and group actions}

Let $(G,\mu)$ be a monoid with identity $\epsilon:\ast_\cC\to G$ and $X$ an object in $\cC$. A \emph{(unitary left) action of $G$ on $X$ in $\cC$} is a morphism $\theta: G\times X\to X$ such that the diagrams
\[
 \xymatrix{G\times G\times X\ar[rr]^{(\id,\theta)}\ar[d]_{(m,\id)}&& G\times X\ar[d]^\theta\\ G\times X\ar[rr]^\theta&& X} 
  \qquad\text{and}\qquad 
 \xymatrix{\ast_\cC\times X\ar[rr]^{(\epsilon,\id)}\ar[drr]_{\pr_2}&& G\times X\ar[d]^\theta\\ && X }
\]
commute. Let $\cF:\cC\to\cD$ be a Cartesian functor. Then $\cF$ sends an action $\theta$ of $G$ on $X$ in $\cC$ to an action $\cF(\theta)$ of $\cF(G)$ on $\cF(X)$ in $\cD$. If $\theta$ is unitary, then $\cF(\theta)$ is unitary. If $G$ is a monoid, then we call a unitary action $\theta: G\times X\to X$ also a \emph{monoid action}, if $G$ is a group, then we call $\theta$ a \emph{group action}.

\subsubsection*{Semidirect products of groups}

The \emph{direct product of groups $(G_1,m_1)$ and $(G_2,m_2)$} in a Cartesian category $\cC$ is the product $G_1\times G_2$ together with the pair $m=(m_1,m_2)$ as group law, which is easily seen to define a group object. 

Let $(N,m_N)$ and $(H,m_H)$ be groups in $\cC$ and let $\theta:H\times N\to N$ be a group action that respects the group law $m_N$ of $N$, i.e.\ if we define the \emph{change of factors along $\theta$} as
\[
 \xymatrix@C=3,5pc{\chi_\theta:\quad H\times N\ar[r]^{(\Delta,\id)}& H\times H\times N\ar[r]^{(\id,\theta)}&H\times N\ar[r]^\chi& N\times H,}
\]
 then the diagram 
\[
 \xymatrix@C=4pc@R=0,8pc{H\times N\times N\ar[r]^{(\id,m_N)}\ar[dd]_{(\chi_\theta,\id)}&H\times N\ar[dr]^\theta\\ &&N\\ N\times H\times N\ar[r]^{(\id,\theta)}& N\times N\ar[ur]_{m_N}} 
\]
commutes. Then the morphism
\[
 \xymatrix@C=4pc{m_\theta:\quad N\times H\times N\times H\ar[r]^{\quad(\id,\chi_\theta,\id)}&N\times N\times H\times H\ar[r]^{\quad\quad(m_N,m_H)}&N\times H}
\]
is a group law for $G=N\times H$. We say that $G$ is the \emph{semidirect product of $N$ with $H$ w.r.t.\ $\theta$} and write $G=N\rtimes_\theta H$. The group object $N$ is a normal subgroup of $G$ with quotient group $H$, and $H$ is a subgroup of $G$ that acts on $N$ by conjugation. The conjugation $H\times N\to N$ equals $\theta$. If $\theta: H\times N\to N$ is the canonical projection to the second factor of $H\times N$, then $N\rtimes_\theta H$ is equal to the direct product of $N$ and $H$ (as a group).

If $\cF:\cC\to \cD$ is a Cartesian functor and if $G=N\rtimes_\theta H$ in $\cC$, then $\cF(G)=\cF(N)\rtimes_{\cF(\theta)} \cF(H)$ in $\cD$.


\subsection{The Cartesian categories and functors of interest}
\label{subsection: sch_t is cartesian}

In this section, we show that the Tits category $\TSch$ is Cartesian, which allows us to consider monoids and group objects in this category. We will further investigate certain Cartesian functors to and from $\TSch$.

%


In order to prove that $\TSch$ is Cartesian, we have to verify that certain constructions behave well with products.

\begin{lemma}\label{lemma: unit scheme of products}
 Let $X$ and $Y$ be two blue schemes. Then $(X\times Y)^\px\simeq X^\px\times Y^\px$.
\end{lemma}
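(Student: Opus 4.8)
The plan is to reduce to the affine case and then use the adjunction characterizing the unit field. Since the unit scheme of a blue scheme $X$ is by definition $\Spec (\Gamma X)^\px$ and since $\Gamma(X\times Y)\simeq \Gamma X\otimes_\Fun\Gamma Y$ by Lemma \ref{lemma: global sections of fibre products} (with $Z=\Spec\Fun$), it suffices to prove the algebraic statement
\[
 (B\otimes_\Fun C)^\px \quad\simeq\quad B^\px\otimes_\Fun C^\px
\]
for blueprints $B=\Gamma X$ and $C=\Gamma Y$. Here I use that $X^\px\times Y^\px=\Spec(B^\px\otimes_\Fun C^\px)$, again by Lemma \ref{lemma: global sections of fibre products}, and that forming the unit scheme only depends on global sections.

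First I would describe the underlying monoid of $(B\otimes_\Fun C)^\px$. Writing $B=\bpquot A\cR$ and $C=\bpquot{A'}{\cR'}$, the tensor product has underlying monoid a quotient of $A\times A'$, and its group of units is exactly the image of $A^\times\times A'^\times$: an element $a\otimes a'$ is invertible in $B\otimes_\Fun C$ if and only if $a\in A^\times$ and $a'\in A'^\times$, since the projections $B\otimes_\Fun C\to B$ and $B\otimes_\Fun C\to C$ (which exist because everything is an $\Fun$-algebra) send a unit to a unit, forcing both components to be units, and conversely a product of units is a unit. Hence the unit monoids of $B^\px\otimes_\Fun C^\px$ and of $(B\otimes_\Fun C)^\px$ agree, namely both are $(A^\times\times A'^\times\cup\{0\})$ modulo the identifications forced by the tensor relations restricted to units. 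The remaining point is the pre-addition: the pre-addition of $(B\otimes_\Fun C)^\px$ is by definition $\cR_{B\otimes C}$ restricted to the submonoid of units together with $0$, and I must check this coincides with the pre-addition of $B^\px\otimes_\Fun C^\px$, which is generated by $\cR^\px\times\{1\}$, $\{1\}\times\cR'^\px$ and the tensor relations $(a_0a_1,a_2)\=(a_1,a_0a_2)$ with all entries units.

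The key observation making this work is that any additive relation $\sum a_i\otimes a'_i\=\sum b_j\otimes b'_j$ in $B\otimes_\Fun C$ all of whose terms are units must already be derivable from relations in $B$ and $C$ involving only units: by the explicit description of the tensor product pre-addition (recalled in the proof of Proposition \ref{prop: tau is an embedding}), such a relation is generated by $\cR\times\{1\}$, $\{1\}\times\cR'$ and the balancing relations; and since passing to units is a retract (via the projections to $B$ and $C$), one can "push" any generating relation into its unit part without leaving the realm of units. I would make this precise by applying the projection morphisms $B\otimes_\Fun C\to B$ and $\to C$ together with the inclusions $B^\px\hookrightarrow B$, $C^\px\hookrightarrow C$ to show the two sub-pre-additions on the common unit monoid coincide. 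The main obstacle is bookkeeping the pre-addition carefully: the representation of $B\otimes_\Fun C$ used above is not proper (as noted in Section \ref{subsection:notation-and-conventions}), so one must be slightly careful that after forming the unit field and passing to the proper quotient one still gets the asserted isomorphism; but since the proper quotient is a functorial operation compatible with the inclusion of the unit submonoid, this is harmless. Once the algebraic isomorphism $(B\otimes_\Fun C)^\px\simeq B^\px\otimes_\Fun C^\px$ is established, taking $\Spec$ and using the global-sections description of $X^\px$ and of the product $X^\px\times Y^\px$ gives $(X\times Y)^\px\simeq X^\px\times Y^\px$, completing the proof.
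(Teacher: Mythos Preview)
Your overall strategy---reduce to global sections via Lemma \ref{lemma: global sections of fibre products} and then prove $(B\otimes_\Fun C)^\px\simeq B^\px\otimes_\Fun C^\px$---is exactly the paper's approach. However, there is a genuine error in your argument for identifying the units.

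You invoke ``projections $B\otimes_\Fun C\to B$ and $B\otimes_\Fun C\to C$ (which exist because everything is an $\Fun$-algebra).'' These morphisms do \emph{not} exist in general: $B\otimes_\Fun C$ is the \emph{coproduct} in the category of blueprints, so it comes with canonical inclusions $B\to B\otimes_\Fun C$ and $C\to B\otimes_\Fun C$, not projections out. A projection $B\otimes_\Fun C\to B$ would require, by the universal property of the coproduct, a blueprint morphism $C\to B$, and there is no reason for one to exist. You repeat this error in the pre-addition argument, where you again appeal to these projections to ``push'' relations into their unit parts.

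The fix is much simpler than your argument and is what the paper does: work directly with the representation $B\otimes_\Fun C=\bpquot{A\times A'}{\cR}$, where $\cR=\gen{\cR\times\{1\},\{1\}\times\cR'}$. In the product monoid $A\times A'$ (with componentwise multiplication), an element $(a,a')$ is invertible if and only if both $a$ and $a'$ are---this is immediate, no projections needed. Hence $(A\times A')^\times=A^\times\times A'^\times$. For the pre-addition, the restriction $\cR\vert_{\{0\}\cup A^\times\times A'^\times}$ is visibly the pre-addition generated by $(\cR\vert_{\{0\}\cup A^\times})\times\{1\}$ and $\{1\}\times(\cR'\vert_{\{0\}\cup A'^\times})$, since the generators of $\cR$ already come separated into the two factors. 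This gives the isomorphism directly, without any retraction argument.
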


\begin{proof}
 Since $(X\times Y)^\px=\Spec\Gamma(X\times Y)^\px$ and $X^\px\times Y^\px=\Spec\bigl( \Gamma X^\px\otimes_\Fun\Gamma Y^\px \bigr)$, we prove the lemma by establishing an isomorphism between the corresponding blueprints of global section. By Lemma \ref{lemma: global sections of fibre products}, $\Gamma(X\times Y)=\Gamma X\otimes_\Fun\Gamma Y$. Let $\Gamma X=\bpquot{A_X}{\cR_X}$ and $\Gamma Y=\bpquot{A_Y}{\cR_Y}$ be proper representations of the global sections of $X$ and $Y$, respectively. Then 
 \[
  \Gamma X\otimes_\Fun\Gamma Y=\bpquot{A_X\times A_Y}{\cR}
 \]
 for $\cR=\gen{\cR_X\times\{1\},\{1\}\times\cR_Y}$ and 
 \[
  (\Gamma X\otimes_\Fun\Gamma Y)^\px=\bpquot{\{0\}\cup(A_X\times A_Y)^\times}{\cR'}
 \] 
 for $\cR'=\cR\vert_{\{0\}\cup(A_X\times A_Y)^\times}$. Since $(A_X\times A_Y)^\times=A_X^\times\times A_Y^\times$, the above expression equals
 \[
  \bpgenquot{(\{0\}\cup A_X^\times)\times(\{0\}\cup A_Y^\times)}{(\cR_X\vert_{\{0\}\cup A_X^\times})\times\{1\},\{1\}\times (\cR_Y\vert_{\{0\}\cup A_Y^\times})},
 \]
 which is $\Gamma X^\px\otimes_\Fun\Gamma Y^\px$.
\end{proof}

\begin{lemma}\label{lemma: product of blueprints generated by their units}
 Let $B_1$ and $B_2$ be two blueprints and $B=B_1\otimes_\Fun B_2$ their tensor product. Assume that both $B_{1,\Z}^+$ and $B_{2,\Z}^+$ are non-zero and free as $\Z$-modules. Then the canonical inclusion
 \[
  u_\Z^+: \quad (B^\px)^+_\Z \quad \longrightarrow \quad B^+_\Z
 \]
 is an isomorphism if and only if the canonical inclusions $u_{i,\Z}^+:(B_i^\px)^+_\Z\to B_{i,\Z}^+$ are isomorphisms for $i=1,2$.
\end{lemma}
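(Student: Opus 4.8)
The plan is to reduce the statement to a ring-theoretic assertion about a tensor product over $\Z$ and then settle it with an elementary faithful-flatness argument. First I would rewrite the map $u_\Z^+$. By the affine case of Lemma \ref{lemma: unit scheme of products}, the inclusion $u:B^\px\hookrightarrow B$ of $B=B_1\otimes_\Fun B_2$ is, under the natural isomorphism $(B_1\otimes_\Fun B_2)^\px\simeq B_1^\px\otimes_\Fun B_2^\px$, the morphism $u_1\otimes_\Fun u_2$ induced by the inclusions $u_i:B_i^\px\hookrightarrow B_i$. Applying $(\blanc)^+$, which turns $\otimes_\Fun$ into $\Sotimes_\N$ since $(C_1\otimes_D C_2)^+=C_1^+\Sotimes_{D^+}C_2^+$ and $\Fun^+=\N$, and then base changing to $\Z$, which commutes with $\Sotimes$ and turns $\Sotimes_\N$ into the ordinary tensor product of the resulting $\Z$-algebras, one obtains a commutative square identifying $u_\Z^+$ with the map
\[
 f_1\otimes_\Z f_2:\qquad R_1\otimes_\Z R_2\ \longrightarrow\ S_1\otimes_\Z S_2 ,
\]
where $R_i=(B_i^\px)^+_\Z$, $S_i=B_{i,\Z}^+$ and $f_i=u_{i,\Z}^+$. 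By hypothesis each $S_i$ is a nonzero free $\Z$-module, hence projective and faithfully flat over $\Z$, and the lemma becomes the assertion that $f_1\otimes_\Z f_2$ is an isomorphism if and only if both $f_1$ and $f_2$ are.

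The ``if'' direction is immediate: the tensor product over $\Z$ of two isomorphisms is an isomorphism. For the ``only if'' direction, assume $\phi:=f_1\otimes_\Z f_2$ is an isomorphism. First I would show each $f_i$ is surjective: factoring $\phi=(\id_{S_1}\otimes f_2)\circ(f_1\otimes\id_{R_2})$ and using right-exactness of $\otimes_\Z$ gives an exact sequence
\[
 \coker(f_1)\otimes_\Z R_2\ \longrightarrow\ \coker\phi\ \longrightarrow\ S_1\otimes_\Z\coker(f_2)\ \longrightarrow\ 0 ,
\]
whence $\coker(f_2)=0$ because $\coker\phi=0$ and $S_1$ is faithfully flat, and symmetrically $\coker(f_1)=0$. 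Since each $S_i$ is projective, the surjection $f_i$ splits, so $R_i\cong S_i\oplus K_i$ with $K_i=\ker f_i$ and $f_i$ the projection onto $S_i$. Distributing the tensor product, $\phi$ is the identity on the summand $S_1\otimes_\Z S_2$ of $R_1\otimes_\Z R_2$ and zero on the summands $S_1\otimes_\Z K_2$, $K_1\otimes_\Z S_2$ and $K_1\otimes_\Z K_2$; injectivity of $\phi$ forces each of these to vanish, and then $K_1\otimes_\Z S_2=0$ together with the faithful flatness of $S_2$ gives $K_1=0$, and likewise $K_2=0$. Hence both $f_i$ are bijective, i.e.\ isomorphisms.

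The main thing to be careful about is the first step, namely making sure the three identifications used — the unit field of a tensor product (Lemma \ref{lemma: unit scheme of products}), the universal semiring of a tensor product, and base change to $\Z$ — are mutually compatible, so that $u_\Z^+$ really \emph{is} $f_1\otimes_\Z f_2$ and not merely abstractly isomorphic to a map of that shape. This is a diagram chase invoking the naturality of the isomorphism in Lemma \ref{lemma: unit scheme of products} and the functoriality of $(\blanc)^+$ and $(\blanc)_\Z$; I expect no genuine obstacle.
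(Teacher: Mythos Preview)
Your proposal is correct and follows essentially the same architecture as the paper: both first identify $u_\Z^+$ with $f_1\otimes_\Z f_2$ via Lemma~\ref{lemma: unit scheme of products} and the compatibility of $(\blanc)^+$, $(\blanc)_\Z$ with tensor products, and then settle the ring-theoretic statement.

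The only noteworthy difference is in the ``only if'' direction. The paper exploits that the $u_{i,\Z}^+$ are already known to be \emph{inclusions} (as stated in the lemma): since $R_i\subset S_i$ with $S_i$ free over the PID $\Z$, each $R_i$ is free; choosing bases $(a_i)$ of $R_1$ and $(b_j)$ of $R_2$, the $(a_i\otimes b_j)$ form a basis of $R_1\otimes_\Z R_2\cong S_1\otimes_\Z S_2$, and one reads off that $(a_i)$ spans $S_1$ and $(b_j)$ spans $S_2$ (implicitly via $S_i\neq 0$ free $\Rightarrow$ faithfully flat). Your argument instead proves surjectivity of the $f_i$ from scratch and then splits using projectivity of $S_i$ to kill the kernels; this is more general (it nowhere uses injectivity of the $f_i$) but correspondingly longer. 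Given the hypotheses, you could shorten your argument by simply invoking the injectivity of $u_{i,\Z}^+$ once surjectivity is established, skipping the splitting step entirely.
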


\begin{proof}
 Since $(B^\px)^+_\Z = (B_1^\px)^+_\Z\Sotimes_\Z (B_2^\px)^+_\Z$ (by the previous lemma) and $B^+_\Z = B_{1,\Z}^+\Sotimes_\Z B_{2,\Z}^+$, the inclusion $u^+_\Z$ is clearly an isomorphism if both $u_{1,\Z}^+$ and $u_{2,\Z}^+$ are so.

 Assume that $u^+_\Z$ is an isomorphism. Since $B_\Z^+= B_{1,\Z}^+\Sotimes_\Z B_{2,\Z}^+$ is non-zero and free, the isomorphic blueprint $(B^\px)^+_\Z=(B_1^\px)^+_\Z\Sotimes_\Z (B_2^\px)^+_\Z$ is non-zero and free. Thus both factors $(B_1^\px)^+_\Z$ and $(B_2^\px)^+_\Z$ are non-zero and free. Therefore we obtain a commutative diagram
 \[
  \xymatrix@C=6pc{(B_1^\px)^+_\Z\Sotimes_\Z (B_2^\px)^+_\Z  \ar[r]^\sim_{ u_\Z^+}   &  B_{1,\Z}^+\Sotimes_\Z B_{2,\Z}^+ \\ (B_i^\px)^+_\Z  \arincl{[r]^{u_{i,\Z}^+}} \arincl{[u]}  &   B_{i,\Z}^+  \arincl{[u]}}
 \]
 of inclusions of free $\Z$-modules for $i=1,2$ where the morphisms on the top is an isomorphism. If we choose a basis $(a_i)$ for $(B_1^\px)^+_\Z$ and a basis $(b_j)$ for $(B_2^\px)^+_\Z$, then $(a_i\otimes b_j)$ is a basis for $(B_1^\px)^+_\Z\Sotimes_\Z (B_2^\px)^+_\Z= B_{1,\Z}^+\Sotimes_\Z B_{2,\Z}^+$. Thus $(a_i)$ is a basis for $B_{1,\Z}^+$ and $(b_j)$ is a basis for $B_{2,\Z}^+$, which proves that $u_{1,\Z}^+$ and $u_{2,\Z}^+$ are isomorphisms.
\end{proof}

\begin{prop}\label{prop: product of rank spaces}
 Let $X_1$ and $X_2$ be two blue schemes. Then there are canonical identifications
 \[ 
  \cZ(X_1\times X_2)=\cZ(X_1)\times\cZ(X_2), \quad (X_1\times X_2)^\prk = X_1^\prk\times X_2^\prk \quad\text{and}\quad (X_1\times X_2)^\rk = X_1^\rk\times X_2^\rk
 \]
 such that 
 \[
  \xymatrix@C=4pc{\cZ(X_1\times X_2) \arincl{[r]\ar[d]^{\pr_i}}   &   X_1\times X_2 \ar[d]^{\pr_i} \\
                  \cZ(X_i)\arincl{[r]}  & X_i}
 \]
 commutes as a diagram in $\Sets$ and
 \[
  \xymatrix@C=6pc{X_1^\rk\times X_2^\rk \ar[d]^{\pr_i}   &  X_1^\prk\times X_2^\prk \ar[l]_{\upsilon_{X_1\times X_2}}  \ar[r]^{\rho_{X_1\times X_2}} \ar[d]^{\pr_i}   &  X_1\times X_2 \ar[d]^{\pr_i} \\
                   X_i^\rk    &  X_i^\prk \ar[l]_{\upsilon_{X_i}}  \ar[r]^{\rho_{X_i}}    &  X_i }
 \]
 commutes as a diagram in $\BSch$ for $i=1,2$.
\end{prop}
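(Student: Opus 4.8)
The plan is to reduce the statement to its one substantial ingredient, the equality of index sets $\cZ(X_1\times X_2)=\cZ(X_1)\times\cZ(X_2)$, and then to obtain the identifications of pre-rank and rank spaces, together with the commuting squares, by a componentwise bookkeeping that uses only three structural facts about products over $\Fun$ which are already available: the description of the points and residue fields of $X_1\times X_2$ (Theorem \ref{thm-image-of-tau-in-x_1-times-x_2} together with Lemma \ref{lemma: potential characteristics of b_1 tensor b_2}), the compatibility of inverse closures with $\otimes_\Fun$ (Lemma \ref{lemma: tensor product of inverse closures}), and the compatibility of unit schemes with products (Lemma \ref{lemma: unit scheme of products}). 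Since $(\blanc)^\prk$ and $(\blanc)^\rk$ are defined componentwise and distribute over disjoint unions of connected blue schemes, once the index set is pinned down the rest is essentially formal.

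First I would record, working affine-locally, that a point of $X_1\times X_2$ is a pair $z=(x_1,x_2)$ with $x_1,x_2$ sharing a potential characteristic, that $\kappa(z)\simeq\kappa(x_1)\otimes_\Fun\kappa(x_2)$, and that the prime ideal of $z$ is generated by the pullbacks of those of $x_1$ and $x_2$; as a quotient of a blueprint by a prime ideal is reduced, this gives $\overline{z}\simeq\overline{x_1}\times_\Fun\overline{x_2}$ and hence $\Gamma\overline{z}\simeq\Gamma\overline{x_1}\otimes_\Fun\Gamma\overline{x_2}$. From this I would show that $z$ is pseudo-Hopf if and only if both $x_1$ and $x_2$ are: the condition ``almost of indefinite characteristic'' transfers both ways by Lemma \ref{lemma: potential characteristics of b_1 tensor b_2}, since the potential characteristics of $\kappa(z)$ are exactly the common ones of $\kappa(x_1)$ and $\kappa(x_2)$ and two cofinite sets of primes meet in a cofinite set; using $(\overline{x_1}\times_\Fun\overline{x_2})_\inv\simeq\overline{x_1}_\inv\times_\Fun\overline{x_2}_\inv$ (which follows from $\Funsq\otimes_\Fun\Funsq\simeq\Funsq$) together with Lemma \ref{lemma: product of blueprints generated by their units} handles ``$\overline{z}_\inv$ is generated by its units'', the hypothesis of that lemma being met because for such a point $\Gamma\overline{x_i}^+_\Z$ is the group ring $\Z[F_i^\times]$ of the unit group of its unit field, hence nonzero and free; $\Gamma\overline{z}^+_\Z\simeq\Gamma\overline{x_1}^+_\Z\otimes_\Z\Gamma\overline{x_2}^+_\Z$ is flat because tensor products of flat $\Z$-modules are flat; and ``$\overline{z}$ affine'' follows from ``$\overline{x_i}$ affine'' since products of affines are affine. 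Finally I would note $\rk z=\rk x_1+\rk x_2$, because Krull dimension of a tensor product of algebras over the field $\Q$ is additive.

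With these local facts, the equality $\cZ(X_1\times X_2)=\cZ(X_1)\times\cZ(X_2)$ follows by comparing minimal ranks inside connected components: the projections $\pr_i\colon X_1\times X_2\to X_i$ are continuous, so the connected component of $z=(x_1,x_2)$ maps into those of $x_1$ and $x_2$, and conversely, since all pseudo-Hopf points are almost of indefinite characteristic and hence pairwise compatible, one can move the rank of either factor up or down within the component of $z$; together with the rank formula and the coordinatewise characterization of pseudo-Hopf points this forces $z$ to have minimal rank in its component exactly when each $x_i$ does. Granted this, the identifications are pure bookkeeping:
\[
 (X_1\times X_2)^\prk=\coprod_{(x_1,x_2)\in\cZ(X_1)\times\cZ(X_2)}\widehat{\overline{x_1}\times_\Fun\overline{x_2}}=\coprod_{(x_1,x_2)}\bigl(\prerk{x_1}\times\prerk{x_2}\bigr)=X_1^\prk\times X_2^\prk,
\]
by Lemma \ref{lemma: tensor product of inverse closures} and distributivity of $\times$ over $\coprod$, and likewise, applying $(\blanc)^\px$ on each component and Lemma \ref{lemma: unit scheme of products}, $(X_1\times X_2)^\rk=\coprod\bigl(\prerk{x_1}^\px\times\prerk{x_2}^\px\bigr)=X_1^\rk\times X_2^\rk$. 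Under these identifications $\rho_{X_1\times X_2}$ and $\upsilon_{X_1\times X_2}$ are componentwise, by the functoriality built into Lemmas \ref{lemma: tensor product of inverse closures} and \ref{lemma: unit scheme of products}, the products $\rho_{X_1}\times\rho_{X_2}$ and $\upsilon_{X_1}\times\upsilon_{X_2}$, so the square in $\BSch$ commutes; the square in $\Sets$ commutes because under the embedding $\tau$ of Proposition \ref{prop: tau is an embedding} the inclusion $\cZ(X_1\times X_2)\hookrightarrow X_1\times X_2$ is literally the pair map and $\pr_i$ reads off the $i$-th coordinate.

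The hard part will be the equality $\cZ(X_1\times X_2)=\cZ(X_1)\times\cZ(X_2)$, and within it two delicate points: the converse half of ``$\overline{z}$ affine if and only if each $\overline{x_i}$ affine'' (when the $X_i$ are not affine this needs a descent-of-affineness statement for blue schemes, an $\Fun$-analogue of Chevalley's criterion, invoked along the affine surjections coming from the base-changed projections), and the reconciliation with connected components, where one must make sure that no pseudo-Hopf point of lower rank in a factor's component is lost on passing to the component of $z$ — most cleanly handled by first establishing that a nonempty fibre product over $\Fun$ of connected blue schemes is connected. Everything else reduces to routine manipulations of the tensor-product identities ($(\blanc)_\inv$, $(\blanc)^+_\Z$, $(\blanc)^\hatexp$ and $(\blanc)^\px$ all commuting with $\otimes_\Fun$ in the appropriate sense) and the cited structural lemmas.
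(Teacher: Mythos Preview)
Your proposal is correct and follows essentially the same route as the paper's proof: reduce to connected components, verify the four pseudo-Hopf conditions coordinatewise using Lemma~\ref{lemma: potential characteristics of b_1 tensor b_2}, Theorem~\ref{thm-image-of-tau-in-x_1-times-x_2}, and Lemma~\ref{lemma: product of blueprints generated by their units}, use the additivity of rank to match minimal-rank points, and then invoke Lemma~\ref{lemma: tensor product of inverse closures} and Lemma~\ref{lemma: unit scheme of products} componentwise for the identifications of $(\blanc)^\prk$ and $(\blanc)^\rk$. The two points you flag as delicate---connectedness of $X_{1,k}\times X_{2,l}$ and the converse direction of ``$\overline{(x_1,x_2)}$ affine $\Leftrightarrow$ both $\overline{x_i}$ affine''---are in fact asserted without argument in the paper as well, so you are being more careful rather than less.
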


\begin{proof}
 If $X_1=\coprod X_{1,k}$ and $X_2=\coprod X_{2,l}$ are the respective decompositions of $X_1$ and $X_2$ into connected components, then $X_1\times X_2=\coprod X_{1,k}\times X_{2,l}$ is the decomposition of $X_1\times X_2$ into connected components. Since these decompositions are compatible with the canonical projections $\pr_i:X_1\times X_2\to X_i$, we can assume for the proof that $X_1$, $X_2$ and $X_1\times X_2$ are connected.

 Recall that $\cZ(X)$ are the pseudo-Hopf points of a (connected) blue scheme $X$ that are of minimal rank, i.e.\ of rank equal to $\rk X$. By Lemma \ref{lemma: potential characteristics of b_1 tensor b_2}, the point $(x_1,x_2)\in X_1\times X_2$ is of almost indefinite characteristic if and only if both $x_1\in X_1$ and $x_2\in X_2$ are points that are of almost indefinite characteristic. Conversely, if $x_1\in X_1$ and $x_2\in X_2$ are points of almost indefinite characteristic, then the point $(x_1,x_2)$ exists in $X_1\times X_2$ by Theorem \ref{thm-image-of-tau-in-x_1-times-x_2}. The closed subscheme $\overline{(x_1,x_2)}$ is affine if and only if both $\overline{x_1}$ and $\overline{x_2}$ are affine. By Lemma \ref{lemma: product of blueprints generated by their units}, the scheme $\overline{(x_1,x_2)}^+_\Z$ is flat and non-empty if and only if both $\overline{x_1}^+_\Z$ and $\overline{x_2}^+_\Z$ are flat and non-empty. This shows that $(x_1,x_2)$ is pseudo-Hopf if and only if both $x_1$ and $x_2$ are pseudo-Hopf. To complete the proof of $\cZ(X_1\times X_2)=\cZ(X_1)\times\cZ(X_2)$, note that $(x_1,x_2)$ is of minimal rank if and only if both $x_1$ and $x_2$ are of minimal rank. Since $\pr_i(x_1,x_2)=x_i$, it is clear from the preceding that the first diagram of the proposition is commutative.

 Since $\cZ(X_1\times X_2)=\cZ(X_1)\times\cZ(X_2)$, we have an isomorphism
 \[
  (X_1\times X_2)^\prk \ = \ \coprod_{(x_1,x_2)\in\cZ(X_1\times X_2)} \prerk{(x_1,x_2)} \ \simeq \ \Bigl( \coprod_{x_1\in\cZ(X_1)} \prerk{x_1} \Bigr) \times \Bigl( \coprod_{x_2\in\cZ(X_2)} \prerk{x_2} \Bigr) \ = \ X_1^\prk\times X_2^\prk
 \]
 by Lemma \ref{lemma: tensor product of inverse closures}.
 It is obvious that this identification makes the right square of the second diagram in the proposition commutative.

 By the preceding and Lemma \ref{lemma: unit scheme of products}, we have canonical isomorphisms
 \[
  (X_1\times X_2)^\rk \ = \ ((X_1\times X_2)^\prk)^\px \ \simeq \ (X_1^\prk\times X_2^\prk)^\px \ \simeq \ (X_1^\prk)^\px\times (X_2^\prk)^\px \ = \ X_1^\rk\times X_2^\rk.
 \]
 It is obvious that the left square of the second diagram of the proposition commutes.
\end{proof}

As a side product of the equality $\cZ(X_1\times X_2)=\cZ(X_1)\times\cZ(X_2)$, we have the following fact.

\begin{cor}
 Let $X_1$ and $X_2$ be connected blue schemes. Then $\rk (X_1\times X_2)=\rk X_1+\rk X_2$.\qed
\end{cor}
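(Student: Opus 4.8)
The identity is essentially a pointwise statement, packaged through Proposition~\ref{prop: product of rank spaces}. The plan is: first establish that rank is additive under products, i.e. that for a pseudo-Hopf point $x_1$ of $X_1$ and a pseudo-Hopf point $x_2$ of $X_2$ one has $\rk(x_1,x_2)=\rk x_1+\rk x_2$ (recall $(x_1,x_2)$ is pseudo-Hopf by Proposition~\ref{prop: product of rank spaces}); then take infima. For the second step, assume both $X_1$ and $X_2$ possess pseudo-Hopf points (this is the situation of interest; if one factor has none, then $\cZ(X_1\times X_2)=\cZ(X_1)\times\cZ(X_2)=\emptyset$, so $X_1\times X_2$ has no pseudo-Hopf point either, and the identity has to be read together with the convention fixing the rank of such a scheme — this degenerate case is best isolated). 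When $\cZ(X_1)$ and $\cZ(X_2)$ are non-empty, every point of $\cZ(X_i)$ has rank exactly $\rk X_i$ by definition, hence by the pointwise statement every point of $\cZ(X_1\times X_2)=\cZ(X_1)\times\cZ(X_2)$ has rank $\rk X_1+\rk X_2$; since $\rk(X_1\times X_2)$ is by definition this common value, we are done.

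For the pointwise statement I would argue as follows. As in the proof of Proposition~\ref{prop: product of rank spaces} (where Lemma~\ref{lemma: product of blueprints generated by their units} is applied to the point $(x_1,x_2)$), the reduced closed subscheme $\overline{(x_1,x_2)}$ is canonically identified with $\overline{x_1}\times_\Fun\overline{x_2}$: by Lemma~\ref{lemma: global sections of fibre products} one has $\Gamma(\overline{x_1}\times_\Fun\overline{x_2})=\Gamma\overline{x_1}\otimes_\Fun\Gamma\overline{x_2}$, and one checks that $\overline{x_1}\times_\Fun\overline{x_2}$ is reduced and irreducible with generic point $(x_1,x_2)$ (using that each $\overline{x_i}$ is so). Passing to universal semiring schemes and base-changing to $\Q$, the compatibility $(X\times_\Fun Y)^+=X^+\Stimes Y^+$ and the compatibility of base extension with fibre products give $\overline{(x_1,x_2)}_\Q^+\simeq\overline{x_1}_\Q^+\Stimes_\Q\overline{x_2}_\Q^+$. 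Since $x_i$ is pseudo-Hopf, $\Gamma\overline{x_i}_\Z^+$ is a flat $\Z$-algebra which is a quotient of a group algebra (the Remark following the definition of pseudo-Hopf point); for the points of minimal, finite rank it is of finite type, so $\overline{x_i}_\Q^+$ is a non-empty $\Q$-scheme of finite type of dimension $\rk x_i$. The claim $\rk(x_1,x_2)=\rk x_1+\rk x_2$ is then exactly the classical additivity $\dim_\Q(V\Stimes_\Q W)=\dim_\Q V+\dim_\Q W$ for non-empty finite-type $\Q$-schemes $V$ and $W$.

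The main obstacle is not the dimension count but pinning down the scheme-theoretic identification $\overline{(x_1,x_2)}\simeq\overline{x_1}\times_\Fun\overline{x_2}$: one must know that a fibre product of reduced, irreducible blue schemes over $\Fun$ is again reduced and irreducible with the expected generic point, which is not automatic from classical scheme theory and should be extracted from the structure of products over $\Fun$ developed in Section~\ref{subsection:products} (in particular Proposition~\ref{prop: tau is an embedding} and Theorem~\ref{thm-image-of-tau-in-x_1-times-x_2}). A secondary point is the finiteness required to invoke additivity of Krull dimension, which is precisely where finiteness of the rank (i.e. restriction to pseudo-Hopf points of minimal rank) is used; and, as noted, the statement is implicitly about the case where both factors admit pseudo-Hopf points.
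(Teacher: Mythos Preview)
Your approach is correct and is essentially the paper's argument made explicit. The paper marks the corollary with a bare \qed\ and calls it a ``side product'' of the equality $\cZ(X_1\times X_2)=\cZ(X_1)\times\cZ(X_2)$; the implicit content is precisely the pointwise additivity $\rk(x_1,x_2)=\rk x_1+\rk x_2$, which in turn rests on the identification $\overline{(x_1,x_2)}\simeq\overline{x_1}\times_\Fun\overline{x_2}$ already used (without comment) inside the proof of Proposition~\ref{prop: product of rank spaces} when Lemma~\ref{lemma: product of blueprints generated by their units} is invoked. Your flagging of this identification as the delicate step, and of the degenerate case where one factor has no pseudo-Hopf point, is appropriate --- the paper simply does not address the latter.
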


For brevity, we will denote $\Spec B$ by $\ast_B$, which should emphasize that $\ast_B$ is the terminal object in $\Sch_B$, the category of blue schemes with base scheme $\ast_B=\Spec B$. In particular, $\ast_\Fun$ is the terminal object of $\BSch$. Note that $\ast_B$ is the terminal object of both $\Sch_B$ and $\Sch_B^+$ if $B$ is a semiring.

\begin{thm}\label{thm: sch_t is cartesian}
 The category $\TSch$ is Cartesian. Its terminal object is $\ast_\Fun$ and the product of two blue schemes in $\TSch$ is represented by the product in $\BSch$.
\end{thm}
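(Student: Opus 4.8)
The plan is to verify directly that $\ast_\Fun$ is a terminal object in $\TSch$ and that the fibre product in $\BSch$, equipped with the Tits-morphism structure supplied by Proposition \ref{prop: product of rank spaces}, satisfies the universal property of a categorical product in $\TSch$; then finite products and a terminal object together make $\TSch$ Cartesian by definition.

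First I would treat the terminal object. Let $X$ be a blue scheme. A Tits morphism $X\to\ast_\Fun$ is a pair $(\varphi^\rk,\varphi^+)$ with $\varphi^\rk:X^\rk\to\ast_\Fun^\rk$ and $\varphi^+:X^+\to\ast_\Fun^+$ making the relevant square commute. Now $\ast_\Fun^+=\Spec\N=\ast_\N$ is the terminal object of $\SSch$, so $\varphi^+$ is uniquely determined; and $\ast_\Fun^\rk=\ast_\Fun$ (since $\ast_\Fun$ is of pure rank, being discrete, reduced, with its single point pseudo-Hopf of rank $0$ and $\prerk{x}\to\barx$ an isomorphism), so $\varphi^\rk$ is the unique morphism $X^\rk\to\ast_\Fun$ of blue schemes. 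The compatibility square commutes automatically because $\ast_\N$ is terminal. Hence $\Hom_\cT(X,\ast_\Fun)$ is a singleton for every $X$.

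Next I would handle the product. Write $P=X_1\times X_2$ for the fibre product in $\BSch$ over $\ast_\Fun$, with projections $\pr_i:P\to X_i$. By Proposition \ref{prop: product of rank spaces} we have canonical identifications $P^\rk=X_1^\rk\times X_2^\rk$ and $P^+ = X_1^+\Stimes X_2^+$ (the latter from $(X_1\times X_2)^+=X_1^+\Stimes_{\Fun^+}X_2^+$, recorded in Section \ref{subsection:notation-and-conventions}), compatible with the $\pr_i$ and with $\rho_P$, $\upsilon_P$; in particular each $\pr_i$ is a locally algebraic morphism mapping $\cZ(P)$ to $\cZ(X_i)$, so by Proposition \ref{prop: morphisms that are tits} it is a Tits morphism. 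Given Tits morphisms $\psi_i=(\psi_i^\rk,\psi_i^+):Z\to X_i$, I would produce the unique Tits morphism $\psi=(\psi^\rk,\psi^+):Z\to P$ with $\pr_i\circ\psi=\psi_i$ as follows: take $\psi^\rk:Z^\rk\to X_1^\rk\times X_2^\rk=P^\rk$ to be the morphism $(\psi_1^\rk,\psi_2^\rk)$ into the product in $\BSch$ (or in $\rkBSch$, which embeds as a full subcategory and where products agree — this is implicit in Proposition \ref{prop: product of rank spaces}), and take $\psi^+:Z^+\to X_1^+\Stimes X_2^+=P^+$ to be $(\psi_1^+,\psi_2^+)$, the morphism into the product in $\SSch$. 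Uniqueness of $\psi^\rk$ and $\psi^+$ is immediate from the universal properties of products in $\BSch$ (resp. $\rkBSch$) and in $\SSch$.

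The main obstacle, and the one step deserving genuine care, is checking that the pair $(\psi^\rk,\psi^+)$ so defined actually \emph{is} a Tits morphism, i.e. that the square
\[
 \xymatrix@C=4pc{Z^{\rk,+}_\Z\ar[r]^{\psi^{\rk,+}_\Z}\ar[d]_{\rho_{Z,\Z}^+} & P^{\rk,+}_\Z\ar[d]^{\rho_{P,\Z}^+}\\ Z^+_\Z\ar[r]^{\psi^+_\Z} & P^+_\Z}
\]
commutes. Here I would use that $(\blanc)^+_\Z$ preserves products (so $P^+_\Z=X_{1,\Z}^+\Stimes_\Z X_{2,\Z}^+$ and $P^{\rk,+}_\Z=X_{1,\Z}^{\rk,+}\Stimes_\Z X_{2,\Z}^{\rk,+}$), that $\rho_{P,\Z}^+=\rho_{X_1,\Z}^+\Stimes\rho_{X_2,\Z}^+$ by the compatibility of $\rho$ with projections in Proposition \ref{prop: product of rank spaces}, and that $\psi^{\rk,+}_\Z=(\psi_1^{\rk,+}_\Z,\psi_2^{\rk,+}_\Z)$, $\psi^+_\Z=(\psi_{1,\Z}^+,\psi_{2,\Z}^+)$. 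A morphism into a product of $\Z$-schemes is determined by its two components; composing the square with each projection $\pr_i:P^+_\Z\to X_{i,\Z}^+$ reduces the claim to commutativity of the two squares expressing that $\psi_i$ is a Tits morphism, which holds by hypothesis. Finally, that $\pr_i\circ\psi=\psi_i$ as Tits morphisms is checked component-wise on $(\blanc)^\rk$ and $(\blanc)^+$ and follows from the corresponding facts in $\BSch$ and $\SSch$. This establishes the universal property, completing the proof that $\TSch$ is Cartesian with terminal object $\ast_\Fun$ and products computed in $\BSch$.
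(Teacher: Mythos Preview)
Your proposal is correct and follows essentially the same route as the paper: verify $\ast_\Fun$ is terminal using $\ast_\Fun^\rk=\ast_\Fun$ and $\ast_\Fun^+=\ast_\N$, then for products invoke Proposition~\ref{prop: product of rank spaces} to identify $(X_1\times X_2)^\rk$ and $(X_1\times X_2)^+$ and to see that the projections are Tits, define the induced morphism componentwise on rank spaces and on semiring schemes, and check the Tits compatibility square by reducing to components via the universal property of the product in $\Sch_\Z^+$. Your verification of the compatibility square is slightly more explicit than the paper's (you spell out that $\rho_{P,\Z}^+$ is the product of the $\rho_{X_i,\Z}^+$ and reduce by post-composing with projections), but the substance is identical.
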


\begin{proof}
 We begin to show that $\ast_\Fun$ is terminal. First note that $\ast_\Fun$ is of pure rank, i.e.\ $\ast_\Fun^\rk=\ast_\Fun$, and that $\ast_\Fun^+=\ast_\N$. Let $X$ be a blue scheme. Then there are a unique morphism $\varphi^\rk:X^\rk\to\ast_\Fun^\rk$ and a unique morphism $\varphi^+:X^+\to\ast_\N$. Thus uniqueness is clear. It is easily verified that $(\varphi^\rk,\varphi^+)$ is a Tits morphism. 

 To prove that the product of two blue schemes $X_1$ and $X_2$ in $\BSch$ together with the canonical projections $\pr_i:X_1\times X_2\to X_i$ (which are Tits by Proposition \ref{prop: product of rank spaces}) represents the product in $\TSch$, consider two Tits morphisms $\varphi_1:Y\to X_1$ and $\varphi_2:Y\to X_2$ for a blue scheme $Y$, i.e.\ $\varphi_i=(\varphi_i^\rk,\varphi_i^+)$ for $i=1,2$. We define $\varphi^\rk$ as $\varphi_1^\rk\times\varphi_2^\rk:Y^\rk\to X_1^\rk\times X_2^\rk$ and $\varphi^+$ as $\varphi_1^+\times\varphi_2^+:Y^+\to X_1^+\times X_2^+$. We have to show that the pair $\varphi=(\varphi^\rk,\varphi^+)$ is a Tits morphism $\varphi:Y\to X_1\times X_2$. Once this is shown, it is clear that $\varphi_i=\pr_i\circ\varphi$ and that $\varphi$ is unique with this property.

 To verify that $\varphi$ is Tits, consider for $i=1,2$ the diagram
 \[
  \xymatrix@R=2pc@C=4pc{                &  Y^+_\Z \ar[rr]^{\varphi^+_\Z} \ar[ddrr]|{\textcolor{white}{\rule{4em}{1.5em}}}^(0.35){\varphi^+_{i,\Z}} &          &   X_{1,\Z}^+\Stimes X_{2,\Z}^+ \ar[dd]_{\pr_i}   \\
                         Y_\Z^{\rk,+} \ar[ur]^{\rho_{Y,\Z}^+} \ar[rr]^{\varphi_\Z^{\rk,+}} \ar[ddrr]_{\varphi_{i,\Z}^{\rk,+}}  &        &   X_{1,\Z}^{\rk,+}\Stimes X_{2,\Z}^{\rk,+}\ar[dd]_{\pr_i}\ar[ur]_{\rho^+_{X,\Z}} \\
                                        &          &         &   X_{i,\Z}^+  \\
                                         &         &   X_{i,\Z}^{\rk,+} \ar[ur]_{\rho^+_{X_i,\Z}}  }  
 \]
 which we know to commute up to the top square. The top square commutes because both $\rho_{X,\Z}^+\circ\varphi^{\rk,+}_\Z$ and $\varphi_\Z^+\circ\rho_{Y,\Z}^+$ equal the canonical morphisms $Y_\Z^{\rk,+}\to X_{1,\Z}^+\Stimes X_{2,\Z}^+$ that is associated to the morphisms $\rho_{X_i,\Z}^+\circ\varphi^{\rk,+}_{i,\Z}=\varphi_{i,\Z}^+\circ\rho_{Y,\Z}^+:Y\to X_{i,\Z}^+$ for $i=1,2$. This shows that $\varphi$ is Tits.
\end{proof}

\begin{prop}\label{prop: sch_fun-t is cartesian}
 The category $\BTSch$ is Cartesian. Its terminal object is $\ast_\Fun$ and the product of two blue schemes in $\BTSch$ is represented by the product in $\BSch$.
\end{prop}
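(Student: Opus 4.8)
The plan is to reduce the statement about $\BTSch$ to the already-established Theorem~\ref{thm: sch_t is cartesian} about $\TSch$, using the fact that $\BTSch$ has the same objects as $\TSch$ (blue schemes) but morphisms that are \emph{locally algebraic} Tits morphisms, i.e.\ genuine morphisms $\varphi:X\to Y$ of blue schemes that happen to map $\cZ(X)$ into $\cZ(Y)$ (and are then equipped with the induced pair $(\varphi^\rk,\varphi^+)$ by Proposition~\ref{prop: morphisms that are tits}). So the key point is that the product in $\BSch$, which represents the product in $\TSch$, also represents the product in $\BTSch$; in particular the universal Tits morphism obtained in the proof of Theorem~\ref{thm: sch_t is cartesian} is in fact locally algebraic whenever its two components are.

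First I would check that $\ast_\Fun$ is terminal in $\BTSch$: for any blue scheme $X$ the unique locally algebraic morphism $X\to\ast_\Fun$ trivially maps $\cZ(X)$ into $\cZ(\ast_\Fun)=\{\eta\}$ (the unique point of $\Spec\Fun$), so it is a locally algebraic Tits morphism, and it is unique as a morphism of blue schemes hence a fortiori unique in $\BTSch$. Next, given two locally algebraic Tits morphisms $\varphi_1:Y\to X_1$ and $\varphi_2:Y\to X_2$, form the product morphism $\varphi=(\varphi_1,\varphi_2):Y\to X_1\times X_2$ in $\BSch$. I must verify (i) that $\varphi$ maps $\cZ(Y)$ into $\cZ(X_1\times X_2)$, and (ii) that the projections $\pr_i:X_1\times X_2\to X_i$ are locally algebraic Tits morphisms; then $\varphi$ is a locally algebraic Tits morphism with $\pr_i\circ\varphi=\varphi_i$, and its uniqueness in $\BTSch$ follows from uniqueness of the product morphism in $\BSch$. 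For (ii), the projections are locally algebraic by construction of the fibre product, and they map $\cZ(X_1\times X_2)$ into $\cZ(X_i)$ by Proposition~\ref{prop: product of rank spaces} (the commuting square $\cZ(X_1\times X_2)=\cZ(X_1)\times\cZ(X_2)\to\cZ(X_i)$). For (i), let $y\in\cZ(Y)$; since $\varphi_i$ is locally algebraic Tits, $\varphi_i(y)\in\cZ(X_i)$ for $i=1,2$, and $\varphi(y)=(\varphi_1(y),\varphi_2(y))$ under the identification $\tau$ of Proposition~\ref{prop: tau is an embedding}; by the equality $\cZ(X_1\times X_2)=\cZ(X_1)\times\cZ(X_2)$ of Proposition~\ref{prop: product of rank spaces} this point lies in $\cZ(X_1\times X_2)$.

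It remains to confirm that the pair $(\varphi^\rk,\varphi^+)$ attached to $\varphi$ as a locally algebraic Tits morphism is the same as the categorical product pair constructed in Theorem~\ref{thm: sch_t is cartesian}, so that the equations $\pr_i\circ\varphi=\varphi_i$ really hold in $\TSch$ (and hence in $\BTSch$). This is a compatibility check: the construction $\varphi\mapsto\varphi^\rk$ via $\varphi^\prk|_{\prerk y}=(\varphi|_{\bary})^\hatexp$ commutes with composition (as noted in the text just before Proposition~\ref{prop: morphisms that are tits}) and with the identification $(X_1\times X_2)^\rk=X_1^\rk\times X_2^\rk$ from Proposition~\ref{prop: product of rank spaces}, so $\varphi^\rk=\varphi_1^\rk\times\varphi_2^\rk$; similarly $\varphi^+=\varphi_1^+\times\varphi_2^+$ since $(\blanc)^+$ is functorial and commutes with products. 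Hence $\varphi$ agrees with the $\TSch$-product morphism, finishing the proof.

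The main obstacle, such as it is, is purely bookkeeping: making sure that $\BTSch$ is genuinely a subcategory of $\TSch$ (closed under composition — which is exactly the statement $(\psi\circ\varphi)^\prk=\psi^\prk\circ\varphi^\prk$ recorded before Proposition~\ref{prop: morphisms that are tits}) and that ``being a product in the subcategory'' follows from ``being a product in the ambient category whose structure morphisms and all the relevant universal morphisms lie in the subcategory.'' No new geometric input is needed beyond Propositions~\ref{prop: tau is an embedding}, \ref{prop: morphisms that are tits} and \ref{prop: product of rank spaces} and Theorem~\ref{thm: sch_t is cartesian}; the only mild subtlety is keeping straight the three layers (the map on $\cZ$, the rank-space morphism, and the semiring-scheme morphism) and checking each is compatible with products, but each of these is immediate from the cited results.
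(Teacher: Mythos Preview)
Your argument is correct and follows essentially the same route as the paper: verify that the unique morphism to $\ast_\Fun$ and the projections $\pr_i$ are locally algebraic Tits (the latter via Proposition~\ref{prop: product of rank spaces}), then check that the $\BSch$-product morphism $\varphi=(\varphi_1,\varphi_2)$ maps $\cZ(Y)$ into $\cZ(X_1)\times\cZ(X_2)=\cZ(X_1\times X_2)$. The paper's proof stops there; your final compatibility paragraph (matching $(\varphi^\rk,\varphi^+)$ with the $\TSch$-product morphism of Theorem~\ref{thm: sch_t is cartesian}) is superfluous, since $\BTSch$ is by definition a subcategory of $\BSch$, not of $\TSch$, so the equations $\pr_i\circ\varphi=\varphi_i$ and uniqueness are already settled at the level of locally algebraic morphisms.
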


\begin{proof}
 Since $\ast_\Fun$ is of pure rank, the unique morphism $\varphi:X\to \ast_\Fun$ is Tits for each blue scheme $X$ by Proposition \ref{prop: morphisms that are tits}. Thus $\ast_\Fun$ is a terminal object in $\BTSch$.

 We show that the product $X_1\times X_2$ of two blue schemes $X_1$ and $X_2$ in $\BSch$ represents the product in $\BTSch$. First note that the canonical projections $\pi_i:X_1\times X_2\to X_i$ are Tits by Proposition \ref{prop: product of rank spaces}. Let $\varphi_1:Y\to X_1$ and $\varphi_2:Y\to X_2$ be two locally algebraic Tits morphisms and $\varphi=\varphi_1\times\varphi_2:Y\to X_1\times X_2$ the canonical morphism. If $y\in\cZ(Y)$, then $\varphi(y)=(\varphi_1(y),\varphi_2(y))$ is an element of $\cZ(X_1)\times\cZ(X_2)=\cZ(X_1\times x_2)$. Thus $\varphi$ is Tits. This shows that $X_1\times X_2$ is the product of $X_1$ and $X_2$ in $\BTSch$,
\end{proof}

\begin{prop}\label{prop: sch^rk_fun is cartesian}
 The category $\rkBSch$ is Cartesian. Its terminal object is $\ast_\Fun$ and the product of two blue schemes in $\rkBSch$ is represented by the product in $\BSch$.
\end{prop}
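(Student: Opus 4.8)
The plan is to deduce the proposition formally from results already established, exploiting that $\rkBSch$ is, by definition, a \emph{full} subcategory of $\BSch$ (and, via the identification of a pure-rank blue scheme with its rank space, also of $\TSch$). Once that viewpoint is adopted it suffices to check two things: that the terminal object $\ast_\Fun$ of $\BSch$ lies in $\rkBSch$, and that $\rkBSch$ is closed under the binary product formed in $\BSch$. The Cartesian structure on $\rkBSch$ — terminal object $\ast_\Fun$, product represented by the $\BSch$-product together with its projections — then follows from the soft principle that a full subcategory of a Cartesian category which contains the terminal object and is closed under finite products is itself Cartesian with the inherited structure; in particular the $\BSch$-projections $\pr_i\colon X_1\times X_2\to X_i$ are automatically morphisms in $\rkBSch$ by fullness, and the universal property is inherited verbatim. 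This is exactly the pattern used in the proof of Proposition~\ref{prop: sch_fun-t is cartesian}, and one could equally well argue with $\TSch$ (Theorem~\ref{thm: sch_t is cartesian}) in place of $\BSch$.

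For the terminal object, I would simply record that $\ast_\Fun=\Spec\Fun$ is of pure rank: it is discrete and reduced, and its unique point $\eta$ is pseudo-Hopf because $\overline\eta=\ast_\Fun$ is affine, $\overline\eta_\inv=\ast_\Funsq$ is generated by its units, $\overline\eta{}^+_\Z=\ast_\Z$ is flat, and $\Fun$ is of indefinite characteristic by Lemma~\ref{lemma-funn-is-of-indefinite-characteristic}; moreover $\prerk\eta\to\overline\eta$ is an isomorphism since $\Fun$ is inverse closed. This was in fact already observed inside the proof of Theorem~\ref{thm: sch_t is cartesian}, so I would just cite it. Being terminal in $\BSch$ and lying in the full subcategory $\rkBSch$, the object $\ast_\Fun$ is terminal in $\rkBSch$.

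For closure under products, let $X_1$ and $X_2$ be blue schemes of pure rank and form $X_1\times X_2$ in $\BSch$. By the first part of Proposition~\ref{prop: characterisation of rank spaces} the rank space $(X_1\times X_2)^\rk$ is of pure rank; by Proposition~\ref{prop: product of rank spaces} it is canonically identified with $X_1^\rk\times X_2^\rk$; and by the second part of Proposition~\ref{prop: characterisation of rank spaces} the morphisms $\tilde\rho_{X_i}\colon X_i^\rk\to X_i$ are isomorphisms. Chaining these yields an isomorphism $(X_1\times X_2)^\rk\simeq X_1\times X_2$ of blue schemes, and since every condition defining ``of pure rank'' — discrete, reduced, all points pseudo-Hopf, and $\prerk x\to\barx$ an isomorphism — is invariant under isomorphism of blue schemes, we conclude $X_1\times X_2\in\rkBSch$. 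Fullness of $\rkBSch$ in $\BSch$ together with the universal property of the product in $\BSch$ then shows that $X_1\times X_2$, equipped with its projections, represents the product of $X_1$ and $X_2$ in $\rkBSch$.

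I do not anticipate any genuine obstacle here. The only step carrying real mathematical content is the compatibility of rank spaces with products, which is precisely Proposition~\ref{prop: product of rank spaces} (already proved), and everything else is the formal ``full subcategory closed under limits'' argument together with the isomorphism-invariance of the pure-rank conditions. If one wants the shortest possible write-up, one can phrase the whole thing as: $\rkBSch$ is a full subcategory of the Cartesian category $\TSch$ that contains $\ast_\Fun$ and is closed under products, hence Cartesian with the induced structure — the verification of closure being the same chain of identifications as above.
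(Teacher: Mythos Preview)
Your argument is correct and shares the same overall framework as the paper's proof: both observe that $\rkBSch$ is a full subcategory of $\BSch$, so it suffices to check that $\ast_\Fun$ and binary products of pure-rank schemes lie in $\rkBSch$.

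The only genuine difference is in how closure under products is verified. The paper argues directly: if $X_1$ and $X_2$ are discrete, reduced, and have all points almost of indefinite characteristic, then so does $X_1\times X_2$ (using implicitly that for a discrete reduced scheme each $\barx$ is already the spectrum of a blue field, so the remaining pseudo-Hopf conditions and $\prerk x\simeq\barx$ are easy). You instead route through the structural machinery: by Proposition~\ref{prop: product of rank spaces} and Proposition~\ref{prop: characterisation of rank spaces} you obtain $(X_1\times X_2)^\rk\simeq X_1^\rk\times X_2^\rk\simeq X_1\times X_2$, and since the left-hand side is of pure rank by Proposition~\ref{prop: characterisation of rank spaces}\,(i), so is the right-hand side by isomorphism invariance. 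This is a bit more indirect but perfectly valid, and it has the merit of showing that the result is already packaged inside the compatibility of rank spaces with products; the paper's hands-on check is shorter and self-contained but leaves more to the reader.
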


\begin{proof}
 Since $\rkBSch$ is a full subcategory of $\BSch$, it suffices to show that the terminal object of $\BSch$ and the product of two schemes of pure rank (taken in $\BSch$) are in $\rkBSch$. The terminal object $\ast_\Fun$ is of pure rank. If $X_1$ and $X_2$ are of pure rank, i.e.\ discrete, reduced and of almost indefinite characteristic, then $X_1\times X_2$ is also discrete, reduced and of almost indefinite characteristic. This proves the proposition.
\end{proof}

We collect the results of this section in the following theorem, which gives an overview of the Cartesian categories and the Cartesian functors between them, which will be of importance for the rest of this paper. Before we can state it, we fix some notation. We denote by $\iota:\rkBSch\hookrightarrow\BTSch$ and $\iota:\BTSch\hookrightarrow\BSch$ the inclusions as subcategories.

The functor $\cT:\BTSch\to\TSch$ is the identity on objects and sends a locally algebraic Tits morphism $\varphi:X\to Y$ to the Tits morphism $(\varphi^\rk,\varphi^+):X\to Y$ (cf.\ Proposition \ref{prop: morphisms that are tits}). Since a morphism $f:B\to C$ of blueprints is uniquely determined by the morphism $f^+: B^+\to C^+$ of semirings and since morphisms of blue schemes are locally algebraic (see \cite[Thm.\ 3.23]{blueprints1}), a morphism $\varphi:X\to Y$ of blue schemes is uniquely determined by its base extension $\varphi^+:X^+\to Y^+$. This means that $\cT:\BTSch\to\TSch$ is faithful and that we can, in fact, consider $\BTSch$ as a subcategory of $\TSch$.

The functor $\cW:\TSch\to\Sets$ is the Weyl extension, which factors through $\rkBSch$ (cf.\ Section \ref{subsection: tits morphisms}). For any semiring $k$, the  base extension $(\blanc)^+_k:\TSch\to\Sch_k^+$ to semiring schemes over $k$ factors through $\SSch$.

\begin{thm}\label{thm: overview of cartesian categories and functors}
 The diagram 
 \[
  \xymatrix@C=2,5pc@R=1,5pc{\rkBSch \ar[rrrr]^{\id} \arincl{[dr]^\iota} &        &       &       & \rkBSch \ar[rr]^\cW && \Sets \\
                                & \BTSch \arincl{[rr]^\cT}\arincl{[dr]^\iota} &       & \TSch \ar[ur]^(0.4){(\blanc)^\rk} \ar[dr]^{(\blanc)^+} \\
                                &        & \BSch \ar[rr]^{(\blanc)^+} &       & \Sch_\N^+ \ar[rr]^{(\blanc)_k^+ } && \Sch_k^+}
 \]
 is an essentially commutative diagram of Cartesian categories and Cartesian functors where $k$ is an arbitrary semiring. 
\end{thm}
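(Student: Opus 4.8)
The plan is to assemble the theorem from the pieces already established in this section, since each individual claim has essentially been proved. First I would observe that ``essentially commutative'' for the outer shapes of the diagram means commutativity up to the canonical natural isomorphisms that relate the various base-extension and rank-space functors; all squares and triangles in the diagram are either strictly commutative by construction or commute up to such canonical isomorphisms, and I would spell out which is which. Concretely: the triangle on the left involving $\iota:\rkBSch\hookrightarrow\BTSch$, $\cT$, $(\blanc)^\rk$ and $\id$ commutes because for a blue scheme $X$ of pure rank the rank space $X^\rk$ is canonically isomorphic to $X$ by Proposition~\ref{prop: characterisation of rank spaces}(ii), and on morphisms $\cT$ followed by $(\blanc)^\rk$ recovers the original morphism; the triangle involving $\cT$, $(\blanc)^\rk$, $\cW$ and $\id$ followed by $\cW$ commutes by the very definition of $\cW$ as the functor factoring through $(\blanc)^\rk$ (cf.\ Section~\ref{subsection: tits morphisms}); the square involving $\iota:\BTSch\hookrightarrow\BSch$, $\cT$, $(\blanc)^+$ (from $\BSch$) and $(\blanc)^+$ (from $\TSch$) commutes because the functor $\cT$ is the identity on objects and sends $\varphi$ to $(\varphi^\rk,\varphi^+)$, so the $(\blanc)^+$-component is literally unchanged; and the right-hand square involving $(\blanc)^+$, $(\blanc)^+_k$ and the identifications $\Sch_\N^+\to\Sch_k^+$ commutes up to the canonical base-change isomorphism.

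Next I would verify that each of the six categories in the diagram is Cartesian. For $\TSch$ this is Theorem~\ref{thm: sch_t is cartesian}; for $\BTSch$ it is Proposition~\ref{prop: sch_fun-t is cartesian}; for $\rkBSch$ it is Proposition~\ref{prop: sch^rk_fun is cartesian}. The category $\Sets$ is Cartesian with terminal object a one-point set and products the usual Cartesian products. The category $\BSch$ contains finite products (fibre products over $\Spec\Fun$ exist by \cite[Prop.~3.27]{blueprints1}) and has terminal object $\ast_\Fun$, so it is Cartesian; and $\SSch$, hence $\Sch_\N^+$ and $\Sch_k^+$, are Cartesian since semiring schemes contain fibre products and have a terminal object $\ast_\N$ resp.\ $\ast_k$. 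I would note that in each case the terminal object is $\ast_\Fun$ (or its image under the relevant functor), consistent with Theorems~\ref{thm: sch_t is cartesian} and Propositions~\ref{prop: sch_fun-t is cartesian}, \ref{prop: sch^rk_fun is cartesian}.

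Then I would check that each functor in the diagram is Cartesian, i.e.\ preserves finite products and terminal objects. The inclusions $\iota:\rkBSch\hookrightarrow\BTSch$ and $\iota:\BTSch\hookrightarrow\BSch$ are Cartesian because, by Propositions~\ref{prop: sch^rk_fun is cartesian} and~\ref{prop: sch_fun-t is cartesian}, products and terminal objects in the subcategories are computed as in $\BSch$. The functor $\cT:\BTSch\to\TSch$ is Cartesian: it is the identity on objects, it is faithful (a morphism of blue schemes is determined by its base extension to semiring schemes, by \cite[Thm.~3.23]{blueprints1}), it sends $\ast_\Fun$ to $\ast_\Fun$, and it sends the product $X_1\times X_2$ with its projections in $\BTSch$ to the same object with the same projections in $\TSch$, which represents the product there by Theorem~\ref{thm: sch_t is cartesian} (the projections are Tits by Proposition~\ref{prop: product of rank spaces}). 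The functor $(\blanc)^\rk:\TSch\to\rkBSch$ is Cartesian: it sends $\ast_\Fun$ to $\ast_\Fun^\rk=\ast_\Fun$ and, by Proposition~\ref{prop: product of rank spaces}, $(X_1\times X_2)^\rk= X_1^\rk\times X_2^\rk$ compatibly with projections. The functor $(\blanc)^+:\TSch\to\SSch$ (and likewise $(\blanc)^+:\BSch\to\Sch_\N^+$) is Cartesian because $\ast_\Fun^+=\ast_\N$ and $(X_1\times X_2)^+= X_1^+\Stimes X_2^+$ (noted in Section~\ref{subsection:notation-and-conventions}, and this is exactly the product in $\SSch$). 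The base extension $(\blanc)^+_k:\Sch_\N^+\to\Sch_k^+$ is Cartesian since $(\ast_\N)_k=\ast_k$ and base change along $\Spec k\to\Spec\N$ preserves fibre products. Finally, $\cW:\TSch\to\Sets$ is Cartesian because it factors as $\TSch\xrightarrow{(\blanc)^\rk}\rkBSch\to\Sets$ (the second map being the underlying-set functor, which is Cartesian since finite products of discrete blue schemes have underlying set the Cartesian product, and $\ast_\Fun$ has a one-point underlying set), and a composite of Cartesian functors is Cartesian.

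The only real work, and the step I would expect to be the main obstacle, is bookkeeping: making precise the sense in which the diagram is ``essentially commutative'' and pinning down the coherence of the canonical isomorphisms (e.g.\ the identification $X^{\rk,+}_\Z\simeq X^{\prk,+}_\Z$ via $\upsilon^+_{X,\Z}$, the identification $(X_1\times X_2)^\rk\simeq X_1^\rk\times X_2^\rk$, and the base-change isomorphisms) so that the two legs of each subdiagram agree up to these isomorphisms in a way compatible across the whole diagram. Since all of these isomorphisms are the canonical ones produced by universal properties, their naturality and mutual compatibility follow formally, so I would state this carefully but not belabor it. Everything else is a direct citation of Theorem~\ref{thm: sch_t is cartesian} and Propositions~\ref{prop: product of rank spaces}, \ref{prop: sch_fun-t is cartesian}, \ref{prop: sch^rk_fun is cartesian}, \ref{prop: morphisms that are tits}.
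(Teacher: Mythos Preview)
Your proposal is correct and follows essentially the same approach as the paper: verify that each category is Cartesian (citing Theorem~\ref{thm: sch_t is cartesian}, Propositions~\ref{prop: sch_fun-t is cartesian} and~\ref{prop: sch^rk_fun is cartesian}, and the standard facts for $\Sets$, $\BSch$, $\Sch_k^+$), that each functor is Cartesian, and that the two nontrivial sub-diagrams commute up to canonical isomorphism.

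One small point deserves more care. In your argument that $\cW$ is Cartesian you assert that ``finite products of discrete blue schemes have underlying set the Cartesian product''. This is not true in general: by Theorem~\ref{thm-image-of-tau-in-x_1-times-x_2}, a pair $(x_1,x_2)\in X_1^{\textup{top}}\times X_2^{\textup{top}}$ lies in $X_1\times_\Fun X_2$ only if $x_1$ and $x_2$ share a potential characteristic, so discreteness alone does not suffice (e.g.\ $\Spec\F_2\times_\Fun\Spec\F_3$). The paper closes this gap by invoking that every point of a scheme of \emph{pure rank} is almost of indefinite characteristic, whence any two such points share a common potential characteristic and the embedding $\tau$ is a bijection. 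You should add this observation when justifying that the underlying-set functor $\rkBSch\to\Sets$ preserves products.
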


\begin{proof}
 All categories are Cartesian: the terminal object of $\Sets$ is the one-point set $\ast$ and the product is the Cartesian product of sets; the terminal object of $\BSch$ is $\ast_\Fun$ and the product is the fibre product $\blanc\times_\Fun\blanc$; for any semiring $k$, the terminal object of $\Sch_k^+$ is $\ast_k$ and the product is the fibre product $\blanc\Stimes_k\blanc$; that $\TSch$, $\BTSch$ and $\rkBSch$ are Cartesian is subject of in Theorem \ref{thm: sch_t is cartesian}, Proposition \ref{prop: sch_fun-t is cartesian} and Proposition \ref{prop: sch^rk_fun is cartesian}, respectively.

 All functors are Cartesian: since the terminal object and the product in $\TSch$, $\BTSch$ and $\rkBSch$ coincides with the terminal object and the product in $\BSch$, the inclusions $\iota:\rkBSch\hookrightarrow\BTSch$, $\iota:\BTSch\hookrightarrow\BSch$ and $\cT:\BTSch\to\TSch$ are Cartesian; the identity functor $\id:\rkBSch\to\rkBSch$ is evidently Cartesian; $(\blanc)^\rk:\TSch\to\rkBSch$ and is Cartesian by Proposition \ref{prop: product of rank spaces} and since $\ast_\Fun$ is of pure rank; $(\blanc)^+:\TSch\to\SSch$ and $(\blanc)^+:\BSch\to\SSch$ are Cartesian since $(X\times Y)^+=X^+\Stimes Y^+$; $(\blanc)_k^+:\SSch\to \Sch_k^+$ is Cartesian since $(X\Stimes Y)^+_k=X^+_k\Stimes_k Y^+_k$; and $\cW:\rkBSch\to\Sets$ is Cartesian since the underlying set of $\ast_\Fun$ is the one-point set $\ast$ and since every point of a scheme of pure rank is of almost indefinite characteristic and therefore $\cW(X_1\times X_2)$ is the Cartesian product of $X_1,X_2\in\rkBSch$ by Theorem \ref{thm-image-of-tau-in-x_1-times-x_2}.

 The composition $(\blanc)^\rk\circ\cT\circ\iota:\rkBSch\to\rkBSch$ is isomorphic to the identity functor because $X^\rk\simeq X$ for a blue scheme of pure rank and $\varphi^\rk=\varphi$ for a morphism between blue schemes of pure rank. The functors $(\blanc)^+\circ\cT:\BTSch\to\SSch$ and $(\blanc)^+\circ\iota:\BTSch\to\SSch$ are isomorphic because in both cases a blue scheme $X$ is sent to $X^+$ and a locally algebraic Tits morphism $\varphi:X\to Y$ is sent to $\varphi^+: X^+\to Y^+$. This finishes the proof of the theorem.
\end{proof}


\subsection{Tits-Weyl models}
\label{subsection: tits-weyl models}

\begin{df}
 A \emph{Tits monoid} is a monoid in $\TSch$. The \emph{Weyl monoid} of a Tits monoid $(G,\mu)$ is the monoid $(\cW(G),\cW(\mu))$ in $\Sets$. In case that $\cW(G)$ is a group, we call it also the \emph{Weyl group of $G$}.
\end{df}

Often, we will suppress the semigroup law from the notation if it is not necessarily needed. A \emph{Tits model} of a group scheme $\cG$ is a Tits monoid $G$ whose base extension $G_\Z^+$ to schemes is isomorphic to $\cG$ as a group scheme. 

\begin{rem}
 Note that the definition of a Tits model given here differs from that in \cite{Deitmar11}. While we define a Tits model of a group scheme to be a monoid in $\TSch$ whose base extension is isomorphic to the group scheme, the definition of a Tits model of a Chevalley group scheme $\cG$ in \cite{Deitmar11} means a cancellative blue scheme $G$ such that $G_\Z^+\simeq\cG$ (as schemes) and such that the number of morphisms $\ast_\Fun\to G$ coincides with the number of elements in the Weyl group of $\cG$. The notion of a Tits-Weyl model as defined below will combine these two aspects in a certain way.
\end{rem}

Let $(G,\mu)$ be a Tits monoid with identity $\epsilon:\ast_\Fun\to G$. Let $e$ be the image point of $\epsilon^\rk$ in $G^\rk$ and $\fe=\overline{\{e\}}=\Spec\kappa(e)$ the closed subscheme of $G^\rk$ with support $e$. We call $\fe$ the \emph{Weyl kernel of $G$}. 

\begin{lemma}
 Let $(G,\mu)$ be a Tits monoid and $\fe$ its Weyl kernel. The semigroup law $\mu^\rk$ of $G^\rk$ restricts to a semigroup law $\mu_\fe$ of $\fe$, which turns $\fe$ into a commutative group in $\rkBSch$.
\end{lemma}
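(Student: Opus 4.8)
The plan is to obtain the monoid structure on $\fe$ by restricting the one on $G^\rk$ that is handed to us by functoriality, and then to read off commutativity and the existence of an inversion from the blue field $\kappa(e)$ and its base extension to $\Z$.

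\textbf{Step 1: functoriality.} By Theorem \ref{thm: overview of cartesian categories and functors} the functor $(\blanc)^\rk\colon\TSch\to\rkBSch$ is Cartesian, so $(G^\rk,\mu^\rk)$ is a semigroup in $\rkBSch$ with identity $\epsilon^\rk\colon\ast_\Fun\to G^\rk$; moreover the identity axiom $\mu\circ(\epsilon,\epsilon)=\epsilon$ in $\TSch$ yields $\mu^\rk\circ(\epsilon^\rk,\epsilon^\rk)=\epsilon^\rk$, and reading this on underlying spaces gives $\mu^\rk(e,e)=e$. Let $j\colon\fe\hookrightarrow G^\rk$ be the closed immersion. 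The product $\fe\times\fe$ is of pure rank by Proposition \ref{prop: sch^rk_fun is cartesian}, in particular reduced, and the image of $j\times j$ is contained in $\{(e,e)\}$, so $\mu^\rk\circ(j\times j)\colon\fe\times\fe\to G^\rk$ is a morphism from a reduced blue scheme whose set-theoretic image lies in the support $\{e\}$ of $\fe$. By the universal property of the reduced closed subscheme $\fe$ (Section \ref{subsection: reduced blueprints}) it factors uniquely as $j\circ\mu_\fe$ for a morphism $\mu_\fe\colon\fe\times\fe\to\fe$, and similarly $\epsilon^\rk=j\circ\epsilon_\fe$ for a unique $\epsilon_\fe\colon\ast_\Fun\to\fe$. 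That same universal property shows $j$ is a monomorphism on reduced sources; since $\fe$, $\fe\times\fe$ and $\fe\times\fe\times\fe$ are of pure rank, composing the associativity and unit diagrams of $(G^\rk,\mu^\rk,\epsilon^\rk)$ with $j$ and repeatedly using $j\circ\mu_\fe=\mu^\rk\circ(j\times j)$ shows that $(\fe,\mu_\fe,\epsilon_\fe)$ is a monoid in $\rkBSch$; in particular $\mu^\rk$ restricts to the semigroup law $\mu_\fe$.

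\textbf{Step 2: commutativity.} Write $\kappa=\kappa(e)$, so $\fe=\Spec\kappa$ with $\kappa$ a cancellative blue field (schemes of pure rank are cancellative), and put $\Delta=\mu_\fe^\#\colon\kappa\to\kappa\otimes_\Fun\kappa$ and $\varepsilon=\epsilon_\fe^\#\colon\kappa\to\Fun$. The underlying monoid of $\kappa$ is $\{0\}\cup\kappa^\times$ and $(\kappa\otimes_\Fun\kappa)^\times=\kappa^\times\times\kappa^\times$; as $\varepsilon$ is a blueprint morphism it sends every unit of $\kappa$ to the unique unit $1$ of $\Fun$. Hence, writing $\Delta(a)=a_1\otimes a_2$ for a unit $a$, the counit identities $(\varepsilon\otimes\id)\circ\Delta=\id=(\id\otimes\varepsilon)\circ\Delta$ force $a_1=a_2=a$, so $\Delta(a)=a\otimes a$ (and $\Delta(0)=0$): the comultiplication $\Delta$ is grouplike. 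In particular $\tau\circ\Delta=\Delta$ for the flip $\tau$ of $\kappa\otimes_\Fun\kappa$, i.e.\ $\mu_\fe$ is commutative.

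\textbf{Step 3: the inversion (the delicate point).} Since $(\blanc)_\Z^+$ is Cartesian, $\fe_\Z^+=\Spec\kappa_\Z^+$ is a commutative monoid scheme over $\Z$; by the remark following the definition of pseudo-Hopf points $\kappa_\Z^+$ is a quotient of the group Hopf algebra $\Z[\kappa^\times]$, and since $\Delta_\Z^+$ is grouplike this quotient map respects the comultiplications and counits. As $e$ is pseudo-Hopf, $\kappa_\Z^+$ is flat over $\Z$. One then argues that a flat closed submonoid scheme of the diagonalizable group scheme $\Spec\Z[\kappa^\times]$ is a subgroup scheme---in fact $\kappa_\Z^+\simeq\Z[\kappa^\times/N]$ for a subgroup $N\le\kappa^\times$, and since the canonical map $\kappa\to\kappa_\Z^+$ embeds $\kappa^\times$ into $\kappa_\Z^+$ we must have $N=1$ and $\kappa_\Z^+=\Z[\kappa^\times]$. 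Thus $\fe_\Z^+$ is an affine group scheme whose antipode $S_\Z^+\colon\kappa_\Z^+\to\kappa_\Z^+$ restricts on $\kappa^\times$ to $a\mapsto a^{-1}$, hence maps the submonoid $\{0\}\cup\kappa^\times$ into itself. Since $\kappa$ is a subblueprint of $\kappa_\Z^+$ (the fact recalled before Proposition \ref{prop: semiring scheme morphisms that descend to tits morphisms}), the restriction $S=S_\Z^+|_\kappa\colon\kappa\to\kappa$ is again a morphism of blueprints, and I would set $\iota_\fe=\Spec S\colon\fe\to\fe$. The two inversion diagrams for $(\fe,\mu_\fe,\epsilon_\fe,\iota_\fe)$ can then be verified directly on coordinate rings: both $m_\kappa\circ(\id\otimes S)\circ\Delta$ and $m_\kappa\circ(S\otimes\id)\circ\Delta$ (with $m_\kappa\colon\kappa\otimes_\Fun\kappa\to\kappa$ the multiplication, dual to the diagonal) send $0$ to $0$ and a unit $a$ to $aa^{-1}=1$, so both equal $u\circ\varepsilon$ where $u\colon\Fun\to\kappa$ is the structure morphism. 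Together with Step 2 this shows that $\fe$ is a commutative group in $\rkBSch$. The main obstacle is precisely the identification of $\fe_\Z^+$ as a group scheme---equivalently, the fact that the additive structure of $\kappa(e)$ is stable under inverting units; everything else is a routine transport of the monoid axioms across the universal property of reduced closed subschemes, together with the elementary observation that a bialgebra structure on a blue field is automatically grouplike and cocommutative.
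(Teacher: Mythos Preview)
Your Steps 1 and 2 are essentially the paper's argument: the paper also deduces $m(a)=a\otimes a$ from the counit identities, and is simply terser about why $\mu^\rk$ restricts to $\fe$.

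Step 3 is where you diverge, and here there is a genuine gap. The paper does not pass through $\fe_\Z^+$ at all: once $m(a)=a\otimes a$ is known, it simply \emph{defines} $\Gamma\iota_\fe:\kappa(e)\to\kappa(e)$ by $0\mapsto 0$ and $a\mapsto a^{-1}$, and the antipode identities are then immediate elementwise. Your strategy of restricting a ring antipode on $\kappa_\Z^+$ back to the subblueprint $\kappa$ is a reasonable idea, but the argument you give for the existence of that antipode does not work. You assert that a flat quotient bialgebra of $\Z[\kappa^\times]$ must be of the form $\Z[\kappa^\times/N]$; however, the kernel of $\Z[\kappa^\times]\twoheadrightarrow\kappa_\Z^+$ is exactly the ideal
\[
 I \ = \ \bigl\{\,\textstyle\sum a_i-\sum b_j \ \bigm| \ \sum a_i\equiv\sum b_j \text{ in } \kappa(e)\,\bigr\}
\]
(cf.\ the remark after the definition of pseudo-Hopf points), and nothing forces this to be generated by elements $a-1$ with $a$ in a subgroup. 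The blanket claim ``a flat closed submonoid scheme of a diagonalizable group scheme over $\Z$ is a subgroup scheme'' is precisely the nontrivial point, and over a base that is not a field it is not the elementary statement you treat it as; note also that the lemma has no finite-type hypothesis, so $\kappa^\times$ need not be finitely generated and you cannot appeal to Lemma~\ref{lemma: fe is diagonalizable}, which is proved afterwards and under that extra assumption. As written, Step 3 does not establish that $\fe_\Z^+$ is a group scheme, so your restriction argument cannot get started.
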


\begin{proof}
 Let $\epsilon:\ast_\Fun\to G$ be the identity of $G$. Since $\epsilon^\rk$ is a both-sided identity for $\mu^\rk$, the semigroup law of $G^\rk$ restricts to a semigroup law $\mu_\fe^\rk:\fe\times\fe\to\fe$. Since $\fe$ is of pure rank, $\mu_\fe:\fe\times\fe\to\fe$ is a Tits morphism and thus a semigroup law for $\fe$ in $\rkBSch$.

 We verify that this semigroup law is indeed a commutative group law. Its identity is the restriction of $\epsilon^\rk$ to $\epsilon_\fe:\ast_\Fun\to\fe$. Consider the comultiplication $m=\Gamma\mu_\fe:\kappa(e)\to\kappa(e)\otimes_\Fun\kappa(e)$. It sends an element $a\in\kappa(e)$ to an element $b\otimes c=m(a)$ of $\kappa(e)\otimes_\Fun\kappa(e)$. The coidentity yields a commutative diagram
 \[
  \xymatrix@R=1pc@C=2pc{                                                                    &&& \kappa(e)\ar[dlll]_{m}\ar[drrr]^{m}\ar[dd]^{\id} \\ 
                        \kappa(e)\otimes_\Fun\kappa(e)\ar[drr]^{\quad\Gamma\epsilon_\fe\otimes\id} &&&&&                                 & \kappa(e)\otimes_\Fun\kappa(e)\ar[dll]_{\id\otimes\Gamma\epsilon_\fe\quad } ,\\ 
                                        &&\Fun\otimes_\Fun\kappa(e)\ar@{}[r]|(0.62)\simeq     &\kappa(e)          & \kappa(e)\otimes_\Fun\Fun\ar@{}[l]|(0.62)\simeq   }
 \]
 which means that $1\otimes a=1\otimes c$ and $a\otimes 1=b\otimes 1$. Thus, $m(a)=a \otimes a$, which implies that $\mu_\fe$ is commutative. The inverse $\iota_\fe$ of $\mu_\fe$ is defined by the morphism $\Gamma\iota_\fe:\kappa(e)\to\kappa(e)$ that sends $0$ to $0$ and $a$ to $a^{-1}$ if $a\neq 0$.
\end{proof}

\begin{lemma}\label{lemma: fe is diagonalizable}
 Let $G$ be a Tits monoid that is locally of finite type and $\fe$ its Weyl kernel. Then the group scheme $\fe^+_\Z$ is diagonalizable, i.e.\ a closed subgroup of a split torus over $\Z$.
\end{lemma}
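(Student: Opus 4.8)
The plan is to use the structure theory of diagonalizable group schemes: over $\Z$, a commutative affine group scheme is diagonalizable if and only if its coordinate ring is spanned as a $\Z$-module by a subgroup of its units (equivalently, it is the group algebra $\Z[M]$ of an abelian group $M$). So the whole task reduces to understanding $\Gamma\fe^+_\Z$ as a ring.

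\smallskip

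First I would unwind the definitions. By the previous lemma, $\fe = \Spec\kappa(e)$ is a commutative group in $\rkBSch$, where $\kappa(e) = \Fpx(e)$ is a blue field (the residue field at the point $e$ of $G^\rk$, which is a scheme of pure rank, hence $\fe$ is a one-point reduced blue scheme whose coordinate blueprint is a blue field). Write $F = \kappa(e)$ and let $F^\times$ be its unit group. Since $F$ is a blue field, $F = \{0\}\cup F^\times$ as a monoid. Because $\fe$ is a point of the rank space, $F$ is of the form $\Fpx(\prerk x)$ for some pseudo-Hopf point $x$; in particular $F$ is cancellative and $F^+_\Z$ is a flat (non-zero) scheme — indeed, by the remark following the definition of pseudo-Hopf points, $\Gamma\fe^+_\Z = \Gamma\prerk x^+_\Z$ is a quotient of the Hopf algebra $\Z[F^\times]$ by the ideal $I = \{\sum a_i - \sum b_j \mid \sum a_i \= \sum b_j \text{ in } F\}$. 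Since $G$ is locally of finite type, $F^\times$ is a finitely generated abelian group.

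\smallskip

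Next I would identify $\Gamma\fe^+_\Z = \Z[F^\times]/I$ with the group algebra $\Z[M]$ of a suitable quotient $M$ of $F^\times$. The point is that every additive relation $\sum a_i \= \sum b_j$ in the blue field $F$ has all $a_i, b_j$ units (since $F$ has no zero divisors and the relation involves nonzero terms, or if a term is $0$ we discard it), so $I$ is generated by differences of the form $\sum a_i - \sum b_j$ with $a_i, b_j \in F^\times$. One checks (this is exactly the standard computation for blue group schemes) that the quotient $\Z[F^\times]/I$ is again the group algebra $\Z[M]$ where $M = F^\times/\!\!\sim$ for the equivalence relation generated by $a \sim b$ whenever $a,b$ appear as the \emph{only} terms on the two sides of a relation $a \= b$; more precisely, because $\fe$ is a \emph{group} object in $\rkBSch$, the comultiplication $m:F\to F\otimes_\Fun F$ computed in the previous lemma sends $a\mapsto a\otimes a$, so $F^\times$ is literally the character group and no collapsing can make $\Z[F^\times]/I$ fail to be a group algebra of a quotient — one needs to verify that $I$ is a Hopf ideal compatible with the grouplike structure, which follows since $\mu_\fe$ is a group law. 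Alternatively, and more cleanly: $\fe^+_\Z$ is an affine commutative group scheme over $\Z$, flat and of finite type, whose coordinate ring is generated by grouplike elements (the images of $F^\times$); a commutative flat affine $\Z$-group scheme whose Hopf algebra is generated by grouplikes is diagonalizable by the structure theorem (e.g. SGA3 or Demazure–Gabriel), hence a closed subgroup of some $\Gm^n$ over $\Z$.

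\smallskip

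The main obstacle I anticipate is the careful bookkeeping in the second step: showing that $I$, the defining ideal of $\Gamma\fe^+_\Z$ inside $\Z[F^\times]$, is generated by relations \emph{purely among units} and that the quotient is exactly a group algebra rather than merely generated by grouplikes. The delicate point is handling additive relations in $F$ that involve $0$ (one must use that $F$ is a blue field, so $0$ appearing as a term forces the relation $\sum a_i\=0$, and in a cancellative blue field that spans a non-zero flat ring this cannot collapse the units), together with the verification that the resulting quotient group $M$ of $F^\times$ is finitely generated (using local finite type of $G$) so that $\Z[M]$ embeds as a closed subscheme of a split torus $\Gm^n$ rather than an infinite-dimensional one. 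Once $\Gamma\fe^+_\Z \cong \Z[M]$ with $M$ finitely generated abelian, the closed immersion $\fe^+_\Z \hookrightarrow \Gm^n_\Z$ (for $n$ = number of generators of $M$, dualizing $\Z^n \twoheadrightarrow M$) is immediate, completing the proof.
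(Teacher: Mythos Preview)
Your proposal is correct and follows the same core strategy as the paper: both arguments hinge on the observation that $\kappa(e)$ is a blue field, so $\Gamma\fe^+_\Z$ is generated as a $\Z$-algebra by the image of $\kappa(e)^\times$, and then use local finite type to cut down to a finitely generated subgroup $H$ and obtain a surjection $\Z[T_1^{\pm1},\dotsc,T_n^{\pm1}]\twoheadrightarrow \Gamma\fe^+_\Z$.

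There is, however, a small but genuine difference in how the two finish. Your ``alternative'' route invokes the previous lemma (the comultiplication on $\kappa(e)$ is $a\mapsto a\otimes a$) to see that the images of the units are grouplike, so the surjection above is a \emph{Hopf algebra} map; this immediately gives a closed \emph{subgroup} embedding $\fe^+_\Z\hookrightarrow \G_{m,\Z}^n$, and you are done. The paper does not invoke the grouplike structure: it only obtains a ring surjection, hence only a closed \emph{subscheme} embedding into a torus, and then runs a separate structure-theory argument (a closed subscheme of a torus can carry no unipotent or semisimple component; a flat commutative affine group scheme with these constraints is an extension of a constant group scheme by a torus, hence diagonalizable). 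Your route is arguably tighter. On the other hand, your first attempt --- to prove that $\Gamma\fe^+_\Z$ is \emph{exactly} a group algebra $\Z[M]$ --- is stronger than needed and, as you suspected, is where the delicate bookkeeping lies; you can simply drop it in favour of the Hopf-surjection argument.
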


\begin{proof}
 Since $\fe^+_\Z$ is affine, the claim of the lemma means that the global sections $B=\kappa(\fe)^+_\Z$ of $\fe^+_\Z$ are a quotient of $\Z[T_1^{\pm1},\dotsc,T_n^{\pm1}]$ by some ideal where $n\in\N$. The global sections $\kappa(\fe)$ of $\fe$ form a blue field, and $B$ is generated by the image of the multiplicative group $\kappa(\fe)^\times$in $B$. Since $B$ is a finitely generated algebra, it is already generated by a finitely generated subgroup $H$ of $\kappa(\fe)^\times$. In other words, $B$ is a quotient of the group ring $\Z[H]$. By the structure theorem for finitely generated abelian groups, $H$ is the quotient of a finitely generated free abelian group of some rank $n$. Thus $\Z[H]$, and therefore $B$, is a quotient of $\Z[T_1^{\pm1},\dotsc,T_n^{\pm1}]$. 

 This shows that $\fe^+_\Z$ is a closed subscheme of a torus. Therefore $\fe_\Z^+$ can neither have a unipotent component nor a semisimple component. As a flat commutative group scheme it must be an extension of a constant group scheme by a torus. Since $\fe$ is commutative, $\fe^+_\Z$ is commutative and therefore diagonalizable.
\end{proof}

The Weyl kernel $\fe$ of $G$ is the identity component of $G^\rk$, i.e.\ the connected component that contains the image of the identity $\epsilon^\rk:\ast_\Fun\to G^\rk$. Thus $\fe_\Z^+$ equals the identity component $(G^{\rk,+}_\Z)_0$ of $G^{\rk,+}_\Z$, which is a normal subgroup of $G^{\rk,+}_\Z$, and we can consider the quotient group $G^{\rk,+}_\Z/\fe^+_\Z$. In the following, we are interested in comparing this quotient to the Weyl group of $G$. 

We recall some notions from the theory of group schemes. Let $\cG$ be a group scheme of finite type. For a torus $T$ of $\cG$, we denote its centralizer by $C(T)$ and its normalizer by $N(T)$. We define $\cW(T)=N(T)/C(T)$, the \emph{Weyl group relative to $T$}, which is quasi-finite, \'etale and separated over $\Spec\Z$ (cf.\ \cite[]{SGA3II}). This means, in particular, that $\Gamma\cW(T)$ is a flat $\Z$-module of finite rank, or, in other words, that $\cW(T)$ is a finite group scheme. Since $\Spec\Z$ has no non-trivial connected finite \'etale extensions (cf.\ \cite[Section 6.4]{Conrad11}), $\cW(T)$ is indeed a constant group scheme over $\Z$. 

A \emph{maximal torus of $\cG$} is a subgroup $T$ of $\cG$ that is a torus such that for every geometric point $\bar s:\Spec\bar k\to\Z$ of $\Spec\Z$, the torus $T_{\bar s}$ is maximal in $\cG_{\bar s}$ (cf.\ \cite[XII.1.3]{SGA3II}). A maximal torus $T$ of $\cG$ is always split (cf.\ \cite[Section 6.4]{Conrad11}). Note that in general, $\cG$ does not have a maximal torus. If $T$ is a maximal torus of $\cG$, then the rank of $T$ is called the \emph{reductive rank of $\cG$} and $C(T)$ is called a \emph{Cartan subgroup of $\cG$}. In case of a maximal torus, we call $\cW(T)$ simply the \emph{Weyl group of $\cG$}. If $\cG$ is affine smooth and $T$ a maximal torus of $\cG$, then the geometric fibre $\cW(T)_{\bs}$ is the Weyl group of $\cG_\bs$, which is also called the \emph{geometric Weyl group (over $\bar k$)}. Since $\cW(T)$ is a constant group scheme, the group $\cW(T)(R)$ of $R$-rational points does not depend on the chosen ring $R$. We call this group the \emph{ordinary Weyl group of $\cG$}.



Let $G$ be a Tits model of $\cG$, i.e.\ we identify $G_\Z^+$ with $\cG$, and let $\fe$ be the Weyl kernel of $G$. A consequence of Lemma \ref{lemma: fe is diagonalizable} is that $\fe^+_\Z$ contains a unique maximal torus $T$ (cf.\ \cite[XII.1.12]{SGA3II}). We call $T$ the \emph{canonical torus of $\cG$ (with respect to $G$)}. Then $\fe^+_\Z$ is contained in the centralizer $C(T)$ of $T$ in $\cG$. Since $\fe^+_\Z$ is a normal subgroup of $G^{\rk,+}_\Z$ and $T$ is the unique maximal torus of $\fe^+_\Z$, the subgroup $G^{\rk,+}_\Z$ normalizes $T$ in $\cG$, which means that $G^{\rk,+}_\Z$ embeds into $N(T)$. Thus we obtain a morphism $\Psi_\fe: G^{\rk,+}_\Z/\fe^+_\Z\to\cW(T)$ of group schemes.


\begin{df}
 Let $\cG$ be an affine smooth group scheme of finite type. A \emph{Tits-Weyl model of $\cG$} is a Tits model $G$ of $\cG$ such that the canonical torus $T$ is a maximal torus of $\cG$ and such that $\Psi_\fe: G^{\rk,+}_\Z/\fe^+_\Z\to\cW(T)$ is an isomorphism of group schemes where $\fe$ is the Weyl kernel of $G$. 
\end{df}


Before we can collect the first properties of a Tits-Weyl model of a group scheme $\cG$, we have to fix some more notation. We define the \emph{rank of $G$} as the rank of the connected blue scheme $G_0$ (as a blue scheme). Note that the rank of each connected component of $G$ is equal to the rank of the identity component $G_0$ of $G$ since each connected component of $G$ is a torsor of $G_0$. 

Let $\cG$ be an affine smooth group scheme of finite type with maximal torus $T$. In general, the Weyl group $W$ cannot be realized as the $\Z$-rational points of a finite subgroup of $\cG$. This is an obstacle to realize $W$ as the $\Fun$-points of a group scheme over $\Fun$ as suggested by Tits in his '56 paper \cite{Tits56} (for more explanation on this, cf.\ \cite[Problem B]{L09} and \cite{CC08}). However, in case $\cG$ is a split reductive group scheme, Tits describes himself in his paper \cite{Tits66} from '66 a certain extension $\widetilde{W}$ of $W$, called the \emph{extended Weyl group} or \emph{Tits group}, which can be realized as the $\Z$-valued points of a finite flat group scheme $\widetilde{\cW}(T)$ of $\cG$. Namely, $\widetilde{\cW}(T)$ is defined as $N(T)(\Z)$-translates of the $2$-torsion subgroup $T[2]$ of $T$ where $N(T)$ is the normalizer of $T$. This yields a short exact sequence of group schemes
\[
 1 \quad \longrightarrow \quad T[2] \quad \longrightarrow \quad \widetilde{\cW}(T) \quad \longrightarrow \quad {\cW}(T) \quad \longrightarrow \quad 1,
\]
and thus an isomorphism $W\simeq \widetilde{\cW}(\Z)/T(\Z)$ since $T(\Z)=T[2](\Z)$ is a $2$-torsion group.

Let $G$ be a Tits monoid and $S$ a blue scheme. Since $G$ is a monoid in $\TSch$, the set $G^\sT(S)=\Hom_\cT(S,G)$ of $S$-rational Tits points of $G$ is a monoid in $\Sets$. If $S=\Spec B$, we also write $G^\sT(B)$ for $G^\sT(S)$.

\begin{thm}\label{thm: properties of tits-weyl groups}
 Let $\cG$ be an affine smooth group scheme of finite type. If $\cG$ has a Tits-Weyl model $G$, then the following properties hold true.
 \begin{enumerate}
  \item\label{part1} The Weyl group $\cW(G)$ is canonically isomorphic to the ordinary Weyl group $W$ of $\cG$.
  \item\label{part2} The rank of $G$ is equal to the reductive rank of $\cG$.
  \item\label{part3} The group $G^\sT(\Fun)$ of $\Fun$-rational Tits points of $G$ is a subgroup of $\cW(G)$.
  \item\label{part4} If $\cG$ is a split reductive group scheme, then $G^\sT(\Funsq)$ is canonically isomorphic to the extended Weyl group $\widetilde W$ of $\cG$.
 \end{enumerate}
\end{thm}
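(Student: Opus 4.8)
The plan is to prove the four statements essentially in the order given, extracting most of the work from the definition of a Tits-Weyl model and the structure results on $\fe$ established above.

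\emph{Part (i).} By Lemma \ref{lemma: fe is diagonalizable}, the Weyl kernel $\fe$ of $G$ has $\fe^+_\Z$ diagonalizable, and since $G$ is a Tits-Weyl model the canonical torus $T\subset\fe^+_\Z$ is a \emph{maximal} torus of $\cG=G^+_\Z$. First I would argue that, because $\fe$ is connected of pure rank with a blue field of global sections whose unit group surjects onto the coordinate ring of $\fe^+_\Z$, and because $T$ is the unique maximal torus of the diagonalizable group $\fe^+_\Z$, the quotient $\fe^+_\Z/T$ is a finite constant group scheme; combined with the hypothesis that $\Psi_\fe\colon G^{\rk,+}_\Z/\fe^+_\Z\to\cW(T)$ is an isomorphism, this identifies $\cW(G)$ (the underlying set of $G^\rk$, which is a torsor count over the connected component $\fe$) with the ordinary Weyl group $W=N(T)(\Z)/C(T)(\Z)$. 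The one subtle point, which I would spell out, is that $\cW(G)=|G^\rk|$ really is in bijection with $G^{\rk,+}_\Z/\fe^+_\Z$ as a set: each connected component of $G^\rk$ is a torsor under $\fe$ (since $\mu^\rk$ makes $G^\rk$ a monoid with $\fe$ its identity component), hence has the same rank and contributes exactly one element to the quotient. The group structure matches by functoriality of $\cW$ and the construction of $\Psi_\fe$.

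\emph{Part (ii).} The rank of $G$ is by definition $\rk G_0$, and $G_0^\rk=\fe$, so $\rk G=\rk\fe=\dim_{\Q}\fe^+_{\Q}$ (as in the definition of rank of a point). Since $\fe^+_\Z$ is diagonalizable with maximal torus $T$ and finite component group, $\dim\fe^+_\Q=\dim T=\rk T$, which is the reductive rank of $\cG$ by the definition of a Tits-Weyl model ($T$ maximal). So this is essentially a dimension count once part (i)'s structural input is in place.

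\emph{Parts (iii) and (iv).} For (iii): an $\Fun$-rational Tits point of $G$ is a Tits morphism $\ast_\Fun\to G$; by Proposition \ref{prop: semiring scheme morphisms that descend to tits-weyl morphisms} (and the discussion of $\Fun$-rational Tits points of pure-rank and monoidal schemes), such a morphism is determined by its rank-space component $\ast_\Fun\to G^\rk$, i.e.\ by a point of $\cZ(G)$ whose unit field admits a map to $\Fun$ — giving an injection $G^\sT(\Fun)\hookrightarrow\cW(G)$, which is a monoid homomorphism because $\cW$ is Cartesian and $(\blanc)^\rk$ sends the monoid law $\mu$ to $\mu^\rk$; since $\cW(G)\cong W$ is a group, the submonoid $G^\sT(\Fun)$ is a subgroup. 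For (iv), assume $\cG$ split reductive. Here I would compute $G^\sT(\Funsq)=\Hom_\cT(\ast_{\Funsq},G)$ directly: a Tits morphism $\ast_{\Funsq}\to G$ consists of a point of $\cZ(G)$ together with a $\Funsq$-point of the corresponding inverse-closed unit field, compatibly with a $\Funsq$-point of $G^+_\Z=\cG$ factoring through the normalizer $N(T)$; the $\Funsq$-points of each $\prerk x^\px$ contribute the $2$-torsion $T[2]$ (since $\Funsq$ adjoins exactly a square root of $1$, i.e.\ a $-1$, so $\Hom(\Funsq,\kappa(\fe))=\kappa(\fe)^\times[2]$ over $\Z$ gives $T[2](\Z)$), while the rank-space component contributes $N(T)(\Z)$-translates. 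Matching this with Tits' extended Weyl group $\widetilde W\simeq\widetilde{\cW}(T)(\Z)/T(\Z)$ and the short exact sequence $1\to T[2]\to\widetilde{\cW}(T)\to\cW(T)\to 1$ gives the canonical isomorphism $G^\sT(\Funsq)\cong\widetilde W$.

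\emph{Main obstacle.} The hard part will be part (iv): one must carefully identify $G^\sT(\Funsq)$ with the $\Z$-points of $\widetilde{\cW}(T)$, which requires knowing that the unit field of each pseudo-Hopf point of minimal rank is (after base change to $\Funsq$) of the form $\Funsq[H]$ for the relevant character group $H$ of $T$, so that its $\Funsq$-points recover exactly $T[2]$, and that the compatibility condition in the definition of Tits morphism forces the rank-space part and the $\cG$-part to glue into a genuine element of $N(T)(\Z)$ normalizing $T$ and acting correctly on $T[2]$. Establishing that $G^\sT(\Funsq)\to\cW(G)=W$ is surjective with kernel exactly $T[2](\Z)=T(\Z)$, and that the resulting extension is Tits' one (rather than some other extension of $W$ by $T[2]$), is where the real content lies; I expect this to need the explicit description of $\rho_X$ on the rank space together with the fact that $\Funsq^+_\Z=\Z$ with $-1$, forcing the extension class to coincide with the one coming from $N(T)(\Z)$.
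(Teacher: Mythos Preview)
Your argument for parts \eqref{part1}--\eqref{part3} is correct and matches the paper's proof essentially line for line: connected components of $G^\rk$ index $\cW(G)$ and correspond via $\Psi_\fe$ to cosets in $N(T)/C(T)$; the rank is read off from $\dim\fe^+_\Q=\rk T$; and an $\Fun$-Tits point is determined by its image in $G^\rk$, giving a submonoid of the finite group $\cW(G)$, hence a subgroup.

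For part \eqref{part4} you have the right computation but you are overcomplicating the conclusion. Two points. First, a small slip: the extended Weyl group is $\widetilde W=\widetilde{\cW}(T)(\Z)$, not $\widetilde{\cW}(T)(\Z)/T(\Z)$ (the latter is $W$). Second, and more importantly, the extension-class worry you flag as the ``main obstacle'' is unnecessary. The paper avoids it entirely by a direct embedding into $\cG(\Z)$. The key extra input you are missing is that for a \emph{split reductive} group one has $C(T)=T$, hence $\fe^+_\Z=T$ exactly (no finite component) and $G^{\rk,+}_\Z=N(T)$. Then for each $x\in G^\rk$ the scheme $\{x\}^+_\Z$ is a single coset $nT$ for some $n\in N(\Z)$, with coordinate ring $\Z[T_1^{\pm1},\dotsc,T_r^{\pm1}]$. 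Since $\kappa(x)^+_\Z$ is with $-1$, the map $\kappa(x)\to\kappa(x)^+_\Z$ factors through $\kappa(x)_\inv$, which is forced to equal the unit field $\Funsq[T_1^{\pm1},\dotsc,T_r^{\pm1}]$. Hence
\[
\Hom(\kappa(x),\Funsq)\;=\;\Hom(\kappa(x)_\inv,\Funsq)\;\cong\;\Hom(\kappa(x)^+_\Z,\Z)\;=\;nT(\Z)\;=\;nT[2](\Z).
\]
A Tits morphism $\ast_{\Funsq}\to G$ is determined by its rank part (since $(\ast_{\Funsq})^+=\ast_\Z$ and the compatibility square forces $\varphi^+_\Z=\varphi^{\rk,+}_\Z$), so $G^\sT(\Funsq)$ is \emph{literally} the subset $\bigcup_{n\in N(\Z)} nT[2](\Z)\subset N(\Z)\subset\cG(\Z)$ with its induced group law. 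That subset is, by definition, $\widetilde W$. No comparison of extension classes is needed because the identification is not abstract: it goes through actual $\Z$-points of $\cG$.
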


\begin{proof}

 We prove \eqref{part1}. The ordinary Weyl group $W$ equals the group $\cW(T)(\C)$ of $\C$-rational points of the geometric Weyl group over $\C$. The isomorphisms 
 \[
  \cW(\cG)(\C) \ \simeq \ N(T)(\C)/C(T)(\C) \ \simeq \ G^\rk(\C)/\fe(\C)
 \]
 show that the elements of $\cW(\cG)(\C)$ stay in one-to-one correspondence with the connected components of $G^\rk$, which in turn is the underlying set of $\cW(G)$. It is clear that the group structures coincide. 

 We prove \eqref{part2}. Let $\fe$ be the Weyl kernel of $G$. The rank of $G$ equals the dimension of the variety $\fe^+_\Q$ over $\Q$. By Lemma \ref{lemma: fe is diagonalizable}, $\fe^+_\Q$ is a closed subgroup of a split torus, which means that it is an extension of $T_\Q$ by a finite group scheme where $T$ is the maximal torus of $\fe^+_\Z$. Therefore the dimension of $\fe^+_\Q$ equals the rank of $T$, which is the reductive rank of $\cG$ since $T$ is a maximal torus of $\cG$.

 We prove \eqref{part3}. We denote as usual $\Spec\Fun$ by $\ast_\Fun$. A Tits morphism $\varphi:\ast_\Fun\to G$ is determined by the set theoretical image of $\varphi^\rk:\ast_\Fun\to G^\rk$ since there is at most one morphism from a blue field, i.e.\ from the residue field of the image point, to $\Fun$. Note that necessarily $\varphi^+=\varphi^{\rk,+}$. Thus $G^\sT(\Fun)$ is a subset of $\cW(G)$ and it inherits its semigroup structure from $\cW(G)$. Since $\cW(G)$ is a finite group, $G^\sT(\Fun)$ is also a group.

 We prove \eqref{part4}. If $\cG$ is a split reductive group, then the subgroups $T$, $\fe^+_\Z$ and $C(T)$ coincide, and consequently also $N(T)$ and $G_\Z^{\rk,+}$ coincide. We write briefly $N$ for $N(T)$. For a point $x$ of $G^\rk$, the scheme $\{x\}^+_\Z$ is a translate $nT$ of $T$ by some element $n\in N(\Z)$. The scheme $nT$ is isomorphic to $T$, i.e.\ $nT$ is isomorphic to the spectrum of $\Z[T_1^{\pm1},\dotsc,T_r^{\pm1}]$,  where $r$ is the rank of $T$. Its largest blue subfield is $\Funsq[T_1^{\pm1},\dotsc,T_r^{\pm1}]$. The map $\kappa(x)\to\kappa(x)^+_\Z\simeq\Z[T_1^{\pm1},\dotsc,T_r^{\pm1}]$ factorizes through $\kappa(x)_\inv=\kappa(x)\otimes_\Fun\Funsq$ since $\kappa(x)_\Z^+$ is with inverses. Since $\kappa(x)_\inv$ is a blue field with inverses, it must be equal to $\Funsq[T_1^{\pm1},\dotsc,T_r^{\pm1}]\subset\Z[T_1^{\pm1},\dotsc,T_r^{\pm1}]$. Since every morphism $\kappa(x)\to \Funsq$ factorizes uniquely through $\kappa(x)_\inv\simeq\Funsq[T_1^{\pm1},\dotsc,T_r^{\pm1}]$, the morphisms $\kappa(x)\to\Funsq$ stay in one-to-one correspondence with the morphisms $\kappa(x)^+_\Z\to \Z$, i.e.\ with $nT(\Z)=nT[2](\Z)$.

 Note that similar to the case of $\Fun$-rational points, a Tits morphism $\varphi:\Spec\Funsq\to G$ is determined by $\varphi^\rk$. This means that every $\Funsq$-rational Tits point $\varphi:\Spec\Funsq\to G$ with image $x$ is given by a morphism $\kappa(x)\to\Funsq$ of blueprints. Therefore $G^\sT(\Funsq)$ is isomorphic to the subgroup of $G(\Z)$ that is generated by the translates $nT[2](\Z)$ where $n$ ranges through $N(\Z)$. This subgroup is by definition the extended Weyl group $\widetilde W$ of $\cG$. This finishes the proof of the theorem.
\end{proof}


\subsection{Groups of pure rank}
\label{subsection: tits-weyl models of pure rank}

In this section, we will explain first examples of Tits models, namely, of constant group schemes and split tori. All these examples will be of pure rank, thus the group law will be indeed a locally algebraic morphism of blue schemes, which makes the description particularly easy. The Tits monoids appearing in this section are indeed group objects in $\rkBSch$. In case of a torus, or, more generally, of a semidirect product of a torus by a constant group scheme satisfying a certain condition, the described Tits model is a Tits-Weyl model.

 \subsubsection*{Constant groups}

Let $G$ be a finite group. Then the constant group scheme $G_\Z$ that is associated to $G$ is defined as the scheme 
\[
 G_\Z \quad = \quad \Spec\ \prod_{g\in G} \Z
\]
together with the multiplication $\mu_\Z: G_\Z\times G_\Z\to G_\Z$ that is defined by the comultiplication
\[
 \begin{array}{cccc}
  \Gamma\mu_\Z: & \displaystyle\prod_{g\in G}\Z & \longrightarrow & \displaystyle\prod_{g\in G}\Z\Sotimes_\Z\prod_{g\in G}\Z \\[20pt]
                  &   (a_g)_{g\in G}              & \longmapsto     & \displaystyle\sum_{g_1,g_2\in G} a_{g_1g_2} e_{g_1}\otimes e_{g_2}
 \end{array}
\]
where $e_h$ is the element $(a_g)_{g\in G}$ of $\prod_{g\in G}\Z$ with $a_g=1$ if $g=h$ and $a_g=0$ otherwise.

This group scheme descends to a group object $G_\Fun$ in $\rkBSch$. Namely, define the scheme $G_\Fun$ as $\Spec\prod_{g\in G}\Fun$, which is obviously of pure rank. Then, we have indeed canonical isomorphisms $(G_\Fun)_\Z\simeq (G_\Fun)^+_\Z\simeq G_\Z$, which justifies our notation. The group law $\mu_\Z$ descends to the group law $\mu_\Fun:G_\Fun\times G_\Fun\to G_\Fun$ that is defined by the comultiplication
\[
 \begin{array}{cccc}
  \Gamma\mu_\Fun: & \displaystyle\prod_{g\in G}\Fun & \longrightarrow & \displaystyle\prod_{g\in G}\Fun\otimes_\Fun\prod_{g\in G}\Fun\ =\ \bpquot{\bigl(\prod_{g\in G}\Fun\bigr)\times\bigl(\prod_{g\in G}\Fun\bigr)}{\cR}\\[20pt]
                  &   (a_g)_{g\in G}                & \longmapsto     & \displaystyle (a_{g_1g_2})_{g_1,g_2\in G}
 \end{array}
\]
where $\cR$ is the pre-addition that is generated by the relations $(a,0)\=(0,0)\=(0,a)$ for $a\in \prod_{g\in G}\Fun$. 

The morphism $\mu_\Fun:G_\Fun\times G_\Fun\to G_\Fun$ is indeed a group law: its identity is the morphism $\epsilon_\Fun:\ast_\Fun\to G_\Fun$ given by
\[
 \begin{array}{cccc}
  \Gamma\epsilon_\Fun: & \displaystyle\prod_{g\in G}\Fun & \longrightarrow & \Fun \\[20pt]
                       &  (a_g)_{g\in G}                 & \longmapsto     & a_e
 \end{array}
\]
where $e$ is the identity element of $G$ and its inverse is the morphism $\iota_\Fun: G_\Fun\to G_\Fun$ given by
\[
 \begin{array}{cccc}
  \Gamma\epsilon_\Fun: & \displaystyle\prod_{g\in G}\Fun & \longrightarrow &  \displaystyle\prod_{g\in G}\Fun. \\[20pt]
                       &  (a_g)_{g\in G}                 & \longmapsto     & (a_{g^{-1}})_{g\in G}
 \end{array}
\]
This shows that $G_\Fun$ together with $\mu_\Fun$ is a group object in $\rkBSch$ and therefore in $\TSch$. In particular, $G_\Fun$ is a Tits model of $G_\Z$.

The Weyl kernel $\fe$ of $G_\Fun$ is its identity component $G_{\Fun,0}=\Spec\Fun$. Thus the canonical torus of $G_\Fun$ equals the identity component $G_{\Z,0}$ of $G_\Z$, which is a maximal torus of $G_\Z$. Both, its centralizer and its normalizer is the whole group scheme $G_\Z\simeq G^{\rk,+}_\Z$. Thus the morphism $\Psi_\fe: G^{\rk,+}_\Z/\fe^+_\Z\to\cW(T)$ is an isomorphism only for the trivial group scheme $\ast_\Fun$. 


 \subsubsection*{Split tori}

We proceed with the description of a Tits-Weyl model of the split torus $\SG_{m,\Z}^r$ of rank $r$, which is $\Spec\Z[T_1^{\pm1},\dotsc,T_r^{\pm1}]^+$ as a scheme. Its group law $\mu_{\SG_{m,\Z}^r}:\SG_{m,\Z}^r\times \SG_{m,\Z}^r\to \SG_{m,\Z}^r$ is given by the comultiplication
\[
  \begin{array}{cccc}
  \Gamma\mu_{\SG_{m,\Z}^r}: & \Z[T_1^{\pm1},\dotsc,T_r^{\pm1}]^+ & \longrightarrow & \Z[(T_1')^{\pm1},\dotsc,(T_r')^{\pm1},(T_1'')^{\pm1},\dotsc,(T_r'')^{\pm1}]^+ \\
  \end{array}
\]
that maps $T_i$ to $T_i'\otimes T_i''$ for $i=1,\dotsc,r$.

This group scheme has the Tits model $(\G_{m,\Fun}^r,\mu)$ where $\G_{m,\Fun}^r=\Spec\Fun[T_1^{\pm1},\dotsc,T_r^{\pm1}]$ and $\mu: \G_{m,\Fun}^r\times \G_{m,\Fun}^r\to \G_{m,\Fun}^r$ given by the morphism
\[
  \begin{array}{cccc}
  \Gamma\mu: & \Fun[T_1^{\pm1},\dotsc,T_r^{\pm1}] & \longrightarrow & \Fun[(T_1')^{\pm1},\dotsc,(T_r')^{\pm1},(T_1'')^{\pm1},\dotsc,(T_r'')^{\pm1}] \\
  \end{array}
\]
that maps $T_i$ to $T_i'\otimes T_i''$ for $i=1,\dotsc,r$. Note that $\G_{m,\Fun}^r$ has precisely one point, which is of indefinite characteristic, and that $\G_{m,\Fun}^r$ is reduced. This means that $\G_{m,\Fun}^r$ is of pure rank and that $\mu$ is Tits. Its identity is the morphism $\epsilon:\ast_\Fun\to \G_{m,\Fun}^r$ given by the morphism $\Gamma\epsilon:\Fun[T_1^{\pm1},\dotsc,T_r^{\pm1}]\to\Fun$ that maps all elements $a\neq0$ to $1$ in $\Fun$. Its inverse is the morphism $\iota:\G_{m,\Fun}^r\to \G_{m,\Fun}^r$ given by the morphism $\Gamma\iota:\Fun[T_1^{\pm1},\dotsc,T_r^{\pm1}]\to\Fun[T_1^{\pm1},\dotsc,T_r^{\pm1}]$ that maps $T_i$ to $T_i^{-1}$ for $i=1,\dotsc,r$. Thus $\G_{m,\Fun}^r$ is a group object in $\rkBSch$ and therefore in $\TSch$.

The Weyl kernel of $\G_{m,\Fun}^r$ is $\G_{m,\Fun}^r$ itself. The canonical torus $T$ of $\G_{m,\Fun}^r$ is $\SG_{m,\Z}^r$, which is further its own normalizer $N$. Thus $T$ is a maximal torus of $\G_{m,\Fun}^r$ and the morphism $\Psi_\fe: (\G_{m,\Fun}^r)^{\rk,+}_\Z/\fe^+_\Z\to\cW(T)$ is an isomorphism of group schemes. This shows that $\G_{m,\Fun}^r$ is a Tits-Weyl model of $\SG_{m,\Z}^r$. Its Weyl group is the trivial group and consequently $(\G_{m,\Fun}^r)^\cT(\Fun)$ is the trivial group. Since the rank of $\SG_{m,\Z}^r$ is $r$, the group $(\G_{m,\Fun}^r)^\cT(\Funsq)$ is $(\Z/2\Z)^r$.

 \subsubsection*{Semi-direct products of split tori by constant group schemes}

Group schemes $N$ of the form $\SG_{m,\Z}^r\rtimes_\theta G_\Z$ appear as normalizers of split maximal tori in reductive group schemes and will be of a particular interest in the following. We will describe groups in $\rkBSch$ that base extend to the group, but we can already conclude for abstract reasons that a model of $N$ exists in $\rkBSch$ if $\theta$ descends to a morphism in $\rkBSch$. More precisely, the conjugation action $\theta:G_\Z\times^+_\Z \SG_{m,\Z}^r \to \SG_{m,\Z}^r $ restricts to morphisms
\[
 \theta_g: \quad \{g\} \ \times^+_\Z \SG_{m,\Z}^r \quad \simeq \quad \{g\} \ \times^+_\Z \SG_{m,\Z}^r \ \times^+_\Z \{g^{-1}\} \quad \stackrel{\mu\circ(\mu,\id)}{\longrightarrow} \quad \SG_{m,\Z}^r 
\]
for every $g\in G$. This yields blueprint morphisms $\Gamma\theta_g:\Z[T_1^{\pm1},\dotsc,T_r^{\pm1}]\to \Z[T_1^{\pm1},\dotsc,T_r^{\pm1}]$. If the images $\theta_g(T_i)$ are of the form $\prod_{j=1}^r T_j^{e_{i,j}(g)}$ for certain exponents $e_{i,j}(g)\in\Z$ for all $i,j=1,\dotsc,r$ and $g\in G$, then the action $\theta$ descends to an action $\tilde\theta$ of $G_\Fun$ on $\G_{m,\Fun}^r$. Thus we can form the semidirect product $\tilde N=\G_{m,\Fun}^r\rtimes_{\tilde\theta} G_\Fun$ in $\rkBSch$, which is an group scheme whose base extension to rings is $N$. By definition, $\SG_{m,\Z}^r$ is normal in $N$. Thus if the centralizer of $T$ is $T$ itself, then $\tilde N$ is a Tits-Weyl model of $N$. We summarize this in the following statement.

\begin{prop}
 Let $G$ be a group and $\theta:G_\Z\times^+_\Z \SG_{m,\Z}^r \to \SG_{m,\Z}^r$ be a group action that is defined by integers $e_{i,j}(g)$ as above. Then $\theta$ descends to a group action $\tilde\theta:G_\Fun\times\G_{m,\Fun}^r\to \G_{m,\Fun}^r$ and $\tilde N=\G_{m,\Fun}^r\rtimes_{\tilde\theta} G_\Fun$ is a group in $\rkBSch$ whose base extension to $\Z$ is $\tilde N^+_\Z=\SG_{m,\Z}^r\rtimes_\theta G_\Z$.

 If for every $g\in G$ different from the neutral element $e\in G$, the matrix $A(g)=(a_{i,j}(g))_{i,j=1,\dotsc,r}$ is different from the identity matrix, then $\tilde N$ is a Tits-Weyl model of $N$.\qed
\end{prop}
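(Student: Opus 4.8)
The plan is to build $\tilde N$ directly as a group object in $\rkBSch$ and then transport everything through the Cartesian functors of Theorem \ref{thm: overview of cartesian categories and functors}. First I would descend $\theta$: since $\theta_g$ acts on coordinate blueprints by $T_i\mapsto\prod_{j=1}^r T_j^{e_{i,j}(g)}$ and every monomial in the $T_j^{\pm1}$ is a unit of $\Fun[T_1^{\pm1},\dots,T_r^{\pm1}]$, the map $\Gamma\theta_g$ restricts to a blueprint endomorphism of $\Fun[T_1^{\pm1},\dots,T_r^{\pm1}]$; gluing over $g\in G$ produces a locally algebraic morphism $\tilde\theta\colon G_\Fun\times\G_{m,\Fun}^r\to\G_{m,\Fun}^r$ with $\tilde\theta^+_\Z=\theta$. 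All schemes involved are of pure rank, and $\rkBSch$ is a full subcategory of $\TSch$, so $\tilde\theta$ is automatically a Tits morphism (cf.\ Proposition \ref{prop: morphisms that are tits} and the remarks following Corollary \ref{cor: tits morphisms between semiring schemes}). The group-action axioms and the compatibility of $\tilde\theta$ with the group law of $\G_{m,\Fun}^r$ are identities between morphisms of blue schemes; since $(\blanc)^+\colon\BSch\to\SSch$ is faithful (a morphism of blue schemes is determined by its base extension, \cite[Thm.\ 3.23]{blueprints1}) and $\tilde\theta^+_\Z=\theta$ is a genuine group action respecting the torus law, the same identities hold for $\tilde\theta$.

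Next, because $\rkBSch$ is Cartesian (Proposition \ref{prop: sch^rk_fun is cartesian}) and $\G_{m,\Fun}^r$ and $G_\Fun$ are groups in $\rkBSch$ (the previous two subsections), the semidirect product construction of Section \ref{subsection: reminder on cartesian categories} yields a group $\tilde N=\G_{m,\Fun}^r\rtimes_{\tilde\theta}G_\Fun$ in $\rkBSch$, hence a Tits monoid. Applying the Cartesian functor given by the composite $\rkBSch\hookrightarrow\BSch\to\Sch^+_\N\to\Sch^+_\Z$ and using that Cartesian functors send $N\rtimes_\theta H$ to $\cF(N)\rtimes_{\cF(\theta)}\cF(H)$, together with $(\G_{m,\Fun}^r)^+_\Z=\SG_{m,\Z}^r$, $(G_\Fun)^+_\Z=G_\Z$ and $\tilde\theta^+_\Z=\theta$, gives $\tilde N^+_\Z=\SG_{m,\Z}^r\rtimes_\theta G_\Z=N$. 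So $\tilde N$ is a Tits model of $N$.

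It remains to check the Tits-Weyl conditions. Since $\tilde N$ is of pure rank, $\tilde N^\rk\simeq\tilde N$, and its underlying space is $\coprod_{g\in G}\G_{m,\Fun}^r$; the identity $\epsilon=(\epsilon_{\G_{m,\Fun}^r},\epsilon_{G_\Fun})$ has image in the component indexed by the neutral element $e$, so the Weyl kernel is $\fe=\Spec\Fun[T_1^{\pm1},\dots,T_r^{\pm1}]$ and $\fe^+_\Z=\SG_{m,\Z}^r$. Being a torus, $\fe^+_\Z$ is its own unique maximal torus, i.e.\ the canonical torus $T=\SG_{m,\Z}^r$; its geometric fibres $T_{\bs}=\Gm^r$ are maximal in $N_{\bs}=\Gm^r\rtimes G$ (as $G$ is finite), so $T$ is a maximal torus of $N$. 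Now $T$ is normal in $N$, hence $N(T)=N=\tilde N^{\rk,+}_\Z$; and an element of $N$ centralizes $T$ precisely when its $G$-component $g$ acts trivially on $T$, i.e.\ when $A(g)=(e_{i,j}(g))_{i,j}$ is the identity matrix, which by hypothesis forces $g=e$. Thus $C(T)=T=\fe^+_\Z$, so $\cW(T)=N(T)/C(T)=N/T$, and $\Psi_\fe\colon\tilde N^{\rk,+}_\Z/\fe^+_\Z=N/T\to N/T=\cW(T)$ is the identity, an isomorphism. Therefore $\tilde N$ is a Tits-Weyl model of $N$.

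The routine part is the monomial bookkeeping of the first paragraph; the two places that genuinely need care are the verification that $\tilde\theta$ and the semidirect-product structure descend (handled by the faithfulness of base extension, since all the schemes are of pure rank and their $\Z$-models are the classical ones) and the identification of $\Psi_\fe$ with the natural quotient map $N/T\to N/T$, where the hypothesis $A(g)\neq I$ for $g\neq e$ is exactly what forces $C(T)=T$ and hence makes $\Psi_\fe$ an isomorphism.
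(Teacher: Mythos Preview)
Your proposal is correct and follows essentially the same route as the paper: the proposition is stated with a \qed because the paper treats it as an immediate consequence of the preceding paragraph, which explains how $\theta$ descends componentwise to $\tilde\theta$, forms the semidirect product in the Cartesian category $\rkBSch$, and observes that the Tits-Weyl condition reduces to $C(T)=T$. Your argument is simply a fleshed-out version of that discussion, with the faithfulness of $(\blanc)^+$ and the explicit centralizer computation making the informal steps precise.
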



\section{Tits-Weyl models of Chevalley groups}
\label{section: tits-weyl models of chevalley groups}

In this section, we prove for a wide class of Chevalley groups that they have a Tits-Weyl model. Namely, for special linear groups, general linear groups, symplectic groups, special orthogonal groups and all Chevalley groups of adjoint type. As a first step, we establish Tits-Weyl model for the special linear groups. Tits-Weyl models for all other groups of the above list but the adjoint Chevalley groups can be obtained by a general principle for subgroups of the special linear groups, which is formulated in Theorem \ref{thm: tits-weyl models of subgroups}, a central result of this section. Finally, we find Tits-Weyl models of adjoint Chevalley groups by a close examination of explicit formulas for their adjoint representation over algebraically closed fields.

The precise meaning of the term Chevalley group varies within the literature. The original works of Chevalley refer to simple groups (cf.\ \cite{Chevalley55}) and, later, to semisimple groups (cf.\ \cite{Chevalley60}). When we refer to a Chevalley group in this text, we mean, in a more loose sense, a split reductive group scheme. But note that in fact almost all of the Chevalley groups that occur in the following are semisimple. As a general reference for background on Chevalley groups and split reductive group schemes, see SGA3 (\cite{SGA3I},\cite{SGA3II},\cite{SGA3III}), Demazure and Gabriel's book (\cite{Demazure-Gabriel70}) or Conrad's lecture notes (\cite{Conrad11}). There are plenteous more compact and readable accounts of root systems and Chevalley bases of Chevalley groups (for instance, cf.\ \cite{Carter89}).


\subsection{The special linear group}
\label{subsection: special linear group}

In this section, we describe a Tits-Weyl model $\SL_n$ of the special linear group $\SL_{n,\Z}^+$. 

To begin with, consider a closed subscheme of $\SA_\Z^n$ of the form $\cX=\Spec\Z[T_1,\dotsc,T_n]^+/\cI$ where $\cI$ is an ideal of $\Z[T_1,\dotsc,T_n]^+$. The set 
\[
 \cR_\cI \quad = \quad \Bigl\{\ \sum a_i\=\sum b_j \ \Bigl| \ \sum a_i-\sum b_j\ \in \ \cI \ \Bigr\}
\]
is a pre-addition for $\Fun[T_1,\dotsc,T_n]$ and defines a blueprint $B=\bpquot{\Fun[T_1,\dotsc,T_n]}{\cR_\cI}$. We call the blue scheme $X=\Spec B$ an \emph{$\Fun$-model of the scheme $\cX$}. It satisfies $X^+_\Z\simeq \cX$. Further, the canonical morphism $\Fun[T_1,\dotsc,T_n]\to B$ of blueprints defines a closed embedding $\iota:X\to\A_\Fun^n$, and $\iota^+_\Z$ is equal to the embedding of $\cX=X^+_\Z$ as closed subscheme of $\SA_\Z^n$.

The underlying topological space of $X$ is a subspace of the underlying topological space of $\A^n_\Fun$. Recall from Example \ref{ex: subschemes of affine space over f1} that the prime ideals of $\Fun[T_1,\dotsc,T_n]$ are of the form $\fp_I=(T_i)_{i\in I}$ where $I$ ranges through all subsets of $\un=\{1,\dotsc,n\}$. Thus the underlying topological space of $\A^n_\Fun$ is finite and completely determined by the rule $\fp_I\leq \fp_{I'}$ if and only if $I\subset I'$ (cf.\ section \ref{subsection-sober-and-locally-finite-spaces}).

In particular, this applies to the special linear group $\SL_{n,\Z}^+$, i.e.\ the scheme $\Spec\Z[\SL_n]^+$ together with the group law $\mu_\Z^+:\SL_{n,\Z}^+\Stimes_\Z\SL_{n,\Z}^+\to\SL_{n,\Z}^+$ where $\Z[\SL_n]^+=\Z[T_{i,j}]_{i,j\in\un}^+/\cI$ for the ideal $\cI$ that is generated by the element 
\[
 \sum_{\sigma\in S_n} \ \Bigl( \ \sign(\sigma) \cdot \ \prod_{i=1}^n \ T_{i,\sigma(i)} \ \Bigr) \ - \ 1
\]
(which expresses the condition that the determinant of a matrix $(a_{i,j})$ equals $1$) and where $\mu_\Z^+$ is defined by the comultiplication
\[
 \begin{array}{cccc}
    m_\Z^+ \ = \ \Gamma\mu_\Z^+: & \Z[\SL_n]^+& \longrightarrow & \bigl(\Z[\SL_n]^+ \bigr)\Sotimes_\Z \bigl(\Z[\SL_n]^+ \bigr). \\[5pt]
                                               &    T_{i,j}         & \longmapsto         & \displaystyle \sum_{k=1}^n \ T'_{i,k} \otimes T''_{k,j} \qquad
 \end{array}
\]
Thus $\SL_{n,\Z}^+$ is a closed subscheme of $\SA^{n^2}_\Z$, and therefore has an $\Fun$-model $\SL_{n}=\Spec \bpquot{\Fun[T_{i,j}]}{\cR_\cI}$.

Before we describe the group law for $\SL_{n}$ in $\TSch$, we determine the rank space of $\SL_{n}$. Since $\SL_{n}$ is a closed subscheme of $\A^{n^2}$, each point of $\SL_{n}$ is of the form $\fp_I=(T_{i,j})_{(i,j)\in I}$ where $I$ is a subset of $\un^2$. We write $\fp^\sigma=\fp_{I(\sigma)}$ for $I(\sigma)=\un-\{(i,\sigma(i))\}_{i\in\un}$ where $\sigma\in S_n$ is a permutation.

\begin{prop}\label{prop: the rank space of sl_n}
 The underlying set of $\SL_{n}$ is
 \[
  \{ \ \fp_I \ | \ I\subset I(\sigma)\text{ for some }\sigma\in S_n \ \}.
 \]
 The rank of $\SL_{n}$ is $n-1$ and the set of pseudo-Hopf points of minimal rank is
 \[
  \cZ(\SL_{n}) \quad = \quad \{ \ \fp^\sigma \ | \ \sigma \in S_n \ \},
 \]
 which equals the set of closed points of $\SL_{n}$. The residue field of $\fp^\sigma$ is
 \[
  \kappa(\fp^\sigma) \quad = \quad \bpgenquot{\Fun[T_{i,\sigma(i)}^{\pm1}]}{\prod_{i=1}^n T_{i,\sigma(i)}\=1}
 \]
 if $\sigma$ is an element of the alternating group $A_n$ and 
 \[
  \kappa(\fp^\sigma) \quad = \quad \bpgenquot{\Funsq[T_{i,\sigma(i)}^{\pm1}]}{\prod_{i=1}^n T_{i,\sigma(i)}+1\=0}
 \]
 if $\sigma\in S_n- A_n$. The rank space is 
 \[
  \SL_{n}^\rk \quad = \quad \coprod_{\sigma\in S_n} \ \Spec \kappa(\fp^\sigma)
 \]
 and embeds into $\SL_{n}$.
\end{prop}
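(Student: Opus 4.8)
The plan is to analyze $\SL_n = \Spec B$ with $B = \bpquot{\Fun[T_{i,j}]}{\cR_\cI}$ in stages: first identify the underlying topological space, then locate the pseudo-Hopf points of minimal rank, compute their residue fields, and finally assemble the rank space and verify it embeds.

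First I would determine the underlying set. By Corollary \ref{cor: Spec B is a subset of Spec A}, $\Spec B$ sits inside $\Spec \Fun[T_{i,j}] = \A^{n^2}_\Fun$, whose points are the $\fp_I$ for $I \subset \un^2$. The criterion from Example \ref{ex: subschemes of affine space over f1} says $\fp_I$ is a prime ideal of $B$ if and only if for each additive relation in $\cR_\cI$ either all terms lie in $\fp_I$ or at least two do not. The only generating relation comes from the determinant: $\sum_{\sigma \in S_n} \sign(\sigma)\prod_i T_{i,\sigma(i)} \= 1$, i.e. the monomials $m_\sigma = \prod_i T_{i,\sigma(i)}$ together with the constant $1$. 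Since $1 \notin \fp_I$ always, the condition becomes: at least one further term $m_\sigma$ is not in $\fp_I$, i.e. there exists $\sigma$ with $\{(i,\sigma(i))\}_i \cap I = \emptyset$, equivalently $I \subset I(\sigma)$. This gives the claimed description of the underlying set, and the closed points are exactly the maximal such $\fp_I$, namely the $\fp^\sigma = \fp_{I(\sigma)}$ (note distinct $\sigma$ can a priori give the same $I(\sigma)$ only if the corresponding permutation matrices coincide, which forces $\sigma$ equal, so these are $n!$ distinct points).

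Next I would compute $\kappa(\fp^\sigma)$ and check pseudo-Hopfness. Localizing $B$ at $\fp^\sigma$ inverts all $T_{i,\sigma(i)}$ and kills all $T_{i,j}$ with $j \neq \sigma(i)$; in the quotient the determinant relation degenerates to $\sign(\sigma)\prod_i T_{i,\sigma(i)} \= 1$. So $\kappa(\fp^\sigma) = \bpgenquot{\Fun[T_{i,\sigma(i)}^{\pm1}]}{\prod_i T_{i,\sigma(i)} \= 1}$ when $\sign(\sigma) = 1$, and $\bpgenquot{\Funsq[T_{i,\sigma(i)}^{\pm1}]}{\prod_i T_{i,\sigma(i)} + 1 \= 0}$ when $\sign(\sigma) = -1$ — here one must note that the relation $\prod T_{i,\sigma(i)} + 1 \= 0$ forces the blueprint to contain $-1$, hence it is naturally an $\Funsq$-blueprint. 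In both cases $\overline{\fp^\sigma} = \Spec \kappa(\fp^\sigma)$ is affine, $\kappa(\fp^\sigma)^+_\Z \cong \Z[T_1^{\pm1},\dots,T_{n-1}^{\pm1}]$ (use the product relation to eliminate one variable) is flat and free of rank $n-1$, and $\kappa(\fp^\sigma)_\inv$ is generated by its units; by Lemma \ref{lemma-funn-is-of-indefinite-characteristic} (applied to $\Fun$ resp. $\Funsq$ with group $\Z^{n-1}$) these blue fields are almost of indefinite characteristic. So each $\fp^\sigma$ is pseudo-Hopf of rank $n-1$. For the minimality claim I would argue that any pseudo-Hopf point $x$ has $\overline x$ affine with $\overline x^+_\Z$ flat; a non-closed point $\fp_I$ with $I \subsetneq I(\sigma)$ has $\overline{\fp_I}$ containing several closed points, and one checks its coordinate ring has a determinant relation that is a genuine sum of $\geq 2$ monomials, which either fails flatness over $\Z$ after base change or has strictly larger rank — I expect $\rk \fp_I > n-1$ for such points because more coordinates survive as free units. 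Hence $\cZ(\SL_n) = \{\fp^\sigma\}$ and $\rk \SL_n = n-1$.

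Finally, the rank space: since each $\overline{\fp^\sigma} = \Spec \kappa(\fp^\sigma)$ is already of the form $\Spec$ of a blue field generated by its units (the product relation involves only units, so $\hat{(\cdot)}$ and $(\cdot)^\px$ act trivially up to isomorphism), we get $\prerk{\fp^\sigma} \cong \prerk{\fp^\sigma}^\px \cong \Spec \kappa(\fp^\sigma)$, whence $\SL_n^\rk = \coprod_{\sigma \in S_n} \Spec \kappa(\fp^\sigma)$. To see it embeds into $\SL_n$: by Example \ref{ex: rank space of a monoidal scheme}-style reasoning generalized here, $\upsilon_{\SL_n}$ is an isomorphism because $\prerk{\fp^\sigma} = \prerk{\fp^\sigma}^\px$, so the rank space lifts; and $\rho_{\SL_n}$ is a closed immersion since on each component it is the inclusion of the reduced closed point $\overline{\fp^\sigma}$, the points $\fp^\sigma$ are pairwise distinct closed points, and $\SL_n$ is locally of finite type over a blue field so these closed immersions glue to a closed immersion of the finite disjoint union. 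The main obstacle I anticipate is the minimality step — rigorously ruling out that some non-closed $\fp_I$ is pseudo-Hopf of rank $< n-1$, which requires carefully analyzing the degenerate determinant relations and their flatness over $\Z$; the residue-field computations and the final embedding assertion are comparatively routine once the space is pinned down.
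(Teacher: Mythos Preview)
Your approach follows the paper's closely, and the computations of the underlying set and of $\kappa(\fp^\sigma)$ are essentially identical. The one place where you diverge is precisely the step you flag as the obstacle: ruling out the non-closed points $\fp_I$ with $I\subsetneq I(\sigma)$. The paper handles this by a different (and cleaner) argument than the one you sketch. Rather than estimating the rank or invoking flatness, it observes that when $I\subsetneq I(\sigma)$ the degenerate determinant relation in $\Gamma\overline{\fp_I}$ still contains, besides the constant $1$, at least two surviving monomials $\prod_i T_{i,\tau(i)}$; consequently none of the generators $T_{i,j}$ becomes invertible, so the unit field of $\overline{\fp_I}$ is $\Fun$ and $\overline{\fp_I}_\inv$ is \emph{not} generated by its units. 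Hence $\fp_I$ fails the pseudo-Hopf condition outright, and the question of its rank never arises.

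Your alternative via rank counting can be made to work, but the justification you give (``more coordinates survive as free units'') is not the right one: the point is simply that $\Gamma\overline{\fp_I}^+_\Q$ is a polynomial ring in $n^2-|I|$ variables modulo a single nontrivial relation, so $\dim\overline{\fp_I}^+_\Q = n^2-|I|-1 \geq n > n-1$ whenever $|I|<n^2-n$. This does show that even if such a $\fp_I$ were pseudo-Hopf it could not lie in $\cZ(\SL_n)$, so the worry about rank $<n-1$ is unfounded. Either route closes the argument; the paper's is shorter because it avoids discussing flatness or dimension for these points at all.
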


\begin{proof}
 The pre-addition $\cR$ of the global sections $\Fun[\SL_n]=\bpquot{\Fun[T_{i,j}]}{\cR}$ of $\SL_{n}$ is generated by the relation 
 \begin{equation}
  \sum_{\sigma\in A_n} \ \prod_{i=1}^n \ T_{i,\sigma(i)} \quad \= \quad \sum_{\sigma\in S_n-A_n} \ \prod_{i=1}^n \ T_{i,\sigma(i)} \quad + \quad 1.  \label{eq: det=1}
 \end{equation}
 Thus $\fp_I$ is a prime ideal if and only if there is at least one $\sigma\in S_n$ such that $\prod_{i=1}^n T_{i,\sigma(i)} \notin\fp_I$. In other words, a prime ideal $\fp_I$ of $\Fun[T_{i,j}]_{i,j\in\un}$ generates a prime ideal of $\Fun[\SL_n]$ if and only if there is a $\sigma\in S_n$ such that $\fp_I\subset\fp^\sigma$. Consequently, the closed points of $\SL_n$ are the prime ideals $\fp^\sigma$ for $\sigma\in S_n$. 

 We determine the pseudo-Hopf points of $\SL_{n}$. For a point $\fp_I$, the coordinate blueprint of the closed subscheme $\overline{\fp_I}$ of $\SL_{n}$ is $\Gamma\,\overline{\fp_I}=\bpgenquot{\Fun[T_{i,j}|i,j\in\un]}{\cR}$ whose pre-addition $\cR$ is generated by the relation \eqref{eq: det=1} together with the relations $T_{i,j}\=0$ for $(i,j)\in I$. If $I=I(\sigma)$ for some $\sigma\in S_n$, then $\Gamma\,\overline{\fp^\sigma}=\kappa(\fp^\sigma)$ and in the relation \eqref{eq: det=1} survive only the ``$1$'' and one other term if $T_{i,j}$ is substituted by $0$ for all $(i,j)\in I$, i.e.\ it looks like
 \[
  \prod_{i=1}^n \ T_{i,\sigma(i)} \ \= \ 1 \quad\text{or}\quad \prod_{i=1}^n \ T_{i,\sigma(i)} \ + \ 1 \ \= \ 0
 \]
 depending on the sign of $\sigma$. In both cases, $T_{i,\sigma(i)}$ is invertible in $\Gamma\,\overline{\fp^\sigma}$ for $i=1,\dotsc,n$. Thus $\kappa(\fp^\sigma)$ is as claimed in the proposition. Further, it is clear that $\overline{\fp^\sigma}$ is affine, that $\overline{\fp^\sigma}^\px=\overline{\fp^\sigma}$, that $\overline{\fp^\sigma}^+_\Z$ is a free $\Z$-module and that $\fp^\sigma$ is of indefinite characteristic. Thus $\fp^\sigma$ is pseudo-Hopf. Note that $\fp^\sigma$ is of rank $r$, independently of $\sigma$.

 If $I$ is properly contained in $I(\sigma)$ for some $\sigma\in S_n$, then there are at least two terms besides to the ``$1$'' in relation \eqref{eq: det=1} that are not trivial when $T_{i,j}$ is substituted by $0$ for all $(i,j)\in I$. Therefore, none of the $T_{i,j}$ is invertible and $\overline{\fp_I}^\px=\Fun$. This shows that $\fp_I$ is not pseudo-Hopf in this case.

 We conclude that the rank of $\SL_{n}$ is $r$ and that $\cZ(\SL_{n})=\{ \fp^\sigma | \sigma \in S_n \}$, which equals the set of closed points of $\SL_{n}$. Therefore, $\upsilon_{\SL_{n}}:\SL_{n}^\prk\to \SL_{n}^\rk$ is an isomorphism, and $\SL_{n}^\rk$ embeds into the finite blue scheme $\SL_{n}$ (cf.\ the comments in Section \ref{subsection: rank space}). This also proves the form of the rank space as claimed in the proposition.
\end{proof}

Let $\fe$ be the Weyl kernel of $\SL_{n}$. Then the canonical torus $T$ equals $\fe_\Z^+$, which is the diagonal torus of $\SL_{n,\Z}^+$. Thus $T$ is a maximal torus of $\SL_{n,\Z}$, which equals its own centralizer. The normalizer of $T$ is the subgroup $N=(\SL_{n}^\rk)^+_\Z$ of monomial matrices. 

\begin{thm}\label{thm: the tits-weyl model sl_n} \ 
 \begin{enumerate}
  \item The group law $\mu_N:N\times N\to N$ descends to a unique group law $\mu^\rk:\SL_{n}^\rk\times\SL_{n}^\rk\to\SL_{n}^\rk$ in $\rkBSch$.
  \item The group law $\mu_\Z^+$ of $\SL_{n,\Z}^+$ descends to a unique monoid law $\mu^+:\SL_{n,\N}^+\times\SL_{n,\N}^+\to\SL_{n,\N}^+$ in $\Sch_\N^+$.
  \item The pair $\mu=(\mu^\rk,\mu^+)$ is a Tits morphism $\mu:\SL_{n}\times \SL_{n}\to \SL_{n}$ that makes $\SL_{n}$ a Tits-Weyl model of $\SL_{n,\Z}^+$.
  \item The group $\SL_n^\cT(\Fun)$ of $\Fun$-rational Tits points is isomorphic to the alternating group $A_n$.
  \item\label{part5} For a semiring $B$, the monoid $\SL_n(B)$ is the monoid of all matrices $n\times n$-matrices $(a_{i,j})$ with coefficients $a_{i,j}\in B$ that satisfy the determinant condition \eqref{eq: det=1}.
 \end{enumerate}
\end{thm}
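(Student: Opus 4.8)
The plan is to derive all five assertions from the single comultiplication formula $T_{i,j}\mapsto\sum_{k=1}^n T'_{i,k}\otimes T''_{k,j}$, checking that it makes sense on the rank spaces (part (i)), on the semiring schemes (part (ii)), and that the resulting pair is a Tits morphism making $\SL_n$ a Tits--Weyl model (part (iii)), and then reading off (iv) and (v) formally. For (i), recall from Proposition~\ref{prop: the rank space of sl_n} that $\SL_n^\rk=\coprod_{\sigma\in S_n}\Spec\kappa(\fp^\sigma)$ and that $N=(\SL_n^\rk)^+_\Z$ is the group scheme of monomial matrices in $\SL_{n,\Z}$, the component indexed by $\sigma$ being $\Spec\kappa(\fp^\sigma)^+_\Z$. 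On the component of $N\times N$ indexed by $(\sigma_1,\sigma_2)$, the product of the monomial matrices with nonzero entries $a_i$ in position $(i,\sigma_1(i))$ and $b_j$ in position $(j,\sigma_2(j))$ is the monomial matrix with permutation $\sigma_2\sigma_1$ and entry $a_ib_{\sigma_1(i)}$ in position $(i,\sigma_2\sigma_1(i))$; hence the comultiplication of $\mu_N$ restricts on residue fields to $T_{i,\sigma_2\sigma_1(i)}\mapsto T'_{i,\sigma_1(i)}\otimes T''_{\sigma_1(i),\sigma_2\sigma_1(i)}$. This sends generators to monomials, and, using $\prod_i T_{i,\sigma(i)}\=\sign(\sigma)$ in $\kappa(\fp^\sigma)$ (which contains $-1$ precisely when $\sigma$ is odd), it respects the defining relation for every combination of signs; since the inverse in $N$ of a monomial matrix is monomial with entries inverse to units of the residue fields, the antipode descends too. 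This yields a group law $\mu^\rk$ on $\SL_n^\rk$ in $\rkBSch$, unique with $(\mu^\rk)^+_\Z=\mu_N$ because a morphism of blue schemes of pure rank is determined by its base extension to $\Z$ (pure rank implies cancellative, so $\Gamma\hookrightarrow\Gamma^+_\Z$).

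For (ii), the same formula should define $\mu^+\colon\SL_{n,\N}^+\times\SL_{n,\N}^+\to\SL_{n,\N}^+$. As $\N[\SL_n]^+=(\Gamma\SL_n)^+$ is generated as a semiring by the $T_{i,j}$ and each $\sum_k T'_{i,k}\otimes T''_{k,j}$ visibly lies in $\N[\SL_n]^+\otimes_\N\N[\SL_n]^+$, one only has to check that the single defining relation $\sum_{\sigma\in A_n}\prod_i T_{i,\sigma(i)}\=1+\sum_{\sigma\notin A_n}\prod_i T_{i,\sigma(i)}$ is preserved, that is, that the product matrix $\bigl(\sum_k T'_{i,k}\otimes T''_{k,j}\bigr)$ satisfies it again in $Q:=\N[\SL_n]^+\otimes_\N\N[\SL_n]^+$. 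This is the crux, and the place where the semiring setting bites: one cannot simply invoke multiplicativity of the determinant, because $\det(XY)=\det X\det Y$ carries signs. The remedy is that $Q$ is cancellative — $\Gamma\SL_n$ is cancellative directly from the definition of $\cR_\cI$, $\otimes_\Fun$ preserves cancellativity (via $(B_1\otimes_\Fun B_2)_\canc=B_{1,\canc}\otimes_\Fun B_{2,\canc}$), and so does $(\blanc)^+$ — so $Q$ embeds into $\Z[\SL_n]\otimes_\Z\Z[\SL_n]$, where the generic matrices $A'=(T'_{i,j})$ and $A''=(T''_{i,j})$ satisfy $\det A'=\det A''=1$, whence $\det(A'A'')=1$ and the relation holds in $Q$. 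Thus $\mu^+$ exists; it is unique with $(\mu^+)^+_\Z=\mu_\Z^+$ since $\N[\SL_n]^+\hookrightarrow\Z[\SL_n]$, and the monoid axioms descend from $\mu_\Z^+$. It is only a monoid law: an antipode over $\N$ would base extend to the adjugate antipode of $\SL_{n,\Z}$, which has negative coefficients.

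For (iii), the square defining a Tits morphism commutes because, after base extension to $\Z$, both composites equal the restriction of $\mu_{\SL_{n,\Z}}$ to $N\times N$; so $\mu=(\mu^\rk,\mu^+)$ is a Tits morphism. With $\epsilon=(\epsilon^\rk,\epsilon^+)$ the point of the identity matrix ($T_{i,j}\mapsto\delta_{i,j}$, which respects all relations and has image $\fp^{\id}\in\SL_n^\rk$), the monoid axioms in $\TSch$ hold because a diagram of Tits morphisms commutes iff both components do, reducing to the group laws $\mu_N$ in $\rkBSch$ and $\mu_{\SL_{n,\Z}}$ in $\Sch_\N^+$; hence $(\SL_n,\mu)$ is a Tits monoid. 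It is a Tits--Weyl model of $\SL_{n,\Z}^+$ since $(\SL_n)^+_\Z=\SL_{n,\Z}$ is a group scheme under $\mu_\Z^+$ and, by the discussion preceding the theorem, the canonical torus $T=\fe^+_\Z$ is the diagonal (split maximal) torus, its own centralizer, with normalizer $N=(\SL_n^\rk)^+_\Z$, so $\Psi_\fe\colon(\SL_n^\rk)^+_\Z/\fe^+_\Z=N/T\to\cW(T)=N(T)/C(T)$ is an isomorphism. For (iv), Theorem~\ref{thm: properties of tits-weyl groups}(iii) gives that $\SL_n^\cT(\Fun)$ is a subgroup of $\cW(\SL_n)\simeq S_n$, and an $\Fun$-rational Tits point with image $\fp^\sigma$ exists iff $\kappa(\fp^\sigma)$ admits a blueprint morphism to $\Fun$; sending every $T_{i,\sigma(i)}$ to $1$ works when $\sigma\in A_n$ but fails when $\sigma$ is odd, since then $\kappa(\fp^\sigma)$ contains $-1$ while $\Fun$ has no $e$ with $1+e\=0$; hence $\SL_n^\cT(\Fun)=A_n$. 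For (v), $\Spec B$ is a semiring scheme for a semiring $B$, so by Corollary~\ref{cor: tits morphisms between semiring schemes} $\SL_n(B)=\Hom_\cT(\Spec B,\SL_n)\cong\Hom(\Spec B,\SL_n)=\Hom_{\SRings}(\N[\SL_n]^+,B)$ (the last step as $B$ is a semiring), which is the set of matrices $(a_{i,j})\in B^{n^2}$ satisfying \eqref{eq: det=1}, and the induced monoid law is matrix multiplication by the comultiplication formula. The main obstacle throughout is the verification in (ii) that the determinant relation survives the comultiplication over $\N$ — the semiring-level substitute for multiplicativity of the determinant — which forces one to track cancellativity carefully so that the passage to $\Z[\SL_n]\otimes_\Z\Z[\SL_n]$ is legitimate.
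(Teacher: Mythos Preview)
Your proof is correct and follows essentially the same approach as the paper. The one noteworthy difference is in part~(ii): the paper simply asserts that since the comultiplication formula $T_{i,j}\mapsto\sum_k T'_{i,k}\otimes T''_{k,j}$ ``can be defined without the use of additive inverses, it descends'' to $\SL_{n,\N}^+$, whereas you make explicit why the determinant relation is preserved---by tracking cancellativity of $\Gamma\SL_n$, of its tensor square over $\Fun$, and of the universal semiring, so that the verification may be carried out inside $\Z[\SL_n]\otimes_\Z\Z[\SL_n]$, where multiplicativity of the determinant is available. This is a genuine clarification of what the paper leaves implicit, not a different method; the remaining parts (i), (iii)--(v) match the paper's argument closely.
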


\begin{proof}
 We prove \eqref{part1}. As a scheme, $N=\coprod_{\sigma\in S_n}\Spec\kappa(\fp^\sigma)^+_\Z$, and $\kappa(\fp^\sigma)^+_\Z=\Z[T_{i,\sigma(i)}]_{i=1,\dotsc,n}/ I$ where the ideal $I$ is generated by $\prod_{i=1}^nT_{i,\sigma(i)}+(-1)^{\sign \sigma}$. The group law $\mu_N:N\Stimes N\to N$ is given by the ring homomorphism
 \[
 \begin{array}{cccc}
      \Gamma\mu_N: & \displaystyle \prod_{\sigma\in S_n}\kappa(\fp^\sigma)^+_\Z & \longrightarrow & \displaystyle \biggl(\; \prod_{\tau\in S_n}\kappa(\fp^\tau)^+_\Z \; \biggr)\ \Sotimes_\Z \ \biggl(\; \prod_{\tau'\in S_n}\kappa(\fp^{\tau'})^+_\Z \; \biggr). \\[20pt]
                                               &    T_{i,\sigma(i)}         & \longmapsto         & \displaystyle \sum_{\tau\tau'=\sigma} \ T_{i,\tau(i)}\otimes T_{\tau(i),\sigma(i)}
 \end{array}
 \]
 This descends to a morphism $\mu^\rk:\SL_{n}^\rk\times \SL_{n}^\rk\to\SL_{n}^\rk$ that is defined by 
 \[
 \begin{array}{cccc}
      \Gamma\mu^\rk: & \displaystyle \prod_{\sigma\in S_n}\kappa(\fp^\sigma) & \longrightarrow & \displaystyle \prod_{\tau\in S_n}\kappa(\fp^\tau) \ \otimes_\Fun \ \prod_{\tau'\in S_n}\kappa(\fp^{\tau'}) = \bpquot{\prod_{\tau,\tau'\in S_n} \kappa(\fp^\tau)\times \kappa (\fp^{\tau'})}{\cR} \\[20pt]
                                               &    T_{i,\sigma(i)}         & \longmapsto         & (a_{\tau,\tau'})_{\tau,\tau'\in S_n}
 \end{array}
 \]
 where $\cR$ is the pre-addition that defines the tensor product and where $a_{\tau,\tau'}=(T_{i,,\tau(i)},T_{\tau(i),\sigma(i)})$ if $\tau\tau'=\sigma$ and $a_{\tau,\tau'}=0$ otherwise. This means that the diagram
 \[
  \xymatrix@C=6pc{ {\prod_{\sigma\in S_n}\kappa(\fp^\sigma)^+_\Z} \ar[r]^(0.4){\mu_N} & \biggl(\; \prod_{\tau\in S_n}\kappa(\fp^\tau)^+_\Z \; \biggr)\ \Sotimes_\Z \ \biggl(\; \prod_{\tau'\in S_n}\kappa(\fp^{\tau'})^+_\Z \; \biggr) \\
                  {\prod_{\sigma\in S_n}\kappa(\fp^\sigma)} \ar[r]^(0.4){\mu^\rk}\ar[u] &  {}\prod_{\tau\in S_n}\kappa(\fp^\tau) \ \otimes_\Fun \ \prod_{\tau'\in S_n}\kappa(\fp^{\tau'}) \ar[u] }
 \]
 commutes. Since $\prod_{\sigma\in S_n}\kappa(\fp^\sigma)$ is cancellative, the vertical arrows are inclusions. Consequently, $\mu^\rk$ is uniquely determined by $\mu_N$. It is easily seen that $\mu^\rk$ is a group law in $\rkBSch$ with identity $\epsilon^\rk:\ast_\Fun\to\SL_{n}^\rk$ given by 
 \[
 \begin{array}{cccc}
        \Gamma\epsilon^\rk: & \displaystyle\prod_{\sigma\in S_n}\kappa(\fp^\sigma)  & \longrightarrow &\Fun \\ [20pt]
              &  (a_\sigma)_{\sigma\in S_n}  & \longmapsto    & a_e
  \end{array}
  \]
 where $e\in S_n$ is the trivial permutation and with inverse $\iota^\rk:\SL_{n}^\rk\to\SL_{n}^\rk$ given by
 \[
 \begin{array}{cccc}
       \Gamma\iota^\rk:   & \displaystyle\prod_{\sigma\in S_n}\kappa(\fp^\sigma) & \longrightarrow & \displaystyle\prod_{\sigma\in S_n}\kappa(\fp^\sigma) \\[20pt]
              & T_{i,\sigma(i)}   & \longmapsto    & T^{-1}_{\sigma(i),i}
  \end{array}
  \]
 where we understand the element $T_{i,\sigma(i)}$ of $\kappa(\fp^\sigma)$ as the element $(a_{\sigma'})$ of $\prod_{\sigma\in S_n}\kappa(\fp^\sigma)$ with $a_{\sigma'}=T_{i,\sigma(i)}$ if $\sigma'=\sigma$ and $a_{\sigma'}=0$ otherwise. This shows \eqref{part1}.

 We continue with \eqref{part2}. The group law $\mu_\Z^+:\SL_{n,\Z}^+\Stimes \SL_{n,\Z}^+\to\SL_{n,\Z}^+$ is defined by the ring homomorphism
 \[
  \begin{array}{cccc}
      \Gamma\mu^+_\Z:   & \Z[T_{i,j}]_{i,j\in\un}\,/\,I  & \longrightarrow & \Bigl(\Z[T'_{i,j}]_{i,j\in\un}\,/\,{I'}\Bigr) \ \Sotimes_\Z \ \Bigl( \Z[T''_{i,j}]_{i,j\in\un}\,/\,{I''} \Bigl)\\[15pt]
             & T_{i,j}   & \longmapsto    & \displaystyle\sum_{k=1}^n T'_{i,k} \otimes T''_{k,j}
  \end{array}
 \]
 where the ideals $I$, $I'$ and $I''$ are generated by the relation that expresses that the determinant equals $1$ (as explained in the beginning of this section). Since $\mu_\Z^+$ can be defined without the use of additive inverses, it descends to a morphism $\mu^+:\SL_n^+\times \SL_n^+\to \SL_n^+$. Uniqueness follows, as in the case of $\mu^\rk$, because $\SL_n^+$ is cancellative. It is easily seen that $\mu^+$ is a semigroup law in $\Sch^+_\N$ with identity $\epsilon^+:\ast_\N\to\SL_n^+$ that is given by the blueprint morphism $\Gamma\epsilon^+:\N[\SL_n]\to\N$ that maps $T_{i,j}$ to $1$ if $i=j$ and to $0$ if $i\neq j$. This shows \eqref{part2}. Note that $\mu^+$ does not have an inverse since the inverse of $\mu_\Z^+$ involves additive inverses of the $T_{i,j}$. Note further that $(\mu^+)_\Z^+=\mu_\Z^+$, which justifies the notation.

 We proceed with \eqref{part3}. It is clear from the definitions of $\mu^\rk$ and $\mu^+$ that the diagram
 \[
  \xymatrix@C=6pc{ N\Stimes N \ar[r]^{\mu^{\rk,+}_\Z=\mu_N}\ar[d]_{\rho_{\SL_n,\Z}^+\times\rho_{\SL_n,\Z}^+} & N \ar[d]^{\rho_{\SL_n,\Z}^+} \\
                   \SL_{n,\Z}^+\Stimes \SL_{n,\Z}^+ \ar[r]^{\mu_\Z^+} & \SL_{n,\Z}^+}
 \]
 commutes. Thus $\mu=(\mu^\rk,\mu^+)$ is a Tits morphism that is a semigroup law for $\SL_n$ in $\TSch$ with identity $\epsilon=(\epsilon^\rk,\epsilon^+)$. This shows that $\SL_n$ is a Tits model of $\SL_{n,\Z}^+$. We already reasoned that the canonical torus $T=\fe_\Z^+$ of $\SL_{n}$ is the diagonal torus of $\SL_{n,\Z}^+$, which is a maximal torus and its own centralizer, and that $N=\SL_{n,\Z}^\rk$ is its normalizer. Thus the morphism $\Psi_\fe: \SL_{n}^\rk/\fe_\Z^+ \to N/T$ is an isomorphism of group schemes, which shows that $\SL_n$ is a Tits-Weyl model of $\SL_{n,\Z}^+$. This proves \eqref{part3}.
 
 We proceed with \eqref{part4}. A morphism $\ast_\Fun\to\SL_n^\rk$ is determined by its image point $\fp^\sigma$ and a morphism $\kappa(\fp^\sigma)\to\Fun$, which is necessarily unique. The latter morphism exists if $\sigma\in A_n$ since in this case $\kappa(\fp^\sigma)$ is a monoid (cf.\ Proposition \ref{prop: the rank space of sl_n}). In case, $\sigma\in S_n-A_n$, the residue field $\kappa(\fp^\sigma)$ contains $-1$ and does not admit a blueprint morphism to $\Fun$. 

 We show \eqref{part5}. Let $B$ be a semiring. A morphism $\Spec B\to\SL_n$ is given by a blueprint morphism $f:\bpquot{\Fun[T_{i,j}]}{\cR}\to B$ where $\cR$ is the pre-addition generated by the relation \eqref{eq: det=1}. Such a morphism is determined by the images $a_{i,j}=f(T_{i,j})\in B$ of the generators $T_{i,j}$, and a family of elements $(a_{i,j})$ occurs as images of a blueprint morphism $f$ if and only if the $A_{i,j}$ satisfy relation \eqref{eq: det=1}. It is clear that the multiplication on $SL_n(B)$ that is induced by the monoid law of $\SL_n$ is the usual matrix multiplication. This concludes the proof of the theorem.
\end{proof}


\subsection{The cube lemma}
\label{subsection: The cube lemma}

In the rest of this part of the paper, we will establish Tits-Weyl models of subgroups $\cG$ of $\SL_{n,\Z}^+$. To show that the semigroup law of $\SL_n$ restricts to a given $\Fun$-model $G$ of $\cG$, we will often need to prove the existence of a morphism $h_1:X_1\to Y_1$ that completes a commuting diagram of the form
\begin{equation}\label{c1}
 \xymatrix@C=4pc@R=1pc{                                                         & X_2'  \ar[rr]^(.4){h_2'}\ar[dd]_(.7){f_{X,2}}|\hole &                                          & Y_2'\ar[dd]^{f_{Y,2}}  \\
                        X_1'\ar[ru]^{g_{X}'}\ar[rr]^(.7){h_1'}\ar[dd]_{f_{X,1}} &                                                     & Y_1'\ar[ru]_{g_Y'}\ar[dd]^(.3){f_{Y,1}}                           \\
                                                                                & X_2 \ar[rr]^(.3){h_2}|\hole                         &                                          & Y_2                    \\
                        X_1                                  \ar[ru]^{g_X}      &                                                     & Y_1\ar[ru]_{g_Y}         }  
\end{equation}
to a commuting cube 
\begin{equation}\label{c2}
 \xymatrix@C=4pc@R=1pc{                                                         & X_2'  \ar[rr]^(.4){h_2'}\ar[dd]_(.7){f_{X,2}}|\hole &                                          & Y_2'\ar[dd]^{f_{Y,2}}  \\
                        X_1'\ar[ru]^{g_{X}'}\ar[rr]^(.7){h_1'}\ar[dd]_{f_{X,1}} &                                                     & Y_1'\ar[ru]_{g_Y'}\ar[dd]^(.3){f_{Y,1}}                           \\
                                                                                & X_2 \ar[rr]^(.3){h_2}|\hole                         &                                          & Y_2                    \\
                        X_1 \ar[rr]^(.6){h_1} \ar[ru]^{g_X}                     &                                                     & Y_1\ar[ru]_{g_Y}         }   
\end{equation}
of morphisms. In this section, we provide the necessary hypotheses that yield the morphism in question for the categories $\Sets$, $\Top$ and $\BSch$.

\begin{lemma}[\textbf{The cube lemma for sets}]\label{lemma: cube lemma for sets}
 Consider a commutative diagram of the form \eqref{c1} in the category $\Sets$. If $f_{X,1}:X_1'\to X_1$ is surjective and $g_Y$ injective, then there exists a unique map $h_1:X_1\to Y_1$ such that the resulting cube \eqref{c2} commutes.
\end{lemma}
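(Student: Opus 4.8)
The plan is to construct $h_1$ by a diagram chase, using surjectivity of $f_{X,1}$ to define it pointwise and injectivity of $g_Y$ to check it is well-defined and makes the cube commute. First I would define the map: given $x\in X_1$, pick a preimage $x'\in X_1'$ under $f_{X,1}$ (possible by surjectivity), and set $h_1(x)=f_{Y,1}(h_1'(x'))$. This is the only reasonable candidate, since the front and back faces force $h_1\circ f_{X,1}=f_{Y,1}\circ h_1'$.

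Next I would check that $h_1$ is well-defined, i.e.\ independent of the choice of preimage $x'$. Suppose $x_1',x_2'\in X_1'$ both map to $x$ under $f_{X,1}$. I want $f_{Y,1}(h_1'(x_1'))=f_{Y,1}(h_1'(x_2'))$. Here is where $g_Y$ comes in: it suffices to show $g_Y(f_{Y,1}(h_1'(x_1')))=g_Y(f_{Y,1}(h_1'(x_2')))$. Using commutativity of the right face ($g_Y\circ f_{Y,1}=f_{Y,2}\circ g_Y'$) and the top face ($h_2'\circ g_X'=g_Y'\circ h_1'$) together with the bottom face ($h_2\circ f_{X,2}\circ g_X' = \dots$), I would trace $g_Y(f_{Y,1}(h_1'(x_i'))) = f_{Y,2}(g_Y'(h_1'(x_i'))) = f_{Y,2}(h_2'(g_X'(x_i')))$. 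Then applying commutativity of the left face and the back face ($f_{X,2}\circ g_X' = g_X\circ f_{X,1}$ and $h_2\circ f_{X,2} = f_{Y,2}\circ h_2'$) gives $f_{Y,2}(h_2'(g_X'(x_i'))) = h_2(f_{X,2}(g_X'(x_i'))) = h_2(g_X(f_{X,1}(x_i'))) = h_2(g_X(x))$, which does not depend on $i$. Since $g_Y$ is injective, this proves $f_{Y,1}(h_1'(x_1'))=f_{Y,1}(h_1'(x_2'))$, so $h_1$ is well-defined.

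Finally I would verify that the completed cube commutes, which amounts to checking the two new faces involving $h_1$: the back-to-front square $h_1\circ f_{X,1}=f_{Y,1}\circ h_1'$ (immediate from the definition, choosing $x'$ itself as the preimage of $f_{X,1}(x')$) and the bottom-left-to-bottom-right square $h_2\circ g_X = g_Y\circ h_1$. For the latter, given $x\in X_1$ with preimage $x'$, we have $g_Y(h_1(x)) = g_Y(f_{Y,1}(h_1'(x')))$, and the computation in the previous paragraph already showed this equals $h_2(g_X(x))$. Uniqueness is automatic: any $h_1$ making the back face commute must satisfy $h_1(f_{X,1}(x'))=f_{Y,1}(h_1'(x'))$ for all $x'$, and since $f_{X,1}$ is surjective this determines $h_1$ completely.

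I do not expect a serious obstacle here; the statement is a routine diagram chase. The only point requiring care is organizing the chain of equalities in the well-definedness step so that each use of commutativity of a face of the cube is correctly invoked, and making sure the injectivity hypothesis on $g_Y$ and the surjectivity hypothesis on $f_{X,1}$ are each used exactly where needed. The subsequent versions of this lemma for $\Top$ and $\BSch$ will presumably add continuity/locally-algebraic bookkeeping on top of this set-theoretic skeleton, so keeping the set-level argument clean is the main goal.
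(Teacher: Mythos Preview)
Your proposal is correct and follows essentially the same approach as the paper: define $h_1$ via a lift along the surjection $f_{X,1}$, verify well-definedness by pushing through the chain $g_Y\circ f_{Y,1}\circ h_1'(x_i') = \dots = h_2\circ g_X(x)$ and invoking injectivity of $g_Y$, then reuse that same chain to check the bottom face. The paper's proof is line-for-line the same diagram chase; your explicit remark on uniqueness is a minor addition the paper leaves implicit.
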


\begin{proof}
 Let $x\in X_1$. Then there is a $x'\in X'_1$ such that $f_{X,1}(x')=x$. Define $h_1(x)=f_{Y,1}\circ h_1'(x')$. 

 We verify that the definition of $h_1$ does not depend on the choice of $x'$. Let $x'_1$ and $x_2'$ be two elements of $X_1'$ with $f_{X,1}(x'_1)=f_{X,1}(x'_2)=x$. Then 
 \begin{eqnarray*}
  g_Y\circ f_{Y,1}\circ h_1'(x'_i) & = & f_{Y,2} \circ g_Y' \circ h_1'(x'_i) \\
                                   & = & f_{Y,2} \circ h_2' \circ g_X'(x'_i) \\
                                   & = & h_2 \circ f_{X,2} \circ g_X'(x'_i) \\
                                   & = & h_2 \circ g_X \circ f_{X,1}(x'_i) \\
                                   & = & h_2 \circ g_X (x) 
 \end{eqnarray*}
 is the same element in $Y_2$ for $i=1,2$. Since $g_Y$ is injective, $f_{Y,1}\circ h_1'(x'_1)=f_{Y,1}\circ h_1'(x'_2)$ in $Y_1$, which means that the definition of $h_1(x)$ does not depend on the choice of $x'$ in $f_{X,1}^{-1}(x)$.

 By definition of $h_1$, the diagram 
 \[
  \xymatrix@R=2pc@C=4pc{X_1'\ar[r]^{h_1'}\ar[d]_{f_{X,1}}  & Y_1'\ar[d]^{f_{Y,1}}    \\
                        X_1 \ar[r]^{h_1}                   & Y_1     }
 \]
 commutes. The same calculation as above shows that $g_Y\circ h_1(x)=g_Y\circ f_{Y,1}\circ h_1'(x')$ equals $h_2 \circ g_X (x)$, which means that the diagram
 \[
  \xymatrix@R=1pc@C=2pc{  & X_2 \ar[rr]^{h_2}                         &                                          & Y_2     \\
                        X_1 \ar[rr]^{h_1} \ar[ru]^{g_X} &                 & Y_1 \ar[ru]_{g_Y}    }
 \]
 commutes. This proves the lemma.
 \end{proof}

\begin{lemma}[\textbf{The cube lemma for topological spaces}]\label{lemma: cube lemma for topological spaces}
 Consider a commutative diagram of the form \eqref{c1} in the category $\Top$. If $f_{X,1}:X_1'\to X_1$ is surjective and $g_Y$ an immersion, then there exists a unique continuous map $h_1:X_1\to Y_1$ such that the resulting cube \eqref{c2} commutes.
\end{lemma}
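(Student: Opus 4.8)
The plan is to reduce to the already-proven set-theoretic cube lemma and then promote the resulting map to a continuous one by means of the universal property of the subspace topology.

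First I would forget the topologies and apply Lemma \ref{lemma: cube lemma for sets} to the underlying diagram of sets. An immersion is in particular injective, so $g_Y$ is injective; together with the surjectivity of $f_{X,1}$ this gives a unique map of sets $h_1\colon X_1\to Y_1$ rendering the cube \eqref{c2} commutative, and in particular $g_Y\circ h_1 = h_2\circ g_X$. Since any continuous map completing the cube is a fortiori a set map completing the cube, the uniqueness part of the statement is inherited from the set lemma, and it remains only to check that this $h_1$ is continuous.

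For the continuity I would exploit that $g_Y$, being an immersion, is a homeomorphism onto the subspace $g_Y(Y_1)\subset Y_2$; write $g_Y^{-1}\colon g_Y(Y_1)\to Y_1$ for its inverse, which is continuous. The composite $h_2\circ g_X\colon X_1\to Y_2$ is continuous, and by the identity $g_Y\circ h_1 = h_2\circ g_X$ its image is contained in $g_Y(Y_1)$; hence it corestricts to a continuous map $X_1\to g_Y(Y_1)$, and post-composing with $g_Y^{-1}$ recovers precisely $h_1$. Therefore $h_1$ is continuous.

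There is no serious obstacle here: the argument is essentially the same as for Lemma \ref{lemma: cube lemma for sets}, the only point requiring care being the bookkeeping that shows $h_2\circ g_X$ really lands in $g_Y(Y_1)$, so that the subspace universal property applies. I would also note that, as in the set case, surjectivity of $f_{X,1}$ is needed only to construct $h_1$; its continuity, once $h_1$ is pinned down, depends solely on $g_Y$ being a topological embedding.
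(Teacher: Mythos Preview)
Your proposal is correct and is essentially the same argument as the paper's proof: both reduce to the set-theoretic cube lemma and then use that $g_Y$ is a topological embedding to deduce continuity of $h_1$. The only cosmetic difference is that you invoke the universal property of the subspace topology (factoring $h_2\circ g_X$ through $g_Y(Y_1)$ and composing with $g_Y^{-1}$), whereas the paper checks directly that $h_1^{-1}(U)=g_X^{-1}(h_2^{-1}(U'))$ for an open $U'\subset Y_2$ with $g_Y^{-1}(U')=U$.
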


\begin{proof}
 By the cube lemma for sets (Lemma \ref{lemma: cube lemma for sets}), a unique map $h_1:X_1\to Y_1$ exists such that the cube \eqref{c2} commutes. We have to show that $h_1$ is continuous. Let $U$ be an open subset of $Y_1$. Then there is an open subset $U'$ of $Y_2$ such that $g_Y^{-1}(U')=U$ since $g_Y$ is an immersion. Then the subset
 \[
  h_1^{-1}(U) \quad = \quad h_1^{-1} \circ g_Y^{-1}(U') \quad = \quad g_X^{-1} \circ h_2^{-1}(U')
 \]
 of $X_1$ is open as an inverse image of $U'$ under a continuous map. This proves the lemma.
\end{proof}

 A \emph{quasi-submersion of blue schemes} is a morphism $f:X\to Y$ that is surjective and satisfies for every affine open subset $U$ of $Y$ that $V=f^{-1}(U)$ is affine and that $f^\#(U):\Gamma(\cO_Y,U)\to\Gamma(\cO_X,V)$ is an inclusion as a subblueprint, i.e.\ $f^\#(U)$ is injective and the pre-addition of $\Gamma(\cO_Y,U)$ is the restriction of the pre-addition of $\Gamma(\cO_X,V)$ to $\Gamma(\cO_Y,U)$ (cf.\ \cite[section 2.1]{blueprints1}).

\begin{lemma}\label{lemma: subblueprints are quasi-submersions}
 If $f:B\hookrightarrow C$ is an inclusion of a subblueprint $B$ of $C$, then $f^\ast:\Spec C\to \Spec B$ is a quasi-submersion.
\end{lemma}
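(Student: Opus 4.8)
The statement asserts that if $f:B\hookrightarrow C$ is an inclusion of a subblueprint, then $f^\ast:\Spec C\to\Spec B$ is a quasi-submersion in the sense just defined, i.e.\ it is surjective and over every affine open $U$ of $\Spec B$ the preimage $V=(f^\ast)^{-1}(U)$ is affine and $(f^\ast)^\#(U)$ is an inclusion of a subblueprint. Since $\Spec B$ and $\Spec C$ are both affine, the affine-open part of the claim will follow once one checks it for the basic opens $U_b=\{\fq\in\Spec B\mid b\notin\fq\}$; the real content is (a) surjectivity of $f^\ast$ and (b) the compatibility of pre-additions under localization.

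\textbf{Step 1: surjectivity.} Given a prime ideal $\fp$ of $B$, I would produce a prime ideal of $C$ lying over it. The clean way is to pass to residue fields: the blue field $\kappa(\fp)=B_\fp/\fp B_\fp$ admits a morphism into some semifield $k$ (e.g.\ by Lemma~\ref{lemma-geom-char-and-the-fibres-of-alpha}, or simply because any nonzero blueprint maps to a blue field), giving $\Spec k\to\Spec B$ with image $\fp$. The inclusion $B\hookrightarrow C$ then needs to be ``lifted'': localize $C$ at the multiplicative set $S=B-\fp$ to get $S^{-1}C$, which is nonzero (it contains the nonzero blueprint $S^{-1}B=B_\fp$ as a subblueprint, hence is itself nonzero since $1\neq 0$ there), and is therefore has a prime ideal; pulling that back to $C$ and intersecting with $B$ recovers $\fp$ because $\fp B_\fp$ is the unique maximal ideal of $B_\fp$. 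A slightly more elementary argument: the preimage of $\fp$ under $B\hookrightarrow C$ being $\fp$ itself is automatic; what must be shown is that $\fp\cdot C$ generates a proper ideal of $S^{-1}C$, which holds since $S^{-1}C$ contains $B_\fp$ and $\fp B_\fp\neq B_\fp$. This is where I expect the main subtlety, since ``proper ideal'' arguments in blueprints must be handled via the pre-addition rather than linear algebra.

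\textbf{Step 2: behaviour over basic opens.} For $b\in B$, the basic open $U_b\subset\Spec B$ has $(f^\ast)^{-1}(U_b)=\{\fq\in\Spec C\mid b\notin\fq\}=U_b^C$ (using $f(b)=b$), which is affine with coordinate blueprint $C[b^{-1}]$; and the induced map is $B[b^{-1}]\to C[b^{-1}]$. Since localization of blueprints is a filtered colimit of multiplications by powers of $b$ and $B$ sits inside $C$ with the restricted pre-addition, $B[b^{-1}]$ injects into $C[b^{-1}]$ and, crucially, the pre-addition of $B[b^{-1}]$ is the restriction of that of $C[b^{-1}]$: a relation $\sum a_i/b^{n}\=\sum c_j/b^{m}$ holding in $C[b^{-1}]$ with all numerators in $B$ clears denominators to a relation (up to a further power of $b$) in $C$ among elements of $B$, hence already holds in $B$. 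This is exactly the statement that $f^\#(U_b)$ is an inclusion of a subblueprint. The general affine open $U\simeq\Spec(S^{-1}B)$ is handled the same way, localizing at a multiplicative set rather than a single element.

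\textbf{Step 3: assembling.} Combining surjectivity (Step 1) with the affine-local compatibility (Step 2) and noting that basic opens form a basis closed under the relevant operations, one concludes $f^\ast$ is a quasi-submersion. The only genuinely delicate point, and the one I would write out most carefully, is the clearing-denominators argument in Step 2 that the restricted pre-addition on the localization $B[b^{-1}]$ really is induced by $C[b^{-1}]$ and not larger; everything else is a routine transcription of the ring-theoretic facts used in Lemma~\ref{lemma: surjections are closed immersions} and its proof, with ``surjective'' replaced by ``injective'' and ``closed immersion'' replaced by ``quasi-submersion''.
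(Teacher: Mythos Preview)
Your Step~2 is essentially the paper's argument: the paper also reduces to showing that $S^{-1}B\to S^{-1}C$ is an inclusion of a subblueprint for every multiplicative set $S\subset B$, proves injectivity by clearing denominators, and proves that the pre-addition of $S^{-1}B$ is the restriction of that of $S^{-1}C$ by the same clearing-denominators trick you describe. So on that part you are aligned with the source.

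Your Step~1, however, has a genuine gap. The inference ``$\fp\cdot S^{-1}C$ is proper because $S^{-1}C$ contains $B_\fp$ and $\fp B_\fp\neq B_\fp$'' is invalid: properness of an ideal in a subblueprint says nothing about properness of the ideal it generates in an overblueprint. Concretely, take $B=\Fun[T]\hookrightarrow C=\Fun[T^{\pm1}]$, which is an inclusion of subblueprints (both pre-additions are trivial). For $\fp=(T)$ one has $S=\{1\}$, so $S^{-1}C=C$ and $\fp\cdot C=C$ since $T$ is a unit in $C$. In fact $\Spec C=\{(0)\}$ while $\Spec B=\{(0),(T)\}$, so $f^\ast$ is not surjective and the lemma, read literally, is false.

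The paper's own proof does not address surjectivity either; it only verifies the affine-local subblueprint condition. In every application of the lemma in the paper (all in the proof of Theorem~\ref{thm: tits-weyl models of subgroups}), the relevant morphism is of the form $X^+_\Z\to X^+$ or a product of such, and surjectivity is supplied separately by Lemma~\ref{lemma-beta-surjective-for-cancellative-scheme} using cancellativity. So the correct reading is that the lemma establishes the second half of the quasi-submersion definition, with surjectivity coming from elsewhere; your attempt to prove surjectivity in general cannot succeed.
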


\begin{proof}
 Every affine open $U$ of $X=\Spec B$ is of the form $U=\Spec S^{-1}B$ for some finitely generated multiplicative subset $S$ of $B$. The inverse image $f^\ast(U)$ is isomorphic to $\Spec S^{-1}C$ and therefore affine. We have to show that the induced blueprint morphism $g:S^{-1}B\to S^{-1}C$ is an inclusion of a subblueprint.

 We show injectivity of $g$. If $g(\frac as)=g(\frac{a'}{s'})$, i.e.\ $\frac{f(a)}{s}=\frac{f(a')}{s'}$, for $a,b\in B$ and $s,s'\in S$, then there is a $t\in S$ such that $ts'f(a)=tsf(a')$ in $C$. Since $B$ is a subblueprint of $C$, we have $ts'a=tsa'$ in $B$, which shows that $\frac as=\frac{a'}{s'}$ in $B$. Thus $g:S^{-1}B\to S^{-1}C$ is injective.

 We show that $S^{-1}B$ is a subblueprint of $S^{-1}C$. Consider an additive relation $\sum\frac{f(a_i)}{s_i}\=\sum\frac{f(b_j)}{r_j}$ in $S^{-1}C$. Then there is a $t\in S$ such that $\sum ts^if(a_i)\=\sum tr^jf(b_j)$ in $C$ where $s^i=(\prod_{k\neq i}s_k)\cdot(\prod_{l}r_l)$ and $r^j=(\prod_{k}s_k)\cdot(\prod_{l\neq j}r_l)$. Since $B$ is a subblueprint of $C$, we have $\sum ts^i a_i\=\sum tr^jb_j$ in $B$, which means that $\sum\frac{a_i}{s_i}\=\sum\frac{b_j}{r_j}$ in $S^{-1}B$. This shows that $g:S^{-1}B\to S^{-1}C$ is an inclusion of a subblueprint and finishes the proof of the lemma.
\end{proof}

\begin{lemma}[\textbf{The cube lemma for blue schemes}]\label{lemma: cube lemma for blue schemes}
 Consider a commutative diagram of the form \eqref{c1} in the category $\BSch$. Suppose that $f_{X,1}:X_1'\to X_1$ is a quasi-submersion and $g_Y$ is a closed immersion. Then there exists a unique morphism $h_1:X_1\to Y_1$ of blue schemes such that the resulting cube \eqref{c2} commutes.
\end{lemma}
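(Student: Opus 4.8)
The plan is to deduce the existence of $h_1:X_1\to Y_1$ from the two previously established cube lemmas by treating separately the underlying topological spaces and the structure sheaves. Since $f_{X,1}$ is a quasi-submersion, it is in particular surjective, and since $g_Y$ is a closed immersion, it is an immersion of topological spaces; hence the cube lemma for topological spaces (Lemma \ref{lemma: cube lemma for topological spaces}) provides a unique continuous map $h_1:X_1\to Y_1$ of underlying spaces making the topological cube commute. It remains to promote this to a morphism of blue schemes, i.e.\ to produce a compatible morphism $h_1^\#:h_1^{-1}\cO_{Y_1}\to\cO_{X_1}$ of sheaves of blueprints, and to check that the resulting morphism fits into the cube.

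The key steps are as follows. First I would reduce to the affine case: the question of defining $h_1^\#$ is local on $X_1$, and since $f_{X,1}$ is a quasi-submersion, every affine open $U_1\subset X_1$ has affine preimage $U_1'=f_{X,1}^{-1}(U_1)$ with $\Gamma(\cO_{X_1},U_1)\hookrightarrow\Gamma(\cO_{X_1'},U_1')$ an inclusion of a subblueprint; similarly, choosing $U_2\supset g_X(U_1)$ affine in $X_2$ and pulling back through $h_2$, $g_X$, $g_Y$ etc., one arranges a diagram of affine pieces of the form \eqref{c1} in $\BSch$. On global sections this gives a diagram of blueprints of the shape \eqref{c1} with all arrows reversed, in which the arrow dual to the quasi-submersion $f_{X,1}$ is an injection $B_1\hookrightarrow B_1'$ realizing $B_1$ as an additively-closed subblueprint (the pre-addition of $B_1$ is the restriction), and the arrow dual to the closed immersion $g_Y$ is a surjection $C_2\twoheadrightarrow C_1$. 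Then I would apply the cube lemma for sets (Lemma \ref{lemma: cube lemma for sets}) in $\Sets$ to the underlying-set diagram of these blueprint maps: the surjection dual to $g_Y$ plays the role of the surjective map, the injection dual to $f_{X,1}$ plays the role of the injective map, so there is a unique map $h_1^\#:C_1\to B_1$ of underlying monoids making the set-level cube commute.

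Next I would check that $h_1^\#:C_1\to B_1$ is in fact a blueprint morphism, i.e.\ multiplicative, unital, sends $0$ to $0$, and respects pre-additions. Multiplicativity, unitality, and $0\mapsto 0$ follow from uniqueness in the set cube lemma applied to elements, or more cleanly from the fact that $h_1^\#$ agrees, after composing with the injection $B_1\hookrightarrow B_1'$, with the blueprint morphism $C_1\to C_1'\to B_1'$ (the upper-then-right composite), which is a blueprint morphism; since $B_1\hookrightarrow B_1'$ is injective on monoids and detects the monoid operations and the element $0$, $h_1^\#$ inherits these properties. For the pre-addition: if $\sum a_i\=\sum b_j$ in $C_1$, then the images under $C_1\to B_1'$ satisfy the corresponding relation, and since $B_1$ is a subblueprint of $B_1'$ with restricted pre-addition and the images actually lie in $B_1$ (namely they are $h_1^\#$ of the $a_i,b_j$), the relation $\sum h_1^\#(a_i)\=\sum h_1^\#(b_j)$ holds already in $B_1$. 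Thus $h_1^\#$ is a blueprint morphism, hence defines a morphism of affine blue schemes $U_1\to Y_1$-piece; these glue over an affine cover of $X_1$ because of the uniqueness clause (any two agree on overlaps), yielding the morphism $h_1:X_1\to Y_1$ of blue schemes. Commutativity of the two faces of the cube containing $h_1$ — namely $h_1\circ f_{X,1}=f_{Y,1}\circ h_1'$ and $g_Y\circ h_1=h_2\circ g_X$ — follows on underlying spaces from Lemma \ref{lemma: cube lemma for topological spaces} and on structure sheaves from the corresponding commutativities in the blueprint set cube together with injectivity of $B_1\hookrightarrow B_1'$; uniqueness of $h_1$ follows from uniqueness on spaces and the injectivity of $\Gamma(g_Y)^\ast$-type maps, or directly from the fact that a morphism into $Y_1$ is determined after composing with the closed immersion $g_Y$.

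The main obstacle I anticipate is the bookkeeping of affine covers so that the pieces genuinely assemble into a diagram of shape \eqref{c1} in $\BSch$ to which the affine/blueprint version applies — one must choose the affine opens of $X_2,Y_1,Y_2,X_1',X_2',Y_1',Y_2'$ compatibly with all the structure maps in the cube so that taking global sections yields a single commuting diagram of blueprints — and the verification that the locally defined morphisms glue, which again rests on the uniqueness statement. The algebra itself (checking $h_1^\#$ is a blueprint morphism) is routine given Lemma \ref{lemma: subblueprints are quasi-submersions}, which guarantees the crucial subblueprint property on sections.
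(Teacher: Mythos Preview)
Your proposal is correct and follows essentially the same approach as the paper: first invoke the cube lemma for topological spaces to get the continuous map, then reduce to the affine case and apply the cube lemma for sets to the reversed diagram of global sections, using that $\Gamma g_Y$ is surjective and $\Gamma f_{X,1}$ is a subblueprint inclusion, and finally verify that the resulting map is a blueprint morphism. The only cosmetic difference is that you check multiplicativity by composing with the injection $B_1\hookrightarrow B_1'$, whereas the paper lifts elements through the surjection $\Gamma g_Y$; both arguments use the same two commuting faces of the blueprint cube, and the verification that pre-additions are respected is identical.
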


\begin{proof}
 By the cube lemma for topological spaces (Lemma \ref{lemma: cube lemma for topological spaces}), a unique continuous map $h_1:X_1\to Y_1$ exists such that the cube \eqref{c2} commutes. We have to show that there exists a morphism $\Gamma h_1:\cO_{Y_1}\to\cO_{X_1}$ between the structure sheaves of $X_1$ and $Y_1$. Since a morphism of sheaves can be defined locally, we may assume that all blue schemes in question are affine. 

 If we denote by $\Gamma {Z}$ the coordinate ring of the blue scheme $Z$, then there is a morphism $\Gamma h_1:\Gamma {Y_1}\to\Gamma {X_1}$ of blueprints that completes the commutative diagram
 \[
   \xymatrix@C=4pc@R=1pc{                                     & \Gamma {X_2'}  \ar@{<-}[rr]^(.4){\Gamma h_2'}\ar@{<-}[dd]_(.7){\Gamma f_{X,2}}|\hole &                         & \Gamma {Y_2'}\ar@{<-}[dd]^{\Gamma f_{Y,2}}  \\
                        \Gamma {X_1'}\ar@{<-}[ru]^{\Gamma g_{X}'}\ar@{<-}[rr]^(.7){\Gamma h_1'}\ar@{<-}[dd]_{\Gamma f_{X,1}} &                & \Gamma {Y_1'}\ar@{<-}[ru]_{\Gamma g_Y'}\ar@{<-}[dd]^(.3){\Gamma f_{Y,1}}     \\
                                                              & \Gamma {X_2} \ar@{<-}[rr]^(.3){\Gamma h_2}|\hole                              &                                          &\Gamma { Y_2}                    \\
                        \Gamma {X_1}\ar@{<-}[ru]^{\Gamma g_X}        &                                                                        & \Gamma {Y_1}\ar@{<-}[ru]_{\Gamma g_Y}         }  
 \]
 to a commuting cube. Since $g_Y$ is a closed immersion, $\Gamma g_Y$ is a surjective morphism of blueprints. Since $f_{X,1}$ a quasi-submersion, we may assume further that $f_{X,1}:X_1'\to X_1$ is still surjective and $\Gamma f_{X,1}$ is an injective morphism of blueprints. Then by the cube lemma for sets (Lemma \ref{lemma: cube lemma for sets}), there is a unique map $\Gamma h_1:\Gamma Y_1\to\Gamma X_1$.

 To verify that $\Gamma h_1$ is a morphism of monoids, let $a,b\in\Gamma {Y_1}$. Then there are elements $a',b'\in\Gamma {Y_2}$ such that $\Gamma g_Y(a')=a$ and $\Gamma g_Y(b')=b$. Therefore,
 \[ 
  \Gamma h_1(ab) \ = \ \Gamma g_Y \circ \Gamma h_1(a'b') \ = \ \Gamma h_2 \circ \Gamma g_X(a'b') \ = \ \Bigr(\Gamma h_2 \circ \Gamma g_X(a')\Bigl) \ \cdot \ \Bigr(\Gamma h_2 \circ \Gamma g_X(b')\Bigl),
 \] 
 which is, tracing back the above calculation for both factors, equal to $\Gamma h_1(a) \cdot \Gamma h_1(b)$. Thus $\Gamma h_1$ is multiplicative. Similarly, the calculation $\Gamma h_1(1)=\Gamma g_Y \circ \Gamma h_1(1)=\Gamma h_2 \circ \Gamma g_X(1)=1$ shows that $\Gamma h_1$ is unital.
 
 We are left with showing that $\Gamma h_1$ maps the pre-addition of $\Gamma {Y_1}$ to the pre-addition of $\Gamma {X_1}$. Consider an arbitrary additive relation $\sum a_i\=\sum b_j$ in $\Gamma {Y_1}$. Applying the morphism $\Gamma h_1'\circ \Gamma f_{Y,1}$, we see that 
 \begin{equation*}\label{eqn in proof of cube lemma for blue schemes}
  \sum \ \Gamma h_1'\circ \Gamma f_{Y,1}(a_i) \quad \= \quad \sum \ \Gamma h_1'\circ \Gamma f_{Y,1}(b_j)
 \end{equation*}
 in $\Gamma {X_1'}$. 
 Since $\Gamma f_{X,1}$ is the inclusion of $\Gamma {X_1}$ as a subblueprint of $\Gamma {X_1'}$ and $\Gamma h_1'\circ \Gamma f_{Y,1}=\Gamma f_{X,1}\circ \Gamma h_1$, this relation restricts to $\Gamma {X_1}$, i.e.\ $\sum \Gamma h_1(a_i)\= \sum \Gamma h_1(b_j)$ in $\Gamma {X_1}$. This finishes the proof of the lemma.
\end{proof}


\subsection{Closed subgroups of Tits-Weyl models}
\label{subsection: closed subgroups of tits-weyl models}

In this section, we formulate a criterion for subgroups $\cH$ of a group scheme $\cG$ that yields a Tits-Weyl model of $\cH$. Applied to $\cG=\SL_{n,\Z}^+$, this will establish a variety of Tits-Weyl models of prominent algebraic groups that we discuss in more detail at the end of this section: general linear groups, special orthogonal groups, symplectic groups and some of their isogenies.

If $G=\Spec\bigl(\bpquot A\cR\bigr)$ is an $\Fun$-model of $\cG$ and $\iota_\cH:\cH\hookrightarrow \cG$ a closed immersion, then we can consider the pre-addition $\cR'$ on $A$ that is generated by $\cR$ and all defining relations of $\cH$ in elements of $A$. This defines an $\Fun$-model $H=\Spec\bigl(\bpquot{A}{\cR'}\bigr)$ of $\cH$ together with a closed immersion $\iota:H\hookrightarrow G$ that base extends to $\iota_\cH=\iota_\Z^+$.


Let $\cG$ be an affine smooth group scheme of finite type with an $\Fun$-model $G$. We say that a torus $T\subset\cG$ is \emph{diagonal w.r.t.\ $G$} if for every $x\in G$, the group law $\mu_\cG$ of $\cG$ restricts to morphisms
\[
 T \ \Stimes_\Z \ \barx^+_\Z \quad \longrightarrow \quad \barx^+_\Z \qquad\text{and}\qquad \barx^+_\Z \ \Stimes_\Z \ T \quad \longrightarrow \quad \barx^+_\Z.
\]
If $G$ is the $\Fun$-model of $\cG$ that is associated to an embedding $\iota:\cG\hookrightarrow\SL_{n,\Z}^+$, then a torus $T\subset\cG$ is diagonal w.r.t.\ $G$ if and only if the image $\iota(T)$ is contained in the diagonal torus of $\SL_{n,\Z}^+$. In particular, the canonical torus of $\SL_n$ is diagonal.

If $G$ is a Tits-Weyl model of $\cG$, then we say briefly that the canonical torus $T$ is diagonal if $T$ is diagonal w.r.t.\ $G$.

\begin{thm} \label{thm: tits-weyl models of subgroups}
 Let $\cG$ be an affine smooth group scheme of finite type with a Tits Weyl model $G$. Assume that the canonical torus $T$ is diagonal and that $N=G^{\rk,+}_\Z$ is the normalizer of $T$ in $\cG$. Then $C=\fe^+_\Z$ is the centralizer of $T$ where $\fe$ is the Weyl kernel of $G$. 

 Let $\cH$ be a smooth closed subgroup of $\cG$ and $\iota:H\hookrightarrow G$ the associated $\Fun$-model. Assume that $\tilde T=T\cap\cH$ is a maximal torus of $H$. Assume further that the centralizer $\tilde C$ of $\tilde T$ in $\cH$ is contained in $C\cap\cH$ and that the normalizer $\tilde N$ of $\tilde T$ in $\cH$ is contained in $N\cap\cH$. Then the following holds true.
 \begin{enumerate}
  \item The set of pseudo-Hopf points of $H$ is $\cZ(H)=\iota^{-1}(\cZ(G))$. Thus the closed immersion $\iota: H\hookrightarrow G$ is Tits. 
  \item We have $\tilde C=C\cap \cH=\tilde\fe^+_\Z$ and $\tilde N=N\cap\cH=H^{\rk,+}_\Z$.
  \item The monoid law $\mu_G$ of $G$ restricts uniquely to a monoid law $\mu_H$ of $H$. The pair $(H,\mu_H)$ is a Tits-Weyl model of $\cH$ whose canonical torus $\tilde T$ is diagonal. The Tits morphism $(\iota^\rk,\iota^+):H\hookrightarrow G$ is a monoid homomorphism in $\TSch$. 
  \item If $G^\rk$ lifts to $G$, then $H^\rk$ lifts to $H$.
 \end{enumerate}
\end{thm}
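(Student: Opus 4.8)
The preliminary claim and the set-theoretic parts of (ii) are formal group theory. For $C=\fe^+_\Z=C(T)$: a Tits-Weyl model satisfies $\fe^+_\Z\subseteq C(T)\subseteq N(T)$, and the hypothesis $N(T)=G^{\rk,+}_\Z$ turns the defining isomorphism $\Psi_\fe\colon G^{\rk,+}_\Z/\fe^+_\Z\xrightarrow{\sim}\cW(T)=N(T)/C(T)$ into a quotient map $N(T)/\fe^+_\Z\to N(T)/C(T)$, which forces $\fe^+_\Z=C(T)$. Next, a closed subgroup of $\cG$ that is contained in $\cH$ and centralizes $T$ also centralizes $\tilde T=T\cap\cH$, so $C(T)\cap\cH\subseteq\tilde C$; together with the hypothesis $\tilde C\subseteq C\cap\cH$ this gives $\tilde C=C\cap\cH$, and the same argument gives $\tilde N=N\cap\cH$.

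The core is (i). I would begin with $G^{\rk,+}_\Z=N(T)=\coprod_{y\in\cZ(G)}\bary^+_\Z$, each $\bary^+_\Z$ being open and closed in $N(T)$, hence a finite union of torsors under the diagonalizable group $C=\fe^+_\Z$. The scheme-theoretic intersection with $\cH$ gives $\tilde N=N(T)\cap\cH=\coprod_{y\in\cZ(G)}(\bary^+_\Z\cap\cH)$, and the piece $\bary^+_\Z\cap\cH$ is empty unless $y=\iota(x)$ for some $x\in H$, in which case it equals $\barx^+_\Z$ --- this uses that $H$ is built from $G$ by adjoining the relations of $\cH$, that $\barx$ carries the reduced structure, and that $\iota$ restricts to a closed immersion $\barx\hookrightarrow\bary$. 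Since $\tilde N$, the normalizer of a torus in a smooth group, is smooth, each such $\barx^+_\Z$ is a smooth subscheme of $\tilde N$ that is open and closed, a union of torsors under the Cartan subgroup $\tilde C$; in particular all these $x$ have the common rank $\dim\tilde C_\Q=\rk\tilde T$, the reductive rank of $\cH$. These $x$ are pseudo-Hopf in $H$: flatness of $\barx^+_\Z$ is smoothness; $\barx$ is affine because $\bary$ is; $x$ is almost of indefinite characteristic because $\barx^+_\Z$ is flat and nonempty (Section \ref{subsection:fibres-of-morphisms}); and $\barx_\inv$ is generated by its units because $\bary_\inv$ is and ``generated by units'' passes to surjective images along $\Gamma\bary_\inv\twoheadrightarrow\Gamma\barx_\inv$. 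This yields $\iota^{-1}(\cZ(G))\subseteq\cZ(H)$. The reverse inclusion, together with the rank identity $\rk H=\rk\tilde T$, is the step I expect to be the main obstacle. My plan for it is to exploit the diagonality of $T$ w.r.t.\ $G$: for any point $z$ of $H$ with $w=\iota(z)$, the group law restricts to a left-translation action $T\Stimes_\Z\barw^+_\Z\to\barw^+_\Z$, and, since $\tilde T\subseteq T$ and $\barz^+_\Z$ lies in $\cH$, the presence of this $\tilde T$-structure on the flat model attached to $z$ should force $\rk z\ge\rk\tilde T$, with equality only when $\barz^+_\Z$ is essentially a single $\tilde T$-coset; maximality of $\tilde T$ in $\cH$ and the identities $\tilde C=C\cap\cH$, $\tilde N=N\cap\cH$ then pin such $z$ down to $\iota^{-1}(\cZ(G))$. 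Granting (i), the rest of (ii) is immediate: $H^{\rk,+}_\Z=\coprod_{x\in\cZ(H)}\barx^+_\Z=\tilde N$, and $\tilde\fe^+_\Z$, the identity component of $H^{\rk,+}_\Z$, is $\tilde C$.

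For (iii) I would descend $\mu_G$ to $\mu_H$ by the cube lemma for blue schemes (Lemma \ref{lemma: cube lemma for blue schemes}), taking $\iota\colon H\hookrightarrow G$ as the closed immersion, a base-change morphism such as $\alpha_{H\times H}\colon(H\times H)^+\to H\times H$ (a quasi-submersion by Lemma \ref{lemma: subblueprints are quasi-submersions}) as the surjective leg, and the restriction of $\mu_\cG$ to $\mu_\cH$ together with the compatibility of $\mu_G$ with $\iota$ filling in the remaining faces; uniqueness of $\mu_H=(\mu_H^\rk,\mu_H^+)$ follows from the cube lemma and cancellativity. That $(H,\mu_H)$ is a Tits-Weyl model of $\cH$ is then a check: $H^+_\Z\cong\cH$ by construction; the canonical torus of $H$ is the unique maximal torus $\tilde T$ of $\tilde\fe^+_\Z=\tilde C$, which is maximal in $\cH$ by hypothesis and diagonal w.r.t.\ $H$ because $\tilde T\subseteq T$ and $T$ is diagonal w.r.t.\ $G$; and $\Psi_{\tilde\fe}\colon H^{\rk,+}_\Z/\tilde\fe^+_\Z\to\cW(\tilde T)$ is an isomorphism since by (ii) both sides are $\tilde N/\tilde C$. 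The Tits morphism $(\iota^\rk,\iota^+)$ is a monoid homomorphism because $\iota^+$ intertwines $\mu_H^+$ and $\mu_G^+$ by construction and $\mu_H^\rk$ is the restriction of $\mu_G^\rk$. Finally (iv): by (i), $\cZ(H)=\iota^{-1}(\cZ(G))$, so for $x\in\cZ(H)$ with $y=\iota(x)$ the closed immersion $\barx\hookrightarrow\bary$ induces a surjection $\Gamma\prerk{y}\twoheadrightarrow\Gamma\prerk{x}$ (inverse closure preserves the surjection $\Gamma\bary\twoheadrightarrow\Gamma\barx$; moreover $\Gamma\prerk{x}\ne 0$, since $x$ is almost of indefinite characteristic and hence $\Gamma\barx_\inv\ne 0$ by Lemma \ref{lemma: potential characteristics of b_1 tensor b_2}); if $G^\rk$ lifts to $G$ then $\Gamma\prerk{y}$ is a blue field, and a nonzero surjective image of a blue field is again a blue field, so $\Gamma\prerk{x}$ coincides with its unit field, i.e.\ $\upsilon_x$ is an isomorphism. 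As this holds on every connected component, $\upsilon_H$ is an isomorphism and $H^\rk$ lifts to $H$.
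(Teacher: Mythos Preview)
Your treatment of the preliminary claim, of (i), and of the set-theoretic parts of (ii) matches the paper, and your direct proof of $\tilde N = N \cap \cH$ (an element of $\cH$ normalizing $T$ normalizes $T \cap \cH = \tilde T$) is cleaner than the paper's detour through $\C$-points. Your identification of the reverse inclusion in (i) as the crux is also correct; the paper fills it in by exactly the double $\tilde T$-action you sketch, arguing on $\C$-points that if $\rk x = r$ then $\tilde T(\C)\, p = p\, \tilde T(\C)$ for any $p \in \barx(\C)$, whence $p \in \tilde N(\C)$.

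The genuine gap is in (iii). Your proposed cube does not parse: $\mu_G = (\mu_G^\rk, \mu_G^+)$ is a Tits morphism, not a locally algebraic one, so there is no arrow $G \times G \to G$ in $\BSch$ to sit on a face, and neither $\iota\colon H \hookrightarrow G$ nor $\alpha_{H \times H}$ is the right leg for either component. The two pieces must be handled separately. For $\mu_H^+$ the cube lemma works straightforwardly with $H^+ \hookrightarrow G^+$ as the closed immersion and $\cH \Stimes_\Z \cH \to H^+ \Stimes H^+$ as the quasi-submersion. For $\mu_H^\rk$ there is a real obstruction: $\iota^\rk \colon H^\rk \to G^\rk$ is \emph{not} a closed immersion in general, because passing to unit fields does not preserve surjectivity of coordinate blueprints---a unit in a quotient need not lift to a unit. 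The paper's workaround is to pass to $H^\rk_\inv \to G^\rk_\inv$, which \emph{is} a closed immersion: since $\fe^+_\Z$ is diagonalizable, $\Gamma\fe$ is $\Fun[\Lambda]$ or $\Funsq[\Lambda]$ for an abelian group $\Lambda$, so $\Gamma\fe_\inv = \Funsq[\Lambda]$, and the surjection $\Z[\Lambda] \twoheadrightarrow \Z[\tilde\Lambda]$ on the $\Z$-level comes from a group surjection $\Lambda \twoheadrightarrow \tilde\Lambda$, hence $\Funsq[\Lambda] \twoheadrightarrow \Funsq[\tilde\Lambda]$; the same holds componentwise over all of $G^\rk$. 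The cube lemma then yields $\mu_{H^\rk_\inv}$, and a further descent---another cube producing $\mu_H^{\prk,+}$, followed by intersecting the unit-field part with the semiring part inside $\Gamma\tilde N$---recovers $\mu_H^\rk$ itself.

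Your (iv) takes a different route from the paper, which reuses the closed immersion $H^\rk_\inv \hookrightarrow G^\rk_\inv$ from (iii) in one more cube-lemma application and then notes that the presence of $-1$ in $\prerk{x}$ and in $\prerk{x}^\px$ coincide. Your observation that a nonzero surjective image of a blue field is a blue field is correct, but the premise that the inverse-closure functor $(-)^\hatexp$ preserves the surjection $\Gamma\bary \twoheadrightarrow \Gamma\barx$ needs justification: an element of $\hat C$ is witnessed by an additive relation in $C_\inv$, and there is no general reason such a relation lifts to one in $B_\inv$.
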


\begin{proof}
 First note that since $\Psi_\fe: G_\Z^{\rk,+}/\fe^+_\Z\stackrel\sim\to N/C=\cW$ is an isomorphism and $G_\Z^{\rk,+}=N$, the centralizer $C$ equals indeed $\fe^+_\Z$. The proof of \eqref{part1}--\eqref{part4} is somewhat interwoven and doesn't follow the order of the statements. 

 Let $x\in H$ and $y=\iota(x)$ be the image in $G$. Then $\barx^+_\Z=\bary^+_\Z\cap\cH$. Therefore, both left and right action of $T$ on $\bary^+_\Z$ restricts to an action of $\tilde T$ on $\barx^+_\Z$. This shows that $\tilde T$ is diagonal w.r.t.\ $H$.

 We show that there is an element $\tilde\fe$ in $H^\rk$ such that $\iota^\rk(\tilde\fe)=\fe$. Let $e\in\cZ(G)$ be the pseudo-Hopf point such that $\fe=\prerk e^\px$. Then $T=\overline e^+_\Z$. Since $\tilde T\subset T$ maps to $H$, there is a point $\tilde e$ in $H$ such that $\iota(\tilde e)=e$. As a closed subgroup of $T=\overline e^+_\Z$, the group scheme $\overline{\tilde e}^+_\Z$ is diagonalizable and its global sections are isomorphic to a group ring. Thus $\overline{\tilde e}^+_\Z$ is flat, $\overline{\tilde e}_\inv$ is generated by its units, $\overline{\tilde e}$ is affine and $\tilde e$ is almost of indefinite characteristic since $e$ is so and the morphism $\tilde T\to\overline e$ factorizes through $\overline{\tilde e}$. Thus $\tilde e$ is pseudo-Hopf of rank $r=\rk \tilde T$. Since $\tilde T$ acts on $\barx^+_\Z$ for all pseudo-Hopf points of $H$, the rank of a pseudo-Hopf point is at least $r$. Thus $\tilde e\in\cZ(H)$, which defines a point $\tilde\fe\in H^\rk$.

 We show that $\tilde C=C\cap \cH=\tilde\fe^+_\Z$. Since $\fe^+_\Z=C$, we have $\tilde\fe^+_\Z=\fe^+_\Z\cap\cH=C\cap \cH$. Since $\tilde\fe$ is an abelian group in $\BSch$, $\tilde\fe^+_\Z$ centralizes $\tilde T$. Thus $\tilde C=\tilde \fe^+_\Z$.

 We show that $\tilde N=N\cap\cH$. By the hypothesis of the theorem, we already know that $\tilde N\subset N\cap\cH$. By the second isomorphism theorem for groups, $\tilde N/\tilde C$ is a subgroup of the constant group scheme $N/C$. Thus $\tilde N$ is isomorphic to a finite disjoint union of copies of $\tilde C$ as a scheme. Therefore, $\tilde N$ is flat and we can investigate $\tilde N$ by considering complex points. Let $n\in(N\cap\cH)(\C)$. Then $n\tilde T(\C)n^{-1}$ is contained in both $\cH(\C)$ and $T(\C)$, whose intersection is $\tilde T(\C)$. Thus $n$ is contained in the normalizer of $\tilde T(\C)$ in $\cH(\C)$, which is $\tilde N(\C)$.

 We show that $\tilde N=H^{\rk,+}_\Z$ and $\cZ(H)=\iota^{-1}(\cZ(G))$. Since $\tilde N=N\cap\cH$, the image of $N\to H$ is $\iota^{-1}(\cZ(G))$. Since $\tilde N$ is the disjoint union of schemes isomorphic to $\tilde C$, every point of $\iota^{-1}(\cZ(G))$ is pseudo-Hopf of rank $r$ for the same reason as the one that showed that $\overline e$ is pseudo-Hopf of rank $r$. Thus $\iota^{-1}(\cZ(G))\subset\cZ(H)$ and $\tilde N\subset H^{\rk,+}_\Z$.

 To show the reverse conclusion, consider an arbitrary pseudo-Hopf point $x$ of $H$. Since we have a left-right double action
 \[
  \tilde T \ \ \Stimes_\Z \ \ \barx^+_\Z \ \ \Stimes_\Z \ \ \tilde T \quad\longrightarrow \quad \barx^+_\Z
 \]
 of $\tilde T$ on $\barx^+_\Z$, the rank of $x$ is at least $r$. If the rank $\rk x=\dim\barx^+_\Q$ is $r$, then $\tilde T(\C)p\tilde T(\C)=p\tilde T(\C)$ for any $p\in\barx(\C)$. This means that for all $t_1,t_2\in\tilde T(\C)$, there is a $t_3\in\tilde T(\C)$ such that $t_1pt_2=pt_3$. Multiplying the latter equation with $t_2^{-1}$ from the right yields $t_1p=pt_3t_2^{-1}$, which shows that $\tilde T(\C)p=p\tilde T(\C)$. This means that $p\in\tilde N(\C)=\cH(\C)\cap N(\C)$, i.e.\ $x\in\cZ(H)$.

 This finishes the proof of \eqref{part2}. Further we have proven that $\iota$ maps $\cZ(H)$ to $\cZ(G)$, which implies that $\iota:H\to G$ is Tits. This finishes the proof of \eqref{part1}.

 We turn to the proof of \eqref{part3}. We show that $\mu_G^+:G^+\Stimes G^+\to G^+$ descends to a morphism $\mu_H:H^+\Stimes H^+\to H^+$. Since $H\hookrightarrow G$ is a closed immersion, we have a surjection $\Gamma G\twoheadrightarrow \Gamma H$ and a surjection $\Gamma G^+\twoheadrightarrow \Gamma H^+$. By Lemma \ref{lemma: surjections are closed immersions}, the morphism $H^+ \to G^+$ is a closed immersion. Since $H$ is cancellative, $\Gamma H^+$ is also cancellative and $\Gamma H^+\to \Gamma H^+_\Z$ is an inclusion as a subblueprint. By Lemma \ref{lemma: subblueprints are quasi-submersions}, the morphism $H^+_\Z\to H^+$ is a quasi-submersion. The same is true for $H^+_\Z\Stimes_\Z H^+_\Z\to H^+\Stimes H^+$. Therefore we can apply the cube lemma for blue schemes (Lemma \ref{lemma: cube lemma for blue schemes}) to the commutative diagram
 \[
  \xymatrix@C=4pc@R=1pc{                               & \cG\Stimes_\Z \cG  \ar[rr]^(.5){\mu_\cG}\ar@{->>}[dd]_(.7){}|\hole &       & \cG\ar@{->>}[dd]^{}  \\
                        \cH\Stimes_\Z \cH \arincl{[ru]^{}}\ar[rr]^(.7){\mu_\cH}\ar@{->>}[dd]_{} &                                  & \cH\arincl{[ru]_{}}\ar@{->>}[dd]^(.3){}     \\
                                                        &  G^+\Stimes G^+\ar[rr]^(.4){\mu_G^+}|(.565)\hole                   &                         &  G^+\;,               \\
                        H^+\Stimes H^+   \arincl[ru]^{}      &                                                     & H^+ \arincl[ru]_{}         } 
 \]
 which shows that $\mu_G^+$ descends to a unique morphism $\mu_H^+:H^+\Stimes H^+\to H^+$. It easy to verify that $\mu_H^+$ is associative. That $\epsilon_G^+:\ast_\N\to G^+$ descends to an identity $\epsilon_H^+:\ast_\N\to H^+$ of $H^+$ is shown similarly. 

 To show that $\mu_G^\rk:G^\rk\times G^\rk\to G^\rk$ of the rank space descends to a morphism $\mu_H^\rk:H^\rk\times H^\rk\to H^\rk$ is more subtle since, in general, $H^\rk\to G^\rk$ is not a closed immersion. 

 By Lemma \ref{lemma: fe is diagonalizable}, the Weyl kernel $\fe$ of $G$ is diagonalizable. Thus $\Gamma\fe^+_\Z$ is a group ring $\Z[\Lambda]$ for an abelian group $\Lambda$ and the group law of $\fe^+_\Z$ comes from the multiplication of $\Lambda$. Therefore the unit field of $\fe_\Z^+$ is $\Fpx(\fe^+_\Z)=\Funsq[\Lambda]=\Fun[\Lambda]_\inv$. Let $e\in\cZ(G)$ be the pseudo-Hopf point of $G$ with $\fe=\prerk e^\px$. Then $\prerk e$ is generated by its units, which means that either $\fe\simeq\Spec\Fun[\Lambda]$ or $\fe\simeq\Funsq[\Lambda]$. The analogous statement is true for $\tilde\fe\in H^\rk$. Since $\tilde\fe^+_\Z\to\fe^+_\Z$ is a closed immersion, $\Gamma\fe^+_\Z\to\Gamma\tilde\fe^+_\Z$ is surjective, and so is $\Gamma\fe_\inv\to\Gamma\tilde\fe_\inv$. By Lemma \ref{lemma: surjections are closed immersions}, the morphism $\tilde\fe_\inv\to\fe_\inv$ is a closed immersion. Since for every point $x\in H^\rk$ and $y=\iota^\rk(x)\in G^\rk$, the scheme $\barx^+_\Z$ is isomorphic to $\tilde\fe^+_\Z$ and $\bary$ is isomorphic to $\fe^+_\Z$, the same argument as above shows that $H^\rk_\inv\to G^\rk_\inv$ is a closed immersion.

 Since $H^\rk_\inv$ is cancellative, $\Gamma H_\inv^\rk\hookrightarrow\Gamma\tilde N$ is a subblueprint, and by Lemma \ref{lemma: subblueprints are quasi-submersions}, $\tilde N\to H^\rk_\inv$ is a quasi-submersion. The same holds for $\tilde N\Stimes_\Z\tilde N\to H^\rk_\inv\times H^\rk_\inv$. Therefore we can apply the cube lemma for blue schemes (Lemma \ref{lemma: cube lemma for blue schemes}) to the commutative diagram
 \[
  \xymatrix@C=4pc@R=1pc{                               & N\Stimes_\Z N \ar[rr]^(.5){\mu_N}\ar@{->>}[dd]_(.7){}|\hole &                                                                & N \ar@{->>}[dd]^{}  \\
                        {\tilde N\Stimes_\Z \tilde N} \arincl[ru]^{}\ar[rr]^(.7){\mu_{\tilde N}}\ar@{->>}[dd]_{} &                      & {\tilde N}\arincl[ru]_{}\ar@{->>}[dd]^(.3){}     \\
                                                                                      &  {G^\rk_\inv\times G^\rk_\inv}\ar[rr]^(.4){\mu_{G,\inv}^\rk}|(.565)\hole                   &                         &  {G^\rk_\inv}, \\
                        {H^\rk_\inv\times H^\rk_\inv}   \arincl[ru]^{}      &                                                     & {H^\rk_\inv} \arincl[ru]      }    
 \]
 which shows that $\mu_{G,\inv}^\rk$ descends to a unique morphism $\mu_{H^\rk_\inv}:H^\rk_\inv\times H^\rk_\inv\to H^\rk_\inv$. 

 We show that $\mu_{H^\rk_\inv}$ descends to a morphism $\mu_H^\rk:H^\rk\times H^\rk\to H^\rk$. Let $B=\Gamma H^\prk$ and $C=\Gamma\bigl(H^\prk\times H^\prk\bigr)$. Then $B^+$ and $C^+$ are cancellative and embed as subblueprints into the rings $B^+_\Z\simeq\Gamma\tilde N$ and $C^+_\Z\simeq\Gamma(\tilde N\Stimes_\Z\tilde N)$, rspectively. By Lemma \ref{lemma: subblueprints are quasi-submersions}, the canonical morphism $\tilde N\Stimes_\Z\tilde N\to H^{\prk,+}\Stimes H^{\prk,+}$ is a quasi-submersion. Since $\hat D^+\simeq D^+_\canc$ for an arbitrary blueprint $D$, the semiring $B^+$ is canonical isomorphic to the coordinate ring of $\coprod_{x\in\cZ(H)}\barx^+_\canc$. Since $G$ is a Tits-Weyl model of $\cG$, the morphism $\coprod_{y\in\cZ(G)}\bary \to G$ is necessarily a closed immersion. Therefore $\coprod_{x\in\cZ(H)}\barx\to H$ is also a closed immersion. Since $\Gamma H^+\to\Gamma\barx^+\to \Gamma\barx_\canc^+=\prerk x^+$ is surjective, the induced morphism $H^{\prk,+}\to H^+$ is a closed immersion. This shows that the commuting diagram
 \[
  \xymatrix@C=4pc@R=1pc{                               & \cH\Stimes_\Z \cH  \ar[rr]^(.5){\mu_\cH}\ar@{->>}[dd]_(.7){}|\hole &       & \cH\ar@{->>}[dd]^{}  \\
                        \tilde N\Stimes_\Z \tilde N \arincl{[ru]^{}}\ar[rr]^(.7){\mu_{\tilde N}}\ar@{->>}[dd]_{} &                                  & \tilde N\arincl{[ru]_{}}\ar@{->>}[dd]^(.3){}     \\
                                                        &  H^+\Stimes_\N H^+\ar[rr]^(.4){\mu_H^+}|(.565)\hole                   &                         &  H^+               \\
                        H^{\prk,+}\Stimes H^{\prk,+}\arincl[ru]^{}      &                                                     & H^{\prk,+}\arincl[ru]_{}         } 
 \]
 satisfies the hypotheses of the cube lemma for schemes, which yields that $\mu_H^+$ descends to a morphism $\mu^{\prk,+}:H^{\prk,+}\Stimes H^{\prk,+}\to H^{\prk,+}$. With $B$ and $C$ as above, we have that 
 \[
  B^\px \ = \ \Gamma H^\rk, \qquad  B^\px_\inv \ = \ \Gamma H^\rk_\inv, \qquad  B^+ \ = \ \Gamma H^{\prk,+} \quad \text{and} \quad B^+_\Z \ = \ \Gamma \tilde N,
 \]
 and analogous identities for $C^\px$, $C^\px_\inv$, $C^+$ and $C^+_\Z$. The morphisms $\mu_{H^\rk_\inv}$, $\mu^{\prk,+}$ and $\mu_{\tilde N}$ yield a commutative diagram 
 \[
  \xymatrix@C=4pc@R=1pc{                                 &  B^\px_\inv\arincl[dd]\ar[rr]^{\Gamma\mu_{H^\rk_\inv}}       &               &  C^\px_\inv \arincl[dd]\\
                            B^\px \arincl[ur]\arincl[dd] &               &  C^\px\arincl[ur]\arincl[dd]         \\
                                                         & B^+_\Z\ar[rr]|\hole^(0.3){\Gamma\mu_{\tilde N}}            &               &  C^+_\Z    \\
                           B^+ \arincl[ur]\ar[rr]^{\Gamma\mu^{\prk,+}}         &                   &   C^+\arincl[ur]}
 \]     
 of cancellative blueprints. Since $C^\px$ is the intersection of $C^\px_\inv$ with $C^+$ inside $C^+_\Z$, this yields the desired morphism $\Gamma\mu^\rk:B^\px\to C^\px$ or, geometrically, $\mu^\rk:H^\rk\times H^\rk\to H^\rk$.

 The associativity of $\mu_H^\rk$ can be easily derived from the associativity of $\mu^\rk_G$ by using the commutativity of certain diagrams. Similarly to the existence of $\mu_H^\rk$, one shows that there are an identity $\epsilon_H^\rk:\ast_\Fun\to H^\rk$ and an inversion $\iota_H^\rk: H^\rk\to H^\rk$ which turn $H^\rk$ into a group object in $\rkBSch$.

 Moreover, it is easy to see that the pairs $\mu_H=(\mu_H^\rk,\mu_H^+)$ and $\epsilon_H=(\epsilon_H^\rk,\epsilon_H^+)$ are Tits morphisms that give $H$ the structure of a Tits monoid. The Weyl kernel of $H$ is $\tilde\fe$ and the canonical torus is $\tilde T$, which is a maximal torus of $\cH$ by hypothesis. Since $\tilde\fe^+_\Z=\tilde C$ and $H^{\rk,+}_\Z=\tilde N$, the morphism $\Psi_{\tilde\fe}: H_\Z^{\rk,+}/\tilde\fe^+_\Z\stackrel\sim\to \tilde N/\tilde C$ is an isomorphism of group schemes. This shows that $H$ is a Tits-Weyl model of $\cH$. It is clear by the definition of $\mu_H$ that $\iota:H\to G$ is a monoid homomorphism in $\TSch$. This shows \eqref{part3}.

 We show \eqref{part4}. Assume that $G^\rk$ lifts to $G$. The existence of the inverse of $\upsilon_{H_\inv}:H^\prk_\inv\to H^\rk_\inv$ follows an application of the cube lemma to 
 \[
  \xymatrix@C=4pc@R=1pc{             & N \ar[rr]^{\id}\ar@{->>}[dd]_(.7){}|\hole                        &    & N \ar@{->>}[dd]   \\
                        {\tilde N} \arincl[ru]^{\iota^{\rk,+}_\Z}\ar[rr]^(.7){\id}\ar@{->>}[dd]_{}   &   & {\tilde N}\arincl[ru]_{\iota^{\rk,+}_\Z}\ar@{->>}[dd]^(.3){}  \\
                                                        &  G^\rk_\inv  \ar[rr]^(.35){\upsilon_{G,\inv}^{-1}}|(.49)\hole                   &                         &  G^\prk_\inv\,.               \\
                        H^\rk_\inv   \arincl[ru]^{\iota^\rk}      &                                                     & H^\prk_\inv \arincl[ru]_{\iota^\prk}         } 
 \]
 Since for $x\in\cZ(X)$, the subscheme $\prerk x$ of $H^\prk$ is with $-1$ if and only if the subscheme $\prerk x^\px$ of $H^\rk$ is with $-1$, it is clear that $\upsilon_{H_\inv}^{-1}$ comes from an isomorphism $\upsilon_{H}^{-1}:H^\rk\to H^\prk$. This completes the proof of the theorem.
\end{proof}

\subsubsection{The general linear group}
 \label{subsubsection: general linear group}
 
 As a first application of Theorem \ref{thm: tits-weyl models of subgroups}, we establish Tits-Weyl models of general linear groups. The Tits-Weyl models $\GL_n$ of $\GL_{n,\Z}^+$ is of importance for all other Chevalley group schemes since one can consider them as closed subgroups of a general linear group. The standard way to embed $\GL_{n,\Z}^+$ as a closed subgroup in $\SL_{n+1,\Z}^+$ is by sending an invertible $n\times n$-matrix $A$ to the $(n+1)\times(n+1)$-matrix whose upper left $n\times n$ block equals $A$, whose coefficient at the very lower right equals $(\det A)^{-1}$ and whose other entries are $0$.

 In other words, if $\Z[\SL_n]^+=\Z[T_{i,j}|i,j\in\underline{(n+1)}]^+/\cI$ is the coordinate ring of $\SL_{n+1,\Z}^+$ where $\cI$ is the ideal generated by $\sum_{\sigma\in S_{n+1}} \Bigl( \sign(\sigma) \cdot \prod_{i=1}^{n+1} T_{i,\sigma(i)} \Bigr) - 1$ (cf.\ Section \ref{subsection: special linear group}), then the closed subscheme $\GL_{n,\Z}^+$ of $\SL_{n+1,\Z}^+$ is defined by the ideal generated by $T_{i,n+1}$ and $T_{n+1,i}$ for $i=1,\dotsc,n$ and by $T_{n+1,n+1}\cdot\sum_{\sigma\in S_n} \Bigl( \sign(\sigma) \cdot \prod_{i=1}^n T_{i,\sigma(i)} \Bigr) - 1$.

 It is clear that this embedding satisfies the hypothesis of Theorem \ref{thm: tits-weyl models of subgroups}. Thus we obtain the Tits-Weyl model $\GL_n$ of $\GL_{n,\Z}^+$. We describe the points of the blue scheme $\GL_n$. Recall from Proposition \ref{prop: the rank space of sl_n} that the points of $\SL_{n+1}$ are of the form $\fp_I=(T_{i,j}|(i,j)\in I)$ for some $I\in \underline{(n+1)}\times\underline{(n+1)}$ such that there is a permutation $\sigma\in S_{n+1}$ with $I\subset I(\sigma)$. By the definition of $\GL_n$, it is clear that a point $\fp_I$ of $\SL_n$ is a point of $\GL_n$, if and only if $\fp_I$ contains $T_{i,n+1}$ and $T_{n+1,i}$ for $i=1,\dotsc,n$, but does not contain $T_{n+1,n+1}$. This means that a point $\fp_I$ of $\SL_{n+1}$ is in $\GL_n$ if and only if $I\subset I(\sigma)$ for a permutation $\sigma\in S_{n+1}$ that fixes $n+1$. 
 
 Since a permutation $\sigma\in S_{n+1}$ fixes $n+1$ if and only if it lies in the image of the standard embedding $\iota:S_n\hookrightarrow S_{n+1}$, a point $\fp_I$ is contained in $\GL_n$ if and only if $I\subset I(\iota(\sigma))$ for some $\sigma\in S_n$. This shows, in particular, that every prime ideal of $\GL_n$ is generated by a subset of $\{T_{i,j}\}_{i,j\in\un}$ and that the rank space of $\GL_n$ equals $\GL_n^\rk=\{\fp^{\iota(\sigma)}|\sigma\in S_n\}$.

 The residue field of $\fp^\sigma$ depends, as in the case of $\SL_n$, on the sign of $\sigma$: if $\sign\sigma$ is even, then $\kappa(\fp^\sigma)\simeq\Fun[T_{i,\sigma(i)}^{\pm 1}]$; if $\sign\sigma$ is odd, then $\kappa(\fp^\sigma)\simeq\Funsq[T_{i,\sigma(i)}^{\pm 1}]$. Thus $\GL_n^\sT(\Fun)$ is equal to the alternating group inside $\cW(\GL_n)=S_n$. The rank of $\GL_n$ is $n$ and the extended Weyl group equals $G^\sT(\Funsq)\simeq(\Z/2\Z)^n\rtimes S_n$.

 We specialize Theorem \ref{thm: tits-weyl models of subgroups} for subgroups of $\cG=\GL_{n,\Z}^+$. Let $\diag(\GL_{n,\Z}^+)$ be the diagonal torus of $\GL_{n,\Z}^+$ and $\mon(\GL_{n,\Z}^+)$ the group of monomial matrices, which is the normalizer of the diagonal torus.

\begin{cor}\label{cor: models of subgroups of gl_n}
 Let $\cG$ be a smooth closed subgroup of $\GL_{n,\Z}^+$ and $G$ the corresponding $\Fun$-model. Assume that $T=\diag(\GL_{n,\Z}^+)\cap\cG$ is a maximal torus of $\cG$ whose normalizer $N$ is contained in $\mon(\GL_{n,\Z}^+)$. Then the following holds true.
 \begin{enumerate}
  \item The monoid law of $\GL_n$ restricts to $G$ and makes $G$ a Tits-Weyl model of $\cG$. 
  \item The closed embedding $G\hookrightarrow\GL_n$ is Tits and a homomorphism of semigroups in $\TSch$.
  \item The rank space $G^\rk$ lifts to $G$ and equals the intersection $\GL_n^\rk\cap G$. 
  \item The canonical torus $T$ of $G$ is diagonal and equals $\fe^+_\Z$ where $\fe$ is the Weyl kernel of $G$. Its normalizer in $G$ is $G^{\rk,+}_\Z$.\qed
 \end{enumerate}
\end{cor}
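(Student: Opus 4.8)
The plan is to obtain the corollary as a direct specialization of Theorem \ref{thm: tits-weyl models of subgroups}, applied with the roles $\cG\rightsquigarrow\GL_{n,\Z}^+$, $G\rightsquigarrow\GL_n$ and $\cH\rightsquigarrow\cG$, $H\rightsquigarrow G$. So the first step is to record that $\GL_n$ itself satisfies the input hypotheses of that theorem. This was essentially done when $\GL_n$ was constructed in Section \ref{subsubsection: general linear group}: $\GL_n$ is a Tits-Weyl model of $\GL_{n,\Z}^+$; its canonical torus is $\diag(\GL_{n,\Z}^+)$, which is diagonal w.r.t.\ $\GL_n$ because its image under $\GL_{n,\Z}^+\hookrightarrow\SL_{n+1,\Z}^+$ lies in the diagonal torus; and $N:=(\GL_n)^{\rk,+}_\Z=\mon(\GL_{n,\Z}^+)$ is the normalizer of $\diag(\GL_{n,\Z}^+)$ in $\GL_{n,\Z}^+$, with centralizer $C:=\fe^+_\Z=\diag(\GL_{n,\Z}^+)$ (the centralizer of a full-rank diagonal torus is the torus itself).

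Next I would verify the hypotheses concerning the subgroup. The $\Fun$-model $G$ attached to the closed immersion $\cG\hookrightarrow\GL_{n,\Z}^+$ as in the opening of Section \ref{subsection: closed subgroups of tits-weyl models} comes with a closed immersion $G\hookrightarrow\GL_n$ satisfying $G^+_\Z\simeq\cG$; in particular $\cG$ is smooth and closed. By hypothesis $\tilde T=\diag(\GL_{n,\Z}^+)\cap\cG$ is a maximal torus of $\cG$, and its normalizer $\tilde N$ in $\cG$ lies in $\mon(\GL_{n,\Z}^+)\cap\cG=N\cap\cG$, again by hypothesis. The remaining condition is $\tilde C\subseteq C\cap\cG=\diag(\GL_{n,\Z}^+)\cap\cG$ for the centralizer $\tilde C$ of $\tilde T$ in $\cG$: here one uses $\tilde C\subseteq\tilde N\subseteq\mon(\GL_{n,\Z}^+)$ together with the fact that the conjugation action of a monomial matrix on $\diag(\GL_{n,\Z}^+)$ factors through its underlying permutation, so an element of $\tilde C$ lies in the subgroup of monomial matrices whose permutation part fixes $\tilde T$ pointwise; since $\tilde T$ is a maximal torus (of full reductive rank inside $\diag$), this forces the permutation to be trivial, i.e.\ the element to be diagonal, hence in $\diag(\GL_{n,\Z}^+)\cap\cG$.

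With all hypotheses of Theorem \ref{thm: tits-weyl models of subgroups} in place, assertions (i)--(iv) are a translation of its conclusions: part (3) of the theorem gives that $\mu_{\GL_n}$ restricts uniquely to a monoid law on $G$ making it a Tits-Weyl model of $\cG$ with diagonal canonical torus $\tilde T$, and that $(\iota^\rk,\iota^+)\colon G\hookrightarrow\GL_n$ is a homomorphism of monoids in $\TSch$, giving (i) and (ii); part (4), applicable because $\GL_n^\rk$ lifts to $\GL_n$, gives that $G^\rk$ lifts to $G$, while part (1), $\cZ(G)=\iota^{-1}(\cZ(\GL_n))$, identifies the lifted rank space with $\GL_n^\rk\cap G$ (the closure of a point of $\cZ(\GL_n)$ lying in $G$ is its closure in $\GL_n$ intersected with $G$), giving (iii); and part (2), $\tilde C=C\cap\cG=\tilde\fe^+_\Z$ and $\tilde N=N\cap\cG=G^{\rk,+}_\Z$, gives (iv).

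The step I expect to require the most care is the verification of $\tilde C\subseteq C\cap\cG$: one must argue rigorously that a monomial matrix centralizing the full-rank maximal torus $\tilde T$ is diagonal, which means unwinding the permutation action on $\diag(\GL_{n,\Z}^+)$ and invoking maximality of $\tilde T$ (for the applications to symplectic and special orthogonal groups this is transparent, since there the diagonal maximal torus has pairwise distinct coordinate characters). Everything else is bookkeeping with the already-established properties of $\GL_n$ and a direct appeal to Theorem \ref{thm: tits-weyl models of subgroups}.
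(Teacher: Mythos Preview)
Your overall approach is correct and matches the paper's (which gives no proof beyond the \qed): specialize Theorem~\ref{thm: tits-weyl models of subgroups} with $(\GL_n,\GL_{n,\Z}^+)$ playing the role of $(G,\cG)$ and the given pair playing the role of $(H,\cH)$, then read off (i)--(iv) from its conclusions. You also correctly isolate the one hypothesis of Theorem~\ref{thm: tits-weyl models of subgroups} not explicitly assumed in the corollary, namely $\tilde C\subseteq C\cap\cG=\diag(\GL_{n,\Z}^+)\cap\cG$.

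Your argument for that step, however, does not go through in the stated generality. Maximality of $\tilde T$ in $\cG$ does not by itself force a centralizing monomial matrix to have trivial permutation part. For instance, take $\cG\subset\GL_{3,\Z}^+$ to be the smooth closed subgroup consisting of all $\diag(t,t,s)$ together with their translates by the permutation matrix $c$ for the transposition $(1\ 2)$; then $\tilde T=\{\diag(t,t,s)\}$ is a maximal torus of $\cG$ and $\tilde N=\cG\subset\mon(\GL_{3,\Z}^+)$, yet $c\in\tilde C\smallsetminus\diag(\GL_{3,\Z}^+)$, and indeed the resulting $G$ is \emph{not} a Tits-Weyl model (one finds $G^{\rk,+}_\Z/\fe^+_\Z\simeq\Z/2\Z$ while $\cW(\tilde T)$ is trivial). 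What \emph{does} suffice is either that the coordinate characters $\chi_1,\dots,\chi_n$ of $\diag(\GL_{n,\Z}^+)$ remain pairwise distinct on $\tilde T$ (your parenthetical observation for the symplectic and orthogonal groups), or that $\cG$ is reductive so that the centralizer of a maximal torus is the torus itself. Thus the corollary as literally stated carries a slight imprecision; your instinct to flag this step was exactly right, and for every application made in the paper the missing condition is evidently satisfied.
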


\subsubsection{Other groups of type $A_n$} 
 \label{subsubsection: other groups of type a_n}

It is interesting to reconsider $\SL_{n,\Z}^+$ as a closed subgroup of $\GL_{n,\Z}^+$. The Tits-Weyl model associated to this embedding is indeed isomorphic to the Tits-Weyl model $\SL_n$ that we described in Theorem \ref{thm: the tits-weyl model sl_n}. The embedding $\SL_n\to\GL_n$ that we obtain from Corollary \ref{cor: models of subgroups of gl_n} is a homeomorphism between the underlying topological spaces.

Corollary \ref{cor: models of subgroups of gl_n} yields a Tits-Weyl model of the adjoint group scheme $\cG$ of type $A_n$ as follows. Let $\Mat_{n,\Z}^+$ be the scheme of the $n\times n$-matrices, which is isomorphic to an affine space $\SA_\Z^{n^2}$. The action of $\cG$ on $\Mat_{n,\Z}^+$ by conjugation has trivial stabilizer. This defines an embedding $\cG\hookrightarrow\GL_{n^2,\Z}^+$ as a closed subgroup. It is easily seen that this embedding satisfies the hypotheses of Corollary \ref{cor: models of subgroups of gl_n} by using the action  of $\SL_{n,\Z}^+$ on $\Mat_{n,\Z}^+$ by conjugation, which factors through the action of $\cG$ on $\Mat_{n,\Z}^+$ via the canonical isogeny $\SL_{n,\Z}^+\to\cG$. This yields a Tits-Weyl model $G$ of $\cG$. 

Note that the construction of Tits-Weyl models of adjoint groups in the Section \ref{subsection: adjoint chevalley groups} also yields a Tits-Weyl model of $\cG$. We compare these two models in Appendix \ref{app: tits-weyl models of type a_1} in the case $n=1$.


\subsubsection{Symplectic groups} 
 \label{subsubsection: symplectic groups}

The symplectic groups $\Sp_{2n,\Z}^+$ have a standard representation in the following form. Let $J$ be the $2n\times 2n$-matrix whose non-zero entries are concentrated on the anti-diagonal with $J_{i,2n-i}=1$ if $1\leq i\leq n$ and $J_{i,2n-i}=-1$ if $n+1\leq i\leq 2n$. Then the set $\Sp_{2n}(k)$ of $k$-rational points can be described as follows for every ring $k$: the elements of $\Sp_{2n}(k)$ correspond to the $2n\times 2n$-matrices $A=(a_{i,j})$ with entries in $k$ that satisfy $AJA^t=J$, i.e.\ the equations
\[
 \sum_{l=1}^n a_{i,l}a_{j,2n+1-l} \qquad = \qquad \sum_{l=n+1}^{2n} a_{i,l}a_{j,2n+1-l} \quad + \quad \delta_{i,2n+1-j}
\]
for all $1\leq i<j \leq 2n$ where $\delta_{i,2n+1-j}$ is the Kronecker symbol. These equations describe $\Sp_{2n,\Z}^+$ as a closed subscheme of $\GL_{2n,\Z}^+$ and thus yield an $\Fun$-model $\Sp_{2n}$. The intersection of $\Sp_{2n,\Z}^+$ with the diagonal torus of $\GL_{2n,\Z}^+$ is a maximal torus of $\Sp_{2n,\Z}^+$, and its normalizer is contained in the group of monomial matrices of $\GL_{2n,\Z}^+$. Thus Theorem \ref{thm: tits-weyl models of subgroups} applies and shows that $\Sp_{2n}$ is a closed submonoid of $\GL_{2n}$ and a Tits-Weyl model of $\Sp_{2n,\Z}^+$.

\subsubsection{Special orthogonal groups} 
 \label{subsubsection: orthogonal groups}


It requires some more thought to define a model of (special) orthogonal groups over the integers. A standard way to do it is the following (cf.\ \cite[Appendix B]{Conrad11} for more details). We first define integral models of the orthogonal groups $\OO_n$. Define for each ring $R$ the quadratic form
\begin{align*}
 q_n(x) \ &= \ \sum_{i=1}^m x_ix_{n+1-i}                     &&\text{for }n=2m,\text{ and}\\
 q_n(x) \ &= \ x_{m+1}^2 \ + \  \sum_{i=1}^m x_ix_{n+1-i}    &&\text{for }n=2m+1
\end{align*}
where $x=(x_1,\dotsc,x_n)\in R^n$. The functor
\[
 \underline{\OO_n}(R) \quad = \quad \{ \ g\in\GL_n(R) \ | \ q_n(gx)=x\text{ for all }x\in R^n \ \}
\]
is representable by a scheme $\OO_{n,\Z}^+$. It is smooth in case $n$ is even, but not for odd $n$, in which case only the base extension $\OO_{n,\Z[1/2]}^+$ to $\Z[1/2]$ is smooth (cf.\ \cite[Thm.\ B.1.8]{Conrad11}). For odd $n$, we define the \emph{special orthogonal group $\SO_{n,\Z}^+$} as the kernel of the determinant $\det:\OO_{n,\Z}^+\to\SG_{m,\Z}$. For even $n$, we define \emph{special orthogonal group $\SO_{n,\Z}^+$} as the kernel of the Dickson invariant $D_q:\OO_{n,\Z}^+\to(\Z/2\Z)_\Z$. Then the scheme $\SO_{n,\Z}^+$ is smooth for all $n\geq 1$ (cf.\ \cite[Thm.\ B.1.8]{Conrad11}). 

We describe a maximal torus and its normalizer of these groups and show that we can apply Theorem \ref{thm: tits-weyl models of subgroups} in the following.

\subsubsection*{The Tits-Weyl model of $\SO_n$ for odd $n$}

We consider the case of odd $n=2m+1$ first. A maximal torus $T(R)$ of $SO_n(R)$ is given by the diagonal matrices with values $\lambda_1,\dotsc,\lambda_n\in R$ on the diagonal that satisfy $\lambda_{n+1-i}=\lambda_i^{-1}$ for all $i\in\un$ and $\prod_{i=1}^n\lambda_i=1$. This means that $\lambda_1,\dotsc,\lambda_m$ can be chosen independently from $R^\times$ and that $\lambda_{m+1}=1$. 

Its normalizer $N(R)$ consists of all monomial matrices $A=(a_{i,j})$ that satisfy the following conditions. Let $\sigma\in S_n$ be the permutation such that $a_{i,j}\neq0$ if and only if $j=\sigma(i)$. Then $A\in N(R)$ if and only if $\det A=1$, if $\sigma(n+1-i)=n+1-\sigma(i)$ and if $a_{n+1-i,\sigma(n+1-i)}=a_{i,\sigma(i)}^{-1}$ for all $i\in \un$. This means, in particular, that $\sigma(m+1)=m+1$ and that $a_{m+1,m+1}=\sign\sigma$. The permutation $\sigma$ permutes the set of pairs $\Lambda_1=\{\lambda_1,\lambda_n\},\dotsc,\Lambda_m=\{\lambda_m,\lambda_{m+2}\}$ and permutes each pair $\Lambda_i$ for $i\in\un$. This means that the quotient $W=N(R)/T(R)$, which corresponds to all permutations $\sigma\in S_n$ that occur, is isomorphic to a signed permutation group. A set of generators is determined by the involutions $s_1=(1,2)(n-1,n),\dotsc,s_{m-1}=(m-1,m)(m+2,m+3)$ and $s_m=(m,m+2)$. The Weyl group $W$ together with the generators $s_1,\dotsc,s_m$ is a Coxeter group of type $B_m$.

Since $\SO_{n,\Z}^+\subset\GL_{n,\Z}^+$ is smooth, its maximal torus $T$ is contained in the diagonal matrices of $\GL_{n,\Z}^+$ and the normalizer $N$ of $T$ is contained in the monomial matrices of $\GL_{n,\Z}^+$, we can apply Theorem \ref{thm: tits-weyl models of subgroups} to yield a Tits-Weyl model $\SO_n$ of $\SO_{n,\Z}^+$.

\subsubsection*{The Tits-Weyl model of $\SO_n$ for even $n$}

We consider the case of even $n=2m$. Since $\OO_{n,\Z}^+$ is smooth, we can ask whether $\OO_{n,\Z}^+$ has a Tits-Weyl model $\OO_n$. This is indeed the case as we will show now. A maximal torus $T(R)$ of $\OO_n(R)$ is given by the diagonal matrices  with values $\lambda_1,\dotsc,\lambda_n\in R$ on the diagonal that satisfy $\lambda_{n+1-i}=\lambda_i^{-1}$ for all $i\in\un$. Its normalizer $N(R)$ in $\OO_n(R)$ consists of all monomial matrices $A=(a_{i,j})$ that satisfy the following conditions. Let $\sigma\in S_n$ be the permutation such that $a_{i,j}\neq0$ if and only if $j=\sigma(i)$. Then $A\in N(R)$ if and only if $\sigma(n+1-i)=n+1-\sigma(i)$ and $a_{n+1-i,\sigma(n+1-i)}=a_{i,\sigma(i)}^{-1}$ for all $i\in \un$ . Thus we can apply Theorem \ref{thm: tits-weyl models of subgroups} to obtain a Tits-Weyl model $\OO_n$ of $\OO_{n,\Z}^+$.

Note that the Weyl group of $\OO_n$ is the same as for $\SO_{n+1}$: a permutation $\sigma$ associated with an element of the normalizer $N$ permutes the set of pairs $\Lambda_1=\{\lambda_1,\lambda_n\},\dotsc,\Lambda_m=\{\lambda_m,\lambda_{m+1}\}$ and permutes each pair $\Lambda_i$ for $i\in\un$. This means the quotient $W=N(R)/T(R)$, which corresponds to all permutations $\sigma\in S_n$ that occur, is isomorphic to a signed permutation group. 

The subgroup $\SO_{n,\Z}^+$ has the same maximal torus $T$ as $\OO_{n,\Z}^+$, but its normalizer in $\SO_{n,\Z}^+$ is a proper subgroup $N'$ of the normalizer $N$ of $T$ in $\OO_{n,\Z}^+$. Namely, a matrix $A
\in N(R)$ is contained in $N'(R)$ if and only if the sign of the associated permutation $\sigma$ is $1$. The Weyl group $W=N'(R)/T(R)$ is isomorphic to the subgroup of all elements of sign $1$ of a signed permutation group. A set of generators is determined by the involutions $s_1=(1,2)(n-1,n),\dotsc,s_{m-1}=(m-1,m)(m+1,m+2)$ and $s_m=(m-1,m+1)(m,m+2)$. The Weyl group $W$ together with $s_1,\dotsc,s_m$ is a Coxeter group of type $D_m$. This shows that we can apply Theorem \ref{thm: tits-weyl models of subgroups} to yield a Tits-Weyl model $\SO_n$ of $\SO_{n,\Z}^+$.

\bigskip
\noindent We summarize the above results.

\begin{thm}
 All of the Chevalley groups in the following list have a Tits-Weyl model: the special linear groups $\SL_{n,\Z}^+$, the general linear groups $\GL_{n,\Z}^+$, the  adjoint Chevalley groups of type $A_n$, the symplectic groups $\Sp_{2n,\Z}^+$, the special orthogonal groups $\SO_{n,\Z}^+$ (for all $n$) and the  orthogonal groups $O_{n,\Z}^+$ (for even $n$). \qed
\end{thm}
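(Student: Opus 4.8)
The plan is simply to assemble the cases that have been treated individually throughout Section~\ref{section: tits-weyl models of chevalley groups}, so the theorem is essentially a bookkeeping statement once the preceding results are in place. First I would recall Theorem~\ref{thm: the tits-weyl model sl_n}, which already establishes that $\SL_n$ is a Tits-Weyl model of $\SL_{n,\Z}^+$ with canonical torus the diagonal torus (which is diagonal in the sense of Section~\ref{subsection: closed subgroups of tits-weyl models}) and normalizer the monomial matrices. This provides the base case and, simultaneously, the ambient group into which all the remaining Chevalley groups are embedded as closed subgroups.

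Next I would invoke Theorem~\ref{thm: tits-weyl models of subgroups} (together with its specialization Corollary~\ref{cor: models of subgroups of gl_n}) once for each remaining group. For $\GL_{n,\Z}^+$ one uses the standard closed embedding $\GL_{n,\Z}^+\hookrightarrow\SL_{n+1,\Z}^+$ described in Section~\ref{subsubsection: general linear group}; for the adjoint Chevalley group of type $A_n$ one uses the conjugation action on $\Mat_{n,\Z}^+\simeq\SA_\Z^{n^2}$, which has trivial stabilizer and hence gives a closed embedding into $\GL_{n^2,\Z}^+$, and then applies Corollary~\ref{cor: models of subgroups of gl_n} (Section~\ref{subsubsection: other groups of type a_n}); for $\Sp_{2n,\Z}^+$ one uses the defining relations $AJA^t=J$ realizing it as a closed subscheme of $\GL_{2n,\Z}^+$ (Section~\ref{subsubsection: symplectic groups}); and for $\OO_{n,\Z}^+$ (even $n$), $\SO_{n,\Z}^+$ (odd $n$) and $\SO_{n,\Z}^+$ (even $n$) one uses the orthogonal-group models recalled in Section~\ref{subsubsection: orthogonal groups}. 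In each case the conclusion of Theorem~\ref{thm: tits-weyl models of subgroups} yields that the associated $\Fun$-model is a closed submonoid of the relevant $\SL_m$ or $\GL_m$ and is a Tits-Weyl model of the group scheme in question.

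The real content — and the step I expect to be the main obstacle — is verifying, for each of these embeddings, the two hypotheses of Theorem~\ref{thm: tits-weyl models of subgroups}: that $\tilde T=T\cap\cH$ (the intersection with the diagonal torus of the ambient $\SL_m$ or $\GL_m$) is a \emph{maximal} torus of $\cH$, and that the centralizer $\tilde C$ and normalizer $\tilde N$ of $\tilde T$ in $\cH$ are contained in $C\cap\cH$ and $N\cap\cH$ respectively; equivalently, that $\tilde N$ lies in the monomial matrices of the ambient group. For $\GL_n$, $\Sp_{2n}$ and the orthogonal groups this is the explicit computation of the maximal torus of diagonal form and its monomial normalizer carried out in Sections~\ref{subsubsection: general linear group}--\ref{subsubsection: orthogonal groups}; for the adjoint type $A_n$ case one checks the hypotheses of Corollary~\ref{cor: models of subgroups of gl_n} by factoring the $\SL_{n,\Z}^+$-conjugation action on $\Mat_{n,\Z}^+$ through the canonical isogeny $\SL_{n,\Z}^+\to\cG$. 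I would present these verifications by citing the explicit torus-and-normalizer descriptions already given, noting that each is contained in the monomial matrices by construction, and then concluding that Theorem~\ref{thm: tits-weyl models of subgroups} applies. Since all the hypotheses have been checked in the respective subsections, the final step is to collect the conclusions, and the theorem follows. \qed
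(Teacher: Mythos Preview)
Your proposal is correct and matches the paper's approach exactly: the theorem is stated with a \qed\ because it simply collects the case-by-case constructions already carried out in Sections~\ref{subsection: special linear group} and~\ref{subsubsection: general linear group}--\ref{subsubsection: orthogonal groups}, each of which either invokes Theorem~\ref{thm: the tits-weyl model sl_n} directly (for $\SL_n$) or verifies the hypotheses of Theorem~\ref{thm: tits-weyl models of subgroups}/Corollary~\ref{cor: models of subgroups of gl_n} via the explicit torus and normalizer computations you cite.
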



\subsection{Adjoint Chevalley groups}
\label{subsection: adjoint chevalley groups}

In this section, we establish Tits-Weyl models of adjoint Chevalley schemes, which come from the action of the Chevalley group on its Lie algebra $\fg$. Namely, the choice of a Chevalley basis allows us to identify the automorphism group of $\fg$ (as a linear space) with $\GL_{n,\Z}^+$ and consider $\cG$ as a subgroup of $\GL_{n,\Z}^+$. The intersection of the diagonal torus of $\GL_{n,\Z}^+$ with $\cG$ is a maximal torus $T$ of $\cG$. However, the normalizer of $T$ is not contained in the subgroup of monomial matrices of $\GL_{n,\Z}^+$ (unless each simple factor of $G$ is of type $A_1$), thus Theorem \ref{thm: tits-weyl models of subgroups} does not apply to this situation. Moreover, we will see that the rank space of the $\Fun$-model $G$ associated with the embedding of $\cG$ into $\GL_{n,\Z}^+$ does not lift (unless each simple factor of $\cG$ is type $A_n$, $D_n$ or $E_n$). Though the situation is more difficult, the formalism of Tits monoids applies to adjoint representations and we will see that there is a monoid law $\mu=(\mu^\rk,\mu^+)$ for $G$ that turns $G$ into a Tits-Weyl model of $\cG$.

\subsubsection*{Chevalley bases and the adjoint action}

Let $\cG$ be a adjoint Chevalley group scheme, i.e.\ a split semisimple group scheme with trivial center, and let $\fg$ be its Lie algebra. Then its adjoint representation $\cG\to\Aut(\fg)$ is a closed embedding as a subgroup (cf.\ \cite[XVI, 1.5(a)]{SGA3II} or \cite[Thm.\ 5.3.5]{Conrad11}). The choice of a Cartan subalgebra $\fh$ of $\fg$ yields a root system $\Phi$. The choice of fundamental roots $\Pi\subset\Phi$ identifies $\Phi$ with the coroots $\Phi^\vee$, which can be seen as a subset $\{h_r| r\in \Phi\}$ of $\fh$, and decomposes $\Phi$ into the positive roots $\Phi^+$ and the negative roots $\Phi^-$. We denote by $\Psi$ the disjoint union of $\Phi$ and $\Pi$. Let $\{e_r\}_{r\in\Psi}$ be the Chevalley basis given by the choices of $\fh$ and $\Pi$. If $r\in\Pi\subset\Psi$, then $e_r$ is the coroot $h_r$. If $r\in\Phi\subset\Psi$, then we write $l_r$ for $e_r$ to avoid confusion with the coroot $h_r$. This leads to the decomposition
\[
 \fg \quad = \quad \bigoplus_{r\in\Pi} \fh_r \quad \oplus \quad \bigoplus_{r\in\Phi} \fl_r
\]
of $\fg$ into $\fh$-invariant $1$-dimensional subspaces of $\fg$ where $\fh_r$ is generated by $h_r$ for $r\in\Pi$ and $\fl_r$ is generated by $l_r$ for $r\in\Phi$. 

The choice of the Cartan subalgebra $\fh$ corresponds to the choice of a maximal torus $T$ of $\cG$. Let $N$ be its normalizer and $\cW=N/T$ its Weyl group. Since $\cG$ is split, the ordinary Weyl group $W$ is isomorphic to $\cW(\Z)=N(\Z)/T(\Z)$ and $\cW\simeq (W)_\Z$. If we choose an ordering on the Chevalley basis, we obtain an isomorphism $\GL(\fg)\simeq\GL_{n,\Z}^+$ where $n=\# \Psi$ is the dimension of $\fg$. Thus, we can realize $\cG$ as a closed subgroup of $\GL_{n,\Z}^+$. Independent of the ordering of the Chevalley basis, the maximal torus $T$ of $\cG$ is the intersection of $\cG$ with the diagonal subgroup $\diag(\GL_{n,\Z}^+)$ of $\GL_{n,\Z}^+$. 

The adjoint action of $T$ factors into actions on each $\fh_r$ for $r\in \Pi$ and $\fl_r$ for $r\in \Phi$. The action on $\fh$ is trivial and the adjoint action of $T$ on $\fl_r$ factorizes through a character of $T$. The adjoint action of $N$ factors into an action on $\fh$ and an action on $\fl=\bigoplus_{r\in \Phi}\fl_r$. The action on $\fh$ has kernel $T$, which means that it factors through the Weyl group $\cW$. The action of $N$ on $\fl$ restricts to $nT\times\fl_r\to\fl_{w(r)}$ for each $n\in N(\Z)$ and each coset $w=nT(\Z)\in W$. More precisely, we have
\begin{align*}
 n.h_r &= h_{w(r)}   & {\text{where}}\quad h_{w(r)} &= \sum_{s\in\Pi} \lambda_{s,r}^w h_s \quad {\text{for certain integers}} \quad \lambda_{s,r}^w \quad \text{and}\\
 n.l_r &=\pm l_{w(r)}\\
\end{align*}
for all $r\in\Phi$, $n\in N(\Z)$ and $w=nT(\Z)$ (cf.\ \cite[Prop.\ 6.4.2]{Carter89}).

 \subsubsection*{The $\Fun$-model associated with a Chevalley basis}

Let $G$ be the $\Fun$-model associated with the closed embedding $\cG\hookrightarrow\GL_{n,\Z}^+$. Then $G$ is a closed blue subscheme of $\GL_n$, and every point of $G$ is of the form $\fp_I$ for some $I\subset\un\times\un$ (cf.\ Section \ref{subsubsection: general linear group}). In this notation, the maximal torus $T$ equals $(\fp^e)_\Z^+$ where $e$ is the trivial permutation. If a point of the form $\fp^\sigma$ is contained in $G$, then it is a closed point since it is closed in $\GL_n$. Note that $T$ is diagonal w.r.t.\ $G$, i.e.\ $T$ acts on $(\fP_I)^+_\Z$ from the left and from the right for every point $\fp_I$ of $G$. Therefore the rank of pseudo-Hopf points is at least equal to the rank $r$ of $T$, which is the same as the rank of $\fp^e$. This shows that the rank of $G$ is $r$.

\begin{lemma} \label{lemma: on the points p_i of g}\ 
 \begin{enumerate}
  \item The $\Fun$-scheme $G$ contains the point $\fp_I$ if and only if there is an algebraically closed field $k$ and a matrix $(a_{i,j})$ in $\cG(k)\subset\GL_n(k)$ such that $a_{i,j}=0$ if and only if $(i,j)\in I$.
  \item The rank of a point $\fp_I$ of $G$ equals $r$ if and only if there is a matrix $(n_{i,j})\in N(\C)$ such that $n_{i,j}=0$ if and only if $(i,j)\in I$.
\end{enumerate}
\end{lemma}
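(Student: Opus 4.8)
\textbf{Proof proposal for Lemma \ref{lemma: on the points p_i of g}.}

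The plan is to reduce both statements to a description of the points of the blue scheme $G$ in terms of the points of the semiring schemes $G^+$ over algebraically closed fields, using the general results of Section \ref{subsection:fibres-of-morphisms} together with the explicit structure of $\cG\hookrightarrow\GL_{n,\Z}^+$. For part (i), recall from Lemma \ref{lemma-geom-char-and-the-fibres-of-alpha} (or more precisely from the remark following it, together with Theorem \ref{thm-image-of-tau-in-x_1-times-x_2}) that every point $x$ of a blue scheme lies in the image of some base extension morphism $\alpha_{X,k}:X^+\Stimes_\N k\to X$ for $k$ an algebraically closed field. Since $G$ is, by construction, the $\Fun$-model associated with the closed embedding $\cG\hookrightarrow\GL_{n,\Z}^+$, its coordinate blueprint is $\bpquot{\Fun[T_{i,j}]}{\cR_\cI}$ where $\cI$ is the ideal cutting out $\cG$ inside $\SA^{n^2}_\Z$ (compare the discussion at the beginning of Section \ref{subsection: special linear group} and Section \ref{subsubsection: general linear group}). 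Hence $G^+_\Z=\cG$ and $G^+_k=\cG_k$. The point $\fp_I=(T_{i,j})_{(i,j)\in I}$ is a point of $G$ precisely when it is a prime ideal of $\bpquot{\Fun[T_{i,j}]}{\cR_\cI}$, i.e.\ (by Example \ref{ex: subschemes of affine space over f1}) when the defining relations are compatible with setting $T_{i,j}=0$ for $(i,j)\in I$ and keeping the others generic; and by the domination of blue schemes by algebraic geometry over algebraically closed fields, this happens if and only if there is an algebraically closed field $k$ and a point of $\cG_k$ whose matrix $(a_{i,j})$ has exactly the entries indexed by $I$ equal to zero. The one direction that requires a small argument is: given such a matrix over $k$, it defines a morphism $\Spec k\to \GL_{n,k}^+$ landing in $\cG_k$ and hence, composing with $\alpha_{\GL_n}$ and restricting, a morphism $\Spec k\to G$ whose image is exactly $\fp_I$, because the support of that image is determined by which $T_{i,j}$ map to $0$.

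For part (ii), I would combine the description of the rank of a point from Section \ref{subsection: rank space} with the analysis of $G$ just above and with what the text has already established about $N$: namely $T$ is diagonal w.r.t.\ $G$, so $T$ acts on $(\fp_I)^+_\Z$ from both sides, forcing $\rk \fp_I \geq r = \rk T$, with equality exactly when the double action $\tilde T \Stimes (\fp_I)^+_\Z \Stimes \tilde T \to (\fp_I)^+_\Z$ is transitive on a torsor, i.e.\ when $(\fp_I)^+_\C$ is a single $T(\C)$-coset, which is precisely the condition that $(\fp_I)^+_\C$ is a translate $nT$ of $T$ by some $n$ normalizing $T$ (this is the same argument as in the proof of Theorem \ref{thm: tits-weyl models of subgroups}, the step analyzing pseudo-Hopf points of rank $r$). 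Translating the coset condition back to coordinates: $(\fp_I)^+_\C = nT$ for some $n\in N(\C)$ means the zero-pattern of the matrices in $nT$ is constant and equal to $I$; since multiplying $n$ by a diagonal invertible matrix does not change which entries vanish, this is the same as saying there is a matrix $(n_{i,j})\in N(\C)$ with $n_{i,j}=0$ if and only if $(i,j)\in I$. Conversely, given such an $n\in N(\C)$, the closed subscheme $(\fp_I)^+_\C$ contains $n T$ and, by part (i) applied over $\C$ plus the fact that $T$ preserves zero-patterns, is contained in the union of $T(\C)$-cosets with zero-pattern $I$; since $n$ normalizes $T$, that union is just $nT$, giving dimension $r$, hence $\rk \fp_I = r$.

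The main obstacle I anticipate is the precise bookkeeping in the "converse" directions — in particular, showing in part (ii) that a point $\fp_I$ of rank $r$ really does come from an element of the normalizer $N$ and not merely from some matrix with the right zero-pattern that happens to normalize $T$ only "up to the zero-pattern". This is where one needs to invoke, as in Theorem \ref{thm: tits-weyl models of subgroups}, that $\barx^+_\C$ being a single $\tilde T(\C)$-coset forces the coset representative into $N(\C)$ via the computation $t_1 p = p\, t_3 t_2^{-1}$; I would spell this out using that $T$ is a maximal torus of $\cG$ so that its centralizer is reductive of the expected rank and the coset of $p$ is genuinely $T$-stable on both sides. The rest — the identification $G^+_\Z=\cG$, the reduction to algebraically closed fields, and the fact that diagonal conjugation does not alter zero-patterns — is routine given the machinery already in place.
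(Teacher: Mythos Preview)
Your overall strategy matches the paper's, and part (ii) is essentially identical to the paper's argument (both defer to the double $T(\C)$-action computation from the proof of Theorem \ref{thm: tits-weyl models of subgroups}). However, there are two concrete gaps in your treatment of part (i).

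First, the remark after Lemma \ref{lemma-geom-char-and-the-fibres-of-alpha} that you cite does \emph{not} guarantee that every point of $G$ is hit by $\alpha_{G,k}$ for an algebraically closed field $k$; it only says the semifield $k$ can be chosen algebraically closed \emph{if it is not idempotent}. To rule out the idempotent case you need that $G$ is cancellative (which it is, by its construction as the $\Fun$-model $\bpquot{\Fun[T_{i,j}]}{\cR_\cI}$), and then invoke Lemma \ref{lemma-beta-surjective-for-cancellative-scheme} to get that $\beta:G^+_\Z\to G$ is surjective. This is exactly what the paper does. Your reference to Theorem \ref{thm-image-of-tau-in-x_1-times-x_2} is spurious here; that theorem concerns fibre products.

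Second, and more substantively, knowing that $\fp_I$ lies in the image of $\beta_k:G^+_k\to G$ only gives you \emph{some} point of $\cG_k$ over $\fp_I$, not a $k$-rational one, i.e.\ not an actual matrix $(a_{i,j})\in\cG(k)$. The paper closes this by observing that $\fp_I$ is locally closed in $G$ (since $G$ is locally finite), hence its inverse image $\fp_{I,k}^+$ is locally closed in $G^+_k$ and therefore contains a closed point; by Hilbert's Nullstellensatz this closed point is $k$-rational and gives the desired matrix, whose zero-pattern is forced to be exactly $I$ because it lies in the fibre over $\fp_I$. Your sentence ``by the domination of blue schemes by algebraic geometry over algebraically closed fields, this happens if and only if\dots'' elides precisely this step.
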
 

\begin{proof}
 We show \eqref{part1}. Since $G$ is cancellative, the morphism $\beta:G_\Z^+\to G$ is surjective (cf.\ Lemma \ref{lemma-beta-surjective-for-cancellative-scheme}). Thus there exists for every point $\fp_I$ of $G$ an algebraically closed field $k$ such that $\fp_I$ lies in the image of $\beta_k:G_k^+\to G$. Since $\fp_I$ is locally closed in $G$, the inverse image $\fp_{I,k}^+$ under $\beta_k$ is locally closed in $G_k^+$, which means that $\fp_{I,k}^+$ contains a closed point of $G_k^+$. By Hilbert's Nullstellensatz, such a closed point corresponds to a $k$-rational point $a:k[G]^+\to k$, which is characterized by $a_{i,j}=a(T_{i,j})$ since $k[G]^+$ is a quotient of $k[\GL_n]^+$ and therefore generated by the $T_{i,j}$ as a $k$-algebra. This defines the sought matrix $(a_{i,j})\in G(k)$ of claim \eqref{part1}.

 Part \eqref{part2} is proven by the very same argument that we used already in the proof of Theorem \ref{thm: tits-weyl models of subgroups}. Namely, we can consider the double action of $T(\C)$ on $\fp_I(\C)$ from the left and from the right. Then the rank of $\fp_I$, which equals the complex dimension of $\fp_I(\C)$, is equal to the rank $r$ of $T(\C)$ if and only if $\fp_I(\C)$ is contained in the normalizer $N(\C)$ of $T(\C)$, i.e.\ $\fp_I(\C)=nT(\C)$ for some $n=(n_{i,j})\in N(\C)$, as claimed in \eqref{part2}.
\end{proof}

\subsubsection*{The rank space}

Let $n\in N(\Z)$ and $w=nT(\Z)$. For every root $r\in\Phi$, we can write the coroot $h_{w(r)}$ as an integral linear combination $h_{w(r)} = \sum_{s\in\Pi} \lambda_{s,r}^w h_s$ of fundamental coroots $h_s$. We write $\fp^w=\fp_{I(w)}$ where $I(w)\subset\Psi\times\Psi$ is defined as the set
\[
 I(w) \quad = \quad \Phi\times \Pi \ \ \cup \ \ \Pi\times\Phi \ \ \cup \ \ \{(r,s)\in\Phi\times\Phi|s\neq w(r)\} \ \ \cup \ \ \{(r,s)\in \Pi\times\Pi|\lambda_{s,r}^w=0\},
\]
which is the set of all $(i,j)$ such that $n_{i,j}=0$ if we regard $n$ as a matrix of $\GL_{n,\Z}^+$ (cf.\ the above formulas in for $N$ acting on $\fg$). Note that in general, $\fp^w$ is not a closed point since $\norm{\lambda_{s,r}^w}$ might be larger than $1$, which means that $\fp^w$ specializes to a point whose potential characteristics are those primes that divide $\norm{\lambda_{s,r}^w}$ (this situation occurs indeed for simple groups of types $B_n$, $C_n$, $F_4$ and $G_2$). 

\begin{prop}\label{prop: pseudo-hopf points of adjoint chevalley groups}
 With the notation as above, we have
 \[
  \cZ(G) \quad = \quad \{ \ \fp^w \ | \ w\in W \ \} \qquad\text{and}\qquad \Gamma \prerk{\fp^w}^\px\quad \simeq \quad \F_{1^\epsilon}[T_{r,w(r)}^{\pm1}|{r\in\Pi}]
 \]
 where $\epsilon=1$ if $w=e$ is the neutral element of $W$ and $\epsilon=2$ otherwise. In particular, $N=G^{\rk,+}_\Z$.
\end{prop}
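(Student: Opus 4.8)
The plan is to prove Proposition \ref{prop: pseudo-hopf points of adjoint chevalley groups} by combining the point-set description of $G$ from Lemma \ref{lemma: on the points p_i of g} with a direct analysis of the closed subschemes $\overline{\fp_I}$. First I would establish the inclusion $\cZ(G)\subseteq\{\fp^w\mid w\in W\}$: by Lemma \ref{lemma: on the points p_i of g}\eqref{part1}, since $T$ is diagonal with respect to $G$ and acts on $\overline{\fp_I}{}^+_\Z$ from both sides, every pseudo-Hopf point has rank at least $r=\rk T$; conversely, a pseudo-Hopf point of minimal rank $r$ must (by the double-action argument, exactly as in the proof of Theorem \ref{thm: tits-weyl models of subgroups}, invoked also in Lemma \ref{lemma: on the points p_i of g}\eqref{part2}) satisfy $\fp_I(\C)=nT(\C)$ for some $n=(n_{i,j})\in N(\C)$, hence $I=I(w)$ where $w=nT(\Z)$, i.e.\ $\fp_I=\fp^w$. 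So the pseudo-Hopf points of minimal rank are among the $\fp^w$.

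Next I would show that each $\fp^w$ is in fact pseudo-Hopf of rank $r$, and simultaneously compute $\Gamma\prerk{\fp^w}{}^\px$. The coordinate blueprint of $\overline{\fp^w}$ is $\bpquot{\Fun[T_{i,j}]}{\cR'}$ where $\cR'$ is generated by the relations defining $\cG$ inside $\GL_{n,\Z}^+$ together with $T_{i,j}\=0$ for $(i,j)\in I(w)$; substituting $0$ for the killed variables into the adjoint-action formulas $n.h_r=\sum_s\lambda^w_{s,r}h_s$ and $n.l_r=\pm l_{w(r)}$ shows that on $\overline{\fp^w}$ the surviving generators are $T_{r,w(r)}$ for $r\in\Pi$ (the $\fh$-block) and $T_{r,w(r)}$ for $r\in\Phi$ (the root-space block), and the latter are determined by the former via the integer coefficients $\lambda^w_{s,r}$, while the relation $\det=1$ (inherited from $\GL_n$, here $\GL_{n^2}$-style determinant condition) together with the $n_{i,j}=\pm1$ normalizations on the root blocks forces each $T_{r,w(r)}$, $r\in\Pi$, to be a unit. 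Thus $\overline{\fp^w}$ is affine, $\overline{\fp^w}{}^+_\Z$ is a quotient of a Laurent polynomial ring hence flat (here one must check the quotient is by a monomial-type relation so flatness is preserved), $\overline{\fp^w}_\inv$ is generated by its units, and $\fp^w$ is almost of indefinite characteristic. Hence $\fp^w$ is pseudo-Hopf of rank exactly $\#\Pi=r$, so it lies in $\cZ(G)$, giving the reverse inclusion. The unit-field computation then reads off $\Gamma\prerk{\fp^w}{}^\px\simeq\F_{1^\epsilon}[T_{r,w(r)}^{\pm1}\mid r\in\Pi]$, where $\epsilon=2$ exactly when some root-space sign $n.l_r=-l_{w(r)}$ genuinely introduces $-1$ into the blueprint, which happens for all $w\neq e$ (one checks that for $w=e$ all signs are $+$, so no $-1$ appears and $\epsilon=1$), and $\epsilon=1$ for $w=e$.

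Finally, $N=G^{\rk,+}_\Z$ follows formally: $G^{\rk,+}_\Z=\coprod_{w\in W}\overline{\fp^w}{}^+_\Z$, and $\overline{\fp^w}{}^+_\Z$ is precisely the translate $nT\subseteq N$ corresponding to $w$ by construction of $I(w)$ from the matrix entries of $n\in N(\Z)$; summing over $W=N(\Z)/T(\Z)$ recovers all of $N$.

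The main obstacle I expect is the sign bookkeeping needed to pin down $\epsilon$ precisely — i.e.\ verifying that for every non-trivial $w\in W$ there is at least one root $r\in\Phi$ with $n.l_r=-l_{w(r)}$ in the chosen Chevalley basis, so that $-1$ is forced into $\Gamma\prerk{\fp^w}{}^\px$, while for $w=e$ no such sign occurs. This requires care with the structure constants of the Chevalley basis (Carter's formulas) and with the fact that the relevant blueprint relation, after setting the off-support variables to $0$, reduces to something like $T_{r,w(r)}+1\=0$ on at least one root block. The flatness claim for $\overline{\fp^w}{}^+_\Z$ also needs a short argument that the defining ideal, after the substitution, is generated by elements of the form $(\text{monomial in units})-(\pm1)$, so that the quotient of $\Z[T^{\pm1}_{r,w(r)}]$ is a group algebra of a finitely generated abelian group, hence flat; this is routine but must be stated. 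Everything else is a direct translation of the adjoint-action formulas into the combinatorics of the prime ideals $\fp_I$.
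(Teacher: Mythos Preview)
Your overall strategy matches the paper's: use the double $T$-action (Lemma \ref{lemma: on the points p_i of g}\eqref{part2}) to bound $\cZ(G)$ from above by $\{\fp^w\}$, verify directly that each $\fp^w$ is pseudo-Hopf of rank $r$, and read off $N=G^{\rk,+}_\Z$. The determinant argument for units and the flatness via $\overline{\fp^w}{}^+_\Z\simeq nT$ are also the paper's moves.

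However, your treatment of $\epsilon$ has a genuine gap, and it stems from reversing the roles of the two blocks. The torus $T$ acts \emph{trivially} on $\fh$, so every element of the coset $nT$ has the \emph{same} $\Pi\times\Pi$ block, namely the fixed integer matrix $(\lambda^w_{s,r})$. By contrast, the $\Phi\times\Phi$ entries $T_{w(r),r}$ are the \emph{free} coordinates parametrizing $nT$; they are not constants in $\Gamma\overline{\fp^w}$. So your sentence ``the latter are determined by the former via the integer coefficients $\lambda^w_{s,r}$'' is backwards, and more importantly your proposed source of $-1$ --- the signs in $n.l_r=\pm l_{w(r)}$ --- does not produce a relation of the form $T_{r,w(r)}+1\=0$ in the blueprint. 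Those signs are values of \emph{variables} at the particular point $n\in N(\Z)$, not fixed constants; they even change when $n$ is replaced by another representative $nt$ with $t\in T(\Z)$.

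The paper locates $-1$ in the correct block: for $w\neq e$ there is a fundamental coroot $h_r$ sent to a \emph{negative} coroot $h_{w(r)}$, so all coefficients $\lambda^w_{s,r}$ in that column are $\leq 0$ with at least one strictly negative. The relations $T_{s,r}\=\lambda^w_{s,r}$ (read additively as $T_{s,r}+\underbrace{1+\dotsb+1}_{-\lambda^w_{s,r}}\=0$) then force $-1$ into $\Gamma\prerk{\fp^w}$, giving $\epsilon=2$. For $w=e$ the $\Pi\times\Pi$ block is the identity matrix and no negative integer appears, whence $\epsilon=1$. You should redirect your sign analysis to the coroot coefficients $\lambda^w_{s,r}$ rather than to the root-space permutation signs.
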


\begin{proof}
 To start with, we will show that the points $p^w$ are pseudo-Hopf. The canonical torus $T$ equals the intersection of $\cG$ with the diagonal torus $\diag(\GL_{n,\Z}^+)$ inside $\GL_{n,\Z}^+$ since $\diag(\GL_{n,\Z}^+)$ normalizes $T$ and the normalizer $N$ of $T$ in $\cG$ is of the form $N=\bigcup_{n\in N(\Z)} nT$ where $n$ is not of diagonal form unless $n\in T(\Z)$. If $n\in\overline{\fp^w}(\Z)$, then $n^{-1}\overline{p^w}\subset\bigl(\cG\cap\diag(\GL_{n,\Z}^+)\bigr)=T$. Thus $\overline{p^w}^+_\Z=nT$, which shows that $\overline{p^w}^+_\Z$ is a flat scheme.

 By definition, we have
 \[
  \Gamma\prerk{p^w} \quad = \quad \Bigl(\bigl(\bpgenquot{\Fun[G]}{T_{r,s}\=0|(r,s)\in I(w)}\bigr)^\red\Bigr)^\hatexp
 \]
 which is a subblueprint of the coordinate ring $\Gamma nT$. In particular, the equations $T_{r,s}=\lambda_{r,s}^w$ hold in $\Gamma nT$. In $\Gamma\prerk{p^w}$, we have to read $T_{r,s}=\lambda_{r,s}^w$ as
 \[
  T_{r,s} \quad \= \quad \underbrace{1+\dotsb+1}_{\lambda_{r,s}^w\text{-times}} \qquad \text{or} \qquad T_{r,s} + \underbrace{1+\dotsb+1}_{(-\lambda_{r,s}^w)\text{-times}} \quad \= \quad 0,
 \]
 depending on whether $\lambda_{r,s}^w$ is positive or negative. 

 Since $\Gamma\prerk{p^w}$ is cancellative, we can perform calculations in the ring $\Gamma nT$ to obtain information about $\Gamma\prerk{p^w}$. Let $\det(T_{r,s})$ be the determinant of the $T_{r,s}$ with $r,s\in \Psi$. Then the defining equation of $\GL_{n\Z}^+$ is $d\cdot\det(T_{r,s})=1$ where $d$ is the variable for the inverse of the determinant. If we substitute $T_{r,s}$ by $0$ if $(r,s)\in I(w)$ and by $\lambda_{r,s}^+$ if $(r,s)\in\Pi\times\Pi$, then the determinant condition reads as
 \[
  d\cdot \prod_{r\in\Phi} T_{r,w(r)}\cdot\det(\lambda_{r,s}^w)_{r,s\in\Pi} \quad = \quad \pm 1.
 \]
 Since the submatrix $(\lambda_{r,s}^w)_{r,s\in\Pi}$ describes the action of $w$ on the root system $\Phi$ in the basis $\Pi$, the determinant of $(\lambda_{r,s}^w)_{r,s\in\Pi}$ equals the sign of $w$. Thus we end up with an equation $d\cdot \prod_{r\in\Phi} T_{r,w(r)}=\pm 1$ in $\Gamma nT$, which implies an additive relation in $\Gamma\prerk{p^w}_\inv$. This means that $T_{r,w(r)}$ is a unit in $\Gamma\prerk{p^w}$ for all $r\in \Phi$. Therefore, $\Gamma\prerk{p^w}_\inv$ is generated by its units.

 Since $\{p^w\}(\Q)=nT(\Q)$, the only point in $\overline{p^w}$ with potential characteristic $0$ is $p^w$ itself. Since $G$ and $\overline{p^w}$ are of finite type over $\Fun$, every other point in $\overline{p^w}$ has only finitely many potential characteristics. Since $\overline{p^w}$ is a finite space and $\overline{p^w}^+_\Z$ is a flat scheme, $p^w$ must be almost of indefinite characteristic. This completes the proof that $p^w$ is pseudo-Hopf.

 Clearly, 
 \[
  N \quad = \quad \bigcup_{n\in N(\Z)} nT  \quad =  \quad \coprod_{w\in W}\prerk{\fp^w}^+_\Z  \quad =  \quad \coprod_{w\in W}\bigl(\prerk{\fp^w}^\px\bigr)^+_\Z \quad = \quad G^{\rk,+}_\Z,
 \]
 thus Lemma \ref{lemma: on the points p_i of g} \eqref{part2} implies that pseudo-Hopf point $\fp_I$ is of rank $r=\rk T$ if and only if $\fp_I=\fp^w$ for some $w\in W$. This shows that $\cZ(G) =\{ \fp^w | w\in W \}$.

 We investigate the unit fields $\Gamma\prerk{p^w}^\px$. Since $\Gamma nT\simeq \Z[T_{r,w(r)}^{\pm1}|{r\in\Pi}]$, the unit fields of $\Gamma\prerk{p^w}$ are of the form $\F_{1^\epsilon}[T_{r,w(r)}^{\pm1}|{r\in\Pi}]$ where $\epsilon$ is either $1$ or $2$. Since the unit matrix of $\GL_n(\Z)$ is contained in $\cG(\Z)$, there is a morphism $\ast_\Fun\to G$ whose image is $\fp^e$. This shows that $\Gamma \prerk{\fp^w}^\px\simeq \Fun[T_{r,w(r)}^{\pm1}|{r\in\Pi}]$. If $w$ is not the neutral element of $W$, then there is a matrix $n\in N(\Z)$ such that $w=w_n$ operates non-trivially on the coroots $\Phi^\vee\subset\fh$. This means that at least one of the fundamental coroots $h_r$ is mapped to a negative coroot $h_{w(r)}$, i.e.\ $\lambda_{r,s}^w<0$ for all $s\in\Pi$. Therefore $\Gamma\prerk{\fp^w}$ is with $-1$, and the unit field of $\prerk{\fp^w}$ equals $\Funsq[T_{r,w(r)}^{\pm1}|{r\in\Pi}]$. This finishes the proof of the proposition.
\end{proof}

\subsubsection*{The Tits-Weyl model}

Finally, we are prepared to prove that adjoint Chevalley groups have Tits-Weyl models. More precisely, we formulate the following result.

\begin{thm}
 Let $G$ be the $\Fun$-model of $\cG$ as described above. 
 \begin{enumerate}
  \item The group law $\mu_\cG$ descends uniquely to a monoid law $\mu^+$ of $G^+$.
  \item The restriction of $\mu_\cG$ to $N$ descends uniquely to a group law $\mu^\rk$ of $G^\rk$.
  \item The pair $\mu=(\mu^\rk,\mu^+)$ is a Tits morphism, which turns $G$ into a Tits-Weyl model of $\cG$.
  \item The canonical torus is diagonal.
  \item The group $G^\sT(\Fun)$ is the trivial subgroup of $W=\cW(G)$. 
 \end{enumerate}
\end{thm}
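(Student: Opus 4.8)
The strategy is to adapt the pattern already used twice before---for $\SL_n$ in Theorem \ref{thm: the tits-weyl model sl_n} and, more structurally, in Theorem \ref{thm: tits-weyl models of subgroups}---but without the luxury of Theorem \ref{thm: tits-weyl models of subgroups} itself, since for the adjoint embedding the normalizer $N$ of the canonical torus is not contained in the monomial subgroup of $\GL_{n,\Z}^+$. Instead I would run the cube-lemma descent by hand, using as input the description of $\cZ(G)=\{\fp^w\mid w\in W\}$ and the identity $N=G^{\rk,+}_\Z$ from Proposition \ref{prop: pseudo-hopf points of adjoint chevalley groups}, together with the fact (already observed before that proposition) that $T$ is diagonal w.r.t.\ $G$, which gives part (iv) immediately.

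\textbf{Parts (i) and (ii): descending the two halves of the group law.} For (i), I would argue exactly as in the proof of Theorem \ref{thm: the tits-weyl model sl_n}\,(ii) and of Theorem \ref{thm: tits-weyl models of subgroups}\,(iii): since $\cG\hookrightarrow\GL_{n,\Z}^+$ is a closed immersion, $\Gamma\GL_n\twoheadrightarrow\Gamma G$ and hence $\Gamma\GL_n^+\twoheadrightarrow\Gamma G^+$ are surjective, so $G^+\to\GL_n^+$ is a closed immersion by Lemma \ref{lemma: surjections are closed immersions}; $G$ is cancellative, so $\Gamma G^+\hookrightarrow\Gamma G^+_\Z=\Gamma\cG$ is a subblueprint and $G^+_\Z\to G^+$ is a quasi-submersion by Lemma \ref{lemma: subblueprints are quasi-submersions}, likewise after $\Stimes$. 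Applying the cube lemma for blue schemes (Lemma \ref{lemma: cube lemma for blue schemes}) to the square relating $\mu_\cG$ on $\cG\Stimes_\Z\cG$ and $\mu_{\GL_n}^+$ on $\GL_n^+\Stimes\GL_n^+$ yields a unique $\mu^+:G^+\Stimes G^+\to G^+$; associativity and the descent of the identity $\epsilon^+$ follow as before by the uniqueness clause. For (ii), I would restrict $\mu_\cG$ to the normalizer: $\mu_N:N\Stimes_\Z N\to N$ is a group law on $N=G^{\rk,+}_\Z$. Here I must descend $\mu_N$ to $G^\rk$. Since $N\to G^\rk$ need not be a closed immersion, I would mimic the ``inverse-closure'' step in the proof of Theorem \ref{thm: tits-weyl models of subgroups}\,(iii): the Weyl kernel $\fe$ of $G$ is diagonalizable (Lemma \ref{lemma: fe is diagonalizable}), so $\Gamma\fe^+_\Z$ is a group ring $\Z[\Lambda]$, hence $\Fpx(\fe^+_\Z)=\Funsq[\Lambda]=\Fun[\Lambda]_\inv$ and $\prerk{\fp^e}$ is generated by its units; by Proposition \ref{prop: pseudo-hopf points of adjoint chevalley groups} the same holds at every $\fp^w$ (with $\Funn$-coefficient $\epsilon\in\{1,2\}$). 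Therefore $G^\rk_\inv\to (G^{\rk,+}_\Z)=N$ realizes $\Gamma N\twoheadrightarrow\Gamma G^\rk_\inv$ as a surjection of blueprints after passing to inverse closures, so $G^\rk_\inv\to$ ``$N$'' behaves as a closed immersion and $N\to G^\rk_\inv$ is a quasi-submersion by Lemma \ref{lemma: subblueprints are quasi-submersions}. Running the cube lemma once to descend $\mu_N$ to $\mu_{G^\rk_\inv}$ on $G^\rk_\inv$, then a second time (using that $\coprod_{w}\prerk{\fp^w}\to G$ is a closed immersion, hence $H^{\prk,+}\to H^+$-style arguments apply) to descend to $\mu^{\prk,+}$ on $G^{\prk,+}$, and finally intersecting inside $C^+_\Z$---exactly the diagram of cancellative blueprints at the end of the proof of Theorem \ref{thm: tits-weyl models of subgroups}---produces $\mu^\rk:G^\rk\times G^\rk\to G^\rk$. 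Associativity, identity $\epsilon^\rk$ and inversion $\iota^\rk$ follow from their counterparts on $N$ by uniqueness, making $G^\rk$ a group object in $\rkBSch$.

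\textbf{Parts (iii) and (v).} For (iii), compatibility of $\mu^\rk$ and $\mu^+$ over $\Z$ is automatic: both $\rho_{G,\Z}^+\circ\mu^{\rk,+}_\Z$ and $\mu^+_\Z\circ\rho_{G,\Z}^+$ are, by construction, the descent of $\mu_\cG$ restricted to $N\Stimes_\Z N\to N\subset\cG$, so the Tits-morphism square commutes and $\mu=(\mu^\rk,\mu^+)$ is a Tits morphism; associativity of the pair and existence of $\epsilon=(\epsilon^\rk,\epsilon^+)$ give a Tits monoid structure on $G$. That it is a Tits-Weyl model of $\cG$: $G^+_\Z=\cG$ by construction of an $\Fun$-model, the Weyl kernel is $\fe=\overline{\{e\}}$ with $\fe^+_\Z=T$ the canonical torus, which is the diagonal maximal torus of $\cG$; and by Proposition \ref{prop: pseudo-hopf points of adjoint chevalley groups} we have $G^{\rk,+}_\Z=N$ (the full normalizer) while $\fe^+_\Z=T=C(T)$ since $\cG$ is semisimple adjoint (a maximal torus in a reductive group is its own centralizer), so $\Psi_\fe\colon G^{\rk,+}_\Z/\fe^+_\Z\to\cW(T)=N/T$ is the identity, an isomorphism. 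For (v), a Tits morphism $\ast_\Fun\to G$ is, as in Theorem \ref{thm: the tits-weyl model sl_n}\,(iv) and Theorem \ref{thm: properties of tits-weyl groups}\,(iii), determined by an image point $\fp^w\in G^\rk$ together with a (necessarily unique) blueprint morphism $\kappa(\fp^w)\to\Fun$; by the last line of Proposition \ref{prop: pseudo-hopf points of adjoint chevalley groups}, $\kappa(\fp^w)\simeq\Fun[T_{r,w(r)}^{\pm1}]$ admits such a morphism when $w=e$, but for $w\neq e$ one has $\kappa(\fp^w)\simeq\Funsq[T_{r,w(r)}^{\pm1}]$, which contains $-1$ and hence admits no morphism to $\Fun$. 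Thus $G^\sT(\Fun)=\{e\}$ is the trivial subgroup of $\cW(G)=W$.

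\textbf{Main obstacle.} The delicate point---and the only place where the adjoint case genuinely departs from the earlier ones---is part (ii): descending $\mu_N$ to a morphism $\mu^\rk$ of the rank space when $H^\rk\to G^\rk$ is not a closed immersion. The resolution is precisely the double application of the cube lemma through the intermediate objects $G^\rk_\inv$ and $G^{\prk,+}$, followed by the reconstruction of $G^\rk$ as the intersection $B^\px=B^\px_\inv\cap B^+$ inside $B^+_\Z$; this is the same mechanism that occupies the bulk of the proof of Theorem \ref{thm: tits-weyl models of subgroups}\,(iii), and the only new ingredient needed here is the concrete knowledge---supplied by Proposition \ref{prop: pseudo-hopf points of adjoint chevalley groups}---that $\cZ(G)=\{\fp^w\}$ with the stated unit fields and that $G^{\rk,+}_\Z=N$, so that the requisite surjectivity/quasi-submersion hypotheses of the cube lemma hold. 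Everything else is a routine transcription of the two preceding proofs.
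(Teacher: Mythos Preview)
Your plan for parts (i), (iii), (iv) and (v) is correct and matches the paper. The gap is in (ii), precisely at the point you flag as the main obstacle---but the resolution you propose does not go through.

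The double-cube argument from Theorem~\ref{thm: tits-weyl models of subgroups}\,(iii) that you invoke depends essentially on the presence of an \emph{ambient} Tits-Weyl model: there one descends $\mu_{\tilde N}$ to $H^\rk_\inv$ via a cube whose back face is the already-known morphism $\mu^\rk_{G,\inv}$ on the ambient $G^\rk_\inv$, using the closed immersion $H^\rk_\inv\hookrightarrow G^\rk_\inv$. In the adjoint situation there is no such ambient object: because $N$ is not contained in the monomial matrices of $\GL_{n,\Z}^+$, the points $\fp^w\in\cZ(G)$ do not lie in $\cZ(\GL_n)$, so there is no map $G^\rk\to\GL_n^\rk$ at all, and hence no closed immersion of $G^\rk_\inv$ into a scheme that already carries a group law. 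Your sentence ``$\Gamma N\twoheadrightarrow\Gamma G^\rk_\inv$ as a surjection'' also has the arrow reversed: what one has is an inclusion $\Gamma G^\rk_\inv\hookrightarrow\Gamma N$, which makes $N\to G^\rk_\inv$ a quasi-submersion, not a closed immersion---so the cube lemma has no back face to lean on. The second cube you propose (via $G^{\prk,+}\hookrightarrow G^+$) relies on $\coprod_w\overline{\fp^w}\to G$ being a closed immersion; in Theorem~\ref{thm: tits-weyl models of subgroups} that step was justified by appealing to the ambient group already being a Tits-Weyl model, which is circular here.

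The paper explicitly abandons the cube lemma for (ii) (``we have to use a different argument as in the proof of Theorem~\ref{thm: tits-weyl models of subgroups}'') and builds $\mu^\rk$ directly, component by component. Restricting $\mu_N$ to connected components gives ring homomorphisms
\[
 \Gamma\mu_{w_1,w_2}\colon\ \Z[T_{r,w_1w_2(r)}^{\pm1}]_{r\in\Pi}\ \longrightarrow\ \Z[(T')^{\pm1},(T'')^{\pm1}]_{r\in\Pi};
\]
since ring homomorphisms send units to units and $0$ to $0$, each $\Gamma\mu_{w_1,w_2}$ restricts to a morphism $\Gamma\mu^\px_{w_1,w_2}$ between the unit fields $\Funsq[T^{\pm1}]$. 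By Proposition~\ref{prop: pseudo-hopf points of adjoint chevalley groups}, whenever $(w_1,w_2)\neq(e,e)$ the component $\prerk{\fp^{w_1}}^\px\times\prerk{\fp^{w_2}}^\px$ is already with $-1$, so this defines $\mu^\rk$ there. For the single remaining component $(e,e)$ one checks that $\Gamma\mu^\px_{e,e}$ lands in $\Fun[T^{\pm1}]\subset\Funsq[T^{\pm1}]$; this holds because the torus comultiplication $T_r\mapsto T_r'\otimes T_r''$ is given by monomials with coefficient $+1$ (equivalently, it is compatible with the counit $T_{r,r}\mapsto 1$). Identity and inversion are produced the same way. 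This explicit, unit-field construction is what replaces the cube-lemma machinery you tried to import.
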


\begin{proof}
 We prove \eqref{part1}. The existence and uniqueness of $\mu^+:G^+\Stimes G^+\to G^+$ with $\mu^+_\Z=\mu_\cG$ follows from an application of the cube lemma (Lemma \ref{lemma: cube lemma for blue schemes}) to the commutative diagram
 \[
  \xymatrix@C=4pc@R=1pc{                               & \GL_{n,\Z}^{+}\Stimes_\Z \GL_{n,\Z}^{+}  \ar[rr]^(.5){\mu_{\GL_n,\Z}^+}\ar@{->>}[dd]_(.7){}|\hole &       & \GL_{n,\Z}^{+}\ar@{->>}[dd]^{}  \\
                        \cG\Stimes_\Z \cG \arincl{[ru]^{}}\ar[rr]^(.7){\mu_\cG}\ar@{->>}[dd]_{} &                                  & \cG\arincl{[ru]_{}}\ar@{->>}[dd]^(.3){}     \\
                                                        &  \GL_n^+\Stimes_\N\GL_n^+\ar[rr]^(.4){\mu^+_{\GL_n}}|(.565)\hole                   &                         &  \GL_n^+\,.               \\
                        G^+\Stimes_\N G^+   \arincl[ru]^{}      &                                                     & G^+ \arincl[ru]_{}         } 
 \]
 The identity of $G^+$ is the unique morphism that completes the diagram
 \[
  \xymatrix@C=4pc@R=1pc{                               & \ast_\Z  \ar[rr]^(.5){\epsilon_{\GL_n,\Z}^+}\ar@{->>}[dd]_(.7){}|\hole &       & \GL_{n,\Z}^{+}\ar@{->>}[dd]^{}  \\
                        \ast_\Z \arincl{[ru]^{\id}}\ar[rr]^(.7){\epsilon_\cG}\ar@{->>}[dd]_{} &                                  & \cG\arincl{[ru]_{}}\ar@{->>}[dd]^(.3){}     \\
                                                        &  \ast_\N\ar[rr]^(.35){\epsilon^+_{\GL_n}}|(.47)\hole                   &                         &  \GL_n^+               \\
                        \ast_\N\arincl[ru]^{\id}      &                                                     & G^+ \arincl[ru]_{}         } 
 \]
 to a commuting cube, which exists by the cube lemma. This proves \eqref{part1}.

 We continue with \eqref{part2}. Since $N$ does not embed into the subgroup of monomial matrices of $\GL_{n,\Z}^+$, we have to use a different argument as in the proof of Theorem \ref{thm: tits-weyl models of subgroups}. The group law $\mu_N$ of $N$ can be restricted to morphisms between the connected components $\mu_{w_1,w_2}:n_1T\times n_2T \to (n_1n_2)T$ where $n_1$ and $n_2$ range through $N(\Z)$ and $w_1$ and $w_2$ are the corresponding elements of the Weyl group. Since $nT$ is isomorphic to the spectrum of $\Z[T_{r,w(r)}^{\pm1}]_{r\in\Pi}$ where $w=w_n$, the morphisms $\mu_{w_1,w_2}$ yield ring homomorphisms
 \[
  \begin{array}{cccc}
   \Gamma\mu_{w_1,w_2}:  &  \Z[T_{r,w_{12}(r)}^{\pm1}]_{r\in\Pi} & \longrightarrow &  \Z[(T'_{r,w_{1}(r)})^{\pm1},(T''_{r,w_{2}(r)})^{\pm1}]_{r,s\in\Pi},       \\
  \end{array}
 \]
 where $w_{12}=w_1w_2$. These ring homomorphisms restrict to blueprint morphisms
 \[
  \begin{array}{cccc}
   \Gamma\mu_{w_1,w_2}^\px:  &  \Funsq[T_{r,w_{12}(r)}^{\pm1}]_{r\in\Pi} & \longrightarrow &  \Funsq[(T'_{r,w_{1}(r)})^{\pm1},(T''_{r,w_{2}(r)})^{\pm1}]_{r,s\in\Pi},       \\
  \end{array}
 \]
 between the unit fields. Since the identity matrix is the neutral element of $N(\Z)$, the morphism $\Gamma\mu_{e,e}^\px$ must be compatible with the maps $T_{r,r}\mapsto 1$. This means that $\Gamma\mu_{e,e}^\px$ is the base extension of a morphism
 \[
  \begin{array}{cccc}
  \Gamma \mu_{e,e}^\rk: & \Gamma \prerk{p^e}^\px & \longrightarrow & \Gamma\bigl( \prerk{p^e}^\px \times \prerk{p^e}^\px\bigr)
  \end{array}
 \]
 from $\Fun$ to $\Funsq$ (note that by Proposition \ref{prop: pseudo-hopf points of adjoint chevalley groups}, $\prerk{p^e}^\px$ is without $-1$).

 If one of $n_1$ and $n_2$ differs from $e$, then it follows from Proposition \ref{prop: pseudo-hopf points of adjoint chevalley groups} that $\prerk{\fp^{w_1}}^\px\times_\Fun \prerk{\fp^{w_2}}^\px$ is with $-1$. This means that $\Gamma\bigl(\prerk{\fp^{w_1}}^\px\times_\Fun \prerk{\fp^{w_2}}^\px\bigr) \simeq \Funsq[(T'_{r,w_{1}(r)})^{\pm1},(T''_{r,w_{2}(r)})^{\pm1}]_{r,s\in\Pi}$. Therefore we can define the restriction of $\mu^\rk$ to $\prerk{\fp^{w_1}}^\px\times_\Fun \prerk{\fp^{w_2}}^\px$ by
 \[
  \Gamma \mu^\rk_{w_1,w_2}: \Gamma\prerk{\fp^{w_{12}}}^\px \quad \subset \quad \Funsq[T_{r,w_{12}(r)}^{\pm1}]_{r\in\Pi} \quad \stackrel{\Gamma\mu_{w_1,w_2}^\px}{\longrightarrow} \quad  \Gamma\bigl(\prerk{\fp^{w_1}}^\px\times_\Fun \prerk{\fp^{w_2}}^\px\bigr).
 \]
 This defines a morphism $\mu:G^\rk\times G^\rk\to G^\rk$ that base extends to the group law $\mu_N$ of $N$. The uniqueness of $\mu^\rk$ is clear. The associativity of $\mu^\rk$ follows easily from the associativity of $\mu_N$. 
 Since the Weyl kernel $\fe=\prerk{\fp^e}^\px$ is without $-1$, the identity $\ast_\Z\to N$ of $\mu_N$ descends to an identity $\ast_\Fun\to \fe\subset G^\rk$ of $\mu^\rk$. Similar arguments as above show that the inversion of $\mu_N$ restricts to an inversion $\iota^\rk$ of $\mu^\rk$. Thus $\mu^\rk$ is a group law for $G^\rk$.

 We proceed with \eqref{part3}. Since $\mu_N=\mu_\Z^{\rk,+}$ is the restriction of $\mu_\cG=\mu^+_\Z$, the pair $\mu=(\mu^\rk,\mu^+)$ is Tits. Since $\fe^+_\Z=T$, the canonical torus is a maximal torus of $\cG$ and the morphism $\Psi: G^{\rk,+}/\fe^+_\Z\to N/T$ is an isomorphism of group schemes. Thus $(G,\mu)$ is a Tits-Weyl model of $\cG$.

 Part \eqref{part4} is clear. Part \eqref{part5} follows from the description of the unit fields of $\prerk{\fp^w}^\px$ in Proposition \ref{prop: pseudo-hopf points of adjoint chevalley groups}.
\end{proof}


\section{Tits-Weyl models of subgroups} 
 \label{section: tits-weyl models of subgroups}

In this part of the paper, we establish Tits-Weyl models of subgroups of Chevalley groups, i.e.\ split reductive group schemes. Namely, we will investigate parabolic subgroups, their unipotent radicals and their Levi subgroups.

As a preliminary observation, consider a group scheme $\cG$ of finite type and a torus $T$ in $\cG$. Let $C$ be the centralizer of $T$ in $\cG$ and $N$ the normalizer of $T$ in $\cG$. If $\cH$ is a subgroup of $\cG$ that contains $T$, then the centralizer $\tilde C$ of $T$ in $\cH$ equals the intersection of $C$ and $\cH$, and the normalizer $\tilde N$ of $T$ in $\cH$ equals the intersection of $N$ and $\cH$. 

This means that if we are in the situation of Theorem \ref{thm: tits-weyl models of subgroups}, i.e.\ if $\cG$ is an affine smooth group scheme of finite type with Tits Weyl model $G$ such that $G^{\rk,+}_\Z$ is the normalizer of the canonical torus $T$, then a smooth subgroup $\cH$ of $\cG$ that contains $T$ satisfies automatically all other hypotheses of Theorem \ref{thm: tits-weyl models of subgroups}.

\subsection{Parabolic subgroups} 
 \label{subsection: parabolic subgroups}

Let $\cG$ be a split reductive group scheme. A \emph{parabolic subgroup} of $\cG$ is a smooth affine subgroup $\cP$ of $\cG$ such that for all algebraically closed fields $k$, the algebraic group $\cP_k$ is a parabolic subgroup of $\cG_k$. 

\begin{df}
 Let $G$ be the Tits-Weyl model of a split reductive group scheme $\cG$. A closed submonoid $P$ is a \emph{parabolic submonoid of $G$} if it is the Tits-Weyl model of $P^+_\Z$ where $P^+_\Z$ is a parabolic subgroup of $G$ and if $P^\rk$ contains the Tits-Weyl kernel of $G$.
\end{df}

\begin{thm}\label{thm: parabolic subgroups}
  Let $\cG$ be a reductive group scheme with Tits-Weyl model $G$ and canonical torus $T$. The parabolic submonoids $P$ of $G$ stay in bijection with the parabolic subgroups $\cP$ of $\cG$ that contain $T$.
\end{thm}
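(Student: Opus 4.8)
The plan is to set up a bijection between parabolic submonoids $P$ of $G$ and parabolic subgroups $\cP$ of $\cG$ containing $T$ by passing back and forth through the base extension $(\blanc)_\Z^+$ and the $\Fun$-model construction associated with the embedding $G\hookrightarrow\GL_{n,\Z}^+$ (or $\SL_{n,\Z}^+$) that underlies the Tits-Weyl model $G$. In one direction, a parabolic submonoid $P$ of $G$ has by definition $P^+_\Z$ a parabolic subgroup of $\cG$; since $P^\rk$ contains the Weyl kernel $\fe$ of $G$, its base extension $\fe^+_\Z$ is the canonical torus $T$, and $T=\fe^+_\Z\subset (P^\rk)^+_\Z=P^{\rk,+}_\Z\subset P^+_\Z$, so $\cP:=P^+_\Z$ contains $T$. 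In the other direction, given a parabolic subgroup $\cP$ of $\cG$ containing $T$, the preliminary observation in Section \ref{section: tits-weyl models of subgroups} shows that the centralizer $\tilde C$ and normalizer $\tilde N$ of $T$ in $\cP$ are just $C\cap\cP$ and $N\cap\cP$; in particular $T$ is a maximal torus of $\cP$ (a parabolic subgroup has the same rank as $\cG$), $\tilde C\subset C\cap\cP$ and $\tilde N\subset N\cap\cP$ automatically. Hence the hypotheses of Theorem \ref{thm: tits-weyl models of subgroups} are satisfied for $\cH=\cP$, and taking $P$ to be the $\Fun$-model of $\cP$ associated with the embedding $P\hookrightarrow G$ yields a Tits-Weyl model $P$ of $\cP$, which is a closed submonoid of $G$ by part (iii) of that theorem, with $P^\rk$ containing (in fact, equal to the appropriate restriction involving) the Weyl kernel. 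So $P$ is a parabolic submonoid of $G$.

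The key steps, in order, are: (1) recall from Section \ref{subsection: closed subgroups of tits-weyl models} the construction of the $\Fun$-model $H$ of a closed subgroup $\cH$ of $\cG$ from the $\Fun$-model $G$ (adding the defining relations of $\cH$ to the pre-addition), and note that it satisfies $H^+_\Z\simeq\cH$ and comes with a canonical closed immersion $H\hookrightarrow G$; (2) verify that for $\cH=\cP$ parabolic containing $T$, Theorem \ref{thm: tits-weyl models of subgroups} applies — this is where one uses that $G$ itself has $G^{\rk,+}_\Z$ equal to the normalizer $N$ of its canonical torus $T$ and that $T$ is diagonal w.r.t.\ $G$ (true for $\SL_n$, $\GL_n$ and the adjoint models), together with the fact that $T$ is a maximal torus of $\cP$ and $\tilde C=C\cap\cP$, $\tilde N=N\cap\cP$; (3) check that the resulting Tits-Weyl model $P$ of $\cP$ is indeed a \emph{parabolic submonoid} in the precise sense of the definition, i.e.\ $P^+_\Z=\cP$ is parabolic and $P^\rk$ contains the Weyl kernel $\fe$ of $G$ — the latter because $\fe^+_\Z=T\subset\cP$ forces $\fe\subset P^\rk$ at the level of the rank spaces (a point of $G^\rk$ whose base extension is a translate of $T$ landing in $\cP$ must be a point of $P^\rk$, arguing as in the proof of Theorem \ref{thm: tits-weyl models of subgroups}); (4) prove mutual inverseness: starting from $P$, forming $\cP=P^+_\Z$ and then the $\Fun$-model of $\cP$ recovers $P$ because $P$ was defined by exactly the defining relations of $\cP$ inside $G$, and $P^+_\Z\simeq\cP$ determines the pre-addition of $\Gamma P$ as the restriction of that of $\Gamma P^+_\Z$ (using that $P$ is cancellative, so $\Gamma P\hookrightarrow\Gamma P^+_\Z$); conversely, starting from $\cP$ and forming $P$ and then $P^+_\Z$ gives back $\cP$ by the defining property of the $\Fun$-model.

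The main obstacle I expect is Step (4), specifically showing that the map $P\mapsto P^+_\Z$ is injective on parabolic submonoids — i.e.\ that a parabolic submonoid $P$ of $G$ is uniquely determined by the parabolic subgroup $\cP=P^+_\Z$. This amounts to showing that among all closed submonoids $P$ of $G$ with a prescribed base extension $\cP$ and with $P^\rk\supset\fe$, there is exactly one, namely the $\Fun$-model of $\cP$. The cancellativity of $G$ (hence of $P$) is the crucial input: it forces $\Gamma P\hookrightarrow\Gamma P^+_\Z=\Z[\cP]$, so the monoid $\Gamma P$ must be the image of the monoid $\Gamma G$ in $\Z[\cP]$ and the pre-addition of $\Gamma P$ must be the restriction of that of $\Z[\cP]$; this pins down $\Gamma P$ completely, giving uniqueness. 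One also needs to check that the condition "$P^\rk$ contains the Weyl kernel'' on the submonoid side corresponds exactly to the condition "$\cP$ contains $T$'' on the subgroup side, in both directions — one inclusion was noted above, and the reverse uses that $T$ maximal in $\cP$ together with Theorem \ref{thm: tits-weyl models of subgroups}(ii) gives $\tilde\fe^+_\Z=\tilde C$ and hence the Weyl kernel of $P$ sits inside $P^\rk$ as required. A secondary subtlety is verifying that $P$ is genuinely a \emph{closed} submonoid and that the monoid law $\mu_G$ restricts to it, but this is handled verbatim by Theorem \ref{thm: tits-weyl models of subgroups}(iii), so no new argument is needed there.
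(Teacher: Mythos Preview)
Your approach is essentially the same as the paper's: one direction is immediate from the definition of parabolic submonoid, the other is an application of Theorem~\ref{thm: tits-weyl models of subgroups} (with the preliminary observation at the start of Section~\ref{section: tits-weyl models of subgroups} supplying the hypotheses), and then one checks the two constructions are mutually inverse. The paper's proof is much terser---it dispatches inverseness with ``it is clear''---whereas you spell out the uniqueness argument via cancellativity of $P$ and the resulting embedding $\Gamma P\hookrightarrow\Gamma P_\Z^+$; this is a genuine elaboration rather than a different route.
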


\begin{proof}
 Given a parabolic submonoid $P$, then the parabolic subgroup $\cP=P_\Z^+$ contains the canonical torus $T=\fe^+_\Z$ since $P^\rk$ contains the Weyl kernel $\fe$ of $G$. If $\cP$ is a parabolic subgroup of $\cG$ that contains $T$, then it is an immediate consequence of Theorem \ref{thm: tits-weyl models of subgroups} that $\cP$ has a Tits-Weyl model $P$. It is clear that this two associations are inverse to each other.
\end{proof}






\subsection{Unipotent radicals}
 \label{subsection: unipotent radicals}

Let $\cP$ be a parabolic subgroup of a reductive group scheme $\cG$. Then $\cP$ has a \emph{unipotent radical $\cU$}, i.e.\ the smooth closed normal subgroup such that for all algebraically closed fields $\cU_k$ is the unipotent radical of $\cP_k$ (cf.\ \cite[XXII, 5.11.3, 5.11.4]{SGA3III} or \cite[Cor.\ 5.2.5]{Conrad11} for the existence of $\cU$). The group schemes $\cU$ that occur as unipotent radicals of parabolic subgroups of reductive group schemes have the following properties.

As a scheme, $\cU$ is isomorphic to an affine space $\SA^n_\Z$. The only torus contained in $\cU$ is the image $T$ of the identity $\epsilon:\ast_\Z\to\cU$, which is a $0$-dimensional torus. Trivially, $T$ is a maximal torus of $\cU$. The centralizer $C(T)$ and the normalizer $N(T)$ of $T$ in $\cU$ both equal $\cU$. Therefore, the Weyl group $\cW=N(T)/C(T)$ of $\cU$ is the trivial group scheme $\ast_\Z$.

Let $U$ be the $\Fun$-model of the inclusion $\cU\to\cP$ and $P$ the Tits-Weyl model of $\cP$. As a consequence of the cube lemma, $\mu_P^+$ restricts to a monoid law $\mu_U^+$ of $U^+$. Since $T$ is the intersection of the maximal torus of $\cP$ with $\cU$, $U$ contains a point $e$ such that $\prerk e^+_\Z$ is $T$. Thus $e\in \cZ(U)$. If one can show that $U$ does not contain any other pseudo-Hopf point of rank $0$, then $U^\rk=\prerk e^\px\simeq\ast_\Fun$, and the group law $\mu_P^\rk$ of $P^\rk$ restricts to the trivial group law $\mu_U^\rk$ of $U^\rk$. In this case, $U$ together with $(\mu_U^\rk,\mu_U^+)$ is a Tits-Weyl model of $U$.

\begin{df}
 Let $G$ be the Tits-Weyl model of a reductive group scheme and $P$ a parabolic submonoid. A submonoid $U$ of $P$ is the \emph{unipotent radical} of $P$ if $U^+_\Z$ is the unipotent radical of $P^+_\Z$ and if $U$ is a Tits-Weyl model of $U^+_\Z$.
\end{df}

\begin{rem}
 The uniqueness of $U$ is clear: if $\cU$ is the unipotent radical of $\cP=P^+_\Z$, then $U$ must be the $\Fun$-model associated to $\cU\to\cP$. It is, however, not clear to me whether unipotent radicals always exist, i.e.\ if always $\cZ(U)=\{e\}$.
\end{rem}

We can prove the existence of unipotent radicals in the following special case. We call a parabolic subgroup of $\GL_{n,\Z}^+$ that contains the subgroup of upper triangular matrices a \emph{standard parabolic subgroup of $\GL_{n,\Z}^+$}.

\begin{prop}
 Let $\cP$ be a standard parabolic subgroup of $\GL_{n,\Z}^+$ and $\cU$ its unipotent radical. Let $U\hookrightarrow P$ be the associated $\Fun$-models. Then $U$ is the unipotent radical of $P$.
\end{prop}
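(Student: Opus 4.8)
The plan is to apply the discussion preceding the proposition: we have the $\Fun$-model $U$ associated to the closed immersion $\cU \hookrightarrow \cP$, the monoid law $\mu_P^+$ of $P^+$ restricts to a monoid law $\mu_U^+$ of $U^+$ by the cube lemma, and $U$ contains a point $e$ with $\prerk e^+_\Z = T$ (the trivial torus), so $e \in \cZ(U)$. By the remarks just before the statement, the whole claim reduces to showing that $e$ is the \emph{only} pseudo-Hopf point of $U$ of rank $0$, i.e.\ that $\cZ(U) = \{e\}$; once this is known, $U^\rk \simeq \ast_\Fun$, the group law $\mu_P^\rk$ restricts to the trivial group law on $U^\rk$, and $(\mu_U^\rk, \mu_U^+)$ makes $U$ a Tits-Weyl model of $\cU$ whose canonical torus $T$ is the (trivial) diagonal torus, with $\Psi_\fe$ trivially an isomorphism of trivial group schemes.

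First I would write down $U$ explicitly. For a standard parabolic $\cP$ of $\GL_{n,\Z}^+$ corresponding to a composition $(n_1,\dots,n_k)$ of $n$, the unipotent radical $\cU$ consists of block-upper-triangular matrices with identity diagonal blocks; as a closed subscheme of $\GL_{n,\Z}^+ \subset \SL_{n+1}$ (via the standard embedding of Section~\ref{subsubsection: general linear group}) it is cut out by the equations $T_{i,i} = 1$ for all $i$, $T_{i,j} = 0$ for $i > j$, and $T_{i,j} = 0$ whenever $i,j$ lie in the same diagonal block with $i \ne j$. Hence the $\Fun$-model $U$ has coordinate blueprint $\bpquot{\Fun[T_{i,j}]}{\cR}$ where the surviving generators are the $T_{i,j}$ with $i < j$ in distinct blocks, and $\cR$ is generated by the polynomial relations coming from $\cI$ of $\SL$ together with these vanishing/normalization relations. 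By Corollary~\ref{cor: Spec B is a subset of Spec A} and Example~\ref{ex: subschemes of affine space over f1}, every point of $U$ is of the form $\fp_I = (T_{i,j})_{(i,j)\in I}$ for $I$ a subset of the index set of surviving generators.

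The key step is then: if $\fp_I$ is pseudo-Hopf, then $I$ is the full index set (which gives $e$), hence $\rk \fp_I = 0$ forces $\fp_I = e$. I would argue via Lemma~\ref{lemma: on the points p_i of g}-style reasoning adapted to $U$: since $U$ is cancellative, $\beta_U : U^+_\Z \to U$ is surjective (Lemma~\ref{lemma-beta-surjective-for-cancellative-scheme}), so a point $\fp_I$ lifts to a $k$-rational point of $\cU$ over some algebraically closed $k$, i.e.\ a unipotent matrix $(a_{i,j})$ with $a_{i,j} = 0$ exactly for $(i,j) \in I$. For $\fp_I$ to be pseudo-Hopf, $\overline{\fp_I}_\inv$ must be generated by its units; but any surviving coordinate $T_{i,j}$ of $U$ is \emph{nilpotent} on $\cU$ — more precisely, in the coordinate ring of $\overline{\fp_I}{}^+_\Z$ each $T_{i,j}$ with $(i,j)\notin I$ satisfies a relation forcing it into the augmentation ideal (unipotent group schemes over $\Z$ have no nontrivial characters, equivalently the only invertible global functions are $\pm 1$), so $T_{i,j}$ cannot become a unit after inverting additively. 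Thus $\overline{\fp_I}{}^\px = \Fun$ unless there are no surviving coordinates outside $I$, i.e.\ unless $\fp_I = e$. Combined with the fact that $e$ is pseudo-Hopf of rank $0$, this gives $\cZ(U) = \{e\}$. The main obstacle is making this "no units" argument rigorous at the level of blueprints rather than rings: one must check that the additive closure inside $\overline{\fp_I}_\inv$ genuinely fails to contain inverses of the $T_{i,j}$, which I would reduce to the structure of $\cU \cong \SA^n_\Z$ as an affine space (so $\cU^+_\Z = \Z[x_1,\dots,x_n]$ has unit group $\{\pm 1\}$) together with the observation that the blue structure on $U$ only enriches, never destroys, this obstruction — a surjection $\Gamma\overline{\fp_I} \twoheadrightarrow$ a blueprint generated by its units would force the corresponding ring surjection to land in a group algebra, contradicting $\cU \cong \SA^n$. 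Once $\cZ(U)=\{e\}$ is secured, the rest is a direct invocation of the paragraph preceding the proposition.
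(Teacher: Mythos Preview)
Your reduction to showing $\cZ(U)=\{e\}$ is exactly the paper's reduction, and your conclusion is correct, but you miss the one-line observation that makes the argument immediate: after imposing $T_{i,i}\equiv 1$, $T_{i,j}\equiv 0$ for $i>j$, and $T_{i,j}\equiv 0$ for the within-block off-diagonal pairs, the determinant relation collapses to $1\equiv 1$ (only the identity permutation survives), so the pre-addition $\cR$ on the remaining generators is \emph{trivial}. Hence $U\simeq \A^{\#I}_\Fun$ as a blue scheme, a monoidal scheme, and Example~\ref{ex: rank space of a monoidal scheme} already tells you that the pseudo-Hopf points of a monoidal scheme are exactly its closed points; for affine space over $\Fun$ there is a unique closed point, the maximal ideal. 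That is the paper's entire proof.

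Your route---lifting $\fp_I$ to a $k$-point of $\cU$ and arguing that the coordinates $T_{i,j}$ cannot become units because $\Gamma\overline{\fp_I}{}^+_\Z$ is a polynomial ring over $\Z$ with unit group $\{\pm1\}$---is valid (and indeed is morally what underlies Example~\ref{ex: rank space of a monoidal scheme}), but it reproves that example in a special case rather than invoking it. Two small wobbles: the word ``nilpotent'' is wrong (the coordinate functions on $\cU$ are not nilpotent in the polynomial ring; the matrix $A-I$ is nilpotent, which is a different statement), and your worry about ``making the no-units argument rigorous at the level of blueprints'' evaporates once you know $U$ is monoidal---there is simply no pre-addition to worry about. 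The paper's approach buys you exactly this: no detour through $\Z$-points or characters of unipotent groups is needed.
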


\begin{proof}
 Everything is clear from the preceding discussion if we can show that $\cZ(U)$ contains only one point. The unipotent radical of a standard parabolic subgroup is of the form $\cU=\Spec[T_{i,j}]/I$ where $I$ is the ideal generated by the equations $T_{i,j}=0$ for $i>j$, $T_{i,j}=1$ for $i=j$ and $T_{i,j}=0$ for certain pairs $(i,j)$ with $i<j$. Let $I$ be the subset of $\un\times\un$ that contains all pairs $(i,j)$ that did not occur in the previous relations. Then $U=\Spec \Fun[T_{i,j}]_{(i,j)\in I}\simeq\A^{\#I}_\Fun$ as a blue scheme. It is clear that $\cZ(U)$ consists of only one point, which is the maximal ideal $(T_{i,j})_{(i,j)\in I}$ of $\Fun[T_{i,j}]_{(i,j)\in I}$ (cf.\ Example \ref{ex: rank space of a monoidal scheme}).
\end{proof}



\subsection{Levi subgroups}
 \label{subsection: levi subgroups}

Let $\cP$ be a parabolic subgroup of a reductive group scheme $\cG$. Let $\cU$ be the \emph{unipotent radical} of $\cP$. If $\cP$ contains a maximal torus $T$ of $\cG$, then $\cP$ has a \emph{Levi subgroup} $\cM$, i.e.\ a reductive subgroup that is isomorphic to the scheme theoretic quotient $\cP/\cU$ such that for every algebraically closed field $k$, $\cM_k$ is the Levi subgroup of $\cP_k$ (see \cite[Thm.\ 4.1.7, Prop.\ 5.2.3]{Conrad11} or \cite[Lemmas 2.1.5 and 2.1.8]{Conrad-Gabber-Prasad10} for the existence of $\cM$). In particular, $\cM$ contains the maximal torus $T$. 

\begin{df}
 Let $G$ be the Tits-Weyl model of a reductive group scheme $\cG$. Let $P$ be a parabolic subgroup of $G$. A submonoid $M$ of $P$ is called a \emph{Levi submonoid} if $M^+_\Z$ is the Levi subgroup of $P$ and if $M$ is a Tits-Weyl model of $M^+_\Z$.
\end{df}

\begin{thm}
 Let $G$ be the Tits-Weyl model of a reductive group scheme $\cG$. Let $P$ be a parabolic submonoid of $G$. Then $P$ contains a unique Levi submonoid $M$.
\end{thm}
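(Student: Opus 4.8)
The plan is to deduce the statement from Theorem \ref{thm: tits-weyl models of subgroups}, applied with the parabolic submonoid $P$ in the role of the ambient group and a suitable Levi subgroup of $\cP:=P^+_\Z$ in the role of the closed subgroup. The first step is to collect the structural input that theorem requires. By Theorem \ref{thm: parabolic subgroups}, the parabolic submonoid $P$ is, under the bijection established there, the Tits-Weyl model of $\cP$ produced by Theorem \ref{thm: tits-weyl models of subgroups} from the inclusion $\cP\hookrightarrow\cG$; hence, by parts (2) and (3) of that theorem, the canonical torus $T$ of $P$ is diagonal with respect to $P$, the group scheme $P^{\rk,+}_\Z$ is the normalizer of $T$ in $\cP$, and $\fe^+_\Z$ is the centralizer of $T$ in $\cP$. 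Since $\cG$ is reductive and $T$ is a maximal torus, the centralizer of $T$ in $\cG$ — and a fortiori its intersection with $\cP$ — is $T$ itself, so $\fe^+_\Z=T$. On the other side, because $\cP$ contains the maximal torus $T$, it admits a Levi subgroup $\cM$ containing $T$, and this Levi subgroup is unique with that property; it is smooth, reductive and isomorphic to $\cP/\cU$ (cf.\ \cite[Thm.\ 4.1.7, Prop.\ 5.2.3]{Conrad11} and \cite[Lemmas 2.1.5, 2.1.8]{Conrad-Gabber-Prasad10}).

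Next I would check that the hypotheses of Theorem \ref{thm: tits-weyl models of subgroups} hold for the pair $(\cP,\cM)$: the intersection $T\cap\cM$ equals $T$ and is a maximal torus of $\cM$; the centralizer of $T$ in $\cM$ is $T$ (as $\cM$ is reductive), which is contained in the centralizer of $T$ in $\cP$ intersected with $\cM$; and, by the preliminary observation at the beginning of Section \ref{section: tits-weyl models of subgroups}, the normalizer of $T$ in $\cM$ equals the normalizer of $T$ in $\cP$ intersected with $\cM$. Theorem \ref{thm: tits-weyl models of subgroups} then yields at once: the monoid law $\mu_P$ restricts uniquely to a monoid law $\mu_M$ on the $\Fun$-model $M$ associated to the inclusion $\cM\hookrightarrow\cP$; the pair $(M,\mu_M)$ is a Tits-Weyl model of $M^+_\Z=\cM$ with diagonal canonical torus $T$; and $M\hookrightarrow P$ is a closed immersion and a homomorphism of monoids in $\TSch$. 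In particular $M$ is a closed submonoid of $P$ whose base extension is the Levi subgroup of $P^+_\Z$, hence a Levi submonoid of $P$. (Part (4) of the cited theorem moreover shows that $M^\rk$ lifts to $M$ whenever $P^\rk$ lifts to $P$.)

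For uniqueness, let $M'$ be any Levi submonoid of $P$, so $\cM':=(M')^+_\Z$ is the Levi subgroup of $\cP$ and $M'$ is a Tits-Weyl model of $\cM'$. Since $M'\hookrightarrow P$ is a homomorphism of monoids in $\TSch$, it sends the identity of $M'$ to the identity of $P$, hence carries the Weyl kernel of $M'$ into the Weyl kernel $\fe$ of $P$; on base extension to $\Z$ this exhibits the canonical torus of $M'$ as a subtorus of $T=\fe^+_\Z$, and since its dimension equals the reductive rank of $\cM'\simeq\cP/\cU$, which is the reductive rank of $\cP$, it must coincide with $T$ (Theorem \ref{thm: properties of tits-weyl groups} (2)). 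Thus $\cM'$ contains $T$, and by the uniqueness of the Levi subgroup of $\cP$ containing a fixed maximal torus, $\cM'=\cM$. Finally, arguing as in the remark on the uniqueness of unipotent radicals, a closed submonoid of $P$ with base extension $\cM$ must be the $\Fun$-model associated to $\cM\hookrightarrow\cP$ — here one uses that the relevant blue schemes are cancellative and embed into the coordinate rings of $\cP$ and $\cM$ (cf.\ Lemma \ref{lemma: quotients of cancellative blueprints are cancellative}) — so $M'=M$.

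The step I expect to demand the most attention is the verification that $P$ really fulfils all the hypotheses of Theorem \ref{thm: tits-weyl models of subgroups}, above all the diagonality of $T$ and the equality $P^{\rk,+}_\Z=N_\cP(T)$ — both of which are inherited from the fact that $P$ is itself constructed via that theorem — together with the concluding normalization step in the uniqueness argument; the remainder is a routine application of facts already in hand.
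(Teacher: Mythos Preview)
Your proposal is correct and follows the same approach as the paper: existence via Theorem \ref{thm: tits-weyl models of subgroups} applied to $\cM\hookrightarrow\cP$, and uniqueness via the uniqueness of the Levi subgroup of $P^+_\Z$. The paper's proof is extremely terse (two sentences), and you have supplied the details it leaves implicit, in particular the verification that $P$ inherits from its construction via Theorem \ref{thm: tits-weyl models of subgroups} exactly the hypotheses needed to apply that theorem again (diagonal canonical torus, $P^{\rk,+}_\Z=N_\cP(T)$), and the observation from the beginning of Section \ref{section: tits-weyl models of subgroups} that a subgroup containing $T$ automatically satisfies the centralizer and normalizer conditions. Your uniqueness argument is more elaborate than the paper's, which simply invokes ``the uniqueness of the Levi subgroup of $P^+_\Z$'' without spelling out why this pins down $M$ as a blue submonoid; your reduction to the $\Fun$-model construction is the right way to make that step precise.
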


\begin{proof}
 The uniqueness follows from the uniqueness of the Levi subgroup of $P^+_\Z$. The existence follows from the existence of the Levi subgroup of $P^+_\Z$ and Theorem \ref{thm: tits-weyl models of subgroups}.
\end{proof}



\appendix
\section{Examples of Tits-Weyl models}
 \label{app: examples}

\subsection{Non-standard torus}

There are different blue schemes together with a monoid law in $\TSch$ that are Tits-Weyl models of the torus $\G_{m,Z}^r$ of rank $r$. We give one example for $r=1$, i.e.\ a non-standard Tits-Weyl model of the multiplicative group scheme $\G_{m,\Z}$.

Namely, consider the blueprint $B=\bpgenquot{\Fun[S,T^{\pm1}]}{S\=1+1}$. Its universal ring is $B^+_\Z=\Z[T^{\pm1}]$, the coordinate ring of $\G_{m,\Z}$. Thus $G=\Spec B$ is an $\Fun$-model of $\G_{m,\Z}$. The blue scheme $G$ consists of two points: the closed point $x=(S)$, which is of characteristic $2$, and the generic point $\eta=(0)$, which has all potential characteristics except for $2$. The point $\eta$ is the only pseudo-Hopf point of $G$, i.e.\ $\cZ(G)=\{\eta\}$. The rank space of $G$ is $G^\rk\simeq\Spec\Fun[T^{\pm1}]$ and its universal semiring scheme is $G^+\simeq\Spec\N[T^{\pm1}]$.

The group law of $\G_{m,\Z}$ descends to a morphism $\mu: G\times G\to G$. Namely, it is given by the morphism 
\[
 \Gamma\mu: \quad B \quad  \longrightarrow \quad B\otimes_\Fun B \quad = \quad \bpgenquot{\Fun[S_1,S_2,T_1^{\pm1},T_2^{\pm1}]}{S_1\=1+1\=S_2}
\]
between the global sections of $G$ and $G\times G$ that is defined by $\Gamma\mu(S)=S_1$ and $\Gamma\mu(T)=T_1\otimes T_2$. Indeed $G$ becomes a semigroup object in $\rkBSch$  without an identity: there is no morphism $B\to\Fun$ since $\Fun$ contains no element $S'$ that satisfies $S'\=1+1$. 

However, the morphism $\mu$ maps $\cZ(G\times G)$ to $\cZ(G)$, i.e.\ $\mu$ is Tits. In the category $\TSch$, the pair $\bigl(G, \mu)$ is a group. Since the Weyl group of $\G_{m,\Z}$ is the trivial group and $G^\rk$ consists of one point, $G$ is a Tits-Weyl model of $G_{m,\Z}$.

While it is clear that $G$ is not isomorphic to $G_{m,\Fun}=\Spec\Fun[T^{\pm1}]$ in $\BSch$, the locally algebraic morphism $\varphi:G\to\G_{m,\Fun}$ that is defined by the obvious inclusion $\Fun[T^{\pm1}]\hookrightarrow \bpgenquot{\Fun[S,T^{\pm1}]}{S\=1+1}$ is Tits and an isomorphism of groups in $\TSch$. 

More generally, it can be shown that every cancellative Tits model $G$ of $\G_{m,Z}^r$ is the spectrum of a subblueprint of $\Z[T_1^{\pm1},\dotsc,T_r^{\pm1}]$ that contains $\Fun[T_1^{\pm1},\dotsc,T_r^{\pm1}]$, but not $-1$. Moreover, the inclusion $\Fun[T_1^{\pm1},\dotsc,T_r^{\pm1}]\hookrightarrow \Gamma G$ is Tits and defines an isomorphism $G\to\G_{m,\Fun}$ of groups in $\TSch$.

\subsection{Tits-Weyl models of type $A_1$}
\label{app: tits-weyl models of type a_1}

In this section, we calculate explicitly the different Tits-Weyl models that we described in the main text of the paper for groups of type $A_1$. Namely, we reconsider the standard model $\SL_2$ of the special linear group, the Tits-Weyl model of the adjoint group $\cG$ of type $A_1$ given by the conjugation action on $\Mat_{2\times 2}$ and the Tits-Weyl model of $\cG$ given by the adjoint representation.

 \subsubsection*{The standard model of $\SL_2$}

We reconsider the example $\SL_2=\Spec\Fun[\SL_2]$ with $\Fun[\SL_2]=\bpgenquot{\Fun[T_1,\dotsc,T_4]}{T_1T_4\=T_2T_3+1}$ and make the heuristics from the introduction precise. The prime ideals $\fp$ of $\Fun[SL_2]$ are generated by a subset of $\{T_1,\dotsc,T_4\}$ such that not both $T_1T_4$ and $T_2T_3$ are contained in $\fp$. We illustrate $\SL_2$ in Figure \ref{figure: sl_2} where the encircled points are the pseudo Hopf points of minimal rank.
\begin{figure}[h]
 \begin{center}
  \includegraphics{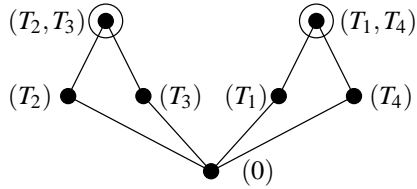}
  \caption{The standard model of $\SL_2$}
  \label{figure: sl_2}
 \end{center} 
\end{figure}

One sees clearly that the maximal ideal $\fp_{2,3}=(T_2,T_3)$ corresponds to the diagonal torus $T=\bigl\{\tinymat \ast 0 0 \ast \bigr\}$ of the matrix group $\SL_2(k)$ (where $k$ is a ring and $\ast$ stays for a non-zero entry) and $\fp_{1,4}=(T_1,T_4)$ corresponds to the subset $\bigl\{\tinymat 0 \ast \ast 0 \bigr\}$ of anti-diagonal matrices. The ideals $\fp_1=(T_1)$, $\fp_2=(T_2)$, $\fp_3=(T_3)$ and $\fp_4=(T_4)$ correspond to the respective subsets $\bigl\{\tinymat 0 \ast \ast \ast \bigr\}$, $\bigl\{\tinymat \ast 0 \ast \ast \bigr\}$, $\bigl\{\tinymat \ast \ast 0 \ast \bigr\}$ and $\bigl\{\tinymat \ast \ast \ast 0 \bigr\}$, while $(0)$ corresponds to the subset $\bigl\{\tinymat \ast \ast \ast \ast \bigr\}$.


 \subsubsection*{The adjoint group of type $A_1$ via conjugation}

We turn to the adjoint group $\cG$ of type $A_1$. Note that $\cG(k)=\PSL_2(k)$ if we consider an algebraically closed field $k$. One can represent $\PSL_2(k)$ by the conjugation action on $2\times 2$-matrices. Consider $\tinymat abcd\in\SL_2(k)$ and $\tinymat efgh\in\Mat_{2\times 2}(k)$. Then the product
\[
 \mat abcd \mat efgh \mat abcd ^{-1} \quad = \quad \mat{ad e-ac f+bd g-bc h}{-ab e+a^2 f-b^2 g+ab h}{cd e-c^2 f+d^2 g-cd h}{-bc e+ac f-bd g+ad h}
\]
shows that $\PSL_2(k)$ acts on the $4$-dimensional affine space, generated by $e$, $f$, $g$ and $h$, via the matrices
\[
   A(a,b,c,d) \qquad = \qquad  \begin{pmatrix}
                                    ad  & -ac  &  bd  & -bc  \\
                                   -ab  &  a^2 & -b^2 &  ab  \\
                                    cd  & -c^2 &  d^2 & -cd  \\
                                   -bc  &  ac  & -bd  &  ad  \\
                               \end{pmatrix} 
\]
with $ad-bc=1$. This is a faithful representation of $\PSL_2(k)$. The algebraic group $\cG_k$ over $k$ that is associated to the group $\PSL_2(k)=\{A(a,b,c,d)|ad-bc=1\}\subset\SA^4_k$ descends to an integral model $\cG\subset\GL_{4,\Z}^+$, which is an adjoint Chevalley group of type $A_1$. Let $G$ be the associated $\Fun$-model. Then the prime ideals of $G$ are generated by subsets of $\{T_{i,j}\}_{i,j=1,\dotsc,4}$ where $T_{i,j}$ is the matrix coefficient at $(i,j)$. Since $ad-bc=1$, one of $ad$ and $bc$ has to be non-zero for $A(a,b,c,d)\in \PSL_2(k)$. We consider the various possible combinations of $a$, $b$, $c$ and $d$ being zero or not (as above, $\ast$ denotes a non-zero entry):
\begin{align*}	
   a=0:\ &   \Biggl(\begin{smallmatrix}  0   &  0   & \ast & \ast \\
                                         0   &  0   & \ast &  0   \\
                                        \ast & \ast & \ast & \ast \\
                                        \ast &  0   & \ast &  0   \\    \end{smallmatrix}\Biggr)
 & b=0:\ &   \Biggl(\begin{smallmatrix} \ast & \ast &  0   &  0   \\
                                         0   & \ast &  0   &  0   \\
                                        \ast & \ast & \ast & \ast \\
                                         0   & \ast &  0   & \ast \\    \end{smallmatrix} \Biggr) 
 & c=0:\ &   \Biggl(\begin{smallmatrix} \ast &  0   & \ast &  0   \\
                                        \ast & \ast & \ast & \ast \\
                                         0   &  0   & \ast &  0   \\
                                         0   &  0   & \ast & \ast \\    \end{smallmatrix}\Biggr) \\
   d=0:\ &   \Biggl(\begin{smallmatrix}  0   & \ast &  0   & \ast \\
                                        \ast & \ast & \ast & \ast \\
                                         0   & \ast &  0   &  0   \\
                                        \ast & \ast &  0   &  0   \\    \end{smallmatrix}\Biggr)  
 & a=d=0:\ & \Biggl(\begin{smallmatrix}   0  &   0  &   0  & \ast \\
                                          0  &   0  & \ast &   0  \\
                                          0  & \ast &   0  &   0  \\
                                        \ast &   0  &   0  &   0  \\    \end{smallmatrix}\Biggr)
 & b=c=0:\ & \Biggl(\begin{smallmatrix} \ast &   0  &   0  &   0  \\
                                          0  & \ast &   0  &   0  \\
                                          0  &   0  & \ast &   0  \\
                                          0  &   0  &   0  & \ast \\    \end{smallmatrix}\Biggr) 
\end{align*}
The case $a,b,c,d\neq0$ corresponds to the matrices $A(a,b,c,d)$ with no vanishing coefficient. The zero entries of each case stay for the generators $T_{i,j}$ of the prime ideals of $G$. Without writing out the generating sets, we see in Figure \ref{figure: psl_2 by conjugation on mat_2x2} that the topological space of $G$ is the same as the topological space of $\SL_2$.
\begin{figure}[h]
 \begin{center}
  \includegraphics{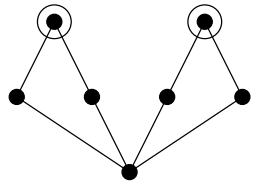}
  \caption{The Tits-Weyl model of type $A_1$ defined by the conjugation action}
  \label{figure: psl_2 by conjugation on mat_2x2}
 \end{center} 
\end{figure}

Since the maximal points (which are encircled in Figure \ref{figure: psl_2 by conjugation on mat_2x2}) correspond to the diagonal and anti-diagonal matrices, respectively, they are the pseudo-Hopf points of minimal rank. The pre-rank space $G^\prk$ is discrete and $G^\rk$ embeds into $G$.

 \subsubsection*{The adjoint group of type $A_1$ via the adjoint action}

Let $G$ be the Tits-Weyl model of the adjoint group $\cG$ of type $A_1$ that is defined by the adjoint action of $\cG$ on its Lie algebra. The roots system of type $A_1$ is $\Phi=\{\pm a\}$ and the set of primitive roots is $\Pi=\{a\}$. Thus a basis of the Lie algebra of $\cG$ is given by the ordered tuple $\Psi=(l_{-a},h_a,l_a)$ where we chose this ordering of $\Psi$ to obtain nice matrix representations below. Since $G$ is defined as a closed subscheme of $\GL_3$, every point $x$ of $G$ is of the form $\fp_I$ for some $I\subset\{1,2,3\}\times\{1,2,3\}$. Since $G$ is cancellative, the morphism $\beta_G:\cG\to G$ is surjective by Lemma \ref{lemma-beta-surjective-for-cancellative-scheme}. Thus a point $\fp_I\in\GL_3$ is contained in $G\subset\GL_3$ if and only if there is an algebraically closed field $k$ and a matrix $(a_{i,j})\in\cG(k)$ such that $a_{i,j}=0$ if and only if $(i,j)\in I$. This reduces the study of the topological space of $G$ to the study of matrices $(a_{i,j})\in\cG(k)$, for which we can use explicit formulas.

There is a surjective group homomorphism $\varphi:\SL_2(k)\to\cG(k)$ (see \cite[Section 6]{Carter89}). We describe the image of certain elements of $\SL_2(k)$ in $\cG\subset\GL_3(k)$ w.r.t.\ the basis $\Psi=(l_{-a},h_a,l_a)$:
\[
 \varphi(\tinymat 1 t 0 1 )                  = \biggl( \begin{smallmatrix} 1 & t & -t^2\ \\ 0 & 1 & -2t\ \\ 0 & 0 & 1 \end{smallmatrix}\biggr), \qquad
 \varphi(\tinymat \lambda {0\ \ \ } 0 {\lambda^{-1}} ) = \biggl( \begin{smallmatrix} \lambda^{-2} & 0 & 0\ \\ 0 \ \ \ & 1 & 0\ \\ 0\ \ \  & 0 & \lambda^2 \end{smallmatrix}\biggr) \quad\text{and}\quad
 \varphi(\tinymat {\ \ 0} {1} {-1}{0} )                 = \biggl( \begin{smallmatrix} \ \ 0 & \ \ 0 & -1\\ \ \ 0 & -1 & \ \ 0 \\ -1 & \ \ 0 & \ \ 0 \end{smallmatrix}\biggr).
\]
For the first equation, see Section 6.2 of \cite{Carter89}, for the second equation Proposition 6.4.1 and for the last equation Propositions 6.4.2 and 6.4.3 of \cite{Carter89}. 

The Bruhat decomposition of $\SL_2(k)$ is $\SL_2(k)=B(k) \amalg BwB(k)$ where $w=\tinymat {\ \ 0} {1} {-1}{0}$ and $B$ is the upper triangular Borel subgroup of $\SL_2$, i.e.\ $B(k)$ is the set of all matrices that can written as a product $\tinymat \lambda {0\ \ \ } 0 {\lambda^{-1}} \tinymat 1 t 0 1 $ with $\lambda\in k^\times$ and $t\in k$. In other words, every element $(a_{i,j})\in\SL_2(k)$ can be written as a product $\tinymat \lambda {0\ \ \ } 0 {\lambda^{-1}} \tinymat 1 t 0 1 $ or as a product $\tinymat 1 s 0 1 \tinymat {\ \ 0} {1} {-1}{0} \tinymat \lambda {0\ \ \ } 0 {\lambda^{-1}} \tinymat 1 t 0 1 $ with $\lambda\in k^\times$ and $s,t\in k$. Since $\varphi:\SL_2(k)\to\cG(k)$ is a surjective group homomorphism, we yield 
\[
 \cG(k) \quad = \quad \biggl\{\biggl( \begin{smallmatrix} \lambda^{-2} &\ \lambda^{-2}t &\ -\lambda^{-2}t^2\\ 0\ \ & 1\ & -2t\\ 0\ \ & 0\ &\ \ \lambda^2\end{smallmatrix} \biggr)\biggr\}_{\substack{\lambda\in k^\times\\t\in k}}
     \quad\amalg\quad \biggl\{\biggl( \begin{smallmatrix} \lambda^{-2}s^2  & \ -s+\lambda^{-2}ts^2 & \ -\lambda^{2}+2st-\lambda^{-2}s^2t^2 \\ 
                                                          2 \lambda^{-2} s & \ -1+2\lambda^{-2}st  & \ 2t - 2\lambda^{-2}st^2 \\
                                                          -\lambda^{-2}    & \ -\lambda^{-2}t      & \ \lambda^{-2} t^2                \end{smallmatrix} \biggr)\biggr\}_{\substack{\lambda\in k^\times\\s,t\in k}}.
\]
To find the points of $G$, we have to investigate for which $\lambda,s,t$ a matrix coefficient of the above matrices vanishes. Concerning the first matrix, we see that the following cases appear:
\begin{align*}
 t=0:                  \quad & \biggl(\begin{smallmatrix} \ast &0    &0    \\ 0 &\ast &0    \\ 0 &0 &\ast \end{smallmatrix} \biggr)&      &\quad \fp^e \\
 t\neq 0,\kar k\neq 2: \quad & \biggl(\begin{smallmatrix} \ast &\ast &\ast \\ 0 &\ast &\ast \\ 0 &0 &\ast \end{smallmatrix} \biggr)& x_1  &= \fp_{\{(2,1),(3,1),(3,2)\}} \\
 t\neq 0,\kar k=2:     \quad & \biggl(\begin{smallmatrix} \ast &\ast &\ast \\ 0 &\ast &0    \\ 0 &0 &\ast \end{smallmatrix} \biggr)& x_1' &= \fp_{\{(2,1),(2,3),(3,1),(3,2)\}} 
\end{align*}
where $\ast$ stays for a non-zero entry and the right hand side column lists the image points in $G\subset\GL_3$ together with the notation used in Figure \ref{figure: psl_2 by adjoint action}. Recall from Section \ref{subsection: special linear group} that $\fp^e=\fp_{I(e)}$ where $e\in S_3$ is the trivial permutation.

Concerning the second matrix, we have to consider more cases. If not both $s$ and $t$ are non-zero, we obtain immediately the following list:
\begin{align*}
 s=t=0:                   \quad & \biggl(\begin{smallmatrix}  0   &  0   & \ast \\  0   & \ast &  0   \\ \ast &  0   &  0   \end{smallmatrix} \biggr)&  &\quad \fp^\sigma \\
 s=0,t\neq 0,\kar k\neq 2:\quad & \biggl(\begin{smallmatrix}  0   &  0   & \ast \\  0   & \ast & \ast \\ \ast & \ast & \ast \end{smallmatrix} \biggr)& x_3  &= \fp_{\{(1,1),(1,2),(2,1)\}} \\
 s=0,t\neq0,\kar k=2:     \quad & \biggl(\begin{smallmatrix}  0   &  0   & \ast \\  0   & \ast &  0   \\ \ast & \ast & \ast \end{smallmatrix} \biggr)& x_3' &= \fp_{\{(1,1),(1,2),(2,1),(2,3)\}} \\
 s\neq0,t= 0,\kar k\neq 2:\quad & \biggl(\begin{smallmatrix} \ast & \ast & \ast \\ \ast & \ast &  0   \\ \ast &  0   &  0   \end{smallmatrix} \biggr)& x_4  &= \fp_{\{(2,3),(3,2),(3,3)\}} \\
 s\neq 0,t\neq0,\kar k=2: \quad & \biggl(\begin{smallmatrix} \ast & \ast & \ast \\  0   & \ast &  0   \\ \ast &  0   &  0   \end{smallmatrix} \biggr)& x_4' &= \fp_{\{(2,1),(2,3),(3,2),(3,3)\}} \\
\end{align*}
To investigate the cases of vanishing matrix coefficients with $s\neq0\neq t$, consider the following cases:
\begin{align*}
 -s+\lambda^{-2}s^2t \ &= \ 0  &&\iff &ts  \ &= \ \lambda^2 \\
 -\lambda^{2}+2st-\lambda^{-2}s^2t^2\ &= \ 0  &&\iff &ts  \ &= \ \lambda^2 \\
 -1+2\lambda^{-2}st  \ &= \ 0  &&\iff &2ts \ &= \ \lambda^2   &&(\text{in this case }\kar k\neq 2)\\
 2t - 2\lambda^{-2}st^2\ &= \ 0  &&\iff &ts  \ &= \ \lambda^2 &&(\text{if }\kar k\neq 2)\\
\end{align*}
This yields the following additional points of $G$ where $s\neq0\neq t$:
\begin{align*}
 st=\lambda^2,\kar k\neq 2:\quad & \biggl(\begin{smallmatrix} \ast &  0   &  0   \\ \ast & \ast &  0   \\ \ast & \ast & \ast \end{smallmatrix} \biggr) & x_2  &= \fp_{\{(1,2),(1,3),(2,3)\}} \\
 st=\lambda^2,\kar k= 2:\quad & \biggl(\begin{smallmatrix} \ast &  0   &  0   \\  0   & \ast &  0   \\ \ast & \ast & \ast \end{smallmatrix} \biggr)    & x_2' &= \fp_{\{(1,2),(1,3),(2,1),(2,3)\}} \\
 2st=\lambda^2,\kar k\neq 2:\quad & \biggl(\begin{smallmatrix} \ast & \ast & \ast \\ \ast &  0   & \ast \\ \ast & \ast & \ast \end{smallmatrix} \biggr)& x_5  &= \fp_{\{(2,2)\}} \\
 st\neq\lambda^2\neq 2st,\kar k\neq 2:\quad & \biggl(\begin{smallmatrix} \ast & \ast & \ast \\ \ast & \ast & \ast \\ \ast & \ast & \ast \end{smallmatrix} \biggr)&\eta &= \fp_{\emptyset} \\
 st\neq\lambda^2\neq 2st,\kar k= 2:\quad & \biggl(\begin{smallmatrix} \ast & \ast & \ast \\  0   & \ast &  0   \\ \ast & \ast & \ast \end{smallmatrix} \biggr)   &\eta'&= \fp_{\{(2,1),(2,3)\}} \\
\end{align*}

We summarize these calculations in Figure \ref{figure: psl_2 by adjoint action}. The circled points are the pseudo-Hopf points of minimal rank.
\begin{figure}[h]
 \begin{center}
  \includegraphics{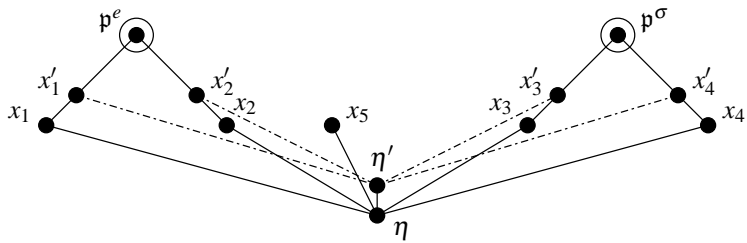}
  \caption{The Tits-Weyl model of type $A_1$ defined by the adjoint action}
  \label{figure: psl_2 by adjoint action}
 \end{center} 
\end{figure}

\begin{rem}
 It is clear that this Tits-Weyl model of $\cG$ differs in $\BSch$ from the Tits-Weyl model that is defined by the conjugation action on $2\times 2$-matrices (cf.\ Figure \ref{figure: psl_2 by conjugation on mat_2x2}). It is, however, not clear to me whether these two models of $\cG$ are isomorphic in $\TSch$ or not.
\end{rem}


\begin{small}
 \addcontentsline{toc}{section}{References}
 
\end{small}

\end{document}